\newtheorem{thm}{Theorem}[section]
\newtheorem{prop}[thm]{Proposition}
\newtheorem{lem}[thm]{Lemma}
\newtheorem{rem}[thm]{Remark}
\newtheorem{cor}[thm]{Corollary}
\newtheorem{ex}[thm]{Example}
\newtheorem{defn}[thm]{Definition}
\newtheorem{fact}[thm]{Fact}
\newtheorem{notation}[thm]{Notation}
\newtheorem{conv}[thm]{Convention}
\newtheorem{construction}[thm]{Construction}
\newtheorem{question}[thm]{Question}
\newtheorem{assumption}[thm]{Assumption}
\title{Quasi-homogeneity of the moduli space of stable maps to homogeneous spaces}
\author{Christoph B\"arligea}
\address{Ruhr-Universit\"at Bochum\\ Fakult\"at f\"ur Mathematik\\ Arbeitsgruppe Transformationsgruppen\\ Universit\"atsstra{\ss}e 150\\ 44780 Bochum}
\email{christoph.baerligea@rub.de}
\thanks{The research was partially supported by the German Research Foundation (DFG)}
\keywords{Moduli space of stable maps, quasi-homogeneity, homogeneous spaces, curve neighborhoods, minimal degrees in quantum products}
\subjclass[2010]{Primary 14N10, 14H10; Secondary 14M15, 14H45}
\date{June 20, 2017}	
\begin{document}

\begin{abstract}

Let $G$ be a connected, simply connected, simple, complex, linear algebraic group. Let $P$ be an arbitrary parabolic subgroup of $G$. Let $X=G/P$ be the $G$-homogeneous projective space attached to this situation. Let $d\in H_2(X)$ be a degree. Let $\overline{M}_{0,3}(X,d)$ be the (coarse) moduli space of three pointed genus zero stable maps to $X$ of degree $d$. We prove under reasonable assumptions on $d$ that $\overline{M}_{0,3}(X,d)$ is quasi-homogeneous under the action of $G$. 

The essential assumption on $d$ is that $d$ is a minimal degree, i.e. that $d$ is a degree which is minimal with the property that $q^d$ occurs with non-zero coefficient in the quantum product $\sigma_u\star\sigma_v$ of two Schubert cycles $\sigma_u$ and $\sigma_v$, where $\star$ denotes the product in the (small) quantum cohomology ring $QH^*(X)$ attached to $X$.
%$(QH^*(X),\star)$ denotes the (small) quantum cohomology ring attached to $X$. 
%and $q$ denotes the quantum parameter. 
We prove our main result on quasi-homogeneity by constructing an explicit morphism which has a dense open $G$-orbit in $\overline{M}_{0,3}(X,d)$. To carry out the construction of this morphism, we develop a combinatorial theory of generalized cascades of orthogonal roots which is interesting in its own right.

\end{abstract}

\maketitle

\section{Introduction}

Let $G$ be a connected, simply connected, simple, complex, linear algebraic group. Let $P$ be a fixed but arbitrary parabolic subgroup of $G$. Let $X=G/P$ be the $G$-homogeneous projective space attached to this situation. We select once and for all a maximal torus $T$ and a Borel subgroup $B$ of $G$ such that
$$
T\subseteq B\subseteq P\subseteq G\,.
$$

We call an effective homology class in $H_2(X)$ a degree. Let $d$ be a degree. Let $\overline{M}_{0,3}(X,d)$ be the (coarse) moduli space of three pointed genus zero stable maps to $X$ of degree $d$. By definition, the moduli space $\overline{M}_{0,3}(X,d)$ parametrizes isomorphism classes
$
[C,p_1,p_2,p_3,\mu\colon C\to X]
$
where:
\begin{itemize}

\item 

$C$ is a complex, projective, connected, reduced, (at worst) nodal curve of arithmetic genus zero.

\item

The marked points $p_i\in C$ are distinct and lie in the nonsingular locus.

\item

$\mu$ is a morphism such that $\mu_*[C]=d$.

\item

The pointed map $\mu$ has no infinitesimal automorphisms.

\end{itemize}

\begin{rem}

The stability condition in the fourth item is often reformulated in terms of a condition on the special points on each irreducible component of $C$ (cf. \cite[Subsection~0.4]{fultonpan}).

\end{rem}

\todo[inline,color=green]{More detailed conventions in the next section: coefficients in $\mathbb{Z}$, degree conventions (I will repeat doubly that a degree is an effective homology class in $H_2(X)$ -- once here in the intro, later in the next section), parabolic conventions, so that $Q$ corresponds to $\Delta_Q\subset\Delta$, Chern class convention. The point class is well-introduced in the first example of this introduction.}

Basic properties of the moduli space $\overline{M}_{0,3}(X,d)$ can be found in \cite{fultonpan}. It is a consequence of more general results in \cite{fultonpan,pand}, namely of \cite[Theorem~2(i)]{fultonpan} and \cite[Corollary~1]{pand}, that $\overline{M}_{0,3}(X,d)$ is a normal projective irreducible variety.

In this work, we ask the question if it is possible to prove stronger properties of $\overline{M}_{0,3}(X,d)$ than irreducibility. Note that the group $G$ acts on $\overline{M}_{0,3}(X,d)$ by translation. Hence, the following question makes sense.

\begin{question}
\label{quest:quasi}

For which degrees $d$ is the moduli space $\overline{M}_{0,3}(X,d)$ quasi-homogeneous under the action of $G$?

\end{question}

To give an affirmative answer to Question~\ref{quest:quasi} for specific degrees, it turns out that the class of all minimal degrees is a good starting point. Minimal degrees and their properties were studied in \cite{minimaldegrees}. We build on this work in various ways.

\begin{defn}

Let $(QH^*(X),\star)$ be the (small) quantum cohomology ring attached to $X$ as defined in \cite[Section~10]{fultonpan}. For a Weyl group element $w$, we denote by $\sigma_w$ the Schubert cycle attached to $w$. We say that a degree $d$ is a minimal degree if there exist Weyl group elements $u$ and $v$ such that $d$ is a minimal degree in $\sigma_u\star\sigma_v$, i.e. if the power $q^d$ appears with non-zero coefficient in the expression $\sigma_u\star\sigma_v$ and if $d$ is minimal with this property.

\end{defn}

\begin{rem}

Using the theory of curve neighborhoods \cite{curvenbhd2} reviewed in Subsection~\ref{subsec:nbhd}, one can give a quantum cohomology independent description of minimal degrees (cf. Definition~\ref{def:minimaldegrees}). We will mostly choose the approach to minimal degrees through curve neighborhoods because it is more explicit and more computable in terms of combinatorics. In this introduction, we gave the definition of minimal degrees in terms of quantum cohomology because it is short and it shows where the ideas originate. Nevertheless, the rest of the text is basically independent from the context of quantum cohomology and does not assume familiarity with it.

\end{rem}

\begin{notation}

We denote by $\Pi_P$ the set of all minimal degrees. We use this notation to make clear relative to which $P$ the minimal degrees are computed. In particular, the set of all minimal degrees in $H_2(G/B)$ is denoted by $\Pi_B$. The sets $\Pi_P$ and $\Pi_B$ will be often simultaneously in use.

\end{notation}

\begin{ex}

By Corollary~\ref{cor:uniqueness}, Remark~\ref{rem:uniqueness}, there exists a unique minimal degree in the quantum product $\mathrm{pt}\star\mathrm{pt}$ of two point classes. We denote this degree by $d_X\in\Pi_P$. In particular, we have $d_{G/B}\in\Pi_B$. By Corollary~\ref{cor:dsmallerdx}, we have an inclusion $\Pi_P\subseteq\{0\leq d\leq d_X\}$. Note that this inclusion is possibly strict unless $P$ is maximal (cf. \cite[Example~10.14]{minimaldegrees}). For more informations on the minimal degree $d_X$, we refer to \cite{minimaldegrees}.

\todo[inline,color=green]{To explain this, I can refer directly (or doubly) to the stronger Postnikov results.}

\end{ex}

Starting from the class of minimal degrees, we impose one further assumption on a minimal degree, namely Assumption~\ref{ass:main}, to give an affirmative answer to Question~\ref{quest:quasi}. For this introduction, it suffices to know that Assumption~\ref{ass:main} is satisfied for all minimal degrees if the root system $R$ associated to $G$ and $T$ is simply laced or if $P=B$, i.e. if $X$ is a generalized complete flag variety. We are able to obtain the following theorem.

%Starting from the class of minimal degrees, we define a more restrictive subclass of so-called $P$-admissible degrees. For those degrees, we are able to obtain the following theorem.

\begin{thm}[Theorem~\ref{thm:main}]
\label{thm:main_intro}

The moduli space $\overline{M}_{0,3}(X,d)$ is quasi-homogeneous under the action of $G$ for all 
%$P$-admissible degrees $d$.
minimal degrees $d$ which satisfy Assumption~\ref{ass:main}.

\end{thm}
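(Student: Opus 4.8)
The strategy is to exhibit an explicit $G$-equivariant morphism $f\colon G/H \to \overline{M}_{0,3}(X,d)$ from a homogeneous space $G/H$ whose image is dense, which by irreducibility of $\overline{M}_{0,3}(X,d)$ (established in the excerpt via Fulton--Pandharipande and Pandharipande) forces the image to contain a dense open $G$-orbit, giving quasi-homogeneity. Since $\overline{M}_{0,3}(X,d)$ is irreducible, it suffices to produce one stable map $[C,p_1,p_2,p_3,\mu]$ whose $G$-orbit has dimension equal to $\dim \overline{M}_{0,3}(X,d)$; dimension formulas for $\overline{M}_{0,n}(X,d)$ are standard (namely $\dim X + \int_d c_1(TX) + n - 3$), so I would first compute that target dimension, and then construct a candidate map and show its stabilizer in $G$ has the complementary dimension.

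The natural candidate to build is a chain of $T$-stable curves: for a minimal degree $d$, the theory of curve neighborhoods reviewed in Subsection~\ref{subsec:nbhd} and the results of \cite{minimaldegrees} should express $d$ (or rather a lift of $d$ to $H_2(G/B)$) as a sum of coroots arising from a cascade of pairwise orthogonal roots. I would take $\mu$ to be a stable map whose image is the union of the corresponding $T$-fixed rational curves $\overline{B s_\beta B/B}$ (one for each orthogonal root $\beta$ in the cascade), glued into a genus zero nodal curve, with the three marked points placed at suitable $T$-fixed points; the composition with $G/B \to G/P$ and a reparametrization to make the degree come out to $d$ in $H_2(X)$. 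Because the roots in the cascade are orthogonal, the relevant $SL_2$'s commute, and one can compute the stabilizer of this configuration in $G$ by intersecting the stabilizers of the individual curves and marked points; the combinatorics of \emph{generalized cascades of orthogonal roots} advertised in the abstract is exactly what is needed to organize this computation in the presence of a general parabolic $P$ and to verify that Assumption~\ref{ass:main} makes the stabilizer as small as required.

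The main obstacle, I expect, is the stabilizer (equivalently, orbit-dimension) count: one must show that the stabilizer of the constructed pointed map is exactly as large as forced by the dimension of the target and no larger, which amounts to a delicate Lie-theoretic calculation with the unipotent radical of $P$ and the root subgroups attached to the cascade — this is where Assumption~\ref{ass:main} (automatic in the simply laced case and for $P=B$) must enter, presumably to rule out extra automorphisms coming from short/long root length mismatches. A secondary technical point is choosing the nodal degeneration and the placement of the three marked points so that the map is genuinely stable (no component contracted with too few special points) and so that no unexpected automorphisms of the domain curve arise; handling the case where $d$ is not a multiple of a single coroot, so the chain has several components, is where the bookkeeping of the cascade theory does the real work. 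Once the orbit of this one point is shown to be dense open, quasi-homogeneity under $G$ follows immediately.
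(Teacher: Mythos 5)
There is a genuine gap, and it is located exactly where your construction differs from the paper's. You propose to take for $\mu$ a stable map whose domain is a \emph{nodal} genus zero curve, with one component for each orthogonal root $\beta$ of the cascade mapping onto the corresponding $T$-invariant curve, so that the image is the \emph{union} of these curves. Any such map lies in the boundary of $\overline{M}_{0,3}(X,d)$ (the locus of maps with reducible domain), which is a proper closed $G$-invariant subvariety. Consequently the $G$-orbit of your candidate is contained in that boundary and can never be dense in $\overline{M}_{0,3}(X,d)$, no matter how small its stabilizer is. The point is not the bookkeeping of the stabilizer but the choice of the point itself: a generic stable map of a minimal degree $d$ has irreducible domain, so the candidate must be a map from an irreducible $\mathbb{P}^1$.

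The paper's essential idea, which is missing from your proposal, is the \emph{diagonal curve}: since the roots $\alpha$ in the generalized cascade $\mathcal{B}_{R,e}\setminus R_P^+$ (where $e$ is the lifting of $d$) are pairwise strongly orthogonal, the curves $C_\alpha$ generate commuting $\mathrm{SL}_2$-directions and one gets an embedding $\prod_{\alpha}C_\alpha\hookrightarrow X$; composing the diagonal $\mathbb{P}^1\hookrightarrow\prod_\alpha C_\alpha$ with this embedding yields a single irreducible rational curve of degree exactly $d$ (no reparametrization needed, since $\sum_\alpha d(\alpha)=e_P=d$). That interior point is the one whose orbit is shown to be dense. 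Your dimension-counting strategy is otherwise in the right spirit (the paper compares $\mathrm{card}(\mathrm{TD}_{P,d})$, a lower bound for the orbit tangent space computed from the root groups $U_{-\gamma}\subseteq P\cap P^{z_d^P}$ acting on the tangent direction of the diagonal curve, with $\dim\overline{M}_{0,3}(X,d)$ after restricting to the fiber of $\mathrm{ev}_1\times\mathrm{ev}_2$ over $(x(1),x(z_d^P))$), and Assumption~\ref{ass:main} indeed enters through the long/short root dichotomy in Lemma~\ref{lem:prep5}. But without replacing the nodal union by the diagonal curve, the argument cannot get off the ground.
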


%Since the definition of $P$-admissible degrees is a bit technical in detail, we rather give the main examples of $P$-admissible degrees instead of defining them properly in this introduction. For a precise definition, the reader can go to Definition~\ref{def:Padmissible}.

We prove Theorem~\ref{thm:main_intro} by constructing for every minimal degree $d$ an explicit morphism $f_{P,d}$. Then, we show that $f_{P,d}$ has a dense open orbit in $\overline{M}_{0,3}(X,d)$ under the action of $G$ if $d$ satisfies Assumption~\ref{ass:main}. We do so by comparing the dimension of the orbit of $f_{P,d}$ with the dimension of the moduli space. For the convenience of the reader, we sketch the construction of $f_{P,d}$ in this introduction.

\begin{construction}
\label{construction:dia}

Let $d\in\Pi_P$. In Section~\ref{sec:lifting}, we uniquely associate to $d$ a minimal degree $e\in\Pi_B$ -- the so-called lifting of $d$. The precise definition of $e$ is given in Definition~\ref{def:lifting}. For the moment, it suffices to know that the image of $e$ under the natural map $H_2(G/B)\to H_2(X)$ is $d$ (cf. Fact~\ref{fact:lifting}\eqref{item:ePequalsd}). In this way, the lifting helps us to transport the situation from $X$ to $G/B$.

In the next step, we associate to the degree $e\in\Pi_B$ a set of (strongly) orthogonal roots $\mathcal{B}_{R,e}$ -- a so-called generalized cascade of orthogonal roots. We systematically investigate the properties of these generalized cascades of orthogonal roots in Section~\ref{sec:gencascade}. The reader finds the precise definition of $\mathcal{B}_{R,e}$ in Definition~\ref{def:gencascade}. 

\todo[inline,color=green]{I assume familiarity with the notion \enquote{strongly orthogonal roots} although I recall it in the text later, but without referring to it now. It is basic.}

Let $R_P$ be the root system associated to the Levi factor of $P$ and $T$. Let $R_P^+$ be the positive roots of $R_P$ induced by $B$. For a root $\alpha\in\mathcal{B}_{R,e}\setminus R_P^+$, there is a unique irreducible $T$-invariant curve $C_\alpha\subseteq X$ containing the $T$-fixed points associated to $1$ and $s_\alpha$, where $s_\alpha$ is the reflection along $\alpha$ (cf. \cite[Lemma~4.2]{fulton}). Each of the curves $C_\alpha$ is isomorphic to $\mathbb{P}^1$ (cf. \cite[Lemma~4.2]{fulton}).

With these preliminaries, we can now define the morphism $f_{P,d}$ as in Definition~\ref{def:dia}.
%With this notation, we can proceed as in Definition~\ref{def:dia}
We define the morphism $f_{P,d}$ by the assignment 
$$
f_{P,d}\colon\mathbb{P}^1\hookrightarrow\prod_{\alpha\in\mathcal{B}_{R,e}\setminus R_P^+}C_\alpha\hookrightarrow X
$$
where the first morphism is the diagonal embedding of $\mathbb{P}^1$ into $\mathrm{card}(\mathcal{B}_{R,e}\setminus R_P^+)$ isomorphic copies of $\mathbb{P}^1$ and the second morphism is the embedding into $X$ which is well-defined due to the fact that two distinct elements of $\mathcal{B}_{R,e}$ are (strongly) orthogonal (cf. Theorem~\ref{thm:gencascade}\eqref{item:stronglyorthogonal}). Also, by the very same fact, the definition of $f_{P,d}$ is independent of the ordering of the product $\prod_{\alpha\in\mathcal{B}_{R,e}\setminus R_P^+}$. Hence, the morphism $f_{P,d}$ is well-defined. We call the image $f_{P,d}(\mathbb{P}^1)$ the diagonal curve (associated to $d$).

\end{construction}

One can show that $f_{P,d}$ has degree $(f_{P,d})_*[\mathbb{P}^1]=d$ (cf. Fact~\ref{fact:dia}). Hence, the morphism $f_{P,d}$ 
%gives indeed a candidate for an element of $\overline{M}_{0,3}(X,d)$ which has a dense open orbit under the action of $G$.
gives indeed a candidate for an element of $\overline{M}_{0,3}(X,d)$ which is interesting for an answer of Question~\ref{quest:quasi}.

We give an example for a class of degrees for which the diagonal curve turns out to be particularly simple and explicit.

\begin{ex}
\label{ex:dia_pcosmall}

Let $\alpha_1,\ldots,\alpha_k$ be $P$-cosmall roots (cf. Definition~\ref{def:pcosmall}) such that the supports of $\alpha_1,\ldots,\alpha_k$ are pairwise totally disjoint (cf. Notation~\ref{not:support}, Definition~\ref{def:totallydisjoint}). Let $d=\sum_{i=1}^k[C_{\alpha_i}]$, where $C_{\alpha_i}$ is the curve associated to $\alpha_i$ as in Construction~\ref{construction:dia}. Then we have $d\in\Pi_P$ (cf. Theorem~\ref{thm:splitting2}). Let $e$ be the lifting of $d$. By Theorem~\ref{thm:splitting2}, we then have $\mathcal{B}_{R,e}\setminus R_P^+=\{\alpha_1,\ldots,\alpha_k\}$. We see directly from the assumption on $\alpha_i$ that two distinct elements of $\mathcal{B}_{R,e}\setminus R_P^+$ are (strongly) orthogonal (cf. Fact~\ref{fact:stronglyorthogonal}). Hence, in this case, we can verify directly that the diagonal curve (associated to $d$) is well-defined. The morphism $f_{P,d}$ is given by
$$
f_{P,d}\colon\mathbb{P}^1\hookrightarrow\prod_{i=1}^kC_{\alpha_i}\hookrightarrow X
$$
where the two arrows are defined as in Construction~\ref{construction:dia}.

\end{ex}

\begin{rem}

The proof of Theorem~\ref{thm:main_intro} for degrees as in Example~\ref{ex:dia_pcosmall} (i.e. degrees which satisfy Assumption~\ref{ass:main}\eqref{item:ass_pcosmall}) is very simple and can be done directly without any further considerations. 
(As an exercise, the reader may provide the arguments that the morphism described in Example~\ref{ex:dia_pcosmall} has a dense open $G$-orbit.)
The point is not that we get an affirmative answer to Question~\ref{quest:quasi} for degrees which satisfy Assumption~\ref{ass:main}\eqref{item:ass_pcosmall}, but rather that the general construction of $f_{P,d}$ subsumes this case as a trivial sub-case. In this way, we gain confidence that Construction~\ref{construction:dia} is reasonable even in a more general setting.

\end{rem}

\todo[inline,color=green]{{\color{black} I cite \cite{pand} to make the warning go away. It is about sharpening these results, since quasi-homogeneity implies irreducibility. Give a summary, and motivation (like the curve associated to a $P$-cosmall root, and, as a direct generalization, to a bunch of $P$-cosmall roots with pairwise disjoint support). Mention the two appendices which are logically independent from the main purpose, and only build on the discussion in the preliminary section.}}

\todo[inline,color=green]{The reader only interested in the case of generalized complete flag varieties can skim through the sections a--b verifying each time that the results are trivial for $P=B$. The construction of the diagonal curve simplifies a lot (e.g. the step of passing from $d$ to its lifting is superfluous). I will amply explain this in the organization section.} 

\todo[inline,color=green]{Maybe, I need to introduce the curve $C_\alpha$ for $\alpha\in R^+\setminus R_P^+$ directly in the introduction, as well as the $T$-fixed points $x(w)$, because I need those objects to explain and motivate. I probably introduce the evaluation maps already in the introduction, or in the beginning of the main section.

I probably have to introduce $\Pi_P,\Pi_B,d_X,d_{G/B}$ doubly, once in the introduction and once in Section~\ref{sec:preliminaries}. In the history section, I was avoiding the $\Pi$-notation, but I am definitely using $d_X$.

I should check later if every term from the history is explained in the introduction before.}

\subsection*{History}

The problem to prove quasi-homogeneity of the moduli space of stable maps to homogeneous spaces was first posed in \cite[Section~3.2]{dmax}. A preliminary construction of the diagonal curve $f_{P,d}(\mathbb{P}^1)$ for maximal parabolic subgroups $P$ and degrees $0\leq d\leq d_X$ is given there and it is stated (without rigorous proof) that $f_{P,d}$ has a dense open orbit in $\overline{M}_{0,3}(X,d)$ (cf. \cite[Proposition~3.1]{dmax}). Preliminary attempts to give a proof of this theorem for the degree $d=d_X$ were undertaken in \cite[Section~9]{thesis}. In this series of work, the present paper can be seen as a more rigorous and more final attempt to prove quasi-homogeneity for arbitrary parabolic subgroups and arbitrary minimal degrees. 

The very idea of the construction of the diagonal curve $f_{P,d}(\mathbb{P}^1)$ goes back to \cite{dmax}. However, in this work, we make clear in a general context to which set of strongly orthogonal roots, namely $\mathcal{B}_{R,e}$ where $e$ is the lifting of a minimal degree $d$, one has to associate the morphism $f_{P,d}$ to -- a question which is left open in \cite{dmax}.

In particular, the combinatorial aspects of minimal degrees which lead to the essential properties of generalized cascades of orthogonal roots and make our results eventually possible were already well prepared in \cite{minimaldegrees}. The reader can consider this paper, in particular Section~\ref{sec:preliminaries}, \ref{sec:gencascade}, as a follow-up which completes some aspects of the theory developed in \cite{minimaldegrees}.

%It must be clear from the context, or from \cite[Proof of Corollary~10.11]{minimaldegrees} that $\Pi_P=\{0\leq d\leq d_X\}$ for maximal parabolic subgroups $P$.

\subsection*{Organization}

As being said in the history section, a reader only interested in the combinatorial aspects of minimal degrees and generalized cascades of orthogonal roots may read Section~\ref{sec:preliminaries}, \ref{sec:gencascade} independently from the rest of the text. 

Moreover, most (or even all) of the results in Section~\ref{sec:positivity}, \ref{sec:lifting} are only interesting in the relative setting modulo $P$ whenever $P\neq B$. The reader can check for each of those results that the statements turn out to be trivial if $P=B$. 

Section~\ref{sec:lifting} deals with the theory of liftings. The step of passing from a degree $d\in\Pi_P$ to its lifting $e\in\Pi_B$ is superfluous if $P=B$, in the sense that the lifting of $e\in\Pi_B$ is $e$ itself. Therefore, the construction of $f_{B,e}$ for $e\in\Pi_B$ simplifies reasonably. Only the theory of generalized cascades of orthogonal roots is necessary to understand it. 

All in all, a reader only interested in Theorem~\ref{thm:main_intro} for degrees which satisfy Assumption~\ref{ass:main}\eqref{item:g/b}, i.e. in the theorem that $\overline{M}_{0,3}(G/B,e)$ is quasi-homogeneous under the action of $G$ for all minimal degrees $e\in\Pi_B$, can skip Section~\ref{sec:positivity}, \ref{sec:lifting} and go directly from Section~\ref{sec:gencascade} to Section~\ref{sec:dia}. For the case of a generalized complete flag variety $G/B$ and minimal degrees $e\in\Pi_B$, the proof of Theorem~\ref{thm:main_intro} simplifies a lot and relies only on our considerations in Section~\ref{sec:gencascade}.

\subsection*{Acknowledgment}

As being said in the history section, the author has adopted the problem of quasi-homogeneity from Nicolas Perrin and Pierre-Emmanuel Chaput \cite{dmax}. The author is grateful to both. In particular, Nicolas Perrin contributed in various ways to the genesis of this paper: In a preliminary draft version of this paper, Nicolas Perrin pointed out a crucial mistake in Lemma~\ref{lem:act} and helped the author to find the right set of tangent directions (cf. Notation~\ref{not:tangentdirection}). The author also would like to thank Nicolas Ressayre for an invitation to Lyon to give a talk on the subject of this paper. Finally, the author thanks Laura Visu-Petra for miscellaneous discussions. 

\section{Notation and conventions}

In this section, we set up basic notation and summarize well-known terminology concerning the theory of algebraic groups. The notation and the conventions are very similar to those in \cite[Section~1]{minimaldegrees}. We recall everything which is necessary to understand this paper. For more details, the reader may go to \cite{minimaldegrees}.

Let $R$ be the root system associated to $G$ and $T$. Let $R^+$ be the positive roots of $R$ associated to $B$. Let $\Delta$ be the set of simple roots associated to $R^+$. Let
$$
W=N_G(T)/T\text{ and }W_P=N_P(T)/T
$$
be the Weyl group of $G$ and $P$ respectively.
%The parabolic subgroup $P$ uniquely determines and is determined by its set of simple roots
Let $\Delta_P=\{\beta\in\Delta\mid s_\beta\in W_P\}$. 
%The group $W_P$ is a parabolic subgroup of $W$ in the sense that it is generated by the simple reflections $s_\beta$ for $\beta\in\Delta_P$. By the Bruhat decomposition (\cite[28.3, Theorem]{humphreys}), we have $P=BW_PB$. 
We set $R_P=R\cap\mathbb{Z}\Delta_P$ and $R_P^+=R_P\cap R^+$. The positive roots $R^+$ clearly induce a partial order \enquote{$\leq$} on $R$. In turn, this partial order induces via restriction a partial order on $R_P$ which is still denotes by \enquote{$\leq$} and coincides with the partial order induced by $R_P^+$.

\begin{notation}

We denote by $R^-$ the set of negative roots of $R$. In other words, we have $R^-=\{-\alpha\mid\alpha\in R^+\}=\{\alpha\in R\mid\alpha<0\}$. Similarly, we define $R_P^-$ by the requirement that $R_P^-=\{-\gamma\mid\gamma\in R_P^+\}=\{\gamma\in R_P\mid\gamma<0\}$.

\end{notation}

\begin{conv}

From now on, if we speak about a parabolic subgroup, we always mean what is usually called a standard parabolic subgroup (relative to the fixed $B$), i.e. a parabolic subgroup of $G$ containing $B$. In other words, by convention, all parabolic subgroups are standard.
Following this convention, parabolic subgroups of $G$ correspond one to one to subsets of $\Delta$  (cf. \cite[30.1]{humphreys}). In particular, the fixed parabolic subgroup $P$ corresponds to $\Delta_P$ as defined above.
For an arbitrary (standard) parabolic subgroup $Q$, we denote by $\Delta_Q$ the subset of $\Delta$ associated to $Q$ via this correspondence. Vice versa, for a given subset $S$ of $\Delta$, we often uniquely define a parabolic subgroup $Q$ by the requirement $\Delta_Q=S$. 

\end{conv}

Throughout the discussion, we fix a $W$-invariant scalar product $(-,-)$ on $\mathbb{R}\Delta$. This scalar product is unique up to non-zero scalar. Each root $\alpha\in R$ has a coroot $\alpha^\vee$ which is defined by $\alpha^\vee=\frac{2\alpha}{(\alpha,\alpha)}$. All coroots together form the dual root system $R^\vee=\{\alpha^\vee\mid\alpha\in R\}$. The set of simple coroots of $R^\vee$ is given by $\Delta^\vee=\{\beta^\vee\mid\beta\in\Delta\}$. For each $\beta\in\Delta$ we denote by $\omega_\beta\in\mathbb{R}\Delta$ the corresponding fundamental weight. It is defined by the equation $(\omega_\beta,\beta')=\delta_{\beta,\beta'}$ for all $\beta'\in\Delta$.

On the Weyl group, we have a natural length function. For $w\in W$, the length of $w$, denoted by $\ell(w)$, is defined to be the number of simple reflections in a reduced expression of $W$. It is well-known that this number does not depend on the choice of the reduced expression. Each coset $wW_P\in W/W_P$ has a unique minimal and maximal representative, i.e. contains a unique element of minimal and maximal length. 
%
%We denote by $W^P$ the set of all minimal representatives of cosets in $W/W_P$. 
%
The length function carries over from $W$ to $W/W_P$. The length of a coset $wW_P\in W/W_P$, denoted by $\ell(wW_P)$, is defined to be the length of the minimal representative in $wW_P$.

\begin{notation}

For each element $w\in W$, we denote by $I(w)=\{\alpha\in R^+\mid w(\alpha)<0\}$ the inversion set of $w$. With this notation, we have the identities
$
\ell(w)=\mathrm{card}(I(w))\text{ and }\ell(wW_P)=\mathrm{card}(I(w)\setminus R_P^+)
$
for all $w\in W$. A proof of these equalities can be found in \cite[5.6, Proposition~(b)]{humphreys3}.

\end{notation}

\begin{notation}

We denote by $w_o$ the longest element of $W$, i.e. the unique element of $W$ with maximal length. Similarly, we denote by $w_P$ the longest element of $W_P$, i.e. the unique element of $W_P$ with maximal length. Note that $w_o$ and $w_P$ are both involutions. 
%We denote by $w_X$ the minimal representative in $w_oW_P$. We have the relation $w_ow_P=w_X$ or equivalent $w_o=w_Xw_P$. For each $w\in W$, we write $w^*=w_ow$ for short. 
%We call $w^*$ the Poincar\'e dual of $w$.

\end{notation}

On the Weyl group, we have a natural partial order \enquote{$\preceq$} -- the so-called Bruhat. This partial order can be defined in terms of the Bruhat graph as in \cite[5.9]{humphreys3}. It has an equivalent characterization in terms of subexpressions as described in \cite[5.10]{humphreys3}. The geometric meaning of the Bruhat order is given by inclusions of Schubert varieties in $G/B$. We explain this geometric meaning in more detail once we have recalled the notion of Schubert varieties in general (cf. Remark~\ref{rem:geombruhat}).

\begin{conv}

All homology and cohomology groups in this paper are taken with integral coefficients. By convention, we write $H_*(X)=H_*(X,\mathbb{Z})$ and $H^*(X)=H^*(X,\mathbb{Z})$. For a closed irreducible subvariety $Z\subseteq X$, we denote by $[Z]\in H^{2\mathrm{codim}(Z)}(X)$ the cohomology class of $Z$. By abuse of notation, we also denote with the same symbol $[Z]\in H_{2\mathrm{dim}(Z)}(X)$ the homology class of $Z$. Both definitions are Poincar\'e dual to each other. 
%For a cohomology class $\sigma\in H^*(X)$, we denote by $\sigma^*\in H^*(X)$ the cohomology class which is dual to $\sigma$ with respect to the intersection pairing.

\end{conv}

Let $B^-=w_oBw_o$ be the Borel subgroup of $G$ opposite to $B$. Let $w\in W$. 
%We denote by $\Omega_w=BwP/P$ the Schubert cell associated to $w$. 
We denote by $X_w=\overline{BwP/P}$ the Schubert variety associated to $w$. We denote by $Y_w=\overline{B^-wP/P}$ the opposite Schubert variety associated to $w$. Note that $X_w$ and $Y_w$ depend only on $wW_P$. %, so that we can equally well parameterize those varieties by elements of $W/W_P$. 
We have the following equality for the dimension and codimension of Schubert and opposite Schubert varieties:
\begin{equation}
\label{eq:dimschubert}
\mathrm{dim}(X_w)=\mathrm{codim}(Y_w)=\ell(wW_P)\,.
\end{equation}

\begin{rem}
\label{rem:geombruhat}

Let $w\in W$. If we denote by $(G/B)_w$ the Schubert variety in $G/B$ associated to $w$, the geometric meaning of the Bruhat order is given by the equivalence:
$$
u\preceq v\Longleftrightarrow(G/B)_u\subseteq(G/B)_v\text{ where }u,v\in W\,.
$$

\end{rem}

Let $w\in W$. Using $X_w$ and $Y_w$ we can define Schubert cycles
$$
\sigma(w)=[X_w]\in H_{2\ell(wW_P)}(X)%\cong H^{2(\mathrm{dim}(X)-\ell(wW_P))}(X)
\text{ and }\sigma_w=[Y_w]\in H^{2\ell(wW_P)}(X)\,.
$$
From the Bruhat decomposition of $X$ (cf. \cite[28.3, Theorem]{humphreys})
%$
%X=\coprod_{w\in W^P}\Omega_w
%$
it follows easily that the cohomology of $X$ decomposes as direct sums
\begin{equation}
\label{eq:cohdecomp}
H^*(X)=\bigoplus_{w}\mathbb{Z}\sigma(w)=\bigoplus_{w}\mathbb{Z}\sigma_w
\end{equation}
where each of the direct sums in the equation ranges over all minimal representatives $w$ of the cosets in $W/W_P$. Poincar\'e duality transforms one basis of Schubert cycles into the other basis of Schubert cycles and vice versa.

Using Equation~\eqref{eq:dimschubert} and \eqref{eq:cohdecomp}, we see that we have the following decompositions
\begin{equation}
\label{eq:degdecomp}
H_2(X)=\bigoplus_{\beta\in\Delta\setminus\Delta_P}\mathbb{Z}\sigma(s_\beta)\text{ and }H^2(X)=\bigoplus_{\beta\in\Delta\setminus\Delta_P}\mathbb{Z}\sigma_{s_\beta}\,.
\end{equation}
In this work, we will be very much concerned with elements of $H_2(X)$ and $H^2(X)$. Therefore, it is useful to use identifications as in \cite[Section~2]{curvenbhd2}. For a simple root $\beta\in\Delta\setminus\Delta_P$, we will always identify the Schubert cycles $\sigma(s_\beta)$ with $\beta^\vee+\mathbb{Z}\Delta_P^\vee\in\mathbb{Z}\Delta^\vee/\mathbb{Z}\Delta_P^\vee$ and $\sigma_{s_\beta}$ with the fundamental weight $\omega_\beta$. Using these identification, we will simply write Equation~\eqref{eq:degdecomp} as
\begin{equation}
\label{eq:identification}
H_2(X)=\mathbb{Z}\Delta^\vee/\mathbb{Z}\Delta_P^\vee\text{ and }H^2(X)=\mathbb{Z}\{\omega_\beta\mid\beta\in\Delta\setminus\Delta_P\}\,.
\end{equation}
Under these identifications, the Poincar\'e pairing $H^2(X)\otimes H_2(X)\to\mathbb{Z}$ simply becomes the restriction of the $W$-invariant scalar product $(-,-)$ on $\mathbb{R}\Delta$. Note that $H_2(X)$ and $H^2(X)$ are naturally endowed with a partial order \enquote{$\leq$} which is given by comparing all the coefficients of the $\mathbb{Z}$-bases pointed out in Equation~\eqref{eq:degdecomp}.

\begin{conv}

If we speak about a degree, without further specification, then we always mean an effective class in $H_2(X)$. If we speak about a degree in $H_2(G/B)$, we mean an effective class in $H_2(G/B)$. In the later case, where the lattice might be different from $H_2(X)$, we explicitly mention it in our terminology. We reserve the term degree without specification for degrees in $H_2(X)$.

\end{conv}

\begin{notation}
\label{not:dalpha}

Let $\alpha\in R^+$. One degree associated to $\alpha$ will be ubiquitous in our discussion. By definition, the degree $d(\alpha)$ is given by the equation
$$
d(\alpha)=\alpha^\vee+\mathbb{Z}\Delta_P^\vee\in H_2(X)\,.
$$
Note that the degree $d(\alpha)$ depends not only on $\alpha$ but also on $P$ although $P$ is not explicitly mentioned in the notation $d(\alpha)$. No confusion will arise from this sloppiness since we refer always to one and the same parabolic subgroup $P$ which is fixed throughout the discussion. 
%We will never use the notation $d(\alpha)$ with respect to a different parabolic subgroup $Q$ but rather write the full expression $\alpha^\vee+\mathbb{Z}\Delta_Q^\vee$ if we need to refer to this degree depending on $Q$.
The geometric meaning of the degree $d(\alpha)$ will become clear later in the context of irreducible $T$-invariant curves (cf. Notation~\ref{not:Calpha}).

\end{notation}

\begin{notation}

We denote by $c_1(X)$ the first Chern class of the tangent bundle on $X$. The Chern class $c_1(X)$ is an element of $H^2(X)$ which will be often in use in this paper. For explicit computations, it is useful to have a description of $c_1(X)$ in terms of the root system. According to \cite[Lemma~3.5]{fulton} and the identifications made in Equation~\eqref{eq:identification}, we have $c_1(X)=\sum_{\gamma\in R^+\setminus R_P^+}\gamma$. In particular, we have $c_1(G/B)=\sum_{\gamma\in R^+}\gamma$.

\end{notation}

\section{Preliminaries}
\label{sec:preliminaries}

In this section, we set up the preliminaries which are used throughout the rest of the text. In particular, this section completes some aspects of the theory developed in \cite{minimaldegrees}. These aspects concern the properties of degrees with pairwise totally disjoint extended support (cf. Subsection~\ref{subsec:supports}). This situation was of no relevance for the analysis in \cite{minimaldegrees} but becomes increasingly important for the purpose of this paper.

It should be clear that many notions and ideas we use originally go back to \cite{curvenbhd2}. We rely in various ways on the theory of curve neighborhoods developed in \cite{curvenbhd2}. We review everything we need from this theory in Subsection~\ref{subsec:nbhd}.

%{\color{black} I want to make clear, once and for all, that many notions and ideas go back to \cite{curvenbhd2}.}

\subsection{Curve neighborhoods}
\label{subsec:nbhd}

In this subsection, we review everything we need from the theory of curve neighborhood due to \cite{curvenbhd2}. For examples, proofs and more informations, we encourage the reader to look at \cite{curvenbhd2}.

%{\color{black} For examples, we refer to \cite{curvenbhd2} - once and for all.}

\begin{defn}[{\cite[Section~4.2]{curvenbhd2}}]

Let $d$ be a degree. The maximal elements of the set $\{\alpha\in R^+\setminus R_P^+\mid d(\alpha)\leq d\}$ are called maximal roots of $d$. A sequence of roots $(\alpha_1,\ldots,\alpha_r)$ is called a greedy decomposition of $d$ if $\alpha_1$ is a maximal root of $d$ and $(\alpha_2,\ldots,\alpha_r)$ is a greedy decomposition of $d-d(\alpha_1)$. The empty sequence is the unique greedy decomposition of $0$.

\end{defn}

\begin{conv}

Let $d$ be a degree. Then the greedy decomposition of $d$ is unique up to reordering. This was proved in \cite[Section~4]{curvenbhd2}. This result plays an important role for the question of well-definedness of other objects described in terms of greedy decompositions. We will use it several times in this work. Although it is a non-trivial result due to \cite{curvenbhd2}, we will often use it without explicitly referring to the authors. The last sentence concerns in particular Section~\ref{sec:gencascade}.

\end{conv}

\begin{defn}[{\cite[Section~4.2]{curvenbhd2}}]
\label{def:pcosmall}

A root $\alpha\in R^+\setminus R_P^+$ is called $P$-cosmall if $\alpha$ is a maximal root of $d(\alpha)$.

\end{defn}

\begin{thm}
\label{thm:pcosmall}

Let $\alpha$ be a $P$-cosmall root. Then we have $(\alpha,\gamma)=0$ for all $\gamma\in R_P^+\setminus I(s_\alpha)$.

\end{thm}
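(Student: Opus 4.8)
The plan is to unpack the definition of $P$-cosmall in terms of the combinatorics of $d(\alpha)$ and the inversion set $I(s_\alpha)$, and then to argue by contradiction: if some $\gamma \in R_P^+ \setminus I(s_\alpha)$ had $(\alpha,\gamma) \neq 0$, I would produce a root $\alpha'$ with $d(\alpha') \leq d(\alpha)$ that strictly dominates $\alpha$ in the partial order on $R^+\setminus R_P^+$, contradicting maximality. The natural candidate is $\alpha' = s_\gamma(\alpha)$ (or possibly $\alpha - (\alpha,\gamma^\vee)\gamma$ up to sign), since reflecting $\alpha$ in a root of the Levi part $R_P$ should not change the image of the coroot in $\mathbb{Z}\Delta^\vee/\mathbb{Z}\Delta_P^\vee$, hence should preserve the degree $d(\alpha)$.

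First I would recall the identity $d(\alpha) = \alpha^\vee + \mathbb{Z}\Delta_P^\vee$ and observe that for any $\gamma \in R_P^+$ one has $s_\gamma(\alpha)^\vee = \alpha^\vee - (\alpha,\gamma^\vee)\gamma^\vee$, and since $\gamma^\vee \in \mathbb{Z}\Delta_P^\vee$ this shows $d(s_\gamma(\alpha)) = d(\alpha)$ whenever $s_\gamma(\alpha) \in R^+ \setminus R_P^+$. So the bulk of the work is: (i) show that if $(\alpha,\gamma) \neq 0$ and $\gamma \notin I(s_\alpha)$ then $s_\gamma(\alpha)$ (suitably chosen among $\pm s_\gamma(\alpha)$) still lies in $R^+ \setminus R_P^+$ and is $\geq \alpha$ in the partial order, with at least one of these inequalities strict; (ii) conclude that $\alpha$ was not a maximal root of $d(\alpha)$, contradiction. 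Step (i) is where I expect the real content to lie: one needs to track positivity coefficient-by-coefficient. The condition $\gamma \notin I(s_\alpha)$ means $s_\alpha(\gamma) > 0$, i.e. $\gamma - (\gamma,\alpha^\vee)\alpha > 0$; I would like to use this together with $\gamma \in R_P^+$ (so $\gamma$ has zero coefficient on every simple root outside $\Delta_P$) to pin down the sign of $(\alpha,\gamma)$ and hence to control $s_\gamma(\alpha) = \alpha - (\alpha,\gamma^\vee)\gamma$: since $\gamma$ is supported on $\Delta_P$, adding or subtracting a multiple of $\gamma$ does not touch the coefficients of $\alpha$ on $\Delta \setminus \Delta_P$, so $s_\gamma(\alpha)$ is still a positive root not in $R_P^+$ as soon as it is a positive root at all, and its coefficients on $\Delta\setminus\Delta_P$ agree with those of $\alpha$ while those on $\Delta_P$ change by $-(\alpha,\gamma^\vee)$ times those of $\gamma$.

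The main obstacle, then, is to show that the sign works out so that $s_\gamma(\alpha) \geq \alpha$ rather than $\leq \alpha$, and that this inequality is genuinely strict. I expect that $\gamma \notin I(s_\alpha)$ forces $(\alpha,\gamma^\vee) \leq 0$ (heuristically: $s_\alpha$ sends the positive root $\gamma$ to a positive root, and $\gamma$ is "short" compared to $\alpha$ in the relevant sense because $\alpha \notin R_P^+$), so that $-(\alpha,\gamma^\vee)\gamma$ is a nonnegative combination of $\Delta_P$, giving $s_\gamma(\alpha) = \alpha + (\text{nonneg. combo of }\Delta_P) \geq \alpha$; and strictness follows from $(\alpha,\gamma) \neq 0$, which forces $(\alpha,\gamma^\vee) \neq 0$, hence the added term is a nonzero nonnegative combination. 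I would double-check the edge case where $(\alpha,\gamma^\vee) > 0$ is impossible by invoking that $s_\alpha(\gamma) = \gamma - (\gamma,\alpha^\vee)\alpha$ must be positive and examining its coefficient on a simple root in the support of $\alpha$ lying outside $\Delta_P$ — that coefficient is $-(\gamma,\alpha^\vee)$ times a positive number, forcing $(\gamma,\alpha^\vee) \leq 0$, and $(\gamma,\alpha^\vee)$ and $(\alpha,\gamma^\vee)$ have the same sign. Once step (i) is in hand, the contradiction with $P$-cosmallness is immediate, so I anticipate the write-up to be short modulo this positivity bookkeeping.
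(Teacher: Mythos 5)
Your proof is correct, but it takes a genuinely different route from the paper's. You argue directly from the definition of $P$-cosmallness: assuming $(\alpha,\gamma)\neq 0$ for some $\gamma\in R_P^+\setminus I(s_\alpha)$, you first deduce $(\gamma,\alpha^\vee)<0$ by inspecting the coefficient of $s_\alpha(\gamma)=\gamma-(\gamma,\alpha^\vee)\alpha$ on a simple root of $\Delta(\alpha)\setminus\Delta_P$ (this step is sound: positivity of $s_\alpha(\gamma)$ forces $(\gamma,\alpha^\vee)\leq 0$, and non-orthogonality then forces strict negativity), and then exhibit $s_\gamma(\alpha)=\alpha+|(\alpha,\gamma^\vee)|\gamma$ as a root of $R^+\setminus R_P^+$ that is strictly larger than $\alpha$ and has the same degree $d(s_\gamma(\alpha))=d(\alpha)$, contradicting maximality. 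The paper instead invokes the Buch--Mihalcea characterization of $P$-cosmall roots (that $s_\alpha$ maps $R_P^+$ into $R^-\cup R_P$) to show that $s_\alpha$ permutes the set $R_P^+\setminus I(s_\alpha)$, concludes by $W$-invariance that $\sum_{\gamma\in R_P^+\setminus I(s_\alpha)}(\gamma,\alpha^\vee)$ equals its own negative and hence vanishes, and then combines this with the same sign constraint $(\gamma,\alpha^\vee)\leq 0$ that you derive. Your argument is more self-contained, using only the definition of $P$-cosmall as a maximal root of $d(\alpha)$; the paper's is a global averaging argument that leans on the external characterization. One small slip: the correct coroot identity is $s_\gamma(\alpha)^\vee=\alpha^\vee-(\gamma,\alpha^\vee)\gamma^\vee$, not $\alpha^\vee-(\alpha,\gamma^\vee)\gamma^\vee$; this does not affect your conclusion, since in either case the correction term is an integer multiple of $\gamma^\vee\in\mathbb{Z}\Delta_P^\vee$, so $d(s_\gamma(\alpha))=d(\alpha)$ still holds.
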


\begin{proof}

We first prove a seemingly weaker statement.

\begin{proof}[Claim: We have $s_\alpha(R_P^+\setminus I(s_\alpha))=R_P^+\setminus I(s_\alpha)$]\renewcommand{\qedsymbol}{$\triangle$}

Indeed, by \cite[Theorem~6.1(c)]{curvenbhd2}, we know that $s_\alpha$ maps $R_P^+$ to the complement of $R^+\setminus R_P^+$ in $R$, i.e. to $R^-\cup R_P$. On the other hand $R^+\setminus I(s_\alpha)$ is mapped to $R^+$ under $s_\alpha$. Altogether, it follows that $s_\alpha(R_P^+\setminus I(s_\alpha))\subseteq(R^-\cup R_P)\cap R^+=R_P^+$. If a root $\gamma$ satisfies $s_\alpha(\gamma)\in I(s_\alpha)$, then we necessarily have $\gamma\in R^-$ (by applying $s_\alpha$ to $s_\alpha(\gamma)$). Thus, we even have $s_\alpha(R_P^+\setminus I(s_\alpha))\subseteq R_P^+\setminus I(s_\alpha)$. This suffices to prove the claim.
\end{proof}

\begin{proof}[Claim: We have $\sum_{\gamma\in R_P^+\setminus I(s_\alpha)}(\gamma,\alpha^\vee)=0$]\renewcommand{\qedsymbol}{$\triangle$}

Indeed, since $(-,-)$ is $W$-invariant, we know that $(\gamma,\alpha^\vee)=-(s_\alpha(\gamma),\alpha^\vee)$. The previous claim therefore yields that the sum the claim speaks about is equal to its negative. Hence, we find the desired vanishing.\footnote{It is actually easy to work out a different proof of this claim without using the previous claim. One can simply apply Lemma~\ref{lem:prep4} to the degree $d=d(\alpha)$}
\end{proof}

Since $\alpha\in R^+\setminus R_P^+$ by definition, all $\gamma\in R_P^+\setminus I(s_\alpha)$ satisfy $(\gamma,\alpha^\vee)\leq 0$ (otherwise we have $s_\alpha(\gamma)<0$). The previous claim shows that none of these inequalities can be strict. In other words, the theorem follows.
\end{proof}

\begin{defn}[{\cite[Section~3]{curvenbhd2}}]

Let $u,v\in W$ and $\beta\in\Delta$. Then we define the Hecke product of $u$ and $s_\beta$ by
$$
u\cdot s_\beta=\begin{cases}
us_\beta & \text{if }us_\beta\succ u\\
u & \text{if }us_\beta\prec u\,.
\end{cases}
$$
Let $v=s_{\beta_1}\cdots s_{\beta_l}$ be any reduced expression for $v$. Then we define the Hecke product of $u$ and $v$ by
$$
u\cdot v=u\cdot s_{\beta_1}\cdot\ldots\cdot s_{\beta_l}\,.
$$
That the expression $u\cdot v$ is well-defined (independent of the choice of the reduced expression for $v$) is proved in detail in \cite[Section~3]{curvenbhd2}.

\end{defn}

The reader finds a list of the most important properties of the Hecke product in \cite[Proposition~3.1]{curvenbhd2}. Among other properties, this list contains the associativity of the Hecke product (\cite[Proposition~3.1(a)]{minimaldegrees}).
%We will use these properties only sporadically. 
Mostly, we need the Hecke product to define for any degree $d$ a Weyl group element $z_d^P$ which captures the geometric properties of $d$.

\begin{defn}[{\cite[Section~4.2]{curvenbhd2}}]
\label{def:zdp}

Let $d$ be a degree. Let $(\alpha_1,\ldots,\alpha_r)$ be a greedy decomposition of $d$. Then we define an element $z_d^P\in W$ by the following equation
$$
z_d^Pw_P=s_{\alpha_1}\cdot\ldots\cdot s_{\alpha_r}\cdot w_P\,.
$$
%By Proposition~\ref{prop:hecke}(\ref{item:wpodot})
It is easy to see that $z_d^P$ is the minimal representative in $z_d^PW_P$ (cf. \cite[Proposition~2.4(8)]{minimaldegrees}). Well-definedness questions of the element $z_d^P$ (independence of the choice of the greedy decomposition of $d$) are discussed in detail in \cite[Section~4, in particular Definition~4.6]{curvenbhd2}.

\end{defn}

\begin{rem}

Most or all intuitions concerning the element $z_d^P$ for a degree $d$ come from its geometric meaning which is illuminated in \cite[Theorem~5.1]{curvenbhd2}. This theorem says among other things that the degree $d$ curve neighborhood of the zero dimensional Schubert variety $X_1$ is itself a Schubert variety parametrized by the element $z_d^P$. We will use \cite[Theorem~5.1]{curvenbhd2} in this weak form precisely once in this paper, namely in the proof of Theorem~\ref{thm:main}.

\end{rem}

\subsection{Supports}
\label{subsec:supports}

This subsection is about various kinds of notions of supports. Most notably, we recapitulate the notion of the extended support of a degree which was first introduced in \cite[Subsection~3.1]{minimaldegrees}. It turns out that the extended support is the right way to extend the naive support with the proper amount of simple roots in $\Delta_P$ to get a useful notion. In this way, we can formulate a disjointness assumption (namely that the extended supports of two degrees are totally disjoint\footnote{We will recall the meaning of \enquote{totally disjoint} in Definition~\ref{def:totallydisjoint}}) which enables us to prove certain addition theorems in Subsection~\ref{subsec:addition} (see for example Theorem~\ref{thm:additionminimaldegrees}).  

\todo[inline,color=green]{Mention that for a connected degree $d$, we have $\widetilde{\Delta}(d)=\Delta(\alpha(d))$ -- maybe as a fact together with trivial properties from February 3. Maybe I don't give it a number because I don't intend to refer to it. I refer directly to \cite{minimaldegrees}.}

\todo[inline,color=green]{For supports and Hecke products, I want to refer to the literature where the reader finds extensive computation / calculation rules\ldots For supports, I already did this. For Hecke products, I will do this in the section on curve neighborhoods. Also, more information on local notions in \cite{minimaldegrees} -- say this in the relevant subsection.}

\begin{notation}
\label{not:support}

Let $\alpha\in R^+$. Then we denote by $\Delta(\alpha)$  the support of $\alpha$, i.e. the set of simple roots $\beta\in\Delta$ such that $\beta\leq\alpha$. 

\end{notation}

\begin{notation}

Let $\alpha\in R^+$. Then we denote by $\Delta_\alpha^\circ$ the set of all simple roots which are orthogonal to $\alpha$; in formulas $\Delta_\alpha^\circ=\{\beta\in\Delta\mid(\alpha,\beta)=0\}$.

\end{notation}

\begin{defn}[{\cite[Subsection~3.1]{minimaldegrees}}]

Let $d$ be a degree. We define the naive support of $d$ to be the set $\Delta(d)$ of all simple roots $\beta\in\Delta\setminus\Delta_P$ such that $(\omega_\beta,d)>0$. We define the extended support of $d$ to be the set $\widetilde{\Delta}$ defined as the union $\widetilde{\Delta}(d)=\bigcup_{i=1}^r\Delta(\alpha_i)$ where $(\alpha_1,\ldots,\alpha_r)$ is a greedy decomposition of $d$. The extended support is clearly well-defined since the greedy decomposition is unique up to reordering. 

\end{defn}

\begin{rem}

Let $d$ be a degree. We have the following trivial relations between the naive and the extended support:
$$
\Delta(d)=\widetilde{\Delta}(d)\setminus\Delta_P\subseteq\widetilde{\Delta}(d)\subseteq\Delta(d)\cup\Delta_P=\widetilde{\Delta}(d)\cup\Delta_P\,.
$$
In particular, for a degree $e\in H_2(G/B)$, we have $\widetilde{\Delta}(e)=\Delta(e)$.

\end{rem}

\begin{defn}[{\cite[Definition~3.15]{minimaldegrees}}]

We say that a degree $d$ is a connected degree if $\widetilde\Delta(d)$ is a connected subset of the Dynkin diagram. We say that a degree $d$ is a disconnected degree if $\widetilde\Delta(d)$ is a disconnected subset of the Dynkin diagram.

\end{defn}

\begin{notation}
\label{not:alphad}

Let $d\neq 0$ be a connected degree. Then the first entry of a greedy decomposition of $d$ is uniquely determined by $d$ -- does not depend on the choice of the greedy decomposition of $d$ (cf. \cite[Proposition~3.16]{minimaldegrees}). We denote by $\alpha(d)$ this unique first entry of a greedy decomposition of $d$. With this notation, we have $\widetilde{\Delta}(d)=\Delta(\alpha(d))$ by \cite[loc. cit.]{minimaldegrees}.

\end{notation}

\begin{conv}

As we consider the empty set as a connected subset of the Dynkin diagram, the degree $d=0$ is considered as a connected degree with empty greedy decomposition. In particular, there does not exist a unique first entry $\alpha(d)$ of the degree $d=0$. We mentioned this exception once in Notation~\ref{not:alphad}. To avoid trivial considerations, we will from now on tacitly assume that a connected degree $d$ is nonzero whenever we speak about $\alpha(d)$. The reader can convince himself that the case $d=0$ can be treated in a trivial way in all proofs we do: All statements about the connected degree $d=0$ are obvious right in the beginning, although we will not explicitly say this each time.

\end{conv}

\begin{defn}

Let $w\in W$. We define the support of $w$ to be the set $\Delta(w)$ of all simple roots $\beta\in\Delta$ such that $s_\beta\preceq w$.

\end{defn}

% The definition of the support of a Weyl group element somehow already appears in \cite{humphreys3}, but I cannot find the exact reference now, maybe because I have an online version of the book available different from the printed version I used previously. 

The reader finds a list of the most important properties of the support of a Weyl group element in \cite[Proposition~3.17]{minimaldegrees}. For later use, we add one further property which is subject to Lemma~\ref{lem:support}.

\begin{lem}
\label{lem:support}

Let $u,v\in W$ such that $\Delta(u)$ and $\Delta(v)$ are disjoint. Then we have $u\cdot v=uv$.

\end{lem}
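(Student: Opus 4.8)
The plan is to prove the statement by induction on $\ell(v)$, using the recursive definition of the Hecke product via a reduced expression of $v$. The key observation is that if $\Delta(u)$ and $\Delta(v)$ are disjoint, then for any simple root $\beta$ appearing in a reduced expression of $v$ we have $\beta \in \Delta(v)$, hence $\beta \notin \Delta(u)$, which should force $us_\beta \succ u$ so that the Hecke product $u \cdot s_\beta$ equals the ordinary product $us_\beta$.

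More precisely, first I would record the base case: if $\ell(v) = 0$ then $v = 1$ and $u \cdot v = u = uv$ trivially. For the inductive step, write $v = v' s_\beta$ with $\ell(v') = \ell(v) - 1$ and $\beta \in \Delta$; then $\beta \in \Delta(v)$, and since every simple root in the support of $v'$ is also in the support of $v$, we get $\Delta(v') \subseteq \Delta(v)$, so $\Delta(u)$ is disjoint from $\Delta(v')$ as well. By the inductive hypothesis, $u \cdot v' = uv'$. By associativity of the Hecke product (quoted from \cite[Proposition~3.1]{curvenbhd2}), $u \cdot v = (u \cdot v') \cdot s_\beta = (uv') \cdot s_\beta$, so it remains to show $(uv') \cdot s_\beta = uv's_\beta = uv$, i.e. that $(uv')s_\beta \succ uv'$.

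The crux — and the step I expect to be the main obstacle — is showing $(uv')s_\beta \succ uv'$, equivalently that $\ell(uv's_\beta) > \ell(uv')$, equivalently that $(uv')(\beta) > 0$. Here I would use that $u$ and $v'$ act on "disjoint" parts of the root system: since $\Delta(u) \cap \Delta(v) = \varnothing$ and $\beta \in \Delta(v)$, the reflections $s_\beta$ and all reflections $s_\gamma$ with $\gamma \preceq u$ commute (orthogonality of roots with disjoint, non-adjacent supports — this needs the standard fact that if $\Delta(w)$ and $\{\beta\}$ are disjoint and moreover no root of $\Delta(w)$ is adjacent to $\beta$ in the Dynkin diagram, then $w(\beta) = \beta$; one must check disjointness of supports of minimal degrees actually gives non-adjacency, or invoke \cite[Proposition~3.17]{minimaldegrees}). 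Then $uv'(\beta) = u(v'(\beta))$. Since $v = v's_\beta$ is a reduced factorization, $v'(\beta) > 0$; write $v'(\beta) = \sum c_\gamma \gamma$ as a nonnegative combination of simple roots, all of which lie in $\Delta(v)$ (as $v'(\beta) \leq$ something supported in $\Delta(v)$, or more directly $v'(\beta)$ is a positive root with support contained in $\Delta(v)$ since $v' $ and $\beta$ are supported there). As $u$ fixes every simple root in $\Delta(v)$ (disjointness plus non-adjacency), $u$ fixes $v'(\beta)$, so $uv'(\beta) = v'(\beta) > 0$, giving the claim and completing the induction.

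I would flag that the delicate point is legitimately passing from "disjoint supports" to "the relevant roots are fixed by the other element": disjoint subsets of a Dynkin diagram can still be adjacent, so orthogonality is not automatic. I expect the paper either to have already built non-adjacency into whatever disjointness hypothesis is in force, or to cite a structural property of supports of the elements under consideration (such as those arising from greedy decompositions, via \cite[Proposition~3.17]{minimaldegrees}). If one only knows set-theoretic disjointness of $\Delta(u)$ and $\Delta(v)$ without non-adjacency, the cleanest fix is to invoke that $W$ decomposes along connected components of $\Delta(u) \cup \Delta(v)$ only when those components are actually separated; otherwise the statement as literally phrased would need the stronger hypothesis, and I would double-check the intended reading of "disjoint" here against the cited list of properties of $\Delta(w)$.
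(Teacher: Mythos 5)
Your inductive framework is sound and correctly isolates the crux, namely showing $uv'(\beta)>0$ where $v=v's_\beta$ is reduced. But the way you resolve that crux contains a genuine error: you try to prove the stronger statement that $u$ \emph{fixes} $v'(\beta)$, which indeed requires the supports to be non-adjacent in the Dynkin diagram, and that is simply not available. Disjoint supports can be adjacent: in type $\mathsf{A}_2$ with $u=s_{\beta_1}$, $v=s_{\beta_2}$ one has $\Delta(u)\cap\Delta(v)=\emptyset$ but $u(\beta_2)=\beta_1+\beta_2\neq\beta_2$. Your closing suggestion that the lemma "as literally phrased would need the stronger hypothesis" is therefore wrong — the lemma is true with mere set-theoretic disjointness, and the conclusion of your example above ($u(\beta_2)=\beta_1+\beta_2>0$, and indeed $u\cdot v=uv$) already shows why: you never needed $u$ to fix $v'(\beta)$, only to keep it positive.

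The correct way to finish your step is the observation the paper's proof is built on. Let $Q,Q'$ be the parabolic subgroups with $\Delta_Q=\Delta(u)$, $\Delta_{Q'}=\Delta(v)$. Then $u\in W_Q$, so by \cite[5.5, Theorem~(b)]{humphreys3} every root in $I(u)$ lies in $R_Q^+$, i.e. is supported on $\Delta(u)$. On the other hand $v'(\beta)$ is a positive root of the subsystem $R_{Q'}$, hence supported on $\Delta(v)$, hence not in $R_Q^+$ and in particular not in $I(u)$; thus $u(v'(\beta))>0$, which is all you need. With that substitution your induction goes through. For comparison, the paper avoids the induction entirely: it shows $I(u)\subseteq R_Q^+$ and $I(v^{-1})\subseteq R_{Q'}^+$ are disjoint and then invokes the length-additivity criterion \cite[Proposition~3.2: $(c)\Leftrightarrow(e)$]{curvenbhd2} in one stroke. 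Both routes rest on the same inversion-set fact; yours just unwinds it one simple reflection at a time.
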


\begin{proof}

Let $Q$ be the parabolic subgroup of $G$ such that $\Delta_Q=\Delta(u)$. Let $Q'$ be the parabolic subgroup of $G$ such that $\Delta_{Q'}=\Delta(v)$. By definition, we clearly have $u\in W_Q$ and $v\in W_{Q'}$. We also have $v^{-1}\in W_{Q'}$ since $\Delta(v^{-1})=\Delta(v)$. By \cite[5.5, Theorem~(b)]{humphreys3}, it follows that $I(u)\subseteq R_Q^+$ and $I(v^{-1})\subseteq R_{Q'}^+$. The assumption $\Delta(u)\cap\Delta(v)=\emptyset$ implies that $R_{Q}^+\cap R_{Q'}^+=\emptyset$ and consequently that $I(u)\cap I(v^{-1})=\emptyset$. The statement now follows from \cite[Proposition~3.2: $(c)\Leftrightarrow(e)$]{curvenbhd2}.
\end{proof}

\subsection{Local notions}

Local notions were studied intensively in \cite[Section~6]{minimaldegrees}. In particular, it was shown in \cite[Theorem~6.10]{minimaldegrees} that minimal degrees in quantum products behave well under \enquote{localization}. In this work, we will only speak about locally high roots and about the root subsystem $R(\varphi)$ of $R$ associated to a positive root $\varphi$. We introduce these notions in this subsection.

\begin{notation}

Let $\varphi$ be a positive root. We denote by $R(\varphi)$ the root subsystem of $R$ generated by $\Delta(\varphi)$. Since $\Delta(\varphi)$ is a connected subset of the Dynkin diagram, the root system $R(\varphi)$ is always irreducible.

\end{notation}

\begin{defn}[{\cite[Section~1]{kostant}}]

Let $\varphi\in R^+$. We say $\varphi$ is locally high if $\varphi$ is the highest root of $R(\varphi)$.

\end{defn}

\begin{defn}[{\cite[VI, 1, 3]{bourbaki_roots}}]

We say that two roots $\alpha$ and $\alpha'$ are strongly orthogonal if and only if $\alpha\pm\alpha'\notin R\cup\{0\}$.

\end{defn}

\begin{defn}[{\cite[Section~1]{kostant}}]
\label{def:totallydisjoint}

Two subsets of roots $S$ and $S'$ are called totally disjoint if every element of $S$ is strongly orthogonal to every element of $S'$.

\end{defn}

\begin{fact}
\label{fact:stronglyorthogonal}

Let $S\subseteq\Delta$. Let $\varphi_1,\ldots,\varphi_k$ be locally high roots such that 
$
\Delta(\varphi_1),\ldots,\Delta(\varphi_k)
$
are the distinct connected components of $S$. Then $R(\varphi_1),\ldots,R(\varphi_k)$ are pairwise totally disjoint.

\end{fact}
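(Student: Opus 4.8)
The plan is to reduce the statement to the basic fact that roots supported on disjoint, non-adjacent subsets of the Dynkin diagram are strongly orthogonal, and that a highest root of an irreducible component ``sees'' only that component. Concretely, I would first unwind the definitions: $R(\varphi_i)$ is by definition the root subsystem of $R$ generated by $\Delta(\varphi_i)$, i.e. $R(\varphi_i) = R \cap \mathbb{Z}\Delta(\varphi_i)$, and since the $\Delta(\varphi_i)$ are the distinct connected components of $S$, for $i \neq j$ the sets $\Delta(\varphi_i)$ and $\Delta(\varphi_j)$ are disjoint and moreover no simple root of $\Delta(\varphi_i)$ is adjacent to any simple root of $\Delta(\varphi_j)$ in the Dynkin diagram (otherwise $\Delta(\varphi_i) \cup \Delta(\varphi_j)$ would lie in a single connected component). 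The second observation is the crucial input.

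Next I would fix $i \neq j$ and arbitrary $\alpha \in R(\varphi_i)$, $\alpha' \in R(\varphi_j)$ and show $\alpha$ and $\alpha'$ are strongly orthogonal, i.e. $\alpha \pm \alpha' \notin R \cup \{0\}$. Write $\alpha = \sum_{\beta \in \Delta(\varphi_i)} c_\beta \beta$ and $\alpha' = \sum_{\beta' \in \Delta(\varphi_j)} c'_{\beta'} \beta'$ with all $c_\beta$ of one sign and all $c'_{\beta'}$ of one sign (roots have coefficients of a single sign in the simple-root basis). Since $\Delta(\varphi_i) \cap \Delta(\varphi_j) = \emptyset$, the element $\alpha \pm \alpha'$ has both strictly positive and strictly negative coefficients in the basis $\Delta$ whenever $\alpha, \alpha'$ are not themselves of opposite-or-equal signs in a way that makes them cancel — but since their supports are disjoint there is no cancellation at all, so $\alpha \pm \alpha'$ is a nonzero vector with mixed-sign coefficients (after possibly replacing $\alpha'$ by $-\alpha'$), hence cannot be a root. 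This already gives $\alpha \pm \alpha' \notin R$, and it is visibly nonzero, so $\alpha \pm \alpha' \notin R \cup \{0\}$. Therefore every element of $R(\varphi_i)$ is strongly orthogonal to every element of $R(\varphi_j)$, which is exactly the definition of $R(\varphi_i)$ and $R(\varphi_j)$ being totally disjoint.

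Finally I would note that the hypothesis that the $\varphi_i$ are locally high roots is in fact not needed for the strong orthogonality conclusion — only the fact that $\Delta(\varphi_1), \ldots, \Delta(\varphi_k)$ are the distinct connected components of $S$ matters, and one could even drop ``distinct connected components'' to just ``pairwise disjoint with no edge between them.'' Being locally high pins down $\varphi_i$ uniquely as the highest root of $R(\varphi_i)$, but that is only relevant for later uses of the fact, not for its proof. I would phrase the argument cleanly in terms of the Dynkin-diagram adjacency so that the ``no cancellation, mixed signs'' step reads transparently, perhaps citing \cite[VI, §1]{bourbaki_roots} for the standard fact that a root has all coefficients of the same sign in the simple-root basis and that the orthogonality of roots with disjoint, non-adjacent support is automatic. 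The only point requiring a moment of care — the ``main obstacle,'' such as it is — is being precise that ``distinct connected components'' forces the absence of Dynkin-diagram edges between $\Delta(\varphi_i)$ and $\Delta(\varphi_j)$; everything after that is the routine sign bookkeeping just described.
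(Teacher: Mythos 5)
The paper offers no proof of this fact at all (it is explicitly left to the reader), so the only question is whether your argument is complete. It is the right argument in outline --- reduce to: the supports $\Delta(\varphi_i)$ and $\Delta(\varphi_j)$ are disjoint and non-adjacent in the Dynkin diagram, then check $\alpha\pm\alpha'\notin R\cup\{0\}$ coefficient-wise --- and your identification of non-adjacency as the crucial input is correct. But the sign bookkeeping as you have written it does not actually establish strong orthogonality. Strong orthogonality requires excluding \emph{both} $\alpha+\alpha'$ and $\alpha-\alpha'$, and your mixed-sign mechanism only handles one of them. If, say, $\alpha$ and $\alpha'$ are both positive, then $\alpha-\alpha'$ indeed has coefficients of both signs and cannot be a root; but $\alpha+\alpha'$ has all coefficients nonnegative, so no mixed-sign argument applies to it. The parenthetical ``after possibly replacing $\alpha'$ by $-\alpha'$'' does not repair this: that replacement merely swaps which of the two combinations you are looking at, and you must rule out both. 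Taken literally, your argument would also ``prove'' that two adjacent simple roots $\beta_1,\beta_2$ are strongly orthogonal (their supports are disjoint and there is no cancellation), which is false since $\beta_1+\beta_2$ is a root.

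The missing step is exactly where the non-adjacency you flagged must be used: for the same-sign combination, the support of $\alpha+\alpha'$ is $\Delta(\alpha)\sqcup\Delta(\alpha')$, which is a disconnected subset of the Dynkin diagram because no vertex of $\Delta(\varphi_i)$ is joined to a vertex of $\Delta(\varphi_j)$; since the support of any root is connected, $\alpha+\alpha'$ cannot be a root. With that one sentence added (and the observation that $\alpha\pm\alpha'\neq 0$ because the supports are disjoint and both roots are nonzero), the proof is complete. Your remaining observations --- that ``locally high'' plays no role in the proof, and that only disjointness plus non-adjacency of the supports is used --- are correct.
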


\begin{proof}

It is very easy to supply a proof of Fact~\ref{fact:stronglyorthogonal}. We leave the details to the reader.
\end{proof}

\begin{fact}
\label{fact:lochighBcosmall}

Every locally high root is $B$-cosmall.
%In particular, simple roots are $B$-cosmall, as it was already pointed out in \cite{curvenbhd2}. The argument that $\alpha$ is simply long does not work out.

\end{fact}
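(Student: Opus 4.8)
The claim is: every locally high root $\varphi$ is $B$-cosmall, i.e. (taking $P=B$) that $\varphi$ is a maximal root of the degree $d(\varphi)=\varphi^\vee\in H_2(G/B)=\mathbb{Z}\Delta^\vee$. So I need to show that there is no positive root $\psi\in R^+$ with $\varphi^\vee<\psi^\vee$ and $\psi^\vee\leq\varphi^\vee$ in the coroot lattice — equivalently, that $\varphi^\vee$ is a maximal element of $\{\psi^\vee\mid\psi\in R^+,\ \psi^\vee\leq\varphi^\vee\}$. First I would reduce to the irreducible root system $R(\varphi)$ generated by $\Delta(\varphi)$: since $\varphi$ is locally high it is by definition the highest root $\theta$ of $R(\varphi)$, and the support $\Delta(\varphi)$ of any root $\psi$ with $\psi^\vee\leq\varphi^\vee=\theta^\vee$ (coefficientwise in the simple coroots) is forced to lie inside $\Delta(\varphi)$, since $\theta^\vee$ has zero coefficient on simple coroots outside $\Delta(\varphi)$ and all coefficients of $\psi^\vee$ in the simple coroots are $\geq 0$. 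So without loss of generality $R$ is irreducible and $\varphi=\theta$ is the highest root.

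The key step is then the classical fact that in an irreducible root system the highest root $\theta$ is the unique maximal element of $R^+$ in the usual root partial order, but here I need the statement for \emph{coroots} ordered in the \emph{coroot} lattice. The clean way is: if $\psi\in R^+$ and $\psi^\vee\leq\theta^\vee$ coefficientwise, I want $\psi^\vee=\theta^\vee$. Write $\theta^\vee-\psi^\vee=\sum_{\beta}c_\beta\beta^\vee$ with $c_\beta\geq 0$ integers, and consider the pairing with the fundamental weights, or better pair $\theta^\vee-\psi^\vee$ against the dominant root $\theta$ (or against $\rho$) and use that $(\theta,\alpha^\vee)\geq 0$ for every simple root $\alpha$ to control signs; alternatively invoke that $\theta^\vee$ is the highest \emph{short} coroot, i.e. the highest root of the dual root system $R^\vee$ (this is exactly the statement that in an irreducible system the highest root of $R$ corresponds to the highest short coroot of $R^\vee$, and when $R$ is simply laced these coincide; in the non-simply-laced case one checks the two cases $B_n/C_n$ and $F_4$, $G_2$ directly, or notes $\theta$ long $\Rightarrow\theta^\vee$ short $\Rightarrow\theta^\vee$ is the highest short coroot by the standard classification of short root heights). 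In all cases one concludes $\theta^\vee$ is maximal in $\{\psi^\vee\mid\psi\in R^+\}$ with respect to the coefficientwise order, hence a fortiori in the subset $\{\psi^\vee:\psi^\vee\leq\theta^\vee\}$, which is precisely $B$-cosmallness of $\varphi=\theta$.

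**Main obstacle.** The only subtlety is the passage from \emph{roots} to \emph{coroots}: it is standard that $\theta$ is the unique maximal positive root, but the partial order relevant for $d(\psi)\leq d(\varphi)$ is on the coroot side, and under $\psi\mapsto\psi^\vee$ long roots become short coroots and vice versa, so $\theta^\vee$ need not be the highest coroot of $R^\vee$ when $R$ is not simply laced — it is only the highest \emph{short} coroot. I expect the author handles this either by observing that a coroot $\psi^\vee$ with $\psi^\vee\leq\theta^\vee$ and support inside $\Delta(\varphi)$ must itself be short (because $\theta^\vee$ has all its coefficients equal to the marks of $R^\vee$, which for short coroots are maximal), reducing to the statement that the highest short root of an irreducible system is the unique maximal short root in the coefficientwise order — a fact that follows from $W$-conjugacy of short roots plus dominance. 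I would verify this last point by a short case-check on the four non-simply-laced types ($B_n$, $C_n$, $F_4$, $G_2$), or by citing the analogous statement for $R^\vee$. Everything else is bookkeeping with supports and the nonnegativity of coroot coefficients.
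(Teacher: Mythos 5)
There is a genuine gap, and it comes from a misreading of the definition of a maximal root. In Definition of maximal roots (following \cite[Section~4.2]{curvenbhd2}), the \emph{membership} condition for the set $\{\alpha\in R^+\mid d(\alpha)\leq d\}$ is an inequality of coroots, but the \emph{maximality} is taken with respect to the partial order on roots ($\alpha\leq\beta$ iff $\beta-\alpha$ is a nonnegative combination of simple roots). You have put both in the coroot order. As a result, your restated goal --- that $\varphi^\vee$ is a maximal element of $\{\psi^\vee\mid\psi^\vee\leq\varphi^\vee\}$ in the coefficientwise coroot order --- is vacuously true (that set has $\varphi^\vee$ as its maximum by construction) and does not express $B$-cosmallness; under that reading every positive root would be $B$-cosmall, which is false (in type $\mathsf{B}_2$ the short root $\beta_1+\beta_2$ is not $B$-cosmall, since the long root $\beta_1+2\beta_2$ is strictly larger in the root order and has coroot $\beta_1^\vee+\beta_2^\vee\leq 2\beta_1^\vee+\beta_2^\vee=(\beta_1+\beta_2)^\vee$). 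Worse, the intermediate claims you then set out to prove are false: ``$\psi^\vee\leq\theta^\vee$ implies $\psi^\vee=\theta^\vee$'' already fails for $\psi$ a simple root, and ``$\theta^\vee$ is maximal in $\{\psi^\vee\mid\psi\in R^+\}$ for the coefficientwise order'' fails in type $\mathsf{B}_2$, where $\theta^\vee=\beta_1^\vee+\beta_2^\vee$ is strictly below $(\beta_1+\beta_2)^\vee=2\beta_1^\vee+\beta_2^\vee$ (the highest coroot of $R^\vee$ is the coroot of the highest \emph{short} root, not of $\theta$). The entire ``highest short coroot'' discussion and the case-check over $\mathsf{B}_n,\mathsf{C}_n,\mathsf{F}_4,\mathsf{G}_2$ are symptoms of chasing this wrong target.

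The correct statement to verify is: if $\alpha\in R^+$ satisfies $\varphi\leq\alpha$ \emph{in the root order} and $\alpha^\vee\leq\varphi^\vee$ in the coroot order, then $\alpha=\varphi$. Your first reduction is exactly the right (and only) idea needed: $\alpha^\vee\leq\varphi^\vee$ forces $\Delta(\alpha)\subseteq\Delta(\varphi)$, hence $\alpha\in R(\varphi)$; since $\varphi$ is by hypothesis the highest root of $R(\varphi)$, this gives $\alpha\leq\varphi$, and combined with $\varphi\leq\alpha$ you conclude $\alpha=\varphi$. This is the paper's proof in full; no classification and no comparison of $\theta^\vee$ with other coroots is required. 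Once you fix which partial order governs the maximality, your own support argument closes the proof in two lines.
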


\begin{proof}

Let $\varphi$ be a locally high root. Let $\alpha\in R^+$ be a root such that $\varphi\leq\alpha$ and $\alpha^\vee\leq\varphi^\vee$. In order to prove that $\varphi$ is $B$-cosmall, we have to show that $\alpha=\varphi$. The inequality $\alpha^\vee\leq\varphi^\vee$ implies that $\Delta(\alpha)\subseteq\Delta(\varphi)$ and thus $\alpha\in R(\varphi)$. By definition, the root $\varphi$ is the highest root of $R(\varphi)$. Consequently, we necessarily have $\alpha\leq\varphi$. In total, this implies that $\alpha=\varphi$ -- as required.
\end{proof}

\subsection{Connected components of a degree}

{\color{black} As we already introduced a notion of connected (and disconnected) degrees, it is plausible also to introduce a notion of connected components of a degree. We do so in this subsection. The notion of connected components of a degree is useful because many characteristics of a degree, such as its set of maximal roots (cf. Theorem~\ref{thm:maxroots}) or the number of its greedy decompositions (cf. Theorem~\ref{thm:nd}), are already determined by its connected components. 
%It will be used in many of our proofs and considerations.
}
%(cf. Theorem on $N_d$).}

\todo[inline,color=green]{It is probably sufficient to introduce the notion \enquote{connected degree} for degrees in $H_2(G/B)$ as I only need it there (maybe mention that there is a \enquote{right} generalization). Material on greedy decomposition, cosmall is needed before this.}

\begin{defn}
\label{def:connectedcomponents}

Let $d$ be a degree. Let $\varphi_1,\ldots,\varphi_k$ be locally high roots such that 
$$
\Delta(\varphi_1),\ldots,\Delta(\varphi_k)
$$
are the distinct connected components of $\widetilde{\Delta}(d)$. Let $(\alpha_1,\ldots,\alpha_r)$ be a greedy decomposition of $d$. Then we define the connected components of $d$ to be the degrees
$$
d_i=\sum_{1\leq j\leq r\colon\Delta(\alpha_j)\subseteq\Delta(\varphi_i)}d(\alpha_j)\text{ where }1\leq i\leq k\,.
$$
Since the greedy decomposition of $d$ is unique up to reordering, 
%(cf. \cite[Section~4.2]{curvenbhd2}), 
the connected components $d_1,\ldots,d_k$ of $d$ are clearly well-defined -- do not depend on the choice of the greedy decomposition but only on $d$.

\end{defn}

\begin{lem}
\label{lem:connectedcomponents1}

Let $d$ be a degree. Let $d_1,\ldots,d_k$ be the connected components of $d$. Then 
$$
\widetilde{\Delta}(d_1),\ldots,\widetilde{\Delta}(d_k)
$$ 
are the distinct connected components of $\widetilde{\Delta}(d)$, in particular each $d_i$ is a connected degree. Moreover, we have $d=\sum_{i=1}^k d_i$.

\end{lem}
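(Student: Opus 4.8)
The plan is to prove the three assertions of Lemma~\ref{lem:connectedcomponents1} by tracking a single fixed greedy decomposition $(\alpha_1,\ldots,\alpha_r)$ of $d$ throughout, and then reading everything off from the combinatorics of supports. First I would record the obvious identity $d=\sum_{i=1}^k d_i$: the $d_i$ are defined as sums of $d(\alpha_j)$ over the blocks of a partition of $\{1,\ldots,r\}$ (namely $j$ is assigned to the block $i$ for which $\Delta(\alpha_j)\subseteq\Delta(\varphi_i)$), and this partition is exhaustive because, by definition of the connected components $\Delta(\varphi_1),\ldots,\Delta(\varphi_k)$ of $\widetilde\Delta(d)=\bigcup_{j=1}^r\Delta(\alpha_j)$, each connected set $\Delta(\alpha_j)$ lies entirely inside exactly one component $\Delta(\varphi_i)$. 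Since $d=\sum_{j=1}^r d(\alpha_j)$ (which follows from $d(\alpha_j)=d(\alpha_j)-0$ telescoping along the greedy decomposition, or directly from the definition of greedy decomposition), grouping the summands by block gives $d=\sum_i d_i$ at once.

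Next I would show that for each $i$ the sequence obtained by listing, in their original order, the $\alpha_j$ with $\Delta(\alpha_j)\subseteq\Delta(\varphi_i)$ is itself a greedy decomposition of $d_i$, which would immediately give $\widetilde\Delta(d_i)=\bigcup_{j:\Delta(\alpha_j)\subseteq\Delta(\varphi_i)}\Delta(\alpha_j)$. The key point here is that passing from $d$ to $d-d(\alpha_1)$ and iterating, the maximality of $\alpha_1$ among $\{\alpha\in R^+\setminus R_P^+\mid d(\alpha)\le d\}$ is controlled locally: a root $\alpha$ with $d(\alpha)\le d$ has $\Delta(\alpha)$ contained in $\widetilde\Delta(d)$, hence in a single component $\Delta(\varphi_i)$, so the comparison $d(\alpha)\le d$ only involves the coefficients of $d$ supported on that component, i.e. the coefficients of $d_i$. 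This reduces the greedy-decomposition condition for $d$ to the conjunction of the greedy-decomposition conditions for each $d_i$. I would need to be a little careful about the interleaving of the orders, but uniqueness of the greedy decomposition up to reordering (the Convention cited from \cite{curvenbhd2}) lets me reorder $(\alpha_1,\ldots,\alpha_r)$ so that the block-$i$ roots come first; then the tail is a greedy decomposition of $d-d_i$, and by induction on $k$ (or on $r$) the block-$i$ roots form a greedy decomposition of $d_i$.

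Finally, from $\widetilde\Delta(d_i)=\bigcup_{j:\Delta(\alpha_j)\subseteq\Delta(\varphi_i)}\Delta(\alpha_j)$ I would deduce that the $\widetilde\Delta(d_i)$ are precisely the $\Delta(\varphi_i)$: the right-hand side is a union of connected sets all contained in $\Delta(\varphi_i)$, so it is contained in $\Delta(\varphi_i)$; conversely every simple root of $\widetilde\Delta(d)=\bigcup_j\Delta(\alpha_j)$ lies in some $\Delta(\alpha_j)$, which lies in some block, so $\bigcup_i\widetilde\Delta(d_i)=\widetilde\Delta(d)=\bigsqcup_i\Delta(\varphi_i)$; comparing and using $\widetilde\Delta(d_i)\subseteq\Delta(\varphi_i)$ forces $\widetilde\Delta(d_i)=\Delta(\varphi_i)$ for each $i$. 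In particular each $\widetilde\Delta(d_i)=\Delta(\varphi_i)$ is connected, so each $d_i$ is a connected degree, and the $\widetilde\Delta(d_i)$ are exactly the distinct connected components of $\widetilde\Delta(d)$. I expect the main obstacle to be the second step, namely making rigorous the claim that greediness of the decomposition of $d$ ``factors through'' the connected components; the cleanest route is probably to invoke the uniqueness of greedy decompositions to reorder and then induct, rather than to argue about maximal roots directly, but one still has to justify that $\{\alpha\in R^+\setminus R_P^+\mid d(\alpha)\le d\}$ is the disjoint union over $i$ of $\{\alpha\in R^+\setminus R_P^+\mid d(\alpha)\le d_i\}$, which uses that $\Delta(\alpha)$ connected plus $\Delta(\alpha)\subseteq\widetilde\Delta(d)$ implies $\Delta(\alpha)$ sits in one component — a fact that should already be available from the support lemmas and the discussion of connected degrees in \cite{minimaldegrees}.
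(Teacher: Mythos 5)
Your overall architecture coincides with the paper's proof: the identity $d=\sum_{i=1}^k d_i$ follows because each connected set $\Delta(\alpha_j)$ lies in exactly one component $\Delta(\varphi_i)$; one then identifies $\widetilde{\Delta}(d_i)$ with $\bigcup_{j\colon\Delta(\alpha_j)\subseteq\Delta(\varphi_i)}\Delta(\alpha_j)\subseteq\Delta(\varphi_i)$; and the final comparison of the disjoint union $\coprod_i\widetilde{\Delta}(d_i)=\widetilde{\Delta}(d)=\coprod_i\Delta(\varphi_i)$ forces $\widetilde{\Delta}(d_i)=\Delta(\varphi_i)$. The one place you diverge is the middle step. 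The paper does not prove that the block-$i$ subsequence of $(\alpha_1,\ldots,\alpha_r)$ is a greedy decomposition of $d_i$; it invokes \cite[Proposition~3.10(7)]{minimaldegrees}, which is exactly the statement that a subsequence of a greedy decomposition is a greedy decomposition of the corresponding partial sum. You try to prove this from scratch, and the shortcut you lean on does not work: uniqueness of the greedy decomposition up to reordering says that any two greedy decompositions are permutations of one another, \emph{not} that an arbitrary permutation of a greedy decomposition is again greedy. So you cannot simply ``reorder so that the block-$i$ roots come first'' and read off a greedy decomposition of $d_i$ from the initial segment. The existence of a greedy decomposition of $d$ beginning with all the block-$i$ roots is precisely the content of Theorem~\ref{thm:additiongreedy}, which in the paper is proved \emph{after} this lemma and depends on it (through Theorem~\ref{thm:additionextendedsupport} and Theorem~\ref{thm:inclusionextendedsupport}), so it is not available here without circularity.

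Your fallback ``localization'' argument is the right idea and can be made rigorous: (i) every $\alpha$ with $d(\alpha)\le d$ satisfies $\Delta(\alpha)\subseteq\widetilde{\Delta}(d)$ (pass to a maximal root $\alpha'\ge\alpha$ of $d$; this does not use the lemma), and (ii) for $\Delta(\alpha)$ contained in a union $S$ of components one has $d(\alpha)\le d$ if and only if $d(\alpha)\le d_S$, since the naive support of $d-d_S$ misses $S$. These do split the candidate set of maximal roots as you claim. But the ensuing induction on $r$ is more delicate than your sketch suggests, because the connected components of $d-d(\alpha_1)$ may be strictly finer than those of $d$, so the inductive hypothesis does not apply verbatim to the blocks fixed at the outset; you must either strengthen the inductive statement to arbitrary unions of components or reassemble sub-blocks, which amounts to re-proving \cite[Proposition~3.10(7)]{minimaldegrees}. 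The shortest correct write-up is to cite that proposition, as the paper does.
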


\begin{proof}

Let $\varphi_1,\ldots,\varphi_k$ be locally high roots such that $\Delta(\varphi),\ldots,\Delta(\varphi_k)$ are the distinct connected components of $\widetilde{\Delta}(d)$. Let $(\alpha_1,\ldots,\alpha_r)$ be a greedy decomposition of $d$. It is clear that for every $1\leq j\leq r$ there exists a unique $1\leq i\leq k$ such that $\Delta(\alpha_j)\subseteq\Delta(\varphi_i)$. Hence, we find by %Definition~\ref{def:connectedcomponents} 
definition that $d=\sum_{i=1}^k d_i$. By \cite[Proposition~3.10(7)]{minimaldegrees} and by definition of $\widetilde{\Delta}(d_i)$, we have
\begin{equation}
\label{eq:supportinclusion}
\widetilde{\Delta}(d_i)=\bigcup_{1\leq j\leq r\colon\Delta(\alpha_j)\subseteq\Delta(\varphi_i)}\Delta(\alpha_j)\subseteq\Delta(\varphi_i)\text{ for all }1\leq i\leq k
\end{equation}
and consequently
%I use: (a) the definition of $\varphi_1,\ldots,\varphi_k$, (b) for every $j$ there exists (a unique) $i$ such that $\Delta(\alpha_j)\subseteq\Delta(\varphi_i)$.
\begin{equation}
\label{eq:supportunion}
\widetilde{\Delta}(d)=\coprod_{i=1}^k\widetilde{\Delta}(d_i)\,.
\end{equation}
If one of the inclusions in Equation~\eqref{eq:supportinclusion} is strict, it follows from Equation~\eqref{eq:supportunion} that
$$
\widetilde{\Delta}(d)\subsetneq\coprod_{i=1}^k\Delta(\varphi_i)=\widetilde{\Delta}(d)\,.
$$
Therefore, we conclude that $\widetilde{\Delta}(d_i)=\Delta(\varphi_i)$ for all $1\leq i\leq k$, in other words that 
$$
\widetilde{\Delta}(d_1),\ldots,\widetilde{\Delta}(d_k)
$$
are the distinct connected components of $\widetilde{\Delta}(d)$. Since each $\widetilde{\Delta}(d_i)$ is in particular connected, it follows that each $d_i$ is a connected degree.
%I use the definition of connected degrees.
\end{proof}

\todo[inline,color=green]{I need to introduce the index either in notation and conventions or already in the introduction or both. Note, I freely use the notation $c_1$ already in the section on chain cascades. I think I don't need to mention the combinatorial description of $c_1$ -- only later when I actually use it.

I mention the combinatorial description of $c_1$ already when I introduce this notation in notation and conventions (in the introduction $c_1$ does not occur and is therefore only there necessary). There I cite \cite[Lemma~3.5]{fulton} for this description, everywhere else I cite \cite[Equation~(3)]{curvenbhd2} for this description (for some reason). Actually, both references appear and that should be enough.}

\todo[inline,color=green]{connected degrees, connected components of degrees, minimal degree $d_X$, minimal degrees in $\Pi_B$ and $\Pi_P$ (we speak about a minimal degree in $\Pi_\bullet$ and always specify the precise set relative to $B$ or $P$ to avoid confusion), the $\theta$-sequence is totally ordered (and rightfully called like this), if $P$ is maximal, I want to introduce a convention about the use of $\prod$ and $\bigodot$, only if the members commute (e.g. because the roots are orthogonal), usage of $\Delta$ with respect to roots and degrees, inversion sets, $w_o$, $\Delta^\circ$, $\Delta_\alpha^\circ$, pt=class of a point=$\sigma_{w_o}$, $w_P$}

\todo[inline,color=green]{trivial statements like: each connected component of a degree is connected, mention formula for connected components as needed in the proof of Theorem~\ref{thm:gencascade}\eqref{item:totallyordered} (plus some formula in the vain of: union of supports of connected components is the original support $\star\colon$ I can cite the use of these formulas twice in the proof of Theorem~\ref{thm:gencascade}\eqref{item:totallyordered})}

\todo[inline,color=green]{first entry of a greedy decomposition of a connected degree $d$: $\alpha(d)$, recall again the notion strongly orthogonal (and, if needed, totally disjoint), mention the uniqueness of the greedy decomposition up to reordering directly after the definition -- I somehow assume that this is obvious whenever I compute $\mathcal{B}_{R,e}$, parabolic=standard parabolic: many conventions can be adopted (also the degree convention, cf. Lemma~\ref{lem:connectedcomponents1})}

\subsection{Minimal degrees in quantum products}

In this subsection we introduce the class of minimal degrees. We organize these degrees in a set $\Pi_P$ where the $P$ indicates the parabolic subgroup relative to which they are computed. Minimal degrees are central because they feature many important properties (most prominently orthogonality relations, cf. \cite[Section~8]{minimaldegrees}) which eventually lead to the quasi-homogeneity result which is subject to this paper.
%It will be clear from the introduction what quasi-homogeneity results are meant.
We will elaborate on the consequences of the properties of a minimal degree $e\in\Pi_B$ for the greedy decomposition of $e$ and the element $z_e^B$ in Section~\ref{sec:gencascade}.

Minimal degrees naturally arise in the context of quantum cohomology as minimal degrees of quantum product (cf. \cite{minimaldegrees}). In our exposition, we choose a completely combinatorial definition of minimal degrees which makes only use of the theory of curve neighborhoods surveyed in Subsection~\ref{subsec:nbhd}.
%this section before. 
We do so to keep the prerequisites as low as possible. Moreover, for all of our proofs the combinatorial approach is more suitable. The relation to quantum cohomology is mentioned only in a few remarks intended to the reader familiar with this theory and not needed to understand the purpose of this paper.

\todo[inline,color=green]{I want to make a remark in the introduction, directly referring to this subsection, so that a reader not familiar with quantum cohomology can still understand.}

\todo[inline,color=green]{There are many examples of minimal degrees: cosmall, exceptional, simple, sub-sequences of greedy decompositions of elements in $\Pi_P$, in particular sub-sequences of greedy decomposition of $d_X$ -- quite much, in view of the fact that $d_X$ can be explicitly computed as in \cite{minimaldegrees}. The addition theorem gives a further way to construct minimal degrees. Explain at least the two equivalent definitions of minimal degrees -- they are needed both.}

\begin{defn}
\label{def:minimaldegrees}

Let $d$ be a degree. We say that $d$ is a minimal degree if $d$ is a minimal element of the set
$$
\{d'\text{ a degree such that }z_d^P\preceq z_{d'}^P\}\,.
$$

\end{defn}

\begin{notation}

We denote by $\Pi_P$ the set of all minimal degrees. In particular, the set of all minimal degrees in $H_2(G/B)$ is denoted by $\Pi_B$.

\end{notation}

\begin{rem}[For the reader familiar with quantum cohomology]
\label{rem:quantum}

Let $(QH^*(X),\star)$ be the (small) quantum cohomology ring attached to $X$ as defined in \cite[Section~10]{fultonpan} . In terms of quantum cohomology, the set $\Pi_P$ can be described as follows:
\begin{equation}
\label{eq:quantum}
\Pi_P=\{d\text{ is a minimal degree in }\sigma_u\star\sigma_v\text{ for some }u,v\in W\}\,.
\end{equation}
The definition of a minimal degree in the quantum product of two Schubert cycles is given in \cite[Definition~5.14]{minimaldegrees}. The inclusion \enquote{$\subseteq$} in Equation~\eqref{eq:quantum} follows since a degree $d\in\Pi_P$ is a minimal degree in $\sigma_{z_d^P}\star\mathrm{pt}$ (cf. \cite[Definition~4.1, Theorem~5.15]{minimaldegrees}). The inclusion \enquote{$\supseteq$} in Equation~\eqref{eq:quantum} follows from \cite[Definition~4.1, Theorem~5.10, Theorem~5.15]{minimaldegrees}.

\end{rem}

{\color{black}\begin{notation}
\label{not:dx}

By \cite[Definition~4.1, Theorem~4.7]{minimaldegrees}, there exists a unique minimal element of the set
$$
\{d\text{ a degree such that }w_oW_P=z_d^PW_P\}\,.
$$
This unique minimal element is denoted by $d_X$. By definition, we clearly have $d_X\in\Pi_P$. In particular, we have $d_{G/B}\in\Pi_B$.

\end{notation}}

\begin{rem}

The minimal degree $d_X$ is the main object of study of \cite{minimaldegrees} and therefore very well understood. In particular, it can be explicitly computed (cf. \cite[Corollary~7.12]{minimaldegrees}).

\end{rem}

\begin{ex}
\label{ex:minimaldegrees}

We summarize examples of minimal degrees and ways to produce new minimal degrees out of existing ones. 

\begin{enumerate}

\item

Let $\alpha$ be a $P$-cosmall root. Then we have $d(\alpha)\in\Pi_P$ (\cite[Proposition~4.4(6)]{minimaldegrees}).

\item

Let $\beta$ be a simple root. Then we have $d(\beta)\in\Pi_P$ (\cite[Proposition~4.4(8)]{minimaldegrees}).

\item

Suppose that $R$ is simply laced. Then we have $d(\alpha)\in\Pi_P$ for all $\alpha\in R^+$ (\cite[Theorem~4.15]{minimaldegrees})

\item

Let $d\in\Pi_P$. Let $(\alpha_1,\ldots,\alpha_r)$ be a subsequence of a greedy decomposition of $d$. Then we have $\sum_{i=1}^r d(\alpha_i)\in\Pi_P$. This follows by repeated application of \cite[Proposition~4.4(9)]{minimaldegrees}.

\item

Let $d_1,\ldots,d_k\in\Pi_P$ be minimal degrees such that $\widetilde{\Delta}(d_1),\ldots,\widetilde{\Delta}(d_k)$ are pairwise totally disjoint. Then we have $\sum_{i=1}^k d_i\in\Pi_P$. We will prove this statement later on (cf. Theorem~\ref{thm:additionminimaldegrees}).

\item
\label{item:subsetsofcascade}

Let $(\alpha_1,\ldots,\alpha_r)$ be a subsequence of a greedy decomposition of $d_{G/B}$. Then we have $\sum_{i=1}^r d(\alpha_i)\in\Pi_P$. This follows from \cite[Theorem~7.14, Remark~8.4]{minimaldegrees} and Remark~\ref{rem:general}.

\end{enumerate}

\end{ex}

\begin{comment}

\begin{rem}

In view of the fact that greedy decompositions of $d_{G/B}$ are well-known and can be explicitly computed (they are specific orderings of the ordinary cascade of orthogonal roots, cf. Remark~\ref{rem:general}), Example~\ref{ex:minimaldegrees}\eqref{item:subsetsofcascade} gives a bunch of minimal degrees whose greedy decomposition has length strictly larger that one.

\end{rem}

\end{comment}

\subsection{Addition theorems}
\label{subsec:addition}

In this subsection we present various kinds of addition theorems. By an addition theorem, we mean a theorem which expresses properties of the sum $d_1+d_2$ of two degrees (or more generally of $k$ degrees) in terms of properties of the individual degrees $d_1$ and $d_2$. In most cases, we do so by assuming from the beginning that $\widetilde{\Delta}(d_1)$ and $\widetilde{\Delta}(d_2)$ are totally disjoint. In this way, we are for example able to describe all greedy decomposition of $d_1+d_2$ in terms of greedy decompositions of $d_1$ and $d_2$ whenever the extended supports are totally disjoint (cf. Theorem~\ref{thm:additiongreedy}, \ref{thm:additiongreedyconverse}). In particular, this gives us a way to reduce many problems, e.g. testing the minimality of a degree (cf. Theorem~\ref{thm:additionminimaldegrees} and Corollary~\ref{cor:additionminimaldegrees}), from an arbitrary degree $d$ to a connected degree by passing to the connected components of $d$. 

\todo[inline,color=green]{It turns out that I cited \cite[Proposition~3.10(7), 4.4(9) -- in precisely this block style]{minimaldegrees} unnecessarily often, e.g. if I want to show that the connected components of a degree in $\Pi_P$ lie in $\Pi_P$. I will introduce in this section a more general theorem (for arbitrary $P$ not only $B$) -- as a consequence of the addition theorem for minimal degrees. Afterwards, I should refer to this theorem -- and not to \cite{minimaldegrees} anymore.

I cannot find any unnecessary citations, except maybe in the proof of Fact~\ref{fact:altdef}. But I leave it like this since I think it is better. In the case modulo $B$ everything is \enquote{trivial} anyway.}

\begin{thm}
\label{thm:inclusionextendedsupport}

Let $d$ and $d'$ be two degrees such that $d\leq d'$. Then we have $\widetilde{\Delta}(d)\subseteq\widetilde{\Delta}(d')$ (and clearly also $\Delta(d)\subseteq\Delta(d')$).

\end{thm}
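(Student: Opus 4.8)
The plan is to unwind the definition of $\widetilde{\Delta}$ through a greedy decomposition of $d$ and combine the monotonicity of the condition ``$d(\alpha)\leq d$'' with the well-definedness of the extended support.

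First I would dispose of the parenthetical claim about naive supports, which is purely formal. Under the identifications of Equation~\eqref{eq:identification} the partial order on $H_2(X)$ is coefficientwise with respect to the basis $\{\sigma(s_\beta)\mid\beta\in\Delta\setminus\Delta_P\}$, and for each such $\beta$ the integer $(\omega_\beta,\cdot)$ is a positive multiple of the $\sigma(s_\beta)$-coefficient; since $\beta\in\Delta(d)$ means exactly that this coefficient is positive, $d\leq d'$ forces $\Delta(d)\subseteq\Delta(d')$.

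For the main assertion, fix a greedy decomposition $(\alpha_1,\dots,\alpha_r)$ of $d$, so that $\widetilde{\Delta}(d)=\bigcup_{i=1}^{r}\Delta(\alpha_i)$; it suffices to show $\Delta(\alpha_i)\subseteq\widetilde{\Delta}(d')$ for each $i$. Unravelling the recursive definition of a greedy decomposition gives $d=\sum_{j=1}^{r}d(\alpha_j)$ with each $d(\alpha_j)$ effective, hence $d(\alpha_i)\leq d$, and therefore $d(\alpha_i)\leq d'$ by hypothesis. Thus $\alpha_i$ belongs to the finite set $\{\alpha\in R^+\setminus R_P^+\mid d(\alpha)\leq d'\}$, whose maximal elements are by definition the maximal roots of $d'$; choose a maximal root $\alpha'$ of $d'$ with $\alpha_i\leq\alpha'$. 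Comparing coefficients of simple roots gives $\Delta(\alpha_i)\subseteq\Delta(\alpha')$. Finally, by the recursive definition there is a greedy decomposition of $d'$ whose first entry is $\alpha'$, and since the extended support of $d'$ does not depend on the chosen greedy decomposition (greedy decompositions being unique up to reordering, by \cite{curvenbhd2}), we get $\Delta(\alpha')\subseteq\widetilde{\Delta}(d')$, hence $\Delta(\alpha_i)\subseteq\widetilde{\Delta}(d')$, as desired. (Equivalently, one could first record the ordering-independent description $\widetilde{\Delta}(d)=\bigcup_{d(\alpha)\leq d}\Delta(\alpha)$ and then invoke transitivity of ``$\leq$'' directly.)

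The main obstacle --- indeed the only non-formal point --- is the passage from ``$d(\alpha_i)\leq d'$'' to ``$\Delta(\alpha_i)\subseteq\widetilde{\Delta}(d')$'': one must know that an arbitrary maximal root of $d'$ can serve as the leading term of some greedy decomposition of $d'$ and that all such decompositions yield the same extended support, which is precisely where the uniqueness-up-to-reordering of greedy decompositions from \cite{curvenbhd2} enters. Everything else is bookkeeping with root supports and the coefficientwise partial order.
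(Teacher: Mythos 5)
Your proof is correct. The core step is the same as the paper's: given a root $\alpha$ with $d(\alpha)\leq d'$, pick a maximal root $\alpha'$ of $d'$ dominating it, note that $\alpha'$ heads some greedy decomposition of $d'$, and use well-definedness of the extended support to conclude $\Delta(\alpha)\subseteq\Delta(\alpha')\subseteq\widetilde{\Delta}(d')$. Where you differ is in how you cover all of $\widetilde{\Delta}(d)$: the paper first reduces to the case of a connected degree via the connected components $d_1,\ldots,d_k$ (Lemma~\ref{lem:connectedcomponents1}) and then only has to treat the single root $\alpha(d)$, using the identity $\widetilde{\Delta}(d)=\Delta(\alpha(d))$ for connected $d$ from \cite[Proposition~3.16]{minimaldegrees}. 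You instead observe that \emph{every} entry $\alpha_i$ of a greedy decomposition satisfies $d(\alpha_i)\leq d$ (since $d=\sum_j d(\alpha_j)$ with each $d(\alpha_j)$ effective), and run the comparison argument entry by entry. This is a genuine, if modest, simplification: it bypasses the connected-component machinery and the external Proposition~3.16 entirely, at the cost of nothing beyond the uniqueness-up-to-reordering of greedy decompositions that both arguments already need. The paper's route has the advantage of fitting the pattern used repeatedly elsewhere in the section (reduce to connected degrees, then exploit $\alpha(d)$), but for this particular statement your direct argument is leaner.
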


\begin{proof}

We first reduce the theorem to the case of a connected degree $d$. Assume that the assertion is true for all connected degrees $d$. Let $d_1,\ldots,d_k$ be the connected components of $d$. By Lemma~\ref{lem:connectedcomponents1} we have $d_i\leq d'$ for all $1\leq i\leq k$. The assumption implies $\widetilde{\Delta}(d_i)\subseteq\widetilde{\Delta}(d')$. Again, by Lemma~\ref{lem:connectedcomponents1}, we then find that $\widetilde{\Delta}(d)\subseteq\widetilde{\Delta}(d')$ (cf. Equation~\eqref{eq:supportunion}).

Without loss of generality, we may assume that $d$ is a connected degree. Let $\alpha=\alpha(d)$. Since $d(\alpha)\leq d\leq d'$, we can find a maximal root $\alpha'$ of $d'$ such that $\alpha\leq\alpha'$. By definition, $\alpha'$ occurs in a greedy decomposition of $d'$. Thus, we have $\Delta(\alpha')\subseteq\widetilde{\Delta}(d')$. By \cite[Proposition~3.16]{minimaldegrees}, we have $\widetilde{\Delta}(d)=\Delta(\alpha)$. Therefore the desired inclusion follows from the trivial inclusion $\Delta(\alpha)\subseteq\Delta(\alpha')$.
%Strictly speaking, it is not necessary to use connected components for this proof. But using them, it seems more elegant to me.
\end{proof}

\begin{thm}[Addition theorem for extended supports]
\label{thm:additionextendedsupport}

Let $d$ and $d'$ be two degrees. Then we have $\widetilde{\Delta}(d+d')=\widetilde{\Delta}(d)\cup\widetilde{\Delta}(d')$.

\end{thm}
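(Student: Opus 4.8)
The goal is to prove the addition theorem $\widetilde{\Delta}(d+d')=\widetilde{\Delta}(d)\cup\widetilde{\Delta}(d')$ for two degrees $d$ and $d'$. One inclusion is essentially free: since $d\leq d+d'$ and $d'\leq d+d'$, Theorem~\ref{thm:inclusionextendedsupport} immediately gives $\widetilde{\Delta}(d)\cup\widetilde{\Delta}(d')\subseteq\widetilde{\Delta}(d+d')$. So the real content is the reverse inclusion $\widetilde{\Delta}(d+d')\subseteq\widetilde{\Delta}(d)\cup\widetilde{\Delta}(d')$. For this I would argue on a greedy decomposition $(\gamma_1,\ldots,\gamma_s)$ of $d+d'$: it suffices to show $\Delta(\gamma_i)\subseteq\widetilde{\Delta}(d)\cup\widetilde{\Delta}(d')$ for each $i$, because $\widetilde{\Delta}(d+d')=\bigcup_i\Delta(\gamma_i)$ by definition.

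First I would reduce to a single greedy step. Set $\gamma=\gamma_1$, a maximal root of $d+d'$, so $d(\gamma)\leq d+d'$. The key combinatorial point I want is that $\Delta(\gamma)$ cannot straddle the supports of $d$ and $d'$ in an uncontrolled way — more precisely, that $\Delta(\gamma)$ is contained in $\widetilde{\Delta}(d)\cup\widetilde{\Delta}(d')$. The natural way to get this is to compare $\gamma$ with the maximal roots of $d$ and of $d'$ separately. From $d(\gamma)\leq d+d'$ one does not directly get $d(\gamma)\leq d$ or $d(\gamma)\leq d'$, so the clean approach is to pass through the naive support: since $\gamma$ occurs in a greedy decomposition of $d+d'$, every $\beta\in\Delta(\gamma)$ with $\beta\notin\Delta_P$ lies in $\Delta(d+d')$, and $\Delta(d+d')\subseteq\Delta(d)\cup\Delta(d')$ because the Poincaré pairing $(\omega_\beta,-)$ is additive and degrees are effective (so $(\omega_\beta,d+d')>0$ forces $(\omega_\beta,d)>0$ or $(\omega_\beta,d')>0$). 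This handles the part of $\Delta(\gamma)$ outside $\Delta_P$; the part inside $\Delta_P$ needs the structure of supports of roots appearing in greedy decompositions, which is exactly what the connectedness results of \cite[Section~3]{minimaldegrees} control.

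To make the $\Delta_P$-part work, I would invoke the reduction to connected degrees used throughout this subsection: writing $d=\sum d_i$ and $d'=\sum d'_j$ as sums of connected components (Lemma~\ref{lem:connectedcomponents1}), and using that $\widetilde{\Delta}(d)$ is the disjoint union of the connected sets $\widetilde{\Delta}(d_i)=\Delta(\alpha(d_i))$, one reduces to understanding which connected subset of the Dynkin diagram $\Delta(\gamma)$ sits in. Since $\Delta(\gamma)$ is connected and meets $\widetilde{\Delta}(d+d')=\bigcup\widetilde{\Delta}(d_i)\cup\bigcup\widetilde{\Delta}(d'_j)$, and since $\Delta(\gamma)\setminus\Delta_P$ already lies in one of the pieces $\Delta(d_i)$ or $\Delta(d'_j)$ by the previous paragraph together with connectedness, the whole of $\Delta(\gamma)$ is pinned into the extended support of that single connected component, hence into $\widetilde{\Delta}(d)\cup\widetilde{\Delta}(d')$. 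Then I would recurse: apply the same argument to the maximal root $\gamma_2$ of $(d+d')-d(\gamma_1)$, noting $(d+d')-d(\gamma_1)\leq d+d'$ and iterating, or more cleanly do an induction on $\ell(z_{d+d'}^P)$ or on the number of greedy steps.

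The step I expect to be the main obstacle is precisely the control of $\Delta(\gamma)\cap\Delta_P$ — showing that the "parabolic padding" in the support of a greedy root $\gamma$ of $d+d'$ does not introduce simple roots that fail to appear in $\widetilde{\Delta}(d)$ or $\widetilde{\Delta}(d')$. The honest resolution is to not separate $\Delta_P$ and $\Delta\setminus\Delta_P$ at all, but instead to directly compare maximal roots: I would try to show that any maximal root $\gamma$ of $d+d'$ satisfies $\gamma\leq\alpha'$ for some maximal root $\alpha'$ of $d$ or of $d'$, using the maximality of $\gamma$ together with the effectivity of $d$ and $d'$ and properties of $d(-)$ and the Hecke product from Subsection~\ref{subsec:nbhd}. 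If that comparison holds, then $\Delta(\gamma)\subseteq\Delta(\alpha')\subseteq\widetilde{\Delta}(d)$ or $\subseteq\widetilde{\Delta}(d')$ and the proof closes immediately; establishing this comparison of maximal roots is where the real work lies, and I would lean on \cite[Section~4]{curvenbhd2} for the needed properties of maximal roots and greedy decompositions.
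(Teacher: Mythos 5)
Your easy inclusion and your reduction to showing $\Delta(\gamma)\subseteq\widetilde{\Delta}(d)\cup\widetilde{\Delta}(d')$ for each greedy root $\gamma$ of $d+d'$ match the paper, and your naive-support argument correctly handles $\Delta(\gamma)\setminus\Delta_P$. But the step you yourself flag as the main obstacle --- controlling $\Delta(\gamma)\cap\Delta_P$ --- is not actually closed by what you write. Connectedness of $\Delta(\gamma)$ together with the fact that $\Delta(\gamma)\setminus\Delta_P$ sits inside some $\Delta(d_i)$ does \emph{not} pin $\Delta(\gamma)$ into $\widetilde{\Delta}(d_i)$: a connected set can perfectly well protrude past $\widetilde{\Delta}(d_i)$ into adjacent simple roots of $\Delta_P$ that lie in $\widetilde{\Delta}(d+d')$ but in neither $\widetilde{\Delta}(d)$ nor $\widetilde{\Delta}(d')$, which is exactly the ``parabolic padding'' you are trying to exclude. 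Moreover, your proposed fallback --- that every maximal root $\gamma$ of $d+d'$ satisfies $\gamma\leq\alpha'$ for some maximal root $\alpha'$ of $d$ or of $d'$ --- is false. Already in type $\mathsf{A}_2$ with $P=B$, $d=\beta_1^\vee$ and $d'=\beta_2^\vee$, the unique maximal root of $d+d'$ is the highest root $\beta_1+\beta_2$, which is not below the maximal root $\beta_1$ of $d$ nor below the maximal root $\beta_2$ of $d'$ (the theorem still holds there, but not by your comparison).

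The missing ingredient is a root-theoretic input specific to greedy roots, not a comparison of maximal roots. The paper sets $S=\widetilde{\Delta}(d+d')\setminus(\widetilde{\Delta}(d)\cup\widetilde{\Delta}(d'))$, notes $S\subseteq\Delta_P$, and proves that $S$ is orthogonal (indeed totally disjoint) to $\widetilde{\Delta}(d)\cup\widetilde{\Delta}(d')$. The proof of that orthogonality uses that every root $\alpha$ occurring in a greedy decomposition of $d$ or $d'$ is $P$-cosmall, hence satisfies $(\alpha,\beta)\geq 0$ for all $\beta\in\Delta_P$ (\cite[Corollary~3.19]{minimaldegrees}); writing $\alpha$ as a positive combination of its support and assuming some $\beta\in S$ pairs negatively with a $\beta'\in\Delta(\alpha)$ then forces $\beta\in\Delta(\alpha)$, a contradiction. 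Once $S$ and $\widetilde{\Delta}(d)\cup\widetilde{\Delta}(d')$ are known to be totally disjoint, the connectedness of $\Delta(\gamma)$ for a greedy root $\gamma$ of $d+d'$ (together with $\gamma\notin R_P^+$) does force $\Delta(\gamma)\subseteq\widetilde{\Delta}(d)\cup\widetilde{\Delta}(d')$, and $S=\emptyset$ follows. Without this positivity property of $P$-cosmall roots against $\Delta_P$, your argument cannot be completed.
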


\begin{rem}

The corresponding addition theorem for naive supports is absolutely trivial and also follows immediately from Theorem~\ref{thm:additionextendedsupport} by subtracting $\Delta_P$. Therefore, we can think of Theorem~\ref{thm:additionextendedsupport} as a more refined statement. 

\end{rem}

\begin{proof}[Proof of Theorem~\ref{thm:additionextendedsupport}]

Let $d$ and $d'$ be as in the statement. Let $\widetilde{\Delta}=\widetilde{\Delta}(d)\cup\widetilde{\Delta}(d')$ for short. By Theorem~\ref{thm:inclusionextendedsupport} we have
$$
\widetilde{\Delta}\subseteq\widetilde{\Delta}(d+d')\subseteq\widetilde{\Delta}\cup\Delta_P\,.
$$
Let $S=\widetilde{\Delta}(d+d')\setminus\widetilde{\Delta}$. By the previous inclusions, we clearly have $S\subseteq\Delta_P$ and $\widetilde{\Delta}(d+d')=S\amalg\widetilde{\Delta}$.

\begin{proof}[Claim: $S$ and $\widetilde{\Delta}$ are totally disjoint]\renewcommand{\qedsymbol}{$\triangle$}

Let $\beta\in S$ and $\beta'\in\widetilde{\Delta}$. We have to show that $\beta$ and $\beta'$ are strongly orthogonal. Since clearly $\beta-\beta'\notin R$, it suffices to show that $(\beta,\beta')=0$ (\cite[Lemma~7.3]{minimaldegrees}). Suppose for a contradiction that $(\beta,\beta')\neq 0$. 
%Let $\delta=s_{\beta}(\beta')$. Since $\delta$ is a root and $(\beta,\beta')\neq 0$, we must have $(\beta,\beta')<0$ and thus $\delta>\beta'$.
%One can probably also argue with root strings to show that $(\beta,\beta')<0$ -- but that is an overkill as it is seen so easily directly. 
Since $\beta\neq\beta'$, we clearly have $(\beta,\beta')<0$. (All non-diagonal entries of a Cartan matrix are non-positive.) 

By replacing $d$ with $d'$ if necessary, we may assume that $\beta'\in\widetilde{\Delta}(d)$. This means that there exists a root $\alpha$ which occurs in a greedy decomposition of $d$ such that $\beta'\in\Delta(\alpha)$. 
%By definition we have $\alpha'\in R^+\setminus R_P^+$ and $d(\alpha')\leq d$. Therefore we can find a maximal root $\alpha$ of $d$ such that $\beta'\leq\alpha'\leq\alpha$. Since $\alpha$ is a maximal root of $d$, it is in particular $P$-cosmall.
Since $\alpha$ occurs in a greedy decomposition of $d$, it is in particular $P$-cosmall. Since $\beta\in\Delta_P$, it follows from \cite[Corollary~3.19]{minimaldegrees} that $(\alpha,\beta)\geq 0$. 

Let $\alpha=\sum_{\mu\in\Delta(\alpha)}n_\mu\mu$ be the expression of $\alpha$ as a linear combination of simple roots. Since $\alpha$ is a positive root (we even have $\alpha\in R^+\setminus R_P^+$), we know that $n_\mu>0$ for all $\mu\in\Delta(\alpha)$. By definition of $\alpha$, we have $\beta'\in\Delta(\alpha)$. Hence, we can write 
$$
(\alpha,\beta)=\sum_{\mu\in\Delta(\alpha)\setminus\{\beta'\}}n_\mu(\mu,\beta)+n_{\beta'}(\beta,\beta')\,.
$$
We already figured out that $(\beta,\beta')<0$ and $(\alpha,\beta)\geq 0$. In view of these inequalities and the previous displayed equation, we find that
$$
\sum_{\mu\in\Delta(\alpha)\setminus\{\beta'\}}n_\mu(\mu,\beta)\geq-n_{\beta'}(\beta,\beta')>0\,.
$$
Therefore, there must exist a $\mu\in\Delta(\alpha)\setminus\{\beta'\}$ such that $(\mu,\beta)>0$. Since all non-diagonal entries of a Cartan matrix are non-positive, it follows that $\mu=\beta$ and thus $\beta\in\Delta(\alpha)\subseteq\widetilde{\Delta}(d)\subseteq\widetilde{\Delta}$. This is a contradiction since by definition $\beta\in S$ and $\beta\notin\widetilde{\Delta}$. This proves the claim.
\end{proof}

Let $\alpha$ be a root which occurs in a greedy decomposition of $d+d'$. By definition, we have $\Delta(\alpha)\subseteq\widetilde{\Delta}(d+d')$ and thus
$$
\Delta(\alpha)=(\Delta(\alpha)\cap S)\amalg(\Delta(\alpha)\cap\widetilde{\Delta})\,.
$$
Since $\Delta(\alpha)$ is necessarily connected, the previous claim shows that either $\Delta(\alpha)\subseteq S$ or $\Delta(\alpha)\subseteq\widetilde{\Delta}$. If $\Delta(\alpha)\subseteq S$, then we have $\alpha\in R_P^+$ since $S\subseteq\Delta_P$ -- a contradiction since $\alpha\in R^+\setminus R_P^+$ by definition. Therefore, we conclude that $\Delta(\alpha)\subseteq\widetilde{\Delta}$. Since $\alpha$ was an arbitrary entry in a greedy decomposition of $d+d'$, the definition of the extended support shows that $\widetilde{\Delta}(d+d')\subseteq\widetilde{\Delta}$. This means that $S=\emptyset$ and $\widetilde{\Delta}(d+d')=\widetilde{\Delta}$ -- as claimed.
\end{proof}

\begin{thm}[Addition theorem for greedy decompositions]
\label{thm:additiongreedy}

Let $d_1,\ldots,d_k$ be degrees such that $\widetilde{\Delta}(d_1),\ldots,\widetilde{\Delta}(d_k)$ are pairwise totally disjoint. Let $(\alpha_1^i,\ldots,\alpha_{r_i}^i)$ be a greedy decomposition of $d_i$ for all $1\leq i\leq k$. Let $r=\sum_{i=1}^k r_i$ for short. Let $(\alpha_1,\ldots,\alpha_r)$ be a sequence of roots such that $(\alpha_1^i,\ldots,\alpha_{r_i}^i)$ is a subsequence of $(\alpha_1,\ldots,\alpha_r)$ for all $1\leq i\leq k$. Then the sequence $(\alpha_1,\ldots,\alpha_r)$ is a greedy decomposition of $\sum_{i=1}^k d_i$.

\end{thm}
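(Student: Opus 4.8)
The plan is to show that the concatenated sequence $(\alpha_1,\ldots,\alpha_r)$ satisfies the recursive definition of a greedy decomposition of $d := \sum_{i=1}^k d_i$. By the remark following the definition, the greedy decomposition is unique up to reordering, so it suffices to verify the two defining conditions: first, that $\alpha_1$ is a maximal root of $d$, and second, that $(\alpha_2,\ldots,\alpha_r)$ is a greedy decomposition of $d - d(\alpha_1)$. I would proceed by induction on $r = \sum_{i=1}^k r_i$. The case $r = 0$ is trivial since then all $d_i = 0$ and $d = 0$.

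For the inductive step, write $\alpha_1 = \alpha_1^j$ for the appropriate index $j$ (the first entry of the concatenation comes from one of the blocks, say the $j$-th). The first key point is to prove that $\alpha_1^j$, which is a maximal root of $d_j$, is actually a maximal root of the whole degree $d$. Here I would use Theorem~\ref{thm:additionextendedsupport}: any root $\gamma \in R^+ \setminus R_P^+$ with $d(\gamma) \le d$ has $\Delta(\gamma) \subseteq \widetilde{\Delta}(d) = \coprod_i \widetilde{\Delta}(d_i)$; since $\Delta(\gamma)$ is connected and the pieces $\widetilde{\Delta}(d_i)$ are pairwise totally disjoint (hence in particular in distinct connected components of the Dynkin diagram), $\gamma$ lives entirely in one block, i.e. $\Delta(\gamma) \subseteq \widetilde{\Delta}(d_i)$ for a single $i$. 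I then want to conclude $d(\gamma) \le d_i$: the coefficients of $d$ along coroots supported in $\widetilde{\Delta}(d_i)$ should match those of $d_i$, because the other $d_{i'}$ contribute nothing there (their greedy-decomposition roots have support in the totally disjoint $\widetilde{\Delta}(d_{i'})$). Thus a maximal root of $d$ whose support meets block $j$ must dominate $\alpha_1^j$; but $\alpha_1^j$ is maximal among roots $\gamma$ with $d(\gamma) \le d_j$, so no such strictly larger root exists, giving that $\alpha_1^j$ is maximal for $d$.

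The second key point is to re-apply the theorem after subtracting $d(\alpha_1^j)$. Set $d_j' = d_j - d(\alpha_1^j)$, so $(\alpha_2^j,\ldots,\alpha_{r_j}^j)$ is a greedy decomposition of $d_j'$ by definition. We have $d - d(\alpha_1^j) = d_j' + \sum_{i \ne j} d_i$. I must check that the extended supports $\widetilde{\Delta}(d_1), \ldots, \widetilde{\Delta}(d_j'), \ldots, \widetilde{\Delta}(d_k)$ are still pairwise totally disjoint; this follows from Theorem~\ref{thm:inclusionextendedsupport} since $d_j' \le d_j$ forces $\widetilde{\Delta}(d_j') \subseteq \widetilde{\Delta}(d_j)$, and totally-disjointness is inherited by subsets. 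Moreover the sequence $(\alpha_2,\ldots,\alpha_r)$ obtained by deleting $\alpha_1^j$ from the concatenation still contains each block's greedy decomposition (with $d_j$ replaced by $d_j'$) as a subsequence. So the inductive hypothesis applies and $(\alpha_2,\ldots,\alpha_r)$ is a greedy decomposition of $d - d(\alpha_1^j)$, completing the induction.

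The main obstacle I anticipate is the careful bookkeeping in the claim that if $\Delta(\gamma) \subseteq \widetilde{\Delta}(d_i)$ then $d(\gamma) \le d_i$ (equivalently, that the contributions of the other degrees $d_{i'}$ to the coroot-lattice coordinates lying in $\widetilde{\Delta}(d_i)$ vanish). This requires knowing that for a root $\alpha'$ occurring in a greedy decomposition of $d_{i'}$, the coroot $(\alpha')^\vee$ has support contained in $\Delta(\alpha') \subseteq \widetilde{\Delta}(d_{i'})$, and that $\widetilde{\Delta}(d_{i'}) \cap \widetilde{\Delta}(d_i) = \emptyset$; then comparing coefficients along the simple coroots indexed by $\widetilde{\Delta}(d_i) \setminus \Delta_P$ (the relevant basis of $H_2(X)$) shows $d$ and $d_i$ agree there, while for simple coroots outside $\widetilde{\Delta}(d_i)$ one only needs $d(\gamma) = 0$ there, which holds since $\Delta(\gamma) \subseteq \widetilde{\Delta}(d_i)$. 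One should also handle the edge case where a greedy-decomposition root of $d_{i'}$ might lie in $R_P^+$ — but by definition greedy-decomposition roots are in $R^+ \setminus R_P^+$, so this does not occur. With these observations in place the argument is routine, if somewhat notation-heavy.
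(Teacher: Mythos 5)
Your proposal is correct and follows essentially the same route as the paper: the key step in both is that the first entry is a maximal root of the sum, proved via Theorem~\ref{thm:additionextendedsupport} plus connectedness of a root's support to confine any competing root to a single block, followed by induction after subtracting $d(\alpha_1)$ (with total disjointness preserved by Theorem~\ref{thm:inclusionextendedsupport}). The only organizational difference is that you run a single induction on $r$ for general $k$, while the paper first treats $k=2$ by induction on $r$ and then inducts on $k$; the extra detail you supply for the step $\Delta(\gamma)\subseteq\widetilde{\Delta}(d_i)\Rightarrow d(\gamma)\leq d_i$ is correct and is left implicit in the paper.
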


\begin{proof}

We first assume that $k=2$ and prove the theorem for that case. We do so by induction on $r$. The case where $r_1=0$ or $r_2=0$ is obvious. Assume that $r_1>0$ and $r_2>0$. This means that $r\geq 2$. Assume further that the statement is known for sequences of length strictly less than $r$. By renaming the indices if necessary ($i\in\{1,2\}$), we may assume that $\alpha_1=\alpha_1^1$. Let us write $\alpha=\alpha_1=\alpha_1^1$ and $d=d_1+d_2$ for short.

\begin{proof}[Claim: $\alpha$ is a maximal root of $d$]\renewcommand{\qedsymbol}{$\triangle$}

Let $\alpha'$ be a maximal root of $d$ such that $\alpha\leq\alpha'$. We can clearly choose such a root $\alpha'$ since $d(\alpha)\leq d_1\leq d$ and $\alpha\in R^+\setminus R_P^+$ by definition. By Theorem~\ref{thm:additionextendedsupport}, we have 
$$
\Delta(\alpha')\subseteq\widetilde{\Delta}(d)=\widetilde{\Delta}(d_1)\amalg\widetilde{\Delta}(d_2)\,.
$$
Since $\widetilde{\Delta}(d_1)$ and $\widetilde{\Delta}(d_2)$ are totally disjoint and since $\Delta(\alpha')$ is connected, it follows that ether $\Delta(\alpha')\subseteq\widetilde{\Delta}(d_1)$ or $\Delta(\alpha')\subseteq\widetilde{\Delta}(d_2)$. Since $\alpha\leq\alpha'$, we have $\Delta(\alpha)\subseteq\Delta(\alpha')$. Thus the second case (i.e. $\Delta(\alpha')\subseteq\widetilde{\Delta}(d_2)$) implies that $\Delta(\alpha)\subseteq\widetilde{\Delta}(d_1)\cap\widetilde{\Delta}(d_2)$ which contradicts the assumption that $\widetilde{\Delta}(d_1)$ and $\widetilde{\Delta}(d_2)$ are totally disjoint. Therefore, we conclude that $\Delta(\alpha')\subseteq\widetilde{\Delta}(d_1)$. In view of this inclusion, the inequality $d(\alpha')\leq d$ implies that $d(\alpha')\leq d_1$. By definition, $\alpha$ is a maximal root of $d_1$. This means that we must have $\alpha=\alpha'$ (since $\alpha\leq\alpha'$ and $d(\alpha')\leq d_1$). By the choice of $\alpha'$, we have now proved that $\alpha$ is a maximal root of $d$ -- as claimed.
\end{proof}

In view of the claim, $(\alpha_1,\ldots,\alpha_r)$ is a greedy decomposition of $d$ if and only if $(\alpha_2,\ldots,\alpha_r)$ is a greedy decomposition of $d-d(\alpha)=(d_1-d(\alpha))+d_2$. We now apply the induction hypothesis to the sequence $(\alpha_2,\ldots,\alpha_r)$ to see that the latter statement is true. (This is possible since $(\alpha_2^1,\ldots,\alpha_{r_1}^1)$ is a greedy decomposition of $d_1-d(\alpha)$, $(\alpha_1^2,\ldots,\alpha_{r_2}^2)$ is a greedy decomposition of $d_2$, $(\alpha_2^1,\ldots,\alpha_{r_1}^1)$ and $(\alpha_1^2,\ldots,\alpha_{r_2}^2)$ are subsequences of $(\alpha_2,\ldots,\alpha_r)$, and since $\widetilde{\Delta}(d_1-d(\alpha))\subseteq\widetilde{\Delta}(d_1)$ and $\widetilde{\Delta}(d_2)$ are totally disjoint.)

We now prove the statement for arbitrary $k$ by induction. The case $k=1$ is obvious. The case $k=2$ was treated above. Assume that $k>2$ and that the statement is known for all positive integers strictly less than $k$.

By definition and assumption, it is clear that for all $1\leq j\leq r$ there exists a unique $1\leq i\leq k$ such that $\Delta(\alpha_j)\subseteq\widetilde{\Delta}(d_i)$. Therefore, we can form a maximal subsequence $(\mu_1,\ldots,\mu_{r'})$ of $(\alpha_1,\ldots,\alpha_r)$ of length $r'=r_1+r_2$ such that 
\begin{equation}
\label{eq:ih}
\Delta(\mu_j)\subseteq\widetilde{\Delta}(d_1+d_2)=\widetilde{\Delta}(d_1)\amalg\widetilde{\Delta}(d_2)\text{ for all }1\leq j\leq r'\,.
\end{equation}
Here, the last equality follows from Theorem~\ref{thm:additionextendedsupport} and the totally disjointness assumption. Moreover, by definition, it is clear that $(\alpha_1^1,\ldots,\alpha_{r_1}^1)$ and $(\alpha_1^2,\ldots,\alpha_{r_2}^2)$ are subsequences of $(\mu_1,\ldots,\mu_{r'})$. The induction hypothesis applied to the case $k=2$ implies that $(\mu_1,\ldots,\mu_{r'})$ is a greedy decomposition of $d_1+d_2$. Finally, the induction hypothesis applied to the $k-1$ degrees $d_1+d_2,d_3,\ldots,d_k$ and the greedy decompositions $(\mu_1,\ldots,\mu_{r'})$ of $d_1+d_2$ and $(\alpha_1^i,\ldots,\alpha_{r_i}^i)$ of $d_i$ for all $3\leq i\leq k$ (clearly, $(\mu_1,\ldots,\mu_{r'})$ is a subsequence of $(\alpha_1,\ldots,\alpha_r)$ and $\widetilde{\Delta}(d_1+d_2),\widetilde{\Delta}(d_3),\ldots,\widetilde{\Delta}(d_k)$ are pairwise totally disjoint, cf. Equation~\eqref{eq:ih} and the initial totally disjointness assumption) implies that $(\alpha_1,\ldots,\alpha_r)$ is a greedy decomposition of $\sum_{i=1}^k d_i$. This is all we wanted to prove.
\end{proof}

\begin{comment}

\begin{thm}[Addition theorem for greedy decompositions]

Let $d$ and $d'$ be two degrees such that $\widetilde{\Delta}(d)$ and $\widetilde{\Delta}(d')$ are totally disjoint. Let $(\alpha_1,\ldots,\alpha_r)$ be a greedy decomposition of $d$. Let $(\alpha_1',\ldots,\alpha_s')$ be a greedy decomposition of $d'$. Then $(\alpha_1,\ldots,\alpha_r,\alpha_1',\ldots,\alpha_s')$ is a greedy decomposition of $d+d'$.
%in particular: $\widetilde{\Delta}(d+d')=\widetilde{\Delta}(d)\amalg\widetilde{\Delta}(d')$.

\end{thm}

\end{comment}

\begin{thm}[Converse of Theorem~\ref{thm:additiongreedy}]
\label{thm:additiongreedyconverse}

Let $d_1,\ldots,d_k$ be degrees such that $$\widetilde{\Delta}(d_1),\ldots,\widetilde{\Delta}(d_k)$$ are pairwise totally disjoint. Let $(\alpha_1,\ldots,\alpha_r)$ be a greedy decomposition of $\sum_{i=1}^k d_i$. Then there exist unique greedy decompositions $(\alpha_1^i,\ldots,\alpha_{r_i}^i)$ of $d_i$ for all $1\leq i\leq k$ such that $r=\sum_{i=1}^k r_i$ and such that $(\alpha_1^i,\ldots,\alpha_{r_i}^i)$ is a subsequence of $(\alpha_1,\ldots,\alpha_r)$ for all $1\leq i\leq k$. In other words, every greedy decomposition of $\sum_{i=1}^k d_i$ can be constructed in the way described in Theorem~\ref{thm:additiongreedy} and uniquely determines the greedy decompositions of $d_i$ for all $1\leq i\leq k$ out of which it is constructed.\footnote{The other way round, it should be clear that $(\alpha_1,\ldots,\alpha_r)$ is \emph{not} uniquely determined by $(\alpha_1^i,\ldots,\alpha_{r_i}^i)$ for all $1\leq i\leq k$.}

\end{thm}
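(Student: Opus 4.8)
The plan is to prove Theorem~\ref{thm:additiongreedyconverse} by combining Theorem~\ref{thm:additiongreedy} with the uniqueness (up to reordering) of greedy decompositions, which is in force throughout the paper. First I would set $d=\sum_{i=1}^k d_i$ and let $(\alpha_1,\ldots,\alpha_r)$ be the given greedy decomposition of $d$. By Theorem~\ref{thm:additionextendedsupport} we have $\widetilde{\Delta}(d)=\coprod_{i=1}^k\widetilde{\Delta}(d_i)$, and since each $\Delta(\alpha_j)$ is connected (as $\alpha_j\in R^+\setminus R_P^+$) and the $\widetilde{\Delta}(d_i)$ are pairwise totally disjoint, each $\Delta(\alpha_j)$ is contained in exactly one $\widetilde{\Delta}(d_i)$. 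This gives a well-defined partition of the index set $\{1,\ldots,r\}$ into blocks $J_1,\ldots,J_k$ with $j\in J_i\iff\Delta(\alpha_j)\subseteq\widetilde{\Delta}(d_i)$; let $(\alpha_1^i,\ldots,\alpha_{r_i}^i)$ denote the subsequence of $(\alpha_1,\ldots,\alpha_r)$ indexed by $J_i$, so $r=\sum r_i$ by construction.

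The core step is to show that $(\alpha_1^i,\ldots,\alpha_{r_i}^i)$ is a greedy decomposition of $d_i$. I would argue this by a direct induction on $r$, mimicking the proof of Theorem~\ref{thm:additiongreedy}: the first entry $\alpha_1$ lies in some block, say $J_1$ after renaming, and $\alpha_1$ is a maximal root of $d$; I need that $\alpha_1$ is then a maximal root of $d_1$. Indeed, $d(\alpha_1)\le d=\sum d_i$, and writing $d(\alpha_1)$ in terms of the $\mathbb{Z}$-basis of $H_2(X)$ from Equation~\eqref{eq:degdecomp} together with the fact that the naive supports $\Delta(d_i)$ are pairwise disjoint (a consequence of $\widetilde{\Delta}(d_i)$ being pairwise totally disjoint), the inequality $d(\alpha_1)\le d$ together with $\Delta(\alpha_1)\subseteq\widetilde{\Delta}(d_1)$ forces $d(\alpha_1)\le d_1$. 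If $\alpha_1$ were not maximal for $d_1$, there would be a strictly larger $\alpha'$ with $d(\alpha')\le d_1\le d$, contradicting maximality of $\alpha_1$ for $d$. So $\alpha_1$ is a maximal root of $d_1$, and then $(\alpha_2,\ldots,\alpha_r)$ is a greedy decomposition of $d-d(\alpha_1)=(d_1-d(\alpha_1))+d_2+\cdots+d_k$; since $\widetilde{\Delta}(d_1-d(\alpha_1))\subseteq\widetilde{\Delta}(d_1)$ by Theorem~\ref{thm:inclusionextendedsupport}, the totally disjointness hypotheses persist, and the induction hypothesis applied to $(\alpha_2,\ldots,\alpha_r)$ yields that the block subsequences are greedy decompositions of $d_1-d(\alpha_1),d_2,\ldots,d_k$ respectively; prepending $\alpha_1$ to the first block gives a greedy decomposition of $d_1$.

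For uniqueness, suppose $(\beta_1^i,\ldots,\beta_{s_i}^i)$ is another family of greedy decompositions of the $d_i$ with $\sum s_i=r$ such that each is a subsequence of $(\alpha_1,\ldots,\alpha_r)$. For each index $j\in\{1,\ldots,r\}$, the root $\alpha_j$ occurs in exactly one of these subsequences — the one indexed by the unique $i$ with $\Delta(\alpha_j)\subseteq\widetilde{\Delta}(d_i)$, because the entries of a greedy decomposition of $d_i$ all have support inside $\widetilde{\Delta}(d_i)$ and these sets are disjoint. Hence the decomposition of $\{1,\ldots,r\}$ into the positions occupied by the $i$-th subsequence coincides block-for-block with the partition $J_1,\ldots,J_k$ constructed above, so $(\beta_1^i,\ldots,\beta_{s_i}^i)=(\alpha_1^i,\ldots,\alpha_{r_i}^i)$ as subsequences of $(\alpha_1,\ldots,\alpha_r)$ for every $i$. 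This proves uniqueness. I expect the main obstacle to be the bookkeeping in the ``maximal root of $d$ implies maximal root of $d_i$'' step — specifically making rigorous that $d(\alpha_1)\le d$ together with the connectedness/disjointness of supports implies $d(\alpha_1)\le d_i$ for the correct $i$; this is where one genuinely uses that the $\widetilde{\Delta}(d_i)$ are disjoint (not merely that the $d_i$ have disjoint supports), via the identity $\widetilde{\Delta}(d(\alpha_1))=\Delta(\alpha_1)$ for the $P$-cosmall root $\alpha_1$ and Theorem~\ref{thm:inclusionextendedsupport}.
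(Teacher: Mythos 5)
Your proof is correct, and its skeleton is the same as the paper's: partition the positions $\{1,\ldots,r\}$ into blocks according to the unique $i$ with $\Delta(\alpha_j)\subseteq\widetilde{\Delta}(d_i)$ (using Theorem~\ref{thm:additionextendedsupport}, connectedness of each $\Delta(\alpha_j)$, and total disjointness), and obtain uniqueness by observing that any admissible family of subsequences is forced to occupy exactly these blocks. The one place you diverge is the existence step: the paper disposes of ``each block subsequence is a greedy decomposition of $d_i$'' by a single citation to \cite[Proposition~3.10(7)]{minimaldegrees}, whereas you re-derive it by an induction on $r$ that mirrors the proof of Theorem~\ref{thm:additiongreedy}, the key point being that the maximal root $\alpha_1$ of $d$ satisfies $d(\alpha_1)\leq d_1$ (by comparing coefficients on the disjoint naive supports) and is therefore a maximal root of $d_1$, after which Theorem~\ref{thm:inclusionextendedsupport} lets the induction hypothesis apply to $(d_1-d(\alpha_1))+d_2+\cdots+d_k$. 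Your version is self-contained at the cost of some bookkeeping (in particular, one should note that the block of $(\alpha_2,\ldots,\alpha_r)$ attached to $d_1-d(\alpha_1)$ coincides with $J_1\setminus\{1\}$, which follows by counting since both families of blocks partition $\{2,\ldots,r\}$); the paper's version is shorter but leans on the companion paper.
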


\begin{proof}

Let $d=\sum_{i=1}^k d_i$ for short. By Theorem~\ref{thm:additionextendedsupport} and assumption, it is clear that we have
$$
\widetilde{\Delta}(d)=\coprod_{i=1}^k\widetilde{\Delta}(d_i)\,.
$$
Again by the totally disjointness assumption, it is therefore clear that for all $1\leq j\leq r$ there exists a unique $1\leq i\leq k$ such that $\Delta(\alpha_j)\subseteq\widetilde{\Delta}(d_i)$. It follows that
$$
d_i=\sum_{1\leq j\leq r\colon\Delta(\alpha_j)\subseteq\widetilde{\Delta}(d_i)}d(\alpha_j)\,.
$$
For all $1\leq i\leq k$, we are now forced to define $(\alpha_1^i,\ldots,\alpha_{r_i}^i)$ as the maximal subsequence of $(\alpha_1,\ldots,\alpha_r)$ such that $\Delta(\alpha_j^i)\subseteq\widetilde{\Delta}(d_i)$ for all $1\leq j\leq r_i$. This already proves the uniqueness part of the statement. By \cite[Proposition~3.10(7)]{minimaldegrees} it also follows that $(\alpha_1^i,\ldots,\alpha_{r_i}^i)$ is indeed a greedy decomposition of $d_i$ for all $1\leq i\leq k$. By definition, we also have $r=\sum_{i=1}^k r_i$.
%since $r_i=\card\{1\leq j\leq r\mid\Delta(\alpha_j)\subseteq\widetilde{\Delta}(d_i)\}$ etc.
\end{proof}

\begin{thm}
\label{thm:maxroots}

Let $d$ be a degree. Let $d_1,\ldots,d_k$ be the connected components of $d$. Then there exist precisely $k$ maximal roots of $d$, namely $\alpha(d_1),\ldots,\alpha(d_k)$.

\end{thm}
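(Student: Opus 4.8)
The plan is to prove this by reducing to the connected case via the connected components machinery already established, and then handling connected degrees directly using the uniqueness of $\alpha(d)$ for connected $d$. First I would recall that by Lemma~\ref{lem:connectedcomponents1}, $d = \sum_{i=1}^k d_i$ where each $d_i$ is connected and $\widetilde{\Delta}(d_1),\ldots,\widetilde{\Delta}(d_k)$ are the distinct connected components of $\widetilde{\Delta}(d)$, hence pairwise totally disjoint. For each $i$, since $d_i \neq 0$ is connected, Notation~\ref{not:alphad} gives the uniquely determined first entry $\alpha(d_i)$ of any greedy decomposition of $d_i$, and by the same reference $\widetilde{\Delta}(d_i) = \Delta(\alpha(d_i))$.

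The key step is to show $\alpha(d_1),\ldots,\alpha(d_k)$ are exactly the maximal roots of $d$. For the "these are maximal roots of $d$" direction: fix $i$ and suppose $\alpha(d_i) \leq \alpha'$ for some root $\alpha' \in R^+ \setminus R_P^+$ with $d(\alpha') \leq d$. Since $\alpha'$ appears in (extends to) a greedy decomposition of $d$ — or more directly, since $d(\alpha')\le d$ and $d(\alpha(d_i))\le d$ — I would argue $\Delta(\alpha') \subseteq \widetilde{\Delta}(d) = \coprod_j \widetilde{\Delta}(d_j)$ using Theorem~\ref{thm:inclusionextendedsupport} applied to $d(\alpha')\le d$ together with the fact that $\widetilde{\Delta}(d(\alpha'))=\Delta(\alpha')$. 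Because $\Delta(\alpha')$ is connected and contains $\Delta(\alpha(d_i)) \subseteq \widetilde{\Delta}(d_i)$, total disjointness forces $\Delta(\alpha') \subseteq \widetilde{\Delta}(d_i)$. Then $d(\alpha') \leq d$ localizes to $d(\alpha') \leq d_i$ (comparing only the coefficients of simple coroots in $\widetilde\Delta(d_i)$, since the other $d_j$ contribute nothing there). Since $\alpha(d_i)$ is a maximal root of $d_i$ and $\alpha(d_i) \leq \alpha'$ with $d(\alpha') \leq d_i$, maximality gives $\alpha' = \alpha(d_i)$. Hence $\alpha(d_i)$ is maximal for $d$.

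For the converse — every maximal root of $d$ is one of the $\alpha(d_i)$ — let $\alpha$ be a maximal root of $d$. Then $\alpha$ is the first entry of some greedy decomposition of $d$, so $\Delta(\alpha) \subseteq \widetilde{\Delta}(d) = \coprod_j \widetilde{\Delta}(d_j)$; connectedness of $\Delta(\alpha)$ and total disjointness pin $\Delta(\alpha) \subseteq \widetilde{\Delta}(d_i)$ for a unique $i$. As above, $d(\alpha) \leq d$ localizes to $d(\alpha) \leq d_i$. Now $\alpha(d_i)$ is a maximal root of $d_i$, so there is a maximal root of $d_i$ above $\alpha$; but the maximal root of a connected degree is unique (Theorem~\ref{thm:maxroots} itself in the connected case, or directly: $\alpha(d_i)$ is the unique first entry and maximal roots of $d_i$ are exactly first entries of greedy decompositions — but I should instead invoke that for connected $d_i$ there is only one, which is precisely what Notation~\ref{not:alphad}/\cite{minimaldegrees} asserts). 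So $\alpha \leq \alpha(d_i)$, and since $d(\alpha(d_i)) \leq d_i \leq d$, maximality of $\alpha$ in $d$ gives $\alpha = \alpha(d_i)$. Finally, distinctness of $\alpha(d_1),\ldots,\alpha(d_k)$ follows from distinctness of their supports $\widetilde{\Delta}(d_i)=\Delta(\alpha(d_i))$.

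The main obstacle I anticipate is the "localization" claim that $d(\alpha') \leq d$ implies $d(\alpha') \leq d_i$ when $\Delta(\alpha') \subseteq \widetilde{\Delta}(d_i)$: one must check that the partial order on $H_2(X)$, read off via the coefficients of $\sigma(s_\beta)$ for $\beta \in \Delta \setminus \Delta_P$, behaves correctly under the decomposition $d = \sum d_j$ — i.e., that $d(\alpha')$, being supported on $\Delta(\alpha')\setminus\Delta_P \subseteq \widetilde\Delta(d_i)\setminus\Delta_P$, only "sees" the $d_i$ summand because the other $d_j$ have naive support disjoint from $\widetilde\Delta(d_i)$. This should follow cleanly from Theorem~\ref{thm:additionextendedsupport} / the disjointness of the naive supports $\Delta(d_j)$, but it is the step that genuinely uses the structure of the connected-component decomposition rather than being a formality, and it is likely where the author spends a sentence or two of care (quite possibly citing a localization statement from \cite{minimaldegrees} such as the behavior of $\leq$ under restriction to a connected component).
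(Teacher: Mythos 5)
Your argument is correct, but it takes a different route from the paper's. The paper treats a maximal root of $d$ as \emph{the first entry of some greedy decomposition of $d$} and then simply quotes Theorem~\ref{thm:additiongreedy} and Theorem~\ref{thm:additiongreedyconverse}: greedy decompositions of $d$ are exactly the shuffles of greedy decompositions of the connected components $d_i$, so the possible first entries of $d$ are exactly the (unique) first entries $\alpha(d_1),\ldots,\alpha(d_k)$ of the components. That makes the proof essentially two lines, at the cost of invoking the full shuffle structure established in Subsection~\ref{subsec:addition}. You instead work with the order-theoretic definition of a maximal root and carry out a support-localization argument: $\Delta(\alpha')\subseteq\widetilde{\Delta}(d_i)$ forces $d(\alpha')\leq d$ to descend to $d(\alpha')\leq d_i$ because the naive supports $\Delta(d_j)$ are pairwise disjoint, after which uniqueness of the maximal root of a connected degree (\cite[Proposition~3.16]{minimaldegrees} via Notation~\ref{not:alphad}) finishes both inclusions. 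Your localization step is sound and is exactly the point where the component structure is genuinely used; your version is longer but only relies on Theorem~\ref{thm:inclusionextendedsupport}, Lemma~\ref{lem:connectedcomponents1} and the coefficient-wise description of $\leq$, not on the addition theorems for greedy decompositions.

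One small inaccuracy: the asserted identity $\widetilde{\Delta}(d(\alpha'))=\Delta(\alpha')$ is false in general (it holds when $\alpha'$ is $P$-cosmall, but a greedy decomposition of $d(\alpha')$ may involve roots with strictly larger support). What your argument actually needs is only the inclusion $\Delta(\alpha')\subseteq\widetilde{\Delta}(d(\alpha'))$, which does hold: any maximal root of $d(\alpha')$ lying above $\alpha'$ occurs in a greedy decomposition of $d(\alpha')$, so its support, hence $\Delta(\alpha')$, is contained in $\widetilde{\Delta}(d(\alpha'))$. (This is the same observation as in the second paragraph of the proof of Theorem~\ref{thm:inclusionextendedsupport}.) With that correction the proof goes through.
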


\begin{proof}

It is clear that $\alpha(d_1),\ldots,\alpha(d_k)$ are pairwise distinct, since their supports are even pairwise totally disjoint (cf. Lemma~\ref{lem:connectedcomponents1}). Therefore, it suffices to show the equality of sets
$$
\{\alpha\text{ a maximal root of }d\}=\{\alpha(d_1),\ldots,\alpha(d_k)\}\,.
$$
We first prove the inclusion \enquote{$\supseteq$}. Let $1\leq i\leq k$. Each greedy decomposition of $d_i$ has as unique first entry $\alpha(d_i)$. Consequently, Theorem~\ref{thm:additiongreedy} applied to $d_1,\ldots,d_k$ shows that there exists a greedy decomposition of $d$ which has as first entry $\alpha(d_i)$. (The assumption of Theorem~\ref{thm:additiongreedy} is satisfied in view of Lemma~\ref{lem:connectedcomponents1}.) This means that $\alpha(d_i)$ is a maximal root of $d$. Next, we prove the inclusion \enquote{$\subseteq$}. Let $\alpha$ be a maximal root of $d$. This means that there exists a greedy decomposition of $d$ which has as first entry $\alpha$. By Theorem~\ref{thm:additiongreedyconverse} applied to $d_1,\ldots,d_k$ there exists an $1\leq i\leq k$ and a greedy decomposition of $d_i$ such that the first entry of this greedy decomposition is $\alpha$. (Again, the assumption of Theorem~\ref{thm:additiongreedyconverse} is satisfied in view of Lemma~\ref{lem:connectedcomponents1}.) In other words, we have $\alpha=\alpha(d_i)$ for some $1\leq i\leq k$.
\end{proof}

\begin{cor}
\label{cor:maxroots}

Let $d$ be a degree. All maximal roots of $d$ occur in every greedy decomposition of $d$.

\end{cor}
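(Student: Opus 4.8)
The plan is to deduce the corollary at once from Theorem~\ref{thm:maxroots} together with the converse addition theorem for greedy decompositions, Theorem~\ref{thm:additiongreedyconverse}. The idea is that Theorem~\ref{thm:maxroots} identifies the maximal roots of $d$ as precisely the first entries $\alpha(d_1),\ldots,\alpha(d_k)$ of greedy decompositions of the connected components $d_1,\ldots,d_k$ of $d$, while Theorem~\ref{thm:additiongreedyconverse} guarantees that an arbitrary greedy decomposition of $d=\sum_i d_i$ restricts, on each connected component, to a genuine greedy decomposition of $d_i$. Combining these two facts forces each $\alpha(d_i)$ to appear.

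Concretely, I would argue as follows. If $d=0$ there is nothing to prove: its unique greedy decomposition is empty and it has no maximal roots. So assume $d\neq 0$ and let $d_1,\ldots,d_k$ be the connected components of $d$. By Lemma~\ref{lem:connectedcomponents1} each $d_i$ is a connected degree with $\widetilde{\Delta}(d_i)$ a (nonempty) connected component of $\widetilde{\Delta}(d)$, so $d_i\neq 0$, the root $\alpha(d_i)$ is defined, and the sets $\widetilde{\Delta}(d_1),\ldots,\widetilde{\Delta}(d_k)$ are pairwise totally disjoint. By Theorem~\ref{thm:maxroots} the maximal roots of $d$ are exactly $\alpha(d_1),\ldots,\alpha(d_k)$, so it suffices to show that each $\alpha(d_i)$ occurs in an arbitrarily chosen greedy decomposition $(\alpha_1,\ldots,\alpha_r)$ of $d$. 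Apply Theorem~\ref{thm:additiongreedyconverse} to $d_1,\ldots,d_k$: for each $1\leq i\leq k$ it yields a greedy decomposition $(\alpha_1^i,\ldots,\alpha_{r_i}^i)$ of $d_i$ which is a subsequence of $(\alpha_1,\ldots,\alpha_r)$. Since $d_i$ is a connected degree, the first entry $\alpha_1^i$ of this greedy decomposition is $\alpha(d_i)$ by Notation~\ref{not:alphad}; as it is a subsequence of $(\alpha_1,\ldots,\alpha_r)$, the root $\alpha(d_i)$ occurs among $\alpha_1,\ldots,\alpha_r$. Hence every maximal root of $d$ occurs in $(\alpha_1,\ldots,\alpha_r)$, as required.

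Since every ingredient is already in place, I expect essentially no obstacle here. The only points needing a word of care are the degenerate case $d=0$ (handled above) and the bookkeeping that the hypotheses of Theorems~\ref{thm:maxroots} and \ref{thm:additiongreedyconverse} — that the extended supports of the connected components are pairwise totally disjoint — are satisfied, which is exactly Lemma~\ref{lem:connectedcomponents1}; and that the subsequences extracted by Theorem~\ref{thm:additiongreedyconverse} really do begin with $\alpha(d_i)$, which follows because they are greedy decompositions of connected degrees and the first entry of such a decomposition is well defined.
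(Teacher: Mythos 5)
Your proof is correct. It differs mildly from the paper's route: the paper deduces the corollary from Lemma~\ref{lem:maxroots}, i.e.\ it first exhibits \emph{one} greedy decomposition of $d$ beginning with $\alpha(d_1),\ldots,\alpha(d_k)$ via the forward addition theorem (Theorem~\ref{thm:additiongreedy}), and then invokes the uniqueness of the greedy decomposition up to reordering to transfer this to \emph{every} greedy decomposition. You instead work directly with an arbitrary greedy decomposition of $d$ and apply the converse addition theorem (Theorem~\ref{thm:additiongreedyconverse}) to split it into greedy decompositions of the connected components, each of which must begin with $\alpha(d_i)$ since $d_i$ is connected. Both arguments lean on Theorem~\ref{thm:maxroots} and Lemma~\ref{lem:connectedcomponents1}; yours avoids an explicit appeal to the uniqueness-up-to-reordering result, at the cost of using the (slightly heavier) converse theorem, and it also simultaneously proves a bit less than the paper does, since Lemma~\ref{lem:maxroots} (the existence of a greedy decomposition whose initial segment consists of all the maximal roots) is needed elsewhere and is not recovered by your argument. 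Your attention to the degenerate cases ($d=0$, and $d_i\neq 0$ so that $\alpha(d_i)$ is defined and $r_i\geq 1$) is appropriate and consistent with the paper's conventions.
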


\begin{lem}
\label{lem:maxroots}

Let $d$ be a degree. Let $d_1,\ldots,d_k$ be the connected components of $d$. Then there exist roots $\alpha_{k+1},\ldots,\alpha_r\in R^+\setminus R_P^+$ ($k\leq r$) such that $(\alpha(d_1),\ldots,\alpha(d_k),\alpha_{k+1},\ldots,\alpha_r)$ is a greedy decomposition of $d$.

\end{lem}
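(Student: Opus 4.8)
The idea is to build the desired greedy decomposition starting from the maximal roots of $d$ and then continuing greedily on the remainder. First, by Lemma~\ref{lem:connectedcomponents1}, the connected components $d_1,\ldots,d_k$ of $d$ satisfy $d=\sum_{i=1}^k d_i$ and $\widetilde{\Delta}(d_1),\ldots,\widetilde{\Delta}(d_k)$ are pairwise totally disjoint; in particular each $d_i$ is a connected degree, so the first entry $\alpha(d_i)$ of a greedy decomposition of $d_i$ is well-defined whenever $d_i\neq 0$ (and if $d_i=0$ the component contributes nothing, which is a triviality we can deal with as in the conventions). Thus we may take for each $i$ a greedy decomposition of $d_i$ whose first entry is $\alpha(d_i)$.

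Next I would invoke Theorem~\ref{thm:additiongreedy} directly: concatenating the chosen greedy decompositions of $d_1,\ldots,d_k$ (in the order $d_1,\ldots,d_k$) produces a greedy decomposition of $\sum_{i=1}^k d_i = d$. Strictly, Theorem~\ref{thm:additiongreedy} allows any sequence $(\alpha_1,\ldots,\alpha_r)$ that has each $(\alpha_1^i,\ldots,\alpha_{r_i}^i)$ as a subsequence; the concatenation is such a sequence. Since each chosen greedy decomposition of $d_i$ begins with $\alpha(d_i)$, the concatenation begins with $\alpha(d_1)$; but more is true — I want all of $\alpha(d_1),\ldots,\alpha(d_k)$ to appear among the first $k$ entries. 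This follows because within the concatenation the block coming from $d_i$ starts at position $1+\sum_{j<i}r_j$, and I can instead choose the interleaving sequence promised by Theorem~\ref{thm:additiongreedy} so that the $k$ first entries $\alpha(d_1),\ldots,\alpha(d_k)$ occupy positions $1,\ldots,k$: explicitly, list $\alpha(d_1),\ldots,\alpha(d_k)$ first, then append the remaining entries of each $(\alpha_2^i,\ldots,\alpha_{r_i}^i)$ in any order (say block by block). This sequence still has each $(\alpha_1^i,\ldots,\alpha_{r_i}^i)$ as a subsequence, so Theorem~\ref{thm:additiongreedy} applies and yields that $(\alpha(d_1),\ldots,\alpha(d_k),\alpha_{k+1},\ldots,\alpha_r)$ is a greedy decomposition of $d$, with $\alpha_{k+1},\ldots,\alpha_r\in R^+\setminus R_P^+$ as required (every entry of a greedy decomposition lies in $R^+\setminus R_P^+$ by definition of maximal roots).

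The only mild subtlety — the main obstacle, such as it is — is checking that the specified interleaving genuinely satisfies the hypothesis of Theorem~\ref{thm:additiongreedy}, i.e. that each $(\alpha_1^i,\ldots,\alpha_{r_i}^i)$ occurs as a subsequence of $(\alpha(d_1),\ldots,\alpha(d_k),\alpha_{k+1},\ldots,\alpha_r)$: this is immediate once the tail is appended block by block, since then $\alpha_1^i=\alpha(d_i)$ sits at position $i$ and $\alpha_2^i,\ldots,\alpha_{r_i}^i$ sit in order in the $i$-th appended block, all to the right of position $i$. Everything else is a direct citation of Theorem~\ref{thm:additiongreedy} and Lemma~\ref{lem:connectedcomponents1}. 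Alternatively, and perhaps more cleanly, one can avoid the reshuffling entirely by noting that Corollary~\ref{cor:maxroots} guarantees that $\alpha(d_1),\ldots,\alpha(d_k)$ (which by Theorem~\ref{thm:maxroots} are exactly the maximal roots of $d$) occur in \emph{every} greedy decomposition of $d$, so it suffices to produce any greedy decomposition of $d$ and then reorder it to bring these $k$ roots to the front — but one must then re-justify that the reordered sequence is still greedy, which again reduces to Theorem~\ref{thm:additiongreedy} applied to the components. I would present the first, constructive route.
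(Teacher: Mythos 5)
Your proposal is correct and follows the same route as the paper, which simply notes that Lemma~\ref{lem:maxroots} follows directly from Theorem~\ref{thm:additiongreedy} applied to the connected components $d_1,\ldots,d_k$, with the hypothesis supplied by Lemma~\ref{lem:connectedcomponents1}. Your explicit choice of interleaving (the $k$ first entries $\alpha(d_1),\ldots,\alpha(d_k)$ up front, then the tails block by block) is exactly the detail the paper leaves implicit, and your verification of the subsequence condition is correct.
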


\begin{proof}[Proof of Corollary~\ref{cor:maxroots} and Lemma~\ref{lem:maxroots}]

In order to prove Corollary~\ref{cor:maxroots}, it clearly suffices to prove Lemma~\ref{lem:maxroots}. This is because of Theorem~\ref{thm:maxroots} and the uniqueness of the greedy decomposition up to reordering. But Lemma~\ref{lem:maxroots} follows directly from Theorem~\ref{thm:additiongreedy} applied to $d_1,\ldots,d_k$. The assumption of Theorem~\ref{thm:additiongreedy} is saitsfied in view of Lemma~\ref{lem:connectedcomponents1}.
\end{proof}

\begin{defn}[{\cite[Section~4.2]{curvenbhd2}}]

Let $d$ be a degree. Let $(\alpha_1,\ldots,\alpha_r)$ be a greedy decomposition $d$. Then we define
$$
\tilde{z}_d^P=s_{\alpha_1}\cdot\ldots\cdot s_{\alpha_r}\,.
$$
The element $\tilde{z}_d^P$ is well-defined (does only depend on $d$ and not on the choice of the greedy decomposition of $d$) for the same reason as $z_d^P$ is well-defined. This was proved in \cite[Section~4]{curvenbhd2}.

\end{defn}

\begin{rem}
\label{rem:tildezinvolution}

Let $d$ be a degree. Let $(\alpha_1,\ldots,\alpha_r)$ be a greedy decomposition of $d$. Let $e=\sum_{i=1}^k\alpha_i^\vee\in H_2(G/B)$\footnote{With the terminology of \cite[Subsection~4.1]{minimaldegrees}, the degree $e$ is the induction of $d$.}. Then we have $\tilde{z}_d^P=z_e^B$. In particular, it follows from \cite[Corollary~4.9]{curvenbhd2} that $\tilde{z}_d^P$ is an involution. It can also be seen more directly that $\tilde{z}_d^P$ is an involution by using \cite[Proposition~3.1(b)]{curvenbhd2} and \cite[Proposition~3.10(2)]{minimaldegrees}.

\end{rem}

\begin{thm}[Addition theorem for $\tilde{z}_d^P$]
\label{thm:additiontildez}

Let $d_1,\ldots,d_k$ be degrees such that $\widetilde{\Delta}(d_1),\ldots,\widetilde{\Delta}(d_k)$ are pairwise totally disjoint. Let $d=\sum_{i=1}^k d_i$ for short. Then we have
\begin{equation}
\label{eq:tildez}
\tilde{z}_d^P=\tilde{z}_{d_1}^P\cdot\ldots\cdot\tilde{z}_{d_k}^P=\tilde{z}_{d_1}^P\cdots\tilde{z}_{d_k}^P\,.
\end{equation}
This means in particular that the Hecke and the ordinary product in Equation~\eqref{eq:tildez} is independent of the ordering of the factors. 
%In other words, we have
%$$
%\tilde{z}_{d_i}^P\cdot\tilde{z}_{d_j}^P=\tilde{z}_{d_j}^P\cdot\tilde{z}_{d_i}^P=\tilde{z}_{d_i}^P\tilde{z}_{d_j}^P=\tilde{z}_{d_j}^P\tilde{z}_{d_i}^P\text{ for all }1\leq i,j\leq k\,.
%$$

\end{thm}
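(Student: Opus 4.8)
The plan is to reduce everything to the addition theorem for greedy decompositions (Theorem~\ref{thm:additiongreedy}) together with a support-disjointness argument for Hecke products. First I would pick a greedy decomposition $(\alpha_1^i,\ldots,\alpha_{r_i}^i)$ of each $d_i$. By Theorem~\ref{thm:additiongreedy}, any concatenation $(\alpha_1^1,\ldots,\alpha_{r_1}^1,\alpha_1^2,\ldots,\alpha_{r_2}^2,\ldots,\alpha_1^k,\ldots,\alpha_{r_k}^k)$ (in this order) is a greedy decomposition of $d=\sum_i d_i$. Applying the definition of $\tilde{z}_d^P$ to \emph{this particular} greedy decomposition gives immediately
$$
\tilde{z}_d^P = s_{\alpha_1^1}\cdot\ldots\cdot s_{\alpha_{r_1}^1}\cdot s_{\alpha_1^2}\cdot\ldots\cdot s_{\alpha_{r_k}^k} = \tilde{z}_{d_1}^P\cdot\ldots\cdot\tilde{z}_{d_k}^P,
$$
using associativity of the Hecke product (cf. \cite[Proposition~3.1]{curvenbhd2}) to group the factors according to the blocks. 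This already establishes the first equality in Equation~\eqref{eq:tildez}; well-definedness of $\tilde{z}_d^P$ guarantees the left-hand side does not depend on the chosen greedy decomposition, so the identity is genuine.

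For the second equality, that the Hecke products may be replaced by ordinary products (and hence that the order of factors is irrelevant), I would invoke Lemma~\ref{lem:support}: it suffices to show that the supports $\Delta(\tilde{z}_{d_i}^P)$ are pairwise disjoint. Here I would use the fact recorded in Remark~\ref{rem:tildezinvolution} that $\tilde{z}_{d_i}^P = z_{e_i}^B$ where $e_i=\sum_j (\alpha_j^i)^\vee$, together with the description of the support of such an element: since $\tilde{z}_{d_i}^P$ is built as a Hecke product of the reflections $s_{\alpha_j^i}$ with $\alpha_j^i$ ranging over a greedy decomposition of $d_i$, one has $\Delta(\tilde{z}_{d_i}^P)\subseteq\bigcup_j\Delta(\alpha_j^i)=\widetilde{\Delta}(d_i)$ (this containment follows from the general properties of supports of Hecke products in \cite[Proposition~3.17]{minimaldegrees}, or directly by induction on the length of the greedy decomposition, since $\Delta(u\cdot s_\beta)\subseteq\Delta(u)\cup\{\beta\}$). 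The hypothesis that $\widetilde{\Delta}(d_1),\ldots,\widetilde{\Delta}(d_k)$ are pairwise totally disjoint — in particular pairwise disjoint as subsets of $\Delta$ — then yields that the $\Delta(\tilde{z}_{d_i}^P)$ are pairwise disjoint. Applying Lemma~\ref{lem:support} repeatedly (to $\tilde{z}_{d_1}^P\cdot\ldots\cdot\tilde{z}_{d_{j}}^P$ against $\tilde{z}_{d_{j+1}}^P$, noting $\Delta(\tilde{z}_{d_1}^P\cdots\tilde{z}_{d_j}^P)\subseteq\bigcup_{i\le j}\Delta(\tilde{z}_{d_i}^P)$) converts all Hecke products into ordinary products, and since disjoint-support Weyl group elements commute, the ordinary product is order-independent.

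The main obstacle I anticipate is the bookkeeping around the support containment $\Delta(\tilde{z}_{d_i}^P)\subseteq\widetilde{\Delta}(d_i)$ and making precise that Lemma~\ref{lem:support} may be applied iteratively — one must track that the partial Hecke products retain supports inside the appropriate union, which requires the elementary fact $\Delta(u\cdot v)\subseteq\Delta(u)\cup\Delta(v)$. This is routine given the cited properties of supports, but it is the one place where a small inductive argument is genuinely needed rather than a direct citation. Everything else is a ba direct consequence of Theorem~\ref{thm:additiongreedy} and associativity of the Hecke product.
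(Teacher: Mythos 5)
Your proposal is correct and follows essentially the same route as the paper: the first equality comes from concatenating greedy decompositions via Theorem~\ref{thm:additiongreedy} and applying the definition of $\tilde{z}_d^P$, and the second from Lemma~\ref{lem:support} once the supports $\Delta(\tilde{z}_{d_i}^P)$ are seen to be pairwise disjoint. The paper organizes the iteration as an induction on $k$ with $d'=\sum_{i\geq 2}d_i$ and obtains the support disjointness from the equality $\Delta(\tilde{z}_{d_i}^P)=\widetilde{\Delta}(d_i)$ (via \cite[Proposition~3.17]{minimaldegrees} and Theorem~\ref{thm:additionextendedsupport}), but your containment-based left-to-right bookkeeping is an equally valid, purely cosmetic variant.
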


\begin{proof}

Let $(\alpha_1^i,\ldots,\alpha_{r_i}^i)$ be a greedy decomposition of $d_i$ for all $1\leq i\leq k$. By Theorem~\ref{thm:additiongreedy}, the sequence $(\alpha_1^1,\ldots,\alpha_{r_1}^1,\ldots,\alpha_1^k,\ldots,\alpha_{r_k}^k)$ is a greedy decomposition of $d$. Hence, the first equality in Equation~\eqref{eq:tildez} follows directly from the definition of $\tilde{z}_d^P$. We prove the second equality of Equation~\eqref{eq:tildez} by induction on $k$. The case $k=1$ is immediate. Assume that $k>1$ and that the second equality of Equation~\eqref{eq:tildez} is known for all values strictly smaller than $k$. By induction hypothesis applied to $d_2,\ldots,d_k$, it suffices to show that $\tilde{z}_{d_1}^P\cdot\tilde{z}_{d'}^P=\tilde{z}_{d_1}^P\tilde{z}_{d'}^P$ where $d'=\sum_{i=2}^k d_i$. By Lemma~\ref{lem:support}, it therefore suffices to show that $\Delta(\tilde{z}_{d_1}^P)\cap\Delta(z_{d'}^P)=\emptyset$. But by \cite[Proposition~3.17]{minimaldegrees} and Theorem~\ref{thm:additionextendedsupport}, we have $\Delta(\tilde{z}_{d_1}^P)=\widetilde{\Delta}(d_1)$ and $\Delta(\tilde{z}_{d'}^P)=\coprod_{i=2}^k\widetilde{\Delta}(d_i)$. Hence, the results follows by assumption.
\end{proof}

\begin{thm}[Addition theorem for minimal degrees]
\label{thm:additionminimaldegrees}

Let $d_1,\ldots,d_k$ be degrees such that $\widetilde{\Delta}(d_1),\ldots,\widetilde{\Delta}(d_k)$ are pairwise totally disjoint. Let $d=\sum_{i=1}^k d_i$ for short. Then we have
$$
d_1,\ldots,d_k\in\Pi_P\text{ if and only if }d\in\Pi_P\,.
$$

\end{thm}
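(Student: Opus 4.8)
I would first reduce to the case $k=2$: by Theorem~\ref{thm:additionextendedsupport} we have $\widetilde{\Delta}(d_1+\cdots+d_{k-1})=\widetilde{\Delta}(d_1)\cup\cdots\cup\widetilde{\Delta}(d_{k-1})$, which is totally disjoint from $\widetilde{\Delta}(d_k)$ since each $\widetilde{\Delta}(d_i)$ is; so, granting the statement for $k-1$ degrees, $d_1,\ldots,d_k\in\Pi_P$ iff $d_1+\cdots+d_{k-1}\in\Pi_P$ and $d_k\in\Pi_P$, and then the case $k=2$ applied to the pair $(d_1+\cdots+d_{k-1},d_k)$ completes the induction. So assume henceforth that $k=2$ and put $d=d_1+d_2$. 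I next record a splitting observation: every degree $d'\le d$ can be written $d'=d_1^*+d_2^*$ with $d_i^*$ a degree, $\widetilde{\Delta}(d_i^*)\subseteq\widetilde{\Delta}(d_i)$ and $d_i^*\le d_i$. Indeed, by Theorems~\ref{thm:inclusionextendedsupport} and~\ref{thm:additionextendedsupport}, $\widetilde{\Delta}(d')\subseteq\widetilde{\Delta}(d_1)\amalg\widetilde{\Delta}(d_2)$, and in this disjoint union no node of $\widetilde{\Delta}(d_1)$ is adjacent to a node of $\widetilde{\Delta}(d_2)$ in the Dynkin diagram; hence each connected component of $d'$ (Definition~\ref{def:connectedcomponents}, Lemma~\ref{lem:connectedcomponents1}), having connected extended support, lies entirely inside $\widetilde{\Delta}(d_1)$ or entirely inside $\widetilde{\Delta}(d_2)$, and summing the components of each kind yields $d_1^*$ and $d_2^*$ (whose extended supports are again totally disjoint). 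The inequalities $d_i^*\le d_i$ follow by comparing coefficients of the basis $\{\sigma(s_\beta)\}_{\beta\in\Delta\setminus\Delta_P}$ of $H_2(X)$, using that this coefficient of $d$ (resp.\ of $d'$) equals the corresponding coefficient of whichever $d_j$ (resp.\ $d_j^*$) has $\beta$ in its extended support, and vanishes if there is no such $j$.

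The heart of the argument is the following comparison principle: if $d_i'\le d_i$ for $i=1,2$ (so $\widetilde{\Delta}(d_1')$ and $\widetilde{\Delta}(d_2')$ are again totally disjoint, by Theorem~\ref{thm:inclusionextendedsupport}), then
$$
z_{d_1+d_2}^P\preceq z_{d_1'+d_2'}^P\quad\Longleftrightarrow\quad z_{d_1}^P\preceq z_{d_1'}^P\ \text{and}\ z_{d_2}^P\preceq z_{d_2'}^P.
$$
To prove this I would pass to maximal coset representatives: for minimal representatives $u,v$ of cosets in $W/W_P$ one has $u\preceq v$ iff $uw_P\preceq vw_P$, and by Definition~\ref{def:zdp} together with the definition of $\tilde{z}_\bullet^P$ one has $z_e^Pw_P=\tilde{z}_e^P\cdot w_P$ for every degree $e$. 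By the addition theorem for $\tilde{z}_\bullet^P$ (Theorem~\ref{thm:additiontildez}) we may factor $\tilde{z}_{d_1+d_2}^P=\tilde{z}_{d_1}^P\cdot\tilde{z}_{d_2}^P$ and $\tilde{z}_{d_1'+d_2'}^P=\tilde{z}_{d_1'}^P\cdot\tilde{z}_{d_2'}^P$, and since $\tilde{z}_{d_i}^P$ and $\tilde{z}_{d_i'}^P$ have support contained in $\widetilde{\Delta}(d_i)$ (by \cite[Proposition~3.17]{minimaldegrees} and Theorem~\ref{thm:inclusionextendedsupport}), the index-$1$ factors commute with the index-$2$ factors, have mutually orthogonal supports, and the two products are ordinary products by Lemma~\ref{lem:support}. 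The displayed equivalence then becomes a purely combinatorial claim about the Bruhat order of Hecke products of the shape $u_1u_2\cdot w\preceq v_1v_2\cdot w$ with $u_i,v_i$ supported in two mutually orthogonal subsets of $\Delta$, which I would establish from the subword characterization of the Bruhat order together with the basic properties of the Hecke product collected in \cite[Proposition~3.1, Proposition~3.2]{curvenbhd2}.

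Granting the comparison principle, both implications are formal. Suppose $d_1,d_2\in\Pi_P$ but $d=d_1+d_2\notin\Pi_P$; then there is a degree $d'$ with $d'\lneq d$ and $z_d^P\preceq z_{d'}^P$. Writing $d'=d_1^*+d_2^*$ as in the splitting observation and applying the comparison principle gives $z_{d_1}^P\preceq z_{d_1^*}^P$ and $z_{d_2}^P\preceq z_{d_2^*}^P$ with $d_i^*\le d_i$; minimality of $d_1$ and of $d_2$ forces $d_1^*=d_1$ and $d_2^*=d_2$, hence $d'=d$, a contradiction. Conversely, suppose $d\in\Pi_P$ but, say, $d_1\notin\Pi_P$; choose a degree $d_1'$ with $d_1'\lneq d_1$ and $z_{d_1}^P\preceq z_{d_1'}^P$, and set $d'=d_1'+d_2$. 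Then $d'\lneq d$ and, by the comparison principle applied with $d_2'=d_2$, we get $z_d^P\preceq z_{d'}^P$, contradicting minimality of $d$. This gives the equivalence for $k=2$, and hence, by the initial reduction, for all $k$.

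The only genuinely non-formal step is the comparison principle, and within it the Hecke-product Bruhat claim. The delicate point is that $w_P$ has full support $\Delta_P$, which may join $\widetilde{\Delta}(d_1)\cap\Delta_P$ to $\widetilde{\Delta}(d_2)\cap\Delta_P$ through simple roots lying outside $\widetilde{\Delta}(d_1)\cup\widetilde{\Delta}(d_2)$; thus $w_P$ does \emph{not} factor along the two orthogonal blocks, so one cannot merely superpose the two one-block comparisons, and a more careful argument---via an adapted choice of reduced word, or a direct computation with inversion sets---is required. (When $P=B$ this difficulty disappears, $w_P=1$, and the principle is immediate.)
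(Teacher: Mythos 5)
Your overall architecture matches the paper's: reduce to $k=2$, split a competing degree $d'$ into pieces $d_1'\leq d_1$, $d_2'\leq d_2$ supported on the two blocks, factor via Theorem~\ref{thm:additiontildez}, and conclude by minimality of each $d_i$. However, the pivotal step --- your \enquote{comparison principle} $z_{d_1+d_2}^P\preceq z_{d_1'+d_2'}^P\Leftrightarrow z_{d_i}^P\preceq z_{d_i'}^P$ for $i=1,2$ --- is asserted but not proved, and you yourself identify exactly why the obvious subword argument breaks down ($w_P$ has full support $\Delta_P$ and does not factor along the two orthogonal blocks). As it stands this is a genuine gap: the entire theorem rests on a lemma for which you offer only the remark that \enquote{a more careful argument is required.}

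The paper avoids this difficulty entirely, in two ways you could adopt. First, the Bruhat-order version of the principle is never needed: in every instance where you invoke it, the competing degree satisfies $d'\leq d$ (resp.\ $d_1'\leq d_1$), and monotonicity of $z_\bullet^P$ (\cite[Corollary~4.12(b)]{curvenbhd2}) gives the reverse relation $z_{d'}^P\preceq z_d^P$, so the hypothesis $z_d^P\preceq z_{d'}^P$ upgrades to the \emph{equality} $z_d^P=z_{d'}^P$. Second, the resulting coset equality $\tilde{z}_{d_1}^P\tilde{z}_{d_2}^PW_P=\tilde{z}_{d_1'}^P\tilde{z}_{d_2'}^PW_P$ is split blockwise not by any analysis of reduced words for $w_P$, but algebraically: writing $\tilde{z}_{d_1}^P\tilde{z}_{d_2}^Pw=\tilde{z}_{d_1'}^P\tilde{z}_{d_2'}^P$ with $w\in W_P$, using that each $\tilde{z}_\bullet^P$ is an involution (Remark~\ref{rem:tildezinvolution}) and that the index-$1$ and index-$2$ factors commute, one rearranges to express $\tilde{z}_{d_i}^P\tilde{z}_{d_i'}^P$ in a form whose support is computed (via \cite[Proposition~3.17]{minimaldegrees} and Theorem~\ref{thm:additionextendedsupport}) to lie in $\Delta_P$; hence $\tilde{z}_{d_i}^P\tilde{z}_{d_i'}^P\in W_P$, i.e.\ $\tilde{z}_{d_i}^PW_P=\tilde{z}_{d_i'}^PW_P$, and minimality of $d_i$ finishes. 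Finally, note that for the implication $d\in\Pi_P\Rightarrow d_i\in\Pi_P$ you do not need any of this machinery: it follows directly from Theorem~\ref{thm:additiongreedyconverse} together with \cite[Proposition~4.4(9)]{minimaldegrees}, since each $d_i$ is the sum over a subsequence of a greedy decomposition of $d$.
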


\begin{proof}

The implication from right to left follows from Theorem~\ref{thm:additiongreedyconverse} and \cite[Proposition~4.4(9)]{minimaldegrees}. Assume that $d_1,\ldots,d_k\in\Pi_P$. A simple induction on $k$ shows that we may assume that $k=2$. Let $d'\in\Pi_P$ be a minimal degree such that $d'\leq d$ and such that $z_d^P=z_{d'}^P$. We have
$$
\Delta(d')\subseteq\Delta(d)=\Delta(d_1)\amalg\Delta(d_2)
$$
by the totally disjointness assumption. Therefore, we can write $d'=d_1'+d_2'$ where $d_1'$ and $d_2'$ are two degrees such that $d_1'\leq d_1$ and $d_2'\leq d_2$. By Theorem~\ref{thm:inclusionextendedsupport} we then have $\widetilde{\Delta}(d_1')\subseteq\widetilde{\Delta}(d_1)$ and $\widetilde{\Delta}(d_2')\subseteq\widetilde{\Delta}(d_2)$. The totally disjointness assumption implies that $\widetilde{\Delta}(d_1')$ and $\widetilde{\Delta}(d_2')$ are totally disjoint. If we now apply Theorem~\ref{thm:additiontildez} to the degrees $d_1,d_2$ and $d_1',d_2'$ we find that
\begin{equation}
\label{eq:tildez2}
\tilde{z}_d^P=\tilde{z}_{d_1}^P\tilde{z}_{d_2}^P\text{ and }\tilde{z}_{d'}^P=\tilde{z}_{d_1'}^P\tilde{z}_{d_2'}^P
\end{equation}
By definition, it is clear that $z_d^PW_P=\tilde{z}_d^PW_P$ and $z_{d'}^PW_P=\tilde{z}_{d'}^PW_P$. The fact that $z_d^P=z_{d'}^P$ and Equation~\eqref{eq:tildez2} therefore imply that 
$$
\tilde{z}_{d_1}^P\tilde{z}_{d_2}^PW_P=\tilde{z}_{d_1'}^P\tilde{z}_{d_2'}^PW_P\,.
$$
The last equation means that there exists a $w\in W_P$ such that
$$
\tilde{z}_{d_1}^P\tilde{z}_{d_2}^Pw=\tilde{z}_{d_1'}^P\tilde{z}_{d_2'}^P\,.
$$
In view of Remark~\ref{rem:tildezinvolution}, the last equation becomes after rearranging
$$
\tilde{z}_{d_2}^Pw\tilde{z}_{d_2'}^P=\tilde{z}_{d_1}^P\tilde{z}_{d_1'}^P\,.
$$
By changing the order of the product in Equation~\eqref{eq:tildez2} we also find the analogous equation
$$
\tilde{z}_{d_1}^Pw\tilde{z}_{d_1'}^P=\tilde{z}_{d_2}^P\tilde{z}_{d_2'}^P\,.
$$
Using the last two displayed equations and the computation rules for the support of a Weyl group element (cf. \cite[Proposition~3.17]{minimaldegrees}) and the Hecke product (cf. \cite[Proposition~3.1]{curvenbhd2}), we see that
\begin{align*}
\Delta(\tilde{z}_{d_1}^P\tilde{z}_{d_1'}^P)&\subseteq\widetilde{\Delta}(d_1)\cap(\widetilde{\Delta}(d_2)\cup\Delta_P)\subseteq\Delta_P\,,\\
\Delta(\tilde{z}_{d_2}^P\tilde{z}_{d_2'}^P)&\subseteq\widetilde{\Delta}(d_2)\cap(\widetilde{\Delta}(d_1)\cup\Delta_P)\subseteq\Delta_P\,.
\end{align*}
But this means that $\tilde{z}_{d_1}^P\tilde{z}_{d_1'}^P,\tilde{z}_{d_2}^P\tilde{z}_{d_2'}^P\in W_P$. Remark~\ref{rem:tildezinvolution} shows again that we can rewrite this containment as
$\tilde{z}_{d_1}^PW_P=\tilde{z}_{d_1'}^PW_P$ and $\tilde{z}_{d_2}^PW_P=\tilde{z}_{d_2'}^PW_P$. As $d_1,d_2\in\Pi_P$, $d_1'\leq d_1$ and $d_2'\leq d_2$, these two equations and the very definition of minimal degrees imply that $d_1=d_1'$ and $d_2=d_2'$. Eventually, this means $d=d'\in\Pi_P$ -- as claimed.
\end{proof}

\begin{cor}
\label{cor:additionminimaldegrees}

Let $d$ be a degree. Let $d_1,\ldots,d_k$ be the connected components of $d$. Then we have
$$
d_1,\ldots,d_k\in\Pi_P\text{ if and only if }d\in\Pi_P\,.
$$

\end{cor}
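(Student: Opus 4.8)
The plan is to deduce this immediately from the addition theorem for minimal degrees (Theorem~\ref{thm:additionminimaldegrees}) applied to the connected components $d_1,\ldots,d_k$ of $d$. The only thing to check is that the hypotheses of Theorem~\ref{thm:additionminimaldegrees} are met, namely that $d=\sum_{i=1}^k d_i$ and that $\widetilde{\Delta}(d_1),\ldots,\widetilde{\Delta}(d_k)$ are pairwise totally disjoint.

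First I would invoke Lemma~\ref{lem:connectedcomponents1}, which gives at once both that $d=\sum_{i=1}^k d_i$ and that $\widetilde{\Delta}(d_1),\ldots,\widetilde{\Delta}(d_k)$ are the distinct connected components of $\widetilde{\Delta}(d)$. It remains to upgrade ``distinct connected components of a subset of the Dynkin diagram'' to ``pairwise totally disjoint''. This is exactly the content of Fact~\ref{fact:stronglyorthogonal} with $S=\widetilde{\Delta}(d)$: there are locally high roots $\varphi_1,\ldots,\varphi_k$ with $\Delta(\varphi_i)=\widetilde{\Delta}(d_i)$, and the root subsystems $R(\varphi_1),\ldots,R(\varphi_k)$ are pairwise totally disjoint; since $\widetilde{\Delta}(d_i)\subseteq R(\varphi_i)$, the sets $\widetilde{\Delta}(d_i)$ themselves are pairwise totally disjoint. (Concretely, two simple roots lying in different connected components of $\widetilde{\Delta}(d)$ are non-adjacent in the Dynkin diagram, hence orthogonal, and neither their sum nor their difference is a root, so they are strongly orthogonal.)

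With the hypotheses verified, Theorem~\ref{thm:additionminimaldegrees} applied to $d_1,\ldots,d_k$ yields precisely the asserted equivalence: $d_1,\ldots,d_k\in\Pi_P$ if and only if $d=\sum_{i=1}^k d_i\in\Pi_P$. There is no real obstacle here; the corollary is a direct specialization of Theorem~\ref{thm:additionminimaldegrees} to the canonical decomposition of $d$ into its connected components, and the only (entirely routine) point is the passage from ``distinct connected components'' to ``pairwise totally disjoint'', handled by Fact~\ref{fact:stronglyorthogonal}.
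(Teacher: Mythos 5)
Your proof is correct and follows exactly the paper's route: the paper's own proof is the one-line observation that the corollary ``follows directly from Lemma~\ref{lem:connectedcomponents1} and Theorem~\ref{thm:additionminimaldegrees}.'' Your extra step of verifying, via Fact~\ref{fact:stronglyorthogonal}, that distinct connected components of $\widetilde{\Delta}(d)$ are pairwise totally disjoint is a careful spelling-out of what the paper leaves implicit, and it is the right justification.
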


\begin{proof}

This follows directly from Lemma~\ref{lem:connectedcomponents1} and Theorem~\ref{thm:additionminimaldegrees}.
\end{proof}

\subsection{On the number of greedy decompositions of a degree}

This subsection is not logically needed elsewhere in the text. The impatient reader can skip it. As an immediate consequence of Theorem~\ref{thm:additiongreedy}, \ref{thm:additiongreedyconverse}, we develop a formula for the number of greedy decompositions of a degree (cf. Theorem~\ref{thm:nd}). 
%Later on in Appendix~\ref{appendix:graph}, we want to elaborate on Equation~\eqref{eq:nd} by visualizing its combinatorial meaning with the help of a graph associated to a degree. Here, we only deduce it as a direct outcome of the work already done before. For a sample computation, which makes clearer how to actually use Theorem~\ref{thm:nd}, we also refer to Appendix~\ref{appendix:graph} (cf. Example~\ref{ex:computationndtyped}).
For the reader familiar with Kostant's cascade of orthogonal roots \cite[Section~1]{kostant}, we give a sample computation of the number of greedy decompositions of $d_{G/B}$ (cf. Example~\ref{ex:np}) which makes clear how to use Theorem~\ref{thm:nd}.

\begin{notation}

Let $d$ be a degree. We denote by $r(d)$ the length of a greedy decomposition. By the uniqueness of the greedy decomposition up to reordering, $r(d)$ is clearly well-defined -- does not depend on the choice of the greedy decomposition but only on $d$. Furthermore, we denote by $N_d$ the number of greedy decompositions of $d$.
%, i.e. the cardinality of the set
%$$
%\{(\alpha_1,\ldots,\alpha_r)\mid(\alpha_1,\ldots,\alpha_r)\text{ is a greedy decomposition of }d\}
%$$

\end{notation}

\begin{thm}
\label{thm:nd}

Let $d$ be a degree. Let 
$$
d_1,\ldots,d_k\text{ be the connected components of}
\begin{cases}
d & \text{if $d$ is a disconnected degree,}\\
d-d(\alpha(d)) & \text{if $d$ is a connected degree.}
\end{cases}
$$
Then we have the following recursive formula for the number of greedy decompositions of $d$ which expresses $N_d$ in terms of $N_{d'}$ with $d'<d$:
\begin{equation}
\label{eq:nd}
N_d=\frac{(r(d_1)+\cdots+r(d_k))!}{r(d_1)!\cdots r(d_k)!}\cdot N_{d_1}\cdots N_{d_k}\,.
\end{equation}

\end{thm}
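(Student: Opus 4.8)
The plan is to prove Equation~\eqref{eq:nd} by combining the two addition theorems (Theorem~\ref{thm:additiongreedy} and its converse Theorem~\ref{thm:additiongreedyconverse}) with a straightforward multinomial count. First I would handle the two cases separately. If $d$ is disconnected, then by Lemma~\ref{lem:connectedcomponents1} the extended supports $\widetilde{\Delta}(d_1),\ldots,\widetilde{\Delta}(d_k)$ are the distinct connected components of $\widetilde{\Delta}(d)$, hence pairwise totally disjoint (they are disjoint subsets of the Dynkin diagram with no edges between them; cf. Fact~\ref{fact:stronglyorthogonal} applied to the associated locally high roots). Thus Theorem~\ref{thm:additiongreedy} and Theorem~\ref{thm:additiongreedyconverse} together give a bijection between, on one hand, the set of greedy decompositions of $d$, and on the other hand, the set of tuples $\big((\alpha_1^i,\ldots,\alpha_{r_i}^i)\big)_{i=1}^k$ of greedy decompositions of the $d_i$ together with a choice of how to interleave them into a single sequence of length $r=r(d_1)+\cdots+r(d_k)$. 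The number of such interleavings is exactly the multinomial coefficient $\binom{r}{r(d_1),\ldots,r(d_k)}=\frac{r!}{r(d_1)!\cdots r(d_k)!}$, since a shuffle of $k$ fixed sequences of lengths $r_1,\ldots,r_k$ is determined by a choice of which positions in $\{1,\ldots,r\}$ belong to which block. Multiplying by the number $N_{d_i}$ of choices for each individual greedy decomposition yields Equation~\eqref{eq:nd} in the disconnected case.

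For the connected case, I would use Corollary~\ref{cor:maxroots} (equivalently Lemma~\ref{lem:maxroots} and Theorem~\ref{thm:maxroots}): since $d$ is connected, it has a unique maximal root, namely $\alpha(d)$, and this root occurs in every greedy decomposition of $d$; moreover by the definition of $\alpha(d)$ (Notation~\ref{not:alphad}) it is the unique possible first entry. Hence every greedy decomposition of $d$ has the form $(\alpha(d),\beta_1,\ldots,\beta_{r-1})$ where $(\beta_1,\ldots,\beta_{r-1})$ is a greedy decomposition of $d-d(\alpha(d))$, and conversely. This sets up a bijection between greedy decompositions of $d$ and greedy decompositions of $d' := d-d(\alpha(d))$, so $N_d=N_{d'}$. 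Now $d'$ need not be connected, so I apply the disconnected-case count to $d'$, whose connected components are precisely $d_1,\ldots,d_k$ as defined in the statement; this gives $N_d=N_{d'}=\frac{(r(d_1)+\cdots+r(d_k))!}{r(d_1)!\cdots r(d_k)!}\,N_{d_1}\cdots N_{d_k}$, which is again Equation~\eqref{eq:nd}. One should also note the boundary behavior: if $d=0$ or $d$ is $P$-cosmall (so $d'=0$ and $k=0$), the empty product and the empty multinomial coefficient are both $1$, and $N_d=1$, consistent with the formula; this makes the recursion bottom out correctly.

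I do not expect a genuine obstacle here — the statement is essentially a bookkeeping consequence of the structural addition theorems already established. The one point requiring a little care is verifying that the map "greedy decomposition of $d$" $\to$ "(tuple of greedy decompositions of the $d_i$, interleaving pattern)" is genuinely a bijection and not just a surjection: injectivity follows because the interleaving pattern together with the sub-decompositions reconstructs the original sequence verbatim, and well-definedness of the sub-decompositions (that the positions carrying roots with support in $\widetilde{\Delta}(d_i)$ do form a greedy decomposition of $d_i$) is exactly the content of Theorem~\ref{thm:additiongreedyconverse}. The other mild subtlety is ensuring the connected/disconnected dichotomy in the hypothesis of the theorem matches up: when $d$ is connected we pass to $d'=d-d(\alpha(d))$ before extracting connected components, and I would remark explicitly that $r(d_1)+\cdots+r(d_k)=r(d')=r(d)-1$ in that case, while in the disconnected case $r(d_1)+\cdots+r(d_k)=r(d)$, so that the factorials in Equation~\eqref{eq:nd} are the intended ones. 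Beyond that the proof is just the two displayed bijections and the multinomial identity.
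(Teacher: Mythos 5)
Your proposal is correct and follows essentially the same route as the paper: the paper likewise splits into the bijection $N_d=N_{d-d(\alpha(d))}$ for connected $d$ (strip off the forced first entry $\alpha(d)$) and a partition of the greedy decompositions of a degree into blocks indexed by tuples of greedy decompositions of its connected components via Theorem~\ref{thm:additiongreedy} and Theorem~\ref{thm:additiongreedyconverse}, each block counted by the multinomial coefficient. The points you flag as needing care (injectivity of the shuffle map, total disjointness of the components' extended supports via Lemma~\ref{lem:connectedcomponents1}) are exactly the ones the paper handles the same way.
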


\todo[inline,color=green]{After the definition of connected / disconnected degree, make a remark / convention that $d=0$, i.e. $\widetilde{\Delta}(d)=\emptyset$ is, as the definition says, actually connected. But anyway, we will often tacitly assume that a connected degree $d$ is automatically $\neq 0$, so that $\alpha(d)$ is well-defined. That matters actually whenever we use $\alpha(d)$ since I never worry about this case. The case $d=0$ can always be treated in a trivial way (in all proofs we do). Mention it once that $d\neq 0$ in the definition of $\alpha(d)$, and then the convention follows\ldots}

\begin{rem}
\label{rem:nd}

Equally well, one can express the meaning of the recursive formula in Theorem~\ref{thm:nd} by saying that $N_d=N_{d-d(\alpha(d))}$ for a connected degree $d$ and that $N_d$ can be written in terms of $N_{d_1},\ldots,N_{d_k}$ as in Equation~\eqref{eq:nd} for the connected components $d_1,\ldots,d_k$ of $d$.

\end{rem}

\begin{proof}[Proof of Theorem~\ref{thm:nd}]

We prove the equivalent version of Equation~\eqref{eq:nd} as described in Remark~\ref{rem:nd}. Assume first that $d$ is a connected degree. We have an obvious bijection
$$
\{\text{greedy decompositions of }d\}\cong\{\text{greedy decompositions of }d-d(\alpha(d))\}
$$
where a greedy decomposition $(\alpha_1,\ldots,\alpha_r)$ of $d$ is sent to $(\alpha_2,\ldots,\alpha_r)$ and, the other way round, a greedy decomposition $(\alpha_2,\ldots,\alpha_r)$ of $d-d(\alpha(d))$ is sent to $(\alpha(d),\alpha_2,\ldots,\alpha_r)$. This assignment is clearly well-defined in view of \cite[Proposition~3.10(7)]{minimaldegrees} and leads to the equality $N_d=N_{d-d(\alpha(d))}$ by taking cardinalities. 

Next, let $d_1,\ldots,d_k$ be the connected components of $d$. Let $r=r(d)$ for short. By Theorem~\ref{thm:additiongreedy}, \ref{thm:additiongreedyconverse} we have the following partition of the set of greedy decompositions of $d$ into disjoint sets:
\begin{multline*}
\{\text{greedy decompositions of }d\}=\\
\coprod_{\substack{(\alpha_1^i,\ldots,\alpha_{r_i}^i)\text{ greedy of }d_i\\\forall 1\leq i\leq k}}\{(\alpha_1,\ldots,\alpha_r)\mid(\alpha_1^i,\ldots,\alpha_{r_i}^i)\text{ is a subsequence of }(\alpha_1,\ldots,\alpha_r)\text{ for all }i\}\,.
\end{multline*}
The disjoint union in this formula runs over $N_{d_1}\cdots N_{d_k}$ sets of equal cardinality and each of these sets has cardinality equal to the number of ways how to combine $k$ disjoint sequences of length $r_1,\ldots, r_k$ respectively to a unique sequence of length $r=\sum_{i=1}^k r_i$. But this number is precisely 
$
\frac{r!}{r_1!\cdots r_k!}
$
-- a so-called multinomial coefficient. This directly leads to Equation~\eqref{eq:nd} by taking cardinalities in the previous displayed equation.
\end{proof}

\begin{ex}[For the reader familiar with Kostant's cascade of orthogonal roots {\cite[Section~1]{kostant}}]
\label{ex:np}

Let $R$ be of type $\mathsf{D}_p$ where $p\geq 3$. Let $r_p=r(d_{G/B})$ and $N_p=N_{d_{G/B}}$. In this example, we compute $r_p$ and $N_p$ for all $p\geq 3$. By \cite[Corollary~7.12, Footnote~8]{minimaldegrees}, we know that any greedy decomposition of $d_{G/B}$ is an ordering of Kostant's cascade of orthogonal roots (cf. Remark~\ref{rem:general}), in particular the length $r_p$ of a greedy decomposition of $d_{G/B}$ equals the cardinality
%length (i.e. the cardinality) 
of Kostant's cascade of orthogonal roots. Therefore, a simple computation shows that we have
$$
r_p=\begin{cases}
p-1 &\text{if $p$ is odd}\\
p &\text{if $p$ is even.}
\end{cases}
$$
Using the previous formula for $r_p$ and Theorem~\ref{thm:nd}, a simple induction for the odd and the even case shows
$$
N_p=\begin{cases}
(p-2)!! &\text{if $p$ is odd}\\
2(p-1)!! &\text{if $p$ is even}
\end{cases}
$$
Indeed, we have the initial values $N_3=1$, $N_4=3!=6$ and the recursive relation
$$
N_p=(r_{p-2}+1)N_{p-2}\text{ for }p\geq 5
$$
which results from Theorem~\ref{thm:nd}.

\end{ex}

\section{Generalized cascades of orthogonal roots}
\label{sec:gencascade}

\todo[inline,color=green]{Key feature of the argumentation are orthogonality relations. We encourage the reader to read Section~8 first. These generalizations are fundamental for the arguments leading eventually to quasi-homogeneity -- and the main results of this paper.

Orthogonality in generalized chain cascades is crucial for the construction of the diagonal curve.}

\todo[inline,color=green]{For the line of arguments leading to quasi-homogeneity for generalized complete flag varieties only one length equality is actually needed. Mention in the introduction that this corollary is the most important thing for the partial result in $G/B$.

I mention enough in the organization section, even if I don't mention Corollary~\ref{cor:gencascade:main2} explicitly.}

\todo[inline,color=green]{Concerning the last sentence of the next paragraph: It will be clear from the introduction that the diagonal curve is the candidate for a curve which has a dense open orbit. I define the term diagonal curve in Construction~\ref{construction:dia}.}

In this section, we develop the theory of generalized cascades of orthogonal roots. This theory is a direct generalization of Kostant's cascade of orthogonal roots \cite[Section~1]{kostant} in the sense that Kostant's cascade is associated to the specific minimal degree $d_{G/B}\in\Pi_B$ while the general construction works for an arbitrary minimal degree $e\in\Pi_B$ (for more details see Remark~\ref{rem:general}). Basically all properties of Kostant's cascade as they were investigated in \cite[loc. cit.]{kostant} carry over in a reasonable way to generalized cascades as the theorems in this section show. But the proofs are less evident. As a key feature of minimal degrees, we need the so-called orthogonality relations in greedy decompositions of minimal degrees which were first proved in \cite[Theorem~8.1]{minimaldegrees}. With the help of this property of minimal degrees, it is easy to see that two distinct elements of a generalized cascade of orthogonal roots are strongly orthogonal (Theorem~\ref{thm:gencascade}\eqref{item:stronglyorthogonal}). This kind of orthogonality is essential to make the diagonal curve (cf. Definition~\ref{def:dia}) well-defined and eventually to prove our main result on quasi-homogeneity. 

\begin{defn}
\label{def:gencascade}

Let $e\in\Pi_B$. Let $\varphi\in R^+$. Then we define 
$$
\mathcal{B}_{R,e}=\{\alpha\in R^+\mid\alpha\text{ occurs in a greedy decomposition of }e\}\,.
$$
We call the set $\mathcal{B}_{R,e}$ of positive roots a generalized cascade of orthogonal roots. Furthermore, we define
$$
C_{R,e}(\varphi)=\{\alpha\in\mathcal{B}_{R,e}\mid\alpha\geq\varphi\}\,.
$$
We call the subset $C_{R,e}(\varphi)$ of $\mathcal{B}_{R,e}$ a generalized chain cascade. We define the length of a generalized chain cascade to be the cardinality of a generalized chain cascade.

\end{defn}

\begin{rem}
\label{rem:general}

We will see in Theorem~\ref{thm:gencascade}\eqref{item:stronglyorthogonal} that the name \enquote{generalized cascade of orthogonal roots} is justified by showing that two distinct elements of any generalized cascade of orthogonal roots are indeed orthogonal (even strongly orthogonal). For now, we only want to mention that Definition~\ref{def:gencascade} is a direct generalization of Kostant's cascade of orthogonal roots and chain cascades \cite[Section~1]{kostant} from one specific minimal degree in $\Pi_B$, namely $d_{G/B}$, to arbitrary minimal degrees in $\Pi_B$. Indeed, by \cite[Corollary~7.12, Footnote~8]{minimaldegrees}, we have
\begin{align*}
\mathcal{B}_{R,d_{G/B}}&=\text{ordinary cascade of orthogonal roots}\,,\\
C_{R,d_{G/B}}(\varphi)&=\text{ordinary chain cascades where }\varphi\in R^+.
\end{align*}
While this relation to the original concepts is important to understand their generalizations intuitively, it is not formally needed elsewhere in the text.

\end{rem}

\begin{fact}
\label{fact:altdef}

Let $e\in\Pi_B$. Let $e_1,\ldots,e_k$ be the connected components of $e$. Then we have
$$
\mathcal{B}_{R,e}=\coprod_{i=1}^k\mathcal{B}_{R,e_i}\,.
$$
Moreover, if $e$ is a connected degree, then we have
$$
\mathcal{B}_{R,e}=\{\alpha(e)\}\amalg\mathcal{B}_{R,e-\alpha(e)^\vee}\,.
$$
The two previous formulas uniquely determine $\mathcal{B}_{R,e}$ and can serve as an inductive definition of generalized cascades of orthogonal roots alternative to Definition~\ref{def:gencascade}.

\end{fact}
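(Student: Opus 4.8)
The plan is to prove the two displayed identities one at a time and then observe that, together with $\mathcal{B}_{R,0}=\emptyset$, they constitute a legitimate recursion on the length $r(e)$ of a greedy decomposition of $e$.

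For the decomposition along connected components, write $e=\sum_{i=1}^k e_i$ with $e_1,\dots,e_k$ the connected components of $e$. By Lemma~\ref{lem:connectedcomponents1} the extended supports $\widetilde{\Delta}(e_1),\dots,\widetilde{\Delta}(e_k)$ are pairwise totally disjoint, and by Corollary~\ref{cor:additionminimaldegrees} each $e_i$ again lies in $\Pi_B$, so every $\mathcal{B}_{R,e_i}$ is defined. If $\alpha$ occurs in a greedy decomposition of $e$, then by Theorem~\ref{thm:additiongreedyconverse} that decomposition splits into subsequences which are greedy decompositions of the $e_i$, so $\alpha$ occurs in a greedy decomposition of some $e_i$; conversely, given $\alpha\in\mathcal{B}_{R,e_i}$, choose a greedy decomposition of $e_i$ containing $\alpha$ together with arbitrary greedy decompositions of the remaining $e_j$, and apply Theorem~\ref{thm:additiongreedy} to interleave them into a greedy decomposition of $e$ containing $\alpha$. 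Thus $\mathcal{B}_{R,e}=\bigcup_i\mathcal{B}_{R,e_i}$, and the union is disjoint because every root of $\mathcal{B}_{R,e_i}$ has support inside $\widetilde{\Delta}(e_i)$ while the $\widetilde{\Delta}(e_i)$ are pairwise disjoint.

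For the reduction of a connected degree $e\neq 0$, recall that in $H_2(G/B)$ one has $d(\alpha)=\alpha^\vee$, so $e-\alpha(e)^\vee=e-d(\alpha(e))$; by Example~\ref{ex:minimaldegrees} (a subsequence of a greedy decomposition of a minimal degree is again minimal), this degree again lies in $\Pi_B$, so $\mathcal{B}_{R,e-\alpha(e)^\vee}$ makes sense. By Notation~\ref{not:alphad}, $\alpha(e)$ is the unique first entry of every greedy decomposition of $e$, and directly from the definition a sequence $(\alpha_1,\dots,\alpha_r)$ is a greedy decomposition of $e$ if and only if $\alpha_1=\alpha(e)$ and $(\alpha_2,\dots,\alpha_r)$ is a greedy decomposition of $e-\alpha(e)^\vee$; passing to the sets of roots that occur yields $\mathcal{B}_{R,e}=\{\alpha(e)\}\cup\mathcal{B}_{R,e-\alpha(e)^\vee}$. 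The only step that genuinely uses minimality — and the one I expect to be the main obstacle — is showing this union is disjoint: if $\alpha(e)$ also occurred in a greedy decomposition of $e-\alpha(e)^\vee$, then $e$ would admit a greedy decomposition $(\alpha(e),\alpha_2,\dots,\alpha_r)$ with $\alpha_j=\alpha(e)$ for some $j\ge 2$, contradicting the orthogonality relations in greedy decompositions of minimal degrees (\cite[Theorem~8.1]{minimaldegrees}), since a root is never strongly orthogonal to itself.

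Finally, to see that the two identities determine $\mathcal{B}_{R,e}$: the case $e=0$ gives $\mathcal{B}_{R,0}=\emptyset$ as base; for connected $e\neq 0$ the second identity reduces $\mathcal{B}_{R,e}$ to $\mathcal{B}_{R,e-\alpha(e)^\vee}$ with $r(e-\alpha(e)^\vee)=r(e)-1$; and for disconnected $e$ the first identity reduces $\mathcal{B}_{R,e}$ to the $\mathcal{B}_{R,e_i}$ over the $k\ge 2$ connected components, each with strictly smaller $r(e_i)$ since $r(e)=\sum_i r(e_i)$. Induction on $r(e)$ then finishes. Apart from the cited orthogonality relations, everything is routine manipulation of the addition theorems of Subsection~\ref{subsec:addition} and the definition of $\alpha(e)$.
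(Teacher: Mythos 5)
Your proof is correct and follows essentially the same route as the paper: the first identity via the addition theorems (Theorem~\ref{thm:additiongreedy}, \ref{thm:additiongreedyconverse}) together with Lemma~\ref{lem:connectedcomponents1} and Corollary~\ref{cor:additionminimaldegrees}, and the second via the bijection of greedy decompositions (as in the proof of Theorem~\ref{thm:nd}) with disjointness coming from the absence of repeated entries in greedy decompositions of minimal degrees. The only cosmetic difference is that you derive the no-repeated-entries fact directly from the orthogonality relations of \cite[Theorem~8.1]{minimaldegrees}, where the paper cites \cite[Remark~8.4]{minimaldegrees}, which records exactly that consequence.
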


\begin{rem}

The reader may wish to compare the two formulas in Fact~\ref{fact:altdef} which can serve as an alternative definition of generalized cascades of orthogonal roots with the definition of ordinary cascades of orthogonal roots which we chose in \cite[Definition~7.6]{minimaldegrees} (see also \cite[Remark~7.7]{minimaldegrees}). The two definitions are completely analogous. One may also expect analogous properties of ordinary and generalized cascades of orthogonal roots to hold. Indeed, we will see throughout this section that this is true (cf. Theorem~\ref{thm:gencascade}, \ref{thm:gencascade:main}, \ref{thm:gencascade:main2}).

\end{rem}

\begin{comment}

\begin{proof}[Proof of Fact~\ref{fact:altdef}]

The first formula in Fact~\ref{fact:altdef} follows directly from Lemma~\ref{lem:connectedcomponents1} and \cite[Proposition~3.10(7), 4.4(9)]{minimaldegrees}. Suppose that $e$ is a connected degree. Let $\alpha=\alpha(e)$ for short. The equation
$$
\mathcal{B}_{R,e}=\{\alpha\}\cup\mathcal{B}_{R,e-\alpha^\vee}
$$
follows similar as the first formula of Fact~\ref{fact:altdef} by \cite[loc. cit.]{minimaldegrees}. The disjointness of the previous union follows by \cite[Remark~8.4]{minimaldegrees}.
\end{proof}

\end{comment}

\begin{proof}[Proof of Fact~\ref{fact:altdef}]

By Corollary~\ref{cor:additionminimaldegrees}, we have $e_1,\ldots,e_k\in\Pi_B$. Hence, the sets $\mathcal{B}_{R,e_i}$ are well-defined for all $1\leq i\leq k$. It follows immediately from Lemma~\ref{lem:connectedcomponents1} that the union $\bigcup_{i=1}^k\mathcal{B}_{R,e_i}$ is disjoint. The equality $\mathcal{B}_{R,e}=\coprod_{i=1}^k\mathcal{B}_{R,e_i}$ is a trivial consequence of Theorem~\ref{thm:additiongreedy} and Theorem~\ref{thm:additiongreedyconverse}.
Suppose that $e$ is a connected degree. Let $\alpha=\alpha(e)$ for short. The equation
$$
\mathcal{B}_{R,e}=\{\alpha\}\cup\mathcal{B}_{R,e-\alpha^\vee}
$$
follows similar as in the proof of Theorem~\ref{thm:nd} by \cite[Proposition~3.10(7), 4.4(9)]{minimaldegrees}. The disjointness of the previous union follows by \cite[Remark~8.4]{minimaldegrees}.
\end{proof}

\todo[inline,color=green]{I want to add a fact here which compares the definition with the original one: decomposition into connected components, inductive definition.}

\begin{thm}
\label{thm:gencascade}

Let $e\in\Pi_B$. Let $\varphi\in R^+$.

\begin{enumerate}

\item 
\label{item:totallyordered}

The generalized chain cascade $C_{R,e}(\varphi)$ is totally ordered.

%Any generalized chain cascade $C_{R,e}(\varphi)$ is totally ordered.

\item
\label{item:gencascadebcosmall}

All elements of the generalized cascade of orthogonal roots $\mathcal{B}_{R,e}$ are $B$-cosmall.

\item
\label{item:stronglyorthogonal}

Two distinct elements of the generalized cascade of orthogonal roots $\mathcal{B}_{R,e}$ are strongly orthogonal.

\item
\label{item:totallydisjoint1}

Let $\alpha,\alpha'\in\mathcal{B}_{R,e}$ such that $C_{R,e}(\alpha)\cap C_{R,e}(\alpha')=\emptyset$. Then the sets $R(\alpha)$ and $R(\alpha')$ are totally disjoint.

\item
\label{item:totallydisjoint2}

Let $\alpha,\alpha'\in\mathcal{B}_{R,e}$. Assume that $\alpha$ and $\alpha'$ do not belong to a common generalized chain cascade $C_{R,e}(\varphi)$, i.e. assume that there exists no $\varphi\in R^+$ such that $\alpha,\alpha'\in C_{R,e}(\varphi)$. Then the sets $R(\alpha)$ and $R(\alpha')$ are totally disjoint.

\end{enumerate}

\end{thm}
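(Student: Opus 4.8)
My plan is to prove the five assertions in the order \eqref{item:gencascadebcosmall}, \eqref{item:stronglyorthogonal}, then an auxiliary domination statement, then \eqref{item:totallyordered}, then \eqref{item:totallydisjoint2}, and finally \eqref{item:totallydisjoint1} as a formal consequence of \eqref{item:totallyordered} and \eqref{item:totallydisjoint2}. For \eqref{item:gencascadebcosmall} I would prove the slightly stronger statement that every root occurring in a greedy decomposition of \emph{any} degree is $B$-cosmall: if such an $\alpha$ is a maximal root of some degree $e''$ with $\alpha^\vee\le e''$, then $\{\beta\in R^+\mid\beta^\vee\le\alpha^\vee\}\subseteq\{\beta\in R^+\mid\beta^\vee\le e''\}$, and maximality of $\alpha$ in the larger set forces maximality in the smaller one, i.e.\ $\alpha$ is a maximal root of $d(\alpha)=\alpha^\vee$. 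Applying this to a greedy decomposition of $e$ yields \eqref{item:gencascadebcosmall}.

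For \eqref{item:stronglyorthogonal} I would use that any two greedy decompositions of $e$ are reorderings of one another (\cite{curvenbhd2}), so two distinct elements $\alpha,\alpha'$ of $\mathcal{B}_{R,e}$ always occur together in one common greedy decomposition of $e$; since $e\in\Pi_B$, the orthogonality relations of \cite[Theorem~8.1]{minimaldegrees} then apply to that decomposition and give that $\alpha$ and $\alpha'$ are strongly orthogonal. Next I would record the auxiliary fact: \emph{if $e$ is a connected degree, then $\alpha(e)=\max\mathcal{B}_{R,e}$.} Indeed a connected degree has, by Theorem~\ref{thm:maxroots}, the unique maximal root $\alpha(e)$, and in a finite poset a unique maximal element dominates every element, so $\alpha(e)$ dominates every $\beta\in R^+$ with $\beta^\vee\le e$; this applies in particular to every element of $\mathcal{B}_{R,e}$ (whose coroots are $\le e$), and $\alpha(e)$ itself lies in $\mathcal{B}_{R,e}$.

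Then I would prove \eqref{item:totallyordered} by induction on $r(e)$ via Fact~\ref{fact:altdef}. If $e$ is disconnected with connected components $e_1,\dots,e_k$, then any $\alpha\in\mathcal{B}_{R,e}$ with $\alpha\ge\varphi$ satisfies $\Delta(\varphi)\subseteq\Delta(\alpha)\subseteq\widetilde\Delta(e_i)$ for a unique $i$ (the $\widetilde\Delta(e_j)$ being pairwise disjoint by Lemma~\ref{lem:connectedcomponents1}); hence $C_{R,e}(\varphi)$ is empty or equals $C_{R,e_i}(\varphi)$, which is totally ordered by induction ($e_i\in\Pi_B$ by Corollary~\ref{cor:additionminimaldegrees}, and $r(e_i)<r(e)$). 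If $e$ is connected with $r(e)\ge1$, write $\mathcal{B}_{R,e}=\{\alpha(e)\}\amalg\mathcal{B}_{R,e-\alpha(e)^\vee}$ with $e-\alpha(e)^\vee\in\Pi_B$ since it arises from a subsequence of a greedy decomposition of $e$ (cf.\ Example~\ref{ex:minimaldegrees}); by the auxiliary fact $\alpha(e)$ dominates all of $\mathcal{B}_{R,e}$, so $C_{R,e}(\varphi)$ is either $C_{R,e-\alpha(e)^\vee}(\varphi)$ or that inductively totally ordered set with the greatest element $\alpha(e)$ adjoined. The cases $r(e)\le1$ are immediate.

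For \eqref{item:totallydisjoint2}, let $\alpha,\alpha'\in\mathcal{B}_{R,e}$ lie in no common $C_{R,e}(\varphi)$, i.e.\ admit no common lower bound in $R^+$. Then $\alpha\neq\alpha'$ (else $\varphi=\alpha$ is such a bound), so $(\alpha,\alpha')=0$ by \eqref{item:stronglyorthogonal}. Were $R(\alpha)$ and $R(\alpha')$ not totally disjoint, some $\beta\in\Delta(\alpha)$ would equal or be adjacent to some $\beta'\in\Delta(\alpha')$: if $\beta=\beta'$ then $\beta$ is a common lower bound, a contradiction; otherwise $\Delta(\alpha)\cap\Delta(\alpha')=\emptyset$, and expanding $(\alpha,\alpha')$ over the simple roots in the two supports makes every cross term $\le0$ with the one coming from $\{\beta,\beta'\}$ strictly negative, so $(\alpha,\alpha')<0$, again a contradiction. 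Hence $R(\alpha)$ and $R(\alpha')$ are totally disjoint. Finally, \eqref{item:totallydisjoint1} follows: if $C_{R,e}(\alpha)\cap C_{R,e}(\alpha')=\emptyset$, then $\alpha\neq\alpha'$ and they are incomparable, and a common lower bound in $R^+$ would put them in the totally ordered set $C_{R,e}(\varphi)$ by \eqref{item:totallyordered}, forcing comparability; so there is none, and \eqref{item:totallydisjoint2} applies. The one genuinely hard input is \eqref{item:stronglyorthogonal}, which rests on the orthogonality relations \cite[Theorem~8.1]{minimaldegrees} — this is where the minimality of $e$ is essential, and where the passage to \emph{strong} (rather than merely ordinary) orthogonality in non-simply-laced types is taken care of; everything else above is organizational.
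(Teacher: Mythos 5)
Your proposal is correct, and for items \eqref{item:totallydisjoint1} and \eqref{item:totallydisjoint2} it takes a genuinely different route. Items \eqref{item:gencascadebcosmall} and \eqref{item:stronglyorthogonal} coincide with the paper's arguments (the paper cites \cite[Corollary~8.3]{minimaldegrees} rather than Theorem~8.1, but the content is the same), and your induction for \eqref{item:totallyordered} on $r(e)$ via Fact~\ref{fact:altdef} is an equivalent reorganization of the paper's induction on the length of the chain cascade, which instead passes to the auxiliary degree $\hat e=\sum_{\alpha\in C_{R,e}(\varphi)}\alpha^\vee$ and shows it is connected; both reductions rest on the same facts ($\alpha(e)=\max\mathcal{B}_{R,e}$ for connected $e$, which you correctly extract from Theorem~\ref{thm:maxroots} instead of citing \cite[Proposition~3.16]{minimaldegrees}). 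The real divergence is the logical order of the last two items: the paper proves \eqref{item:totallydisjoint1} first, via the connected components of $e$ and Fact~\ref{fact:stronglyorthogonal}, and then deduces \eqref{item:totallydisjoint2} from it by a reduction to the sub-degree $\hat e=\sum_{\mu\in\mathcal{B}_{R,e}\colon\mu<\varphi}\mu^\vee$, where $\varphi$ is the minimal element of $C_{R,e}(\alpha)\cap C_{R,e}(\alpha')$ supplied by \eqref{item:totallyordered}. You reverse this: you prove \eqref{item:totallydisjoint2} directly --- absence of a common lower bound in $R^+$ forces $\Delta(\alpha)\cap\Delta(\alpha')=\emptyset$, and then $(\alpha,\alpha')=0$ from \eqref{item:stronglyorthogonal} combined with the non-positivity of off-diagonal Cartan entries forces the two supports to be mutually orthogonal, whence total disjointness of $R(\alpha)$ and $R(\alpha')$ --- and obtain \eqref{item:totallydisjoint1} as a formal corollary of \eqref{item:totallyordered} and \eqref{item:totallydisjoint2}. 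Your direct argument is more elementary (it avoids the connected-component machinery and the reduction step entirely) and isolates cleanly where orthogonality of distinct cascade elements is used; the paper's order has the mild advantage that \eqref{item:totallydisjoint1} is established without invoking \eqref{item:stronglyorthogonal} at all. Both are sound.
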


\begin{rem}
\label{rem:general2}

The properties of generalized cascades of orthogonal roots which are subject to Item~\eqref{item:totallyordered}, \eqref{item:stronglyorthogonal}, \eqref{item:totallydisjoint2} are direct generalizations of properties of the ordinary cascade of orthogonal roots. Indeed, one respectively recovers the statements \cite[Remark~1.3, Lemma~1.6, Proposition~1.7]{kostant} (see also \cite[Proposition~7.8]{minimaldegrees} for a summary) by setting $e=d_{G/B}$ (cf. Remark~\ref{rem:general}). 
%The strength of the results lies in the fact that they not only hold for one specific minimal degree in $\Pi_B$, namely $d_{G/B}$, but for all. Consequently, the generalizations are worth a formal proof, while the statements of Kostant are very easy to see. 
In view of Fact~\ref{fact:lochighBcosmall}, Item~\eqref{item:gencascadebcosmall} can be understood as a weaker version of \cite[Proposition~1.4]{kostant} which holds for all $e\in\Pi_B$. Note that there is no hope to expect locally high roots in $\mathcal{B}_{R,e}$ for all $e\neq d_{G/B}$, since this already fails in the simplest examples (e.g. if $R$ is simply laced and not of type $\mathsf{A}$).

%One may consider $\alpha\in R^+$ minimal such that $\Delta(\alpha)=\Delta(\theta_1)$ and $e=\alpha^\vee$. Since $\alpha$ is $B$-cosmall ($R$ simply laced), we have $\mathcal{B}_{R,e}=\{\alpha\}$, but $\alpha$ is not locally high.

%cf. \cite[Theorem~4.15]{minimaldegrees}). 
%Item~\eqref{item:totallydisjoint1} is only an auxiliary statement to prove Item~\eqref{item:totallydisjoint2}.

\todo[inline,color=green]{{\color{black} Here I want to refer to the previous paper and Kostant for the concordance of the result (also to the previous remark after Definition~\ref{def:gencascade}). Also, I want to mention that there is no chance to get locally high roots in generalized cascades, but only $B$-cosmall roots. The first statement now requires a proof\ldots Moreover, (4) is just an auxiliary statement to prove (5).

Even the statement $z_d^B=\prod_{\alpha\in\mathcal{B}_{R,e}}s_\alpha$ can be seen as a generalization of a result of Kostant. We will have to remark this\ldots As in Remark~\ref{rem:general2}, we see that\ldots}}

\end{rem}

\begin{proof}[Proof of Item~\eqref{item:totallyordered}]

We prove this statement by induction on the length of generalized chain cascades. For any generalized chain cascade of length one, the statement is trivially satisfied. Let $n>1$ be an integer and suppose that any generalized chain cascade of length less than $n$ is totally ordered. Let $C_{R,e}(\varphi)$ be an arbitrary generalized chain cascade of length $n$ for some $e\in\Pi_B$ and some $\varphi\in R^+$. Let
$$
\hat{e}=\sum_{\alpha\in C_{R,e}(\varphi)}\alpha^\vee\,.
$$
By \cite[Proposition~4.4(9)]{minimaldegrees}, we know that $\hat{e}\in\Pi_B$. Moreover, by \cite[Proposition~3.10(7)]{minimaldegrees}, we know that 
\begin{equation}
\label{eq:bequalsc}
\mathcal{B}_{R,\hat{e}}=C_{R,e}(\varphi)\,.
\end{equation}

\begin{proof}[Claim: The degree $\hat{e}$ is connected]\renewcommand{\qedsymbol}{$\triangle$}

Indeed, let $\hat{e}_1,\ldots,\hat{e}_k$ be the connected components of $\hat{e}$. By Equation~\eqref{eq:bequalsc}, we have $\varphi\leq\alpha$ for all $\alpha\in\mathcal{B}_{R,\hat{e}}$ and thus $\Delta(\varphi)\subseteq\Delta(\alpha)$ for all $\alpha\in\mathcal{B}_{R,\hat{e}}$. By Lemma~\ref{lem:connectedcomponents1}, 
%and the definition of connected components of a degree
we have
%Since
$$
\hat{e}_i=\sum_{\alpha\in\mathcal{B}_{R,\hat{e}}\colon\Delta(\alpha)\subseteq\Delta(\hat{e}_i)}\alpha^\vee\,.
$$
It follows that $\Delta(\varphi)\subseteq\Delta(\hat{e}_i)$ for all $1\leq i\leq k$. Again, by Lemma~\ref{lem:connectedcomponents1}, we must have $k=1$ and that $\hat{e}$ is connected.
\end{proof}

%\noindent
Since $\hat{e}$ is connected, we can define $\alpha_1=\alpha(\hat{e})$ to be the unique first entry of a greedy decomposition of $\hat{e}$. 
\todo[disable]{Needs a reference to the introduction, where I want to recall basic facts on connected degrees. It's a definition, therefore I ignore this reference.}
By \cite[Proposition~3.16]{minimaldegrees}, we have $\alpha_1\geq\alpha$ for all $\alpha\in\mathcal{B}_{R,\hat{e}}$. By Equation~\eqref{eq:bequalsc}, this means that $\alpha_1$ is the unique maximal element of $C_{R,e}(\varphi)$

\begin{proof}[Claim: We have $C_{R,\hat{e}-\alpha_1^\vee}(\varphi)=C_{R,e}(\varphi)\setminus\{\alpha_1\}$]\renewcommand{\qedsymbol}{$\triangle$}

First note that $\hat{e}-\alpha_1^\vee\in\Pi_B$ by \cite[Proposition~4.4(9)]{minimaldegrees}. Hence, it makes sense to speak about $C_{R,\hat{e}-\alpha_1^\vee}(\varphi)$ and $\mathcal{B}_{R,\hat{e}-\alpha_1^\vee}$.
%
%Indeed, 
By Equation~\eqref{eq:bequalsc}, it suffices to prove that $C_{R,\hat{e}-\alpha_1^\vee}(\varphi)=\mathcal{B}_{R,\hat{e}}\setminus\{\alpha_1\}$. By \cite[Proposition~3.10(7), Remark~8.4]{minimaldegrees}, it is clear that 
\begin{equation}
\label{eq:bequalsb}
\mathcal{B}_{R,\hat{e}}\setminus\{\alpha_1\}=\mathcal{B}_{R,\hat{e}-\alpha_1^\vee}\,.
\end{equation}
Thus, it suffices to show that $\alpha\geq\varphi$ for all $\alpha\in\mathcal{B}_{R,\hat{e}-\alpha_1^\vee}$. But this later statements is clear in view of Equation~\eqref{eq:bequalsc}, \eqref{eq:bequalsb}, since we have an inclusion $\mathcal{B}_{R,\hat{e}-\alpha_1^\vee}\subseteq C_{R,e}(\varphi)$.
\end{proof}

The previous claim shows that $C_{R,\hat{e}-\alpha_1^\vee}(\varphi)$ is a generalized chain cascade of length $n-1$. Thus the induction hypothesis implies that there exists a total ordering
$$
C_{R,\hat{e}-\alpha_1^\vee}(\varphi)=\{\alpha_2\geq\cdots\geq\alpha_n\}\,.
$$
Again the previous claim and the fact that $\alpha_1$ is the unique maximal element of $C_{R,e}(\varphi)$ show that we get a total ordering
\begin{equation*}
C_{R,e}(\varphi)=\{\alpha_1\geq\alpha_2\geq\cdots\geq\alpha_n\}\,.\qedhere
\end{equation*}
\end{proof}

\begin{proof}[Proof of Item~\eqref{item:gencascadebcosmall}]

Let $\alpha\in\mathcal{B}_{R,e}$. By definition, $\alpha$ occurs in a greedy decomposition of $e$. This means, there exists a degree $e'\in H_2(G/B)$ such that $\alpha^\vee\leq e'\leq e$ and such that $\alpha$ is a maximal root of $e'$. This implies in particular that $\alpha$ is $B$-cosmall.
\end{proof}

\begin{proof}[Proof of Item~\eqref{item:stronglyorthogonal}]

The statement is a consequence of the assumption $e\in\Pi_B$ which leads to a phenomenon which we call \enquote{orthogonality relations in greedy decompositions} (cf. \cite[Section~8]{minimaldegrees}. By \cite[Corollary~8.3]{minimaldegrees}, we know that two different entries of a greedy decomposition of $e\in\Pi_B$ are strongly orthogonal, in particular we know that two distinct elements of $\mathcal{B}_{R,e}$ are strongly orthogonal.
\end{proof}

\begin{proof}[Proof of Item~\eqref{item:totallydisjoint1}]

Let $\alpha,\alpha'\in\mathcal{B}_{R,e}$ such that $C_{R,e}(\alpha)\cap C_{R,e}(\alpha')=\emptyset$. If $e$ is a connected degree, then we have $\alpha,\alpha'\leq\alpha(e)$ (\cite[Proposition~3.16]{minimaldegrees}) and thus $\alpha(e)\in C_{R,e}(\alpha)\cap C_{R,e}(\alpha')$ -- contrary to our assumption. Therefore, we conclude that $e$ is a disconnected degree. Let $e_1,\ldots,e_k$ be the connected components of $e$. By Lemma~\ref{lem:connectedcomponents1}, we necessarily have $k>1$. By definition, we have $\Delta(\alpha),\Delta(\alpha')\subseteq\Delta(e)$. By Lemma~\ref{lem:connectedcomponents1}, we conclude that $\Delta(\alpha)\subseteq\Delta(e_i)$ and $\Delta(\alpha')\subseteq\Delta(e_j)$ for some $1\leq i,j\leq k$. By Theorem~\ref{thm:additiongreedyconverse}, we know that $\alpha$ occurs in a greedy decomposition of $e_i$ and that $\alpha'$ occurs in a greedy decomposition of $e_j$. Hence, \cite[Proposition~3.16]{minimaldegrees} says that $\alpha\leq\alpha(e_i)$ and that $\alpha'\leq\alpha(e_j)$. Moreover, it follows from Lemma~\ref{lem:maxroots} that $\alpha(e_i),\alpha(e_j)\in\mathcal{B}_{R,e}$. If $i=j$, then we must have $\alpha(e_i)=\alpha(e_j)\in C_{R,e}(\alpha)\cap C_{R,e}(\alpha')$ -- contrary to the initial assumption. Therefore, we conclude that $i\neq j$.

%By \cite[Proposition~3.10(7)]{minimaldegrees}, we know that there exists a greedy decomposition of $e_i$ and $e_j$ which is a sub-sequence of a greedy decomposition of $e$. Consequently, we have $\alpha(e_i),\alpha(e_j)\in\mathcal{B}_{R,e}$. 
%(cf. Lemma~\ref{lem:connectedcomponents1}). 
%If $i=j$, then $\alpha,\alpha'\leq\alpha(e_i)=\alpha(e_j)$ by \cite[Proposition~3.16]{minimaldegrees} and thus $\alpha(e_i)=\alpha(e_j)\in C_{R,e}(\alpha)\cap C_{R,e}(\alpha')$ -- contrary to the initial assumption. Therefore, we conclude that $i\neq j$. 

Let $\varphi_i,\varphi_j\in R^+$ be locally high roots such that $\Delta(e_i)=\Delta(\varphi_i)$ and $\Delta(e_j)=\Delta(\varphi_j)$. By Fact~\ref{fact:stronglyorthogonal} (applied to $S=\Delta(e)$), we conclude that $R(\varphi_i)$ and $R(\varphi_j)$ are totally disjoint. Since we clearly have $R(\alpha)\subseteq R(\varphi_i)$ and $R(\alpha')\subseteq R(\varphi_j)$, it follows that $R(\alpha)$ and $R(\alpha')$ are also totally disjoint -- as claimed.
\end{proof}

\begin{proof}[Proof of Item~\eqref{item:totallydisjoint2}]

Let $\alpha,\alpha'\in\mathcal{B}_{R,e}$. Assume that there exists no $\varphi\in R^+$ such that $\alpha,\alpha'\in C_{R,e}(\varphi)$. If $C_{R,e}(\alpha)\cap C_{R,e}(\alpha')=\emptyset$, then, by Item~\eqref{item:totallydisjoint1}, the set $R(\alpha)$ and $R(\alpha')$ are totally disjoint. Therefore, we may assume that $C_{R,e}(\alpha)\cap C_{R,e}(\alpha')\neq\emptyset$. Let $C=C_{R,e}(\alpha)\cap C_{R,e}(\alpha')$ for short. By Item~\eqref{item:totallyordered}, there exists a unique minimal element in $C$, which we denote by $\varphi$. It is easy to see that we have $C=C_{R,e}(\varphi)$.

\begin{proof}[Claim: We have $\alpha<\varphi$ and $\alpha'<\varphi$]\renewcommand{\qedsymbol}{$\triangle$}

Indeed, since $\varphi\in C_{R,e}(\alpha)$, we have $\alpha\leq\varphi$. On the other hand, if $\alpha=\varphi$, then $\alpha\in C\subseteq C_{R,e}(\alpha')$ -- contrary to our initial assumption. Therefore, we must have $\alpha\neq\varphi$ and thus $\alpha<\varphi$. Analogously, we see that $\alpha'<\varphi$.
\end{proof}

Let
$$
\hat{e}=\sum_{\mu\in\mathcal{B}_{R,e}\colon\mu<\varphi}\mu^\vee\,.
$$
By \cite[Proposition~4.4(9)]{minimaldegrees}, we know that $\hat{e}\in\Pi_B$. Moreover, by \cite[Proposition~3.10(7)]{minimaldegrees}, we know that
\begin{equation}
\label{eq:descriptionofb}
\mathcal{B}_{R,\hat{e}}=\{\mu\in\mathcal{B}_{R,e}\mid\mu<\varphi\}\,.
\end{equation}
The previous claim shows in particular that $\alpha,\alpha'\in\mathcal{B}_{R,\hat{e}}$.

\begin{proof}[Claim: We have $C_{R,\hat{e}}(\alpha)\cap C_{R,\hat{e}}(\alpha')=\emptyset$]\renewcommand{\qedsymbol}{$\triangle$}

Suppose for a contradiction that 
$$
C_{R,\hat{e}}(\alpha)\cap C_{R,\hat{e}}(\alpha')\neq\emptyset\,.
$$
By Equation~\eqref{eq:descriptionofb}, then there exists a $\varphi'\in\mathcal{B}_{R,e}$ such that $\varphi'<\varphi$ and such that $\varphi'\geq\alpha$ and $\varphi'\geq\alpha'$. This means that $\varphi'$ is an element of $C$ which is strictly smaller than $\varphi$ -- contrary to the choice of $\varphi$ as the unique minimal element of $C$.
\end{proof}

By the previous claim, we can apply Item~\eqref{item:totallydisjoint1} to $\alpha,\alpha'\in\mathcal{B}_{R,\hat{e}}$ and get that $R(\alpha)$ and $R(\alpha')$ are totally disjoint -- as desired.
\end{proof}

\begin{thm}
\label{thm:gencascade:main}

Let $e\in\Pi_B$. Then we have the following formula:
$$
z_e^B=\prod_{\alpha\in\mathcal{B}_{R,e}}s_\alpha\,.
$$

\end{thm}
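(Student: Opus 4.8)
The proof will be by induction, the starting observation being that for $P=B$ the element $w_B$ is trivial, so $z_e^B=\tilde z_e^B=s_{\alpha_1}\cdot\ldots\cdot s_{\alpha_r}$ (Hecke product) for every greedy decomposition $(\alpha_1,\ldots,\alpha_r)$ of $e$, whereas the asserted right-hand side $\prod_{\alpha\in\mathcal B_{R,e}}s_\alpha$ is the \emph{ordinary} product of the \emph{same} reflections. This ordinary product is well defined and order-independent because the elements of $\mathcal B_{R,e}$ are pairwise strongly orthogonal (Theorem~\ref{thm:gencascade}\eqref{item:stronglyorthogonal}); for the same reason no entry of a greedy decomposition of $e\in\Pi_B$ is repeated, so $\mathrm{card}(\mathcal B_{R,e})=r$. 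Thus the theorem is equivalent to the assertion that the above Hecke product unfolds to an ordinary product.

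First I would reduce to the case of a connected degree $e$: if $e$ has connected components $e_1,\ldots,e_k$ with $k\geq 2$, then $\widetilde\Delta(e_1),\ldots,\widetilde\Delta(e_k)$ are pairwise totally disjoint (Lemma~\ref{lem:connectedcomponents1}), so Theorem~\ref{thm:additiontildez} gives $z_e^B=\tilde z_{e_1}^B\cdots\tilde z_{e_k}^B=z_{e_1}^B\cdots z_{e_k}^B$ as a commuting ordinary product, while $\mathcal B_{R,e}=\coprod_i\mathcal B_{R,e_i}$ by Fact~\ref{fact:altdef}, so the connected case for each $e_i$ assembles to the general one. For $e$ connected I would induct on $r=\mathrm{card}(\mathcal B_{R,e})$, the cases $r\le 1$ being trivial. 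Set $\alpha=\alpha(e)$ and $e'=e-\alpha^\vee$; by Fact~\ref{fact:altdef} and \cite[Proposition~4.4(9)]{minimaldegrees} we have $e'\in\Pi_B$ and $\mathcal B_{R,e}=\{\alpha\}\amalg\mathcal B_{R,e'}$, and since prefixing $\alpha$ to a greedy decomposition of $e'$ produces one of $e$, associativity of the Hecke product gives $z_e^B=s_\alpha\cdot z_{e'}^B$. By the induction hypothesis $z_{e'}^B=\prod_{\gamma\in\mathcal B_{R,e'}}s_\gamma$, and this element is an involution (Remark~\ref{rem:tildezinvolution}). By the criterion used in the proof of Lemma~\ref{lem:support}, namely $I(u)\cap I(v^{-1})=\emptyset\Rightarrow u\cdot v=uv$ (\cite[Proposition~3.2]{curvenbhd2}), and because $(z_{e'}^B)^{-1}=z_{e'}^B$, it then suffices to prove
$$
I(s_\alpha)\cap I(z_{e'}^B)=\emptyset ;
$$
granting this, $z_e^B=s_\alpha z_{e'}^B=\prod_{\alpha'\in\mathcal B_{R,e}}s_{\alpha'}$, since $s_\alpha$ commutes with every $s_\gamma$ ($\gamma\in\mathcal B_{R,e'}$).

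To prove the displayed vanishing I would localize. Each $\gamma\in\mathcal B_{R,e'}$ satisfies $\gamma<\alpha$ by \cite[Proposition~3.16]{minimaldegrees}, hence $\Delta(\gamma)\subseteq\Delta(\alpha)$, and $\gamma\perp\alpha$ by Theorem~\ref{thm:gencascade}\eqref{item:stronglyorthogonal}; therefore $z_{e'}^B$ fixes $\alpha$ and has support $\Delta(z_{e'}^B)=\widetilde\Delta(e')\subseteq\Delta(\alpha)$ (\cite[Proposition~3.17]{minimaldegrees}), so $I(z_{e'}^B)\subseteq R(\alpha)^+$ (\cite[5.5, Theorem~(b)]{humphreys3}). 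Since both inversion sets involved then lie inside $R(\alpha)$, the locality results of \cite[Section~6]{minimaldegrees} allow me to assume $R=R(\alpha)$ is irreducible, where $\alpha$ is still $B$-cosmall (Theorem~\ref{thm:gencascade}\eqref{item:gencascadebcosmall}) and $z_{e'}^B$ is unchanged. It then remains to show that if $\alpha$ is $B$-cosmall in the irreducible system $R=R(\alpha)$ and $w$ is an involution with $I(w)\subseteq R^+$ fixing $\alpha$ and built from reflections in roots strongly orthogonal to $\alpha$, then $I(s_\alpha)\cap I(w)=\emptyset$. For this I would combine the explicit shape of $I(s_\alpha)$ — a positive root $\beta\neq\alpha$ lies in $I(s_\alpha)$ exactly when $s_\alpha\beta<0$, in which case $\beta+(-s_\alpha\beta)=(\beta,\alpha^\vee)\alpha$, so that the inversions other than $\alpha$ pair off into classes summing to a positive multiple of $\alpha$ — with the cosmallness of $\alpha$, which forbids a root above $\alpha$ whose coroot is $\le\alpha^\vee$ and thereby sharply restricts the pairings $(\beta,\alpha^\vee)$ (cf.\ Theorem~\ref{thm:pcosmall}): a hypothetical $\beta\in I(s_\alpha)\cap I(w)$ would, using $w(\alpha)=\alpha$, force $w$ to send the partner $(\beta,\alpha^\vee)\alpha-\beta$ to a positive root, and tracing where the commuting reflections constituting $w$ move $\beta$ — organized along a chain cascade by Theorem~\ref{thm:gencascade}\eqref{item:totallyordered} — contradicts the cosmallness of $\alpha$. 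Making this interaction between $B$-cosmallness and the inversion set of the involution $z_{e'}^B$ precise is the main obstacle; everything else is bookkeeping with the addition theorems of Subsection~\ref{subsec:addition} and the inductive description in Fact~\ref{fact:altdef}.
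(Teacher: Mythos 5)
Your setup coincides with the paper's: induct on the length of the greedy decomposition, peel off $\alpha=\alpha_1$, observe $z_e^B=s_\alpha\cdot z_{e-\alpha^\vee}^B$ (Hecke product), and reduce everything to the single identity $I(s_\alpha)\cap I(z_{e-\alpha^\vee}^B)=\emptyset$. (The preliminary reduction to connected $e$ via Theorem~\ref{thm:additiontildez} is harmless but unnecessary.) The problem is that your argument for this identity is not a proof but a sketch, and you say so yourself (\enquote{the main obstacle}). From the inputs you actually invoke — each $\gamma\in\mathcal{B}_{R,e-\alpha^\vee}$ satisfies $\gamma<\alpha$ and $\gamma\perp\alpha$, hence $I(z_{e-\alpha^\vee}^B)\subseteq R(\alpha)^+$ and $z_{e-\alpha^\vee}^B(\alpha)=\alpha$ — the disjointness does not follow: a root $\beta\in I(z_{e-\alpha^\vee}^B)$ is a priori only some positive root of $R(\alpha)$, and nothing you have established prevents $(\beta,\alpha^\vee)>0$, i.e.\ $\beta\in I(s_\alpha)$. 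The pairing-plus-cosmallness mechanism you gesture at would still have to rule this out, and as written it does not.

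The missing ingredient is the orthogonality relation in greedy decompositions of minimal degrees, \cite[Theorem~8.1]{minimaldegrees}, which is strictly stronger than the strong orthogonality of the cascade elements you do use: it gives $\widetilde{\Delta}(e-\alpha^\vee)\subseteq\Delta_\alpha^\circ$, i.e.\ every \emph{simple} root in the extended support of the remainder degree is orthogonal to $\alpha$. Once you have this, let $Q$ be the parabolic with $\Delta_Q=\Delta_\alpha^\circ$; then $z_{e-\alpha^\vee}^B\in W_Q$, so $I(z_{e-\alpha^\vee}^B)\subseteq R_Q^+$ by \cite[5.5, Theorem~(b)]{humphreys3}, and every root of $R_Q$ is a combination of simple roots orthogonal to $\alpha$, hence is itself orthogonal to $\alpha$, hence is fixed by $s_\alpha$ and cannot lie in $I(s_\alpha)$. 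This is exactly how the paper concludes; no localization to $R(\alpha)$ and no analysis of how $B$-cosmallness interacts with the involution $z_{e-\alpha^\vee}^B$ is needed. I recommend you replace the final paragraph of your argument by this appeal to \cite[Theorem~8.1]{minimaldegrees}.
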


\begin{rem}

Note that the product Theorem~\ref{thm:gencascade:main} speaks about is well-defined in view of Theorem~\ref{thm:gencascade}\eqref{item:stronglyorthogonal}. For the very same reason, all other products we will refer to are also well-defined.

\end{rem}

\begin{rem}
\label{rem:general3}

As in Remark~\ref{rem:general2}, we see that Theorem~\ref{thm:gencascade:main} is an important generalization of \cite[Proposition~1.10]{kostant}. Indeed, we recover the formula
$$
w_o=\prod_{\alpha\in\mathcal{B}_{R,d_{G/B}}}s_\alpha
$$
where $\mathcal{B}_{R,d_{G/B}}$ is the ordinary cascade of orthogonal roots by setting $e=d_{G/B}$ (cf. Remark~\ref{rem:general}). %Theorem~\ref{thm:gencascade:main} and its corollaries have many application throughout the text and play a key role in our argumentation.

\end{rem}

\begin{proof}[Proof of Theorem~\ref{thm:gencascade:main}]

Let $(\alpha_1,\ldots,\alpha_r)$ be a greedy decomposition of $e$. For brevity, let $\alpha=\alpha_1$. By \cite[Proposition~4.4(9)]{minimaldegrees}, we know that $e-\alpha^\vee\in\Pi_B$. By \cite[Proposition~3.10(7)]{minimaldegrees}, 
%and the uniqueness of the greedy decomposition up to reordering, 
we know that $\mathcal{B}_{R,e-\alpha^\vee}=\{\alpha_2,\ldots,\alpha_r\}$. Moreover, since $e\in\Pi_B$, we know that there are no repeated entries in a greedy decomposition of $e$ (\cite[Remark~8.4]{minimaldegrees}). Hence, it follows that
\begin{equation}
\label{eq:bequalsb2}
\mathcal{B}_{R,e-\alpha^\vee}=\mathcal{B}_{R,e}\setminus\{\alpha\}\,.
\end{equation}

We now perform an induction on the length of the greedy decomposition of $e$. The statement of Theorem~\ref{thm:gencascade:main} is obvious whenever the greedy decomposition of $e$ has length zero, i.e. if $e=0$ and $\mathcal{B}_{R,e}=\emptyset$. Assume that $r>0$ and that the statement is known for all minimal degrees in $\Pi_B$ whose greedy decomposition has length less than $r$. By \cite[Proposition~3.10(7)]{minimaldegrees}, a greedy decomposition of $e-\alpha^\vee$ (e.g. $(\alpha_2,\ldots,\alpha_r)$) has length $r-1$. Hence, the induction hypothesis applies to $e-\alpha^\vee$. In view of Equation~\eqref{eq:bequalsb2}, we find that
\begin{equation}
\label{eq:indhyp}
z_{e-\alpha^\vee}^B=\prod_{\mu\in\mathcal{B}_{R,e}\setminus\{\alpha\}}s_\mu\,.
\end{equation}
To prove Theorem~\ref{thm:gencascade:main}, it therefore suffices to show that
\begin{equation}
\label{eq:zeequalsze}
z_e^B=s_\alpha z_{e-\alpha^\vee}^B\,.
\end{equation}
By \cite[Proposition~3.2: $(c)\Leftrightarrow(e)$]{curvenbhd2} and \cite[Corollary~4.9]{curvenbhd2} (see also \cite[Proposition~3.10(6)]{minimaldegrees} for a more refined statement), Equation~\eqref{eq:zeequalsze} is equivalent to the statement
\begin{equation}
\label{eq:inversioncap}
I(s_\alpha)\cap I(z_{e-\alpha^\vee}^B)=\emptyset\,.
\end{equation}

Let $Q$ be the parabolic subgroup of $G$ such that $\Delta_Q=\Delta_\alpha^\circ$.

\begin{proof}[Claim: We have $I(z_{e-\alpha^\vee}^B)\subseteq R_Q^+$]\renewcommand{\qedsymbol}{$\triangle$}

Indeed, to prove the claim, it suffices to show that $z_{e-\alpha^\vee}^B\in W_Q$ (\cite[5.5, Theorem~(b)]{humphreys3}). By Equation~\eqref{eq:bequalsb2}, \eqref{eq:indhyp}, it suffices to show that 
$$
s_\mu\in W_Q\Leftrightarrow\mu\in R_Q^+\Leftrightarrow\Delta(\mu)\subseteq\Delta_Q
$$ 
for all $\mu\in\mathcal{B}_{R,e-\alpha^\vee}$. But by \cite[Theorem~8.1]{minimaldegrees}, we have 
$$
\Delta(\mu)\subseteq\Delta(e-\alpha^\vee)\subseteq\Delta_\alpha^\circ=\Delta_Q
$$
for all $\mu\in\mathcal{B}_{R,e-\alpha^\vee}$. This completes the proof of the claim.
\end{proof}

We are now able to prove Equation~\eqref{eq:inversioncap} which will complete the proof of Theorem~\ref{thm:gencascade:main}. Let $\mu\in I(z_{e-\alpha^\vee}^B)$. By the previous claim, we have $\mu\in R_Q^+$. By definition of $Q$, this means that $\alpha$ is orthogonal to $\mu$. Thus, we have $s_\alpha(\mu)=\mu>0$ and $\mu\notin I(s_\alpha)$. This means that Equation~\eqref{eq:inversioncap} is indeed true.
\end{proof}

\begin{thm}
\label{thm:gencascade:main2}

Let $e\in\Pi_B$. Then we have the following formulas:

\begin{align*}
I(z_e^B)&=\coprod_{\alpha\in\mathcal{B}_{R,e}}I(s_\alpha)\,,\\
I(z_e^B)\setminus R_P^+&=\coprod_{\alpha\in\mathcal{B}_{R,e}\setminus R_P^+}\left(I(s_\alpha)\setminus R_P^+\right)\,.
\end{align*}

\end{thm}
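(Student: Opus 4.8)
The first formula is proved by induction on the length $r$ of a greedy decomposition of $e$, reusing the factorization of $z_e^B$ obtained in the proof of Theorem~\ref{thm:gencascade:main}; the second formula then follows by a short intersection argument.

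\emph{First formula.} If $r=0$, then $e=0$, $z_0^B=1$, $\mathcal{B}_{R,0}=\emptyset$, and both sides are empty. Let $r>0$, fix a greedy decomposition $(\alpha_1,\dots,\alpha_r)$ of $e$ and put $\alpha=\alpha_1$. From the proof of Theorem~\ref{thm:gencascade:main} we have: $e-\alpha^\vee\in\Pi_B$ with $\mathcal{B}_{R,e-\alpha^\vee}=\mathcal{B}_{R,e}\setminus\{\alpha\}$ (Equation~\eqref{eq:bequalsb2}); the factorization $z_e^B=s_\alpha z_{e-\alpha^\vee}^B$ (Equation~\eqref{eq:zeequalsze}), which is length-additive, $\ell(z_e^B)=\ell(s_\alpha)+\ell(z_{e-\alpha^\vee}^B)$ (equivalent to Equation~\eqref{eq:inversioncap}, $I(s_\alpha)\cap I(z_{e-\alpha^\vee}^B)=\emptyset$, since there $s_\alpha z_{e-\alpha^\vee}^B$ is simultaneously the ordinary and the Hecke product); and the inclusion $I(z_{e-\alpha^\vee}^B)\subseteq R_Q^+$, where $\Delta_Q=\Delta_\alpha^\circ$. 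Write $w=z_{e-\alpha^\vee}^B$. Since $(\alpha,\beta)=0$ for all $\beta\in\Delta_Q$, the reflection $s_\alpha$ commutes with $W_Q$, hence $z_e^B=w s_\alpha$, and this product is still length-additive. Applying the standard identity $I(ab)=I(b)\amalg b^{-1}(I(a))$, valid whenever $\ell(ab)=\ell(a)+\ell(b)$, with $a=w$ and $b=s_\alpha$ gives $I(z_e^B)=I(s_\alpha)\amalg s_\alpha(I(w))$. But $I(w)\subseteq R_Q^+$ and $s_\alpha$ fixes $R_Q$ pointwise (again because $\alpha\perp\Delta_Q$), so $s_\alpha(I(w))=I(w)$ and therefore $I(z_e^B)=I(s_\alpha)\amalg I(z_{e-\alpha^\vee}^B)$. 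By the induction hypothesis $I(z_{e-\alpha^\vee}^B)=\coprod_{\mu\in\mathcal{B}_{R,e}\setminus\{\alpha\}}I(s_\mu)$, and substituting yields $I(z_e^B)=\coprod_{\mu\in\mathcal{B}_{R,e}}I(s_\mu)$.

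\emph{Second formula.} Intersecting the first formula with $R^+\setminus R_P^+$ and using $I(z_e^B)\subseteq R^+$ gives $I(z_e^B)\setminus R_P^+=\coprod_{\alpha\in\mathcal{B}_{R,e}}\bigl(I(s_\alpha)\setminus R_P^+\bigr)$. If $\alpha\in\mathcal{B}_{R,e}\cap R_P^+$, then $s_\alpha\in W_P$, and since $W_P$ permutes $R^+\setminus R_P^+$ we get $I(s_\alpha)\subseteq R_P^+$, so the corresponding term vanishes. Only the terms with $\alpha\in\mathcal{B}_{R,e}\setminus R_P^+$ survive, which is exactly the second formula.

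\emph{Main obstacle.} The one delicate point is the inversion-set bookkeeping in the inductive step: one must pass to the commuted form $z_e^B=w s_\alpha$ so that the conjugated term $s_\alpha(I(w))$ collapses to $I(w)$, and this rests on $I(w)\subseteq R_Q^+$ together with $\alpha\perp\Delta_Q$ — precisely the place where the orthogonality relations of \cite[Section~8]{minimaldegrees}, entering through the proof of Theorem~\ref{thm:gencascade:main}, are used. Everything else is routine.
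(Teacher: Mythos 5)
Your proof is correct, and it follows the paper's overall strategy: the same induction on the length of a greedy decomposition, peeling off $\alpha=\alpha_1$ and reusing Equations~\eqref{eq:bequalsb2}, \eqref{eq:zeequalsze}, \eqref{eq:inversioncap} and the containment $z_{e-\alpha^\vee}^B\in W_Q$ from the proof of Theorem~\ref{thm:gencascade:main}. The one place where you genuinely diverge is how the equality is clinched. The paper's inductive step only produces the inclusion $I(s_\alpha)\amalg I(z_{e-\alpha^\vee}^B)\subseteq I(z_e^B)$ and then closes the gap at the end by a cardinality count, invoking Theorem~\ref{thm:gencascade:main} together with the subadditivity $\ell(z_e^B)\leq\sum_{\mu\in\mathcal{B}_{R,e}}\ell(s_\mu)$. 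You instead obtain the equality $I(z_e^B)=I(s_\alpha)\amalg I(z_{e-\alpha^\vee}^B)$ directly in each step, by commuting to $z_e^B=ws_\alpha$ (legitimate, since $w\in W_Q$ with $\Delta_Q=\Delta_\alpha^\circ$ and $s_\alpha$ centralizes $W_Q$), applying the standard identity $I(ab)=I(b)\amalg b^{-1}(I(a))$ for length-additive products, and observing that $s_\alpha$ fixes $I(w)\subseteq R_Q^+$ pointwise. This buys you independence from Theorem~\ref{thm:gencascade:main} as an input (your induction in fact reproves it), at the cost of the extra commutation observation; both routes ultimately rest on the same orthogonality input $\Delta(e-\alpha^\vee)\subseteq\Delta_\alpha^\circ$ from \cite[Theorem~8.1]{minimaldegrees}. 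Your handling of the second formula matches the paper's.
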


\begin{rem}

As in Remark~\ref{rem:general2}, we see that the first formula in Theorem~\ref{thm:gencascade:main2} is a generalization of \cite[Equation~(1.5)]{kostant}. Indeed, we recover the formula
$$
R^+=\coprod_{\alpha\in\mathcal{B}_{R,d_{G/B}}}I(s_\alpha)
$$
where $\mathcal{B}_{R,d_{G/B}}$ is the ordinary cascade of orthogonal roots by setting $e=d_{G/B}$ (cf. Remark~\ref{rem:general}).

\end{rem}

\begin{proof}[Proof of Theorem~\ref{thm:gencascade:main2}]

Let us first note that the second formula follows directly from the first by subtracting $R_P^+$. Indeed, we have $I(s_\alpha)\subseteq R_P^+$ for all $\alpha\in\mathcal{B}_{R,e}\cap R_P^+$ by \cite[5.5, Theorem~(b)]{humphreys3}. We now prove the first formula.

Let $(\alpha_1,\ldots,\alpha_r)$ be a greedy decomposition of $e$. For brevity, let $\alpha=\alpha_1$. The situation is now precisely the same as in the proof of Theorem~\ref{thm:gencascade:main} and we can freely use the formulas we worked out there. By Equation~\eqref{eq:zeequalsze}, \eqref{eq:inversioncap} and \cite[Proposition~3.2: $(d)\Leftrightarrow(e)$]{curvenbhd2}, 
%I suppress that I again use that both $s_\alpha$ and $z_{e-\alpha^\vee}^B$ are involutions -- this is only confusing now
we have
\begin{equation}
\label{eq:inclusion}
I(s_\alpha)\amalg I(z_{e-\alpha^\vee}^B)\subseteq I(z_e^B)\,.
\end{equation}

We now perform an induction on the length of the greedy decomposition of $e$ -- the first formula of Theorem~\ref{thm:gencascade:main2} being obvious whenever the greedy decomposition of $e$ has length zero.
%i.e. if $e=0$ and $\mathcal{B}_{R,e}=\emptyset$ as above.
By the induction hypothesis applied to $e-\alpha^\vee$
%the length of a greedy decomposition of $e-\alpha^\vee$ is $r-1$ as above
and Equation~\eqref{eq:bequalsb2}, we find that
$$
I(z_{e-\alpha^\vee}^B)=\coprod_{\mu\in\mathcal{B}_{R,e}\setminus\{\alpha\}}I(s_\mu)\,.
$$
Combined with the last equation, Inclusion~\eqref{eq:inclusion} gives the inclusion
$$
\coprod_{\mu\in\mathcal{B}_{R,e}}I(s_\mu)\subseteq I(z_e^B)\,.
$$
To prove that this inclusion is actually an equality (and hence Theorem~\ref{thm:gencascade:main2}), it suffices to show that the left and right set have the same cardinality. But this follows from Theorem~\ref{thm:gencascade:main} and the triangle inequality:
\begin{equation*}
\ell(z_e^B)\leq\sum_{\mu\in\mathcal{B}_{R,e}}\ell(s_\mu)\,.\qedhere
\end{equation*}
\end{proof}

\begin{cor}[Length additivity in generalized cascades of orthogonal roots]
\label{cor:gencascade:main}

Let $e\in\Pi_B$. Then we have the following formulas:
\begin{align*}
\ell(z_e^B)&=\sum_{\alpha\in\mathcal{B}_{R,e}}\ell(s_\alpha)\,,\\
%&=(c_1(G/B),e)-\mathrm{card}(\mathcal{B}_{R,e})\\
\ell(z_e^BW_P)&=\sum_{\alpha\in\mathcal{B}_{R,e}\setminus R_P^+}\ell(s_\alpha W_P)\,.
\end{align*}

\end{cor}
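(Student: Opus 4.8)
The plan is to derive both identities directly from Theorem~\ref{thm:gencascade:main2} by passing to cardinalities, using the two standard formulas recalled in the preliminaries, namely $\ell(w)=\mathrm{card}(I(w))$ and $\ell(wW_P)=\mathrm{card}(I(w)\setminus R_P^+)$ for every $w\in W$.

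First I would treat the first formula. Apply the cardinality function to the disjoint union $I(z_e^B)=\coprod_{\alpha\in\mathcal{B}_{R,e}}I(s_\alpha)$ of Theorem~\ref{thm:gencascade:main2}. Since the union is disjoint, the cardinality of the left-hand side is the sum of the cardinalities of the pieces, i.e. $\mathrm{card}(I(z_e^B))=\sum_{\alpha\in\mathcal{B}_{R,e}}\mathrm{card}(I(s_\alpha))$. Rewriting each cardinality as a length via $\ell(w)=\mathrm{card}(I(w))$ gives exactly $\ell(z_e^B)=\sum_{\alpha\in\mathcal{B}_{R,e}}\ell(s_\alpha)$.

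Next I would treat the second formula in the same way, now starting from the second identity of Theorem~\ref{thm:gencascade:main2}, namely $I(z_e^B)\setminus R_P^+=\coprod_{\alpha\in\mathcal{B}_{R,e}\setminus R_P^+}\left(I(s_\alpha)\setminus R_P^+\right)$. Taking cardinalities of this disjoint union yields $\mathrm{card}(I(z_e^B)\setminus R_P^+)=\sum_{\alpha\in\mathcal{B}_{R,e}\setminus R_P^+}\mathrm{card}(I(s_\alpha)\setminus R_P^+)$, and applying the formula $\ell(wW_P)=\mathrm{card}(I(w)\setminus R_P^+)$ to $w=z_e^B$ on the left and to $w=s_\alpha$ in each summand on the right gives $\ell(z_e^BW_P)=\sum_{\alpha\in\mathcal{B}_{R,e}\setminus R_P^+}\ell(s_\alpha W_P)$, as desired. (One may also note, for orientation, that the summands for $\alpha\in\mathcal{B}_{R,e}\cap R_P^+$ contribute nothing, since $I(s_\alpha)\subseteq R_P^+$ in that case, so the second formula could equally be phrased as a sum over all of $\mathcal{B}_{R,e}$.)

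There is no real obstacle here: the corollary is an immediate consequence of Theorem~\ref{thm:gencascade:main2}, the only inputs being the combinatorial identities expressing lengths as cardinalities of inversion sets (and of their complements modulo $R_P^+$) and the additivity of cardinality over disjoint unions. The substantive work — establishing that the inversion set of $z_e^B$ decomposes as the disjoint union of the inversion sets of the $s_\alpha$ — was already carried out in the proof of Theorem~\ref{thm:gencascade:main2}.
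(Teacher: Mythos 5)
Your proof is correct and is exactly the paper's argument: the corollary is obtained from Theorem~\ref{thm:gencascade:main2} by taking cardinalities of the disjoint unions and invoking $\ell(w)=\mathrm{card}(I(w))$ and $\ell(wW_P)=\mathrm{card}(I(w)\setminus R_P^+)$. Your added remark about the vanishing of the summands with $\alpha\in\mathcal{B}_{R,e}\cap R_P^+$ matches the observation made in the proof of Theorem~\ref{thm:gencascade:main2} itself.
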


\begin{proof}

This corollary follows directly from Theorem~\ref{thm:gencascade:main2} by taking the cardinality of the involved sets.
\end{proof}

\begin{rem}

Several preliminary attempts towards a suitable length additivity theorem were already done in \cite[Lemma~8.9]{thesis}. Corollary~\ref{cor:gencascade:main} can be seen as final generalization of these.

\end{rem}

\todo[inline,color=green]{I want to add a reference to \cite{thesis} once I can explain in which sense I proved $P$-length additivity there. That requires the theory of lifting first: lifting of $d_X$ to $d_{G/B}$ stays the main example. Maybe even Kostant has a statement which is similar\ldots and which I can refer to here or after Theorem~\ref{thm:gencascade:main2}\ldots not to the thesis\ldots because, there, I presented bad style with unnecessarily involved proofs.} 

\todo[inline,color=green]{What I did in the thesis was as follows: Let $F$ be a subset of $\mathcal{B}_{R,d_{G/B}}\setminus R_P^+$. Then we have
$$
\ell\left(\left(\prod_{\alpha\in F}s_\alpha\right)W_P\right)=\sum_{\alpha\in F}\ell(s_\alpha W_P)\,.
$$
I did this formula for the case of a maximal parabolic subgroup $P$ with an unnecessary involved proof. Here, it is not important to mention this.}

\begin{cor}
\label{cor:gencascade:main2}

Let $e\in\Pi_B$. Then we have the following formula:
$$
\ell(z_e^B)=(c_1(G/B),e)-\mathrm{card}(\mathcal{B}_{R,e})\,.
$$

\end{cor}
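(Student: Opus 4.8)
The plan is to reduce the identity to the individual roots occurring in a greedy decomposition of $e$ and then to exploit that all of them are $B$-cosmall.

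Since $X=G/B$ has $d(\alpha)=\alpha^\vee$, a greedy decomposition $(\alpha_1,\dots,\alpha_r)$ of $e$ satisfies $e=\sum_{i=1}^r\alpha_i^\vee$ in $H_2(G/B)$; as $e\in\Pi_B$ there are no repeated entries (\cite[Remark~8.4]{minimaldegrees}, as already used in the proof of Theorem~\ref{thm:gencascade:main}), so $\{\alpha_1,\dots,\alpha_r\}=\mathcal{B}_{R,e}$ and $e=\sum_{\alpha\in\mathcal{B}_{R,e}}\alpha^\vee$. By bilinearity of the Poincar\'e pairing this gives $(c_1(G/B),e)=\sum_{\alpha\in\mathcal{B}_{R,e}}(c_1(G/B),\alpha^\vee)$, and combined with the first formula of Corollary~\ref{cor:gencascade:main}, $\ell(z_e^B)=\sum_{\alpha\in\mathcal{B}_{R,e}}\ell(s_\alpha)$, the asserted identity becomes equivalent to the per-root statement
$$
\ell(s_\alpha)=(c_1(G/B),\alpha^\vee)-1\qquad\text{for }\alpha\in\mathcal{B}_{R,e}.
$$
By Theorem~\ref{thm:gencascade}\eqref{item:gencascadebcosmall} every such $\alpha$ is $B$-cosmall, so it suffices to establish this for all $B$-cosmall roots.

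For the per-root identity, write $\rho=\tfrac12 c_1(G/B)=\tfrac12\sum_{\gamma\in R^+}\gamma$, so that $(c_1(G/B),\alpha^\vee)=2(\rho,\alpha^\vee)$, and recall the standard identity $\sum_{\beta\in I(s_\alpha)}\beta=\rho-s_\alpha(\rho)=(\rho,\alpha^\vee)\alpha$. The map $\beta\mapsto -s_\alpha(\beta)$ is an involution of $I(s_\alpha)$ whose unique fixed point is $\alpha$, and each of its two-element orbits $\{\beta,\gamma\}$ satisfies $\beta+\gamma=c\alpha$ with $c=(\beta,\alpha^\vee)=(\gamma,\alpha^\vee)\ge 1$. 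Pairing the displayed identity with $\alpha^\vee$ gives $2+2\sum_{\mathrm{orbits}}c=2(\rho,\alpha^\vee)$, while counting gives $\ell(s_\alpha)=\mathrm{card}(I(s_\alpha))=1+2\,\#\{\mathrm{orbits}\}$; hence $\ell(s_\alpha)=2(\rho,\alpha^\vee)-1$ is equivalent to $c=1$ for every two-element orbit. If some orbit had $c\ge 2$, then $c\in\{2,3\}$, which forces $(\alpha,\beta^\vee)=(\alpha,\gamma^\vee)=1$ and $(\beta,\beta)=(\gamma,\gamma)=c(\alpha,\alpha)$; consequently $\beta^\vee+\gamma^\vee=\tfrac{2(\beta+\gamma)}{c(\alpha,\alpha)}=\alpha^\vee$, so both $\beta^\vee$ and $\gamma^\vee$ are strictly below $\alpha^\vee$ in the coroot lattice. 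On the other hand $\beta-\alpha=-s_\beta(\alpha)$ and $\gamma-\alpha=-s_\gamma(\alpha)$ are roots whose sum is $(c-2)\alpha$, a nonnegative combination of simple roots; hence at least one of them, say $\beta-\alpha$, is a positive root, so $\beta>\alpha$ with $\beta^\vee\le\alpha^\vee$, contradicting the maximality of $\alpha$ among roots $\delta$ with $d(\delta)\le d(\alpha)$ (Definition~\ref{def:pcosmall}). Thus $c=1$ always, which proves the per-root identity. (Alternatively, this identity for $B$-cosmall roots can be quoted from the theory of the $T$-invariant curves $C_\alpha$ in \cite{fulton} or \cite{curvenbhd2}.)

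Summing the per-root identity over $\alpha\in\mathcal{B}_{R,e}$ and inserting the two reductions yields $\ell(z_e^B)=\sum_{\alpha\in\mathcal{B}_{R,e}}\big((c_1(G/B),\alpha^\vee)-1\big)=(c_1(G/B),e)-\mathrm{card}(\mathcal{B}_{R,e})$, as claimed. I expect the only genuinely non-formal step to be the per-root identity for $B$-cosmall roots: the rank-two computation above is elementary but needs attention to the possible length ratios (only type $\mathsf{B}_2$, $\mathsf{C}_2$ or $\mathsf{G}_2$ can produce $c\ge 2$), and it may instead be replaced by an appeal to the literature.
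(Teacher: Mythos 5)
Your proof is correct and follows the same route as the paper: reduce via the length additivity of Corollary~\ref{cor:gencascade:main} and the identity $e=\sum_{\alpha\in\mathcal{B}_{R,e}}\alpha^\vee$ (no repeated entries) to the per-root statement $\ell(s_\alpha)=(c_1(G/B),\alpha^\vee)-1$ for the $B$-cosmall roots $\alpha\in\mathcal{B}_{R,e}$. The only difference is that the paper simply quotes this per-root identity from \cite[Theorem~6.1: $(a)\Leftrightarrow(b)$]{curvenbhd2}, whereas you reprove it with the involution $\beta\mapsto -s_\alpha(\beta)$ on $I(s_\alpha)$; your rank-two argument is sound, but the citation suffices.
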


\begin{proof}

By Theorem~\ref{thm:gencascade}\eqref{item:gencascadebcosmall}, we know that all $\alpha\in\mathcal{B}_{R,e}$ are $B$-cosmall. Hence, it follows from \cite[Theorem~6.1: $(a)\Leftrightarrow(b)$]{curvenbhd2} that $\ell(s_\alpha)=(c_1(G/B),\alpha^\vee)-1$ for all $\alpha\in\mathcal{B}_{R,e}$. Since there are no repeated entries in a greedy decomposition of $e$ (\cite[Remark~8.4]{minimaldegrees}), we clearly have $e=\sum_{\alpha\in\mathcal{B}_{R,e}}\alpha^\vee$. The corollary follows from these facts and Corollary~\ref{cor:gencascade:main}.
\end{proof}

\section{Positivity in generalized cascades of orthogonal roots}
\label{sec:positivity}

In this section, we work out a positivity statement in generalized cascades of orthogonal roots (Theorem~\ref{thm:positivity}). This theorem closes the general properties of generalized chain cascades which we started to investigate in Section~\ref{sec:gencascade}. We apply this theorem to degrees $e\in\Pi_B$ where $z_e^B$ is the maximal representative in $z_e^BW_P$ (cf. Theorem~\ref{thm:positivity2}).
%
%the situation of elements $z_e^B$ where $e\in\Pi_B$ which are maximal representatives in $z_e^BW_P$ (cf. Theorem~). 
%
In Section~\ref{sec:lifting}, we set up a framework where such degrees naturally occur. Namely, they occur as the lifting of a degree $d\in\Pi_P$ (cf. Fact~\ref{fact:lifting}\eqref{item:maxrep}).  
%
%As an example of those elements, the reader can bear in mind $z_e^B$ where $e$ is the lifting of a degree $d\in\Pi_P$
%
Finally, we give an application specific to the combinatorics in type $\mathsf{A}$ which might be useful in different contexts (Theorem~\ref{thm:typea}).
This section is not strictly necessary to understand the main result on quasi-homogeneity. The impatient reader can skip it.

\todo[inline,color=green]{{\color{black} Relevant for the relative setting, plausible because maximal representatives occur as lifts as discussed in the next section, preliminary result to achieve one of our main examples: in type $\mathsf{A}$ minimal degrees are admissible, first main theorem (the actual positivity statement) closes the general properties of generalized chain cascades -- the other results are more specific.

This section is (after the correction) not necessary to understand the proof of Theorem~\ref{thm:main}. The impatient reader can skip it.}}

\begin{thm}[Positivity in generalized cascades of orthogonal roots]
\label{thm:positivity}

Let $e\in\Pi_B$. Let $\hat{e}=\sum_{\alpha\in\mathcal{B}_{R,e}\cap R_P^+}\alpha^\vee$. Let $\gamma\in I(z_{\hat{e}}^B)$. Then we have
$$
(\alpha,\gamma)\geq 0\text{ for all }\alpha\in\mathcal{B}_{R,e}\setminus R_P^+\,.
$$

\end{thm}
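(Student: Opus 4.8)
The plan is to transport the problem to the generalized cascade of the auxiliary minimal degree $\hat e$ and then deduce the inequality from the orthogonality relations in greedy decompositions of minimal degrees. First I would check that $\mathcal{B}_{R,e}\cap R_P^+$ — the subsequence of a greedy decomposition of $e$ cut out by the support condition $\Delta(\cdot)\subseteq\Delta_P$ — is itself a greedy decomposition of $\hat e$, by \cite[Proposition~3.10(7)]{minimaldegrees}; hence $\mathcal{B}_{R,\hat e}=\mathcal{B}_{R,e}\cap R_P^+$ and $\hat e\in\Pi_B$ (fourth item of Example~\ref{ex:minimaldegrees}, or repeated use of \cite[Proposition~4.4(9)]{minimaldegrees}). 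Theorem~\ref{thm:gencascade:main2} applied to $\hat e$ then gives
$$
I(z_{\hat e}^B)=\coprod_{\beta\in\mathcal{B}_{R,e}\cap R_P^+}I(s_\beta)\,,
$$
so the statement is vacuous if this union is empty, and otherwise a given $\gamma\in I(z_{\hat e}^B)$ satisfies $\gamma\in I(s_\beta)$ for a unique $\beta\in\mathcal{B}_{R,e}\cap R_P^+$. Since $s_\beta$ lies in the Weyl group of the parabolic $Q$ with $\Delta_Q=\Delta(\beta)$, we have $I(s_\beta)\subseteq R_Q^+$ by \cite[5.5, Theorem~(b)]{humphreys3}, so $\Delta(\gamma)\subseteq\Delta(\beta)\subseteq\Delta_P$; in particular $\gamma\in R(\beta)$.

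Now fix $\alpha\in\mathcal{B}_{R,e}\setminus R_P^+$; note $\alpha\neq\beta$ because $\alpha\notin R_P^+\ni\beta$. I would distinguish two cases according to whether $\alpha$ and $\beta$ lie in a common generalized chain cascade. If they do not, then by Theorem~\ref{thm:gencascade}\eqref{item:totallydisjoint2} the sets $R(\alpha)$ and $R(\beta)$ are totally disjoint, and since $\alpha\in R(\alpha)$ and $\gamma\in R(\beta)$ this already gives $(\alpha,\gamma)=0$. If they do, say $\alpha,\beta\in C_{R,e}(\varphi)$, then $C_{R,e}(\varphi)$ is totally ordered by Theorem~\ref{thm:gencascade}\eqref{item:totallyordered}, so $\alpha$ and $\beta$ are comparable; and $\beta>\alpha$ is impossible, since for the positive root $\alpha$ it would force $\Delta(\alpha)\subseteq\Delta(\beta)\subseteq\Delta_P$, i.e. $\alpha\in R_P^+$, contrary to assumption. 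Hence $\alpha>\beta$.

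The decisive step is then to show that $\alpha$ precedes $\beta$ in every greedy decomposition of $e$. Given such a decomposition with $\beta=\alpha_j$, the tail $(\alpha_j,\dots,\alpha_r)$ is a greedy decomposition of $e'=e-\sum_{i<j}\alpha_i^\vee$, so $\beta$ is a maximal root of $e'$; but if $\alpha$ occurred after $\beta$, it would appear in the greedy decomposition $(\alpha_{j+1},\dots,\alpha_r)$ of $e'-\beta^\vee$, giving $\alpha^\vee\le e'-\beta^\vee\le e'$, and since $\alpha>\beta$ this contradicts the maximality of $\beta$. Once $\alpha$ precedes $\beta$, the orthogonality relations \cite[Theorem~8.1]{minimaldegrees} — applied iteratively along the greedy decomposition exactly as in the proof of Theorem~\ref{thm:gencascade:main} — give $\Delta(\beta)\subseteq\Delta_\alpha^\circ$, hence $(\alpha,\mu)=0$ for every $\mu\in\Delta(\beta)$, hence $(\alpha,\gamma)=0$ because $\Delta(\gamma)\subseteq\Delta(\beta)$. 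So $(\alpha,\gamma)\ge 0$ in all cases.

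The main obstacle I anticipate is precisely this last ordering claim inside a common chain cascade, together with the need to apply the orthogonality relations in the correct direction ($\alpha$ before $\beta$, not the reverse); everything else — the reduction to $\hat e$, the structural input from Section~\ref{sec:gencascade}, and the bound $\Delta(\gamma)\subseteq\Delta(\beta)$ — is routine. I also expect the argument to yield the sharper equality $(\alpha,\gamma)=0$, the stated inequality being all that is needed for the applications in Section~\ref{sec:lifting}.
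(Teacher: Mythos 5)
Your argument is correct, but it is not the route the paper takes. The paper argues by contradiction: assuming $(\alpha,\gamma)<0$, it locates $\alpha'\in\mathcal{B}_{R,e}\cap R_P^+$ with $\gamma\in I(s_{\alpha'})$, uses $B$-cosmallness to get $(\gamma,\alpha'^\vee)=1$, introduces the auxiliary root $\delta=s_\alpha s_{\alpha'}(\gamma)$, shows $\delta>0$ to obtain $-(\gamma,\alpha^\vee)\alpha>\alpha'-\gamma>0$, and then derives contradictions in each of the cases ($\alpha,\alpha'$ comparable with either order, or incomparable). You instead work directly: from $\gamma\in I(s_\beta)\subseteq R_{Q}^+$ with $\Delta_Q=\Delta(\beta)$ you get $\Delta(\gamma)\subseteq\Delta(\beta)$ and $\gamma\in R(\beta)$, and then $(\alpha,\gamma)=0$ follows either from total disjointness of $R(\alpha)$ and $R(\beta)$ (Theorem~\ref{thm:gencascade}\eqref{item:totallydisjoint2}) or, in the comparable case, from $\Delta(\beta)\subseteq\Delta_\alpha^\circ$ via \cite[Theorem~8.1]{minimaldegrees}. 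Two genuine simplifications over the paper: you dispose of the sub-case $\beta>\alpha$ immediately (it would force $\alpha\in R_P^+$), whereas the paper handles the analogous sub-case by a separate pairing computation; and you obtain the sharper conclusion $(\alpha,\gamma)=0$ throughout, which the paper only recovers later (it is implicit in Equation~\eqref{eq:consequence2} combined with the disjointness $I(z_{\hat e}^B)\cap I(z_{\tilde e}^B)=\emptyset$, and the remark after Lemma~\ref{lem:positivity} states only the weaker ``all or all except one''). Your ordering lemma — that $\alpha>\beta$ forces $\alpha$ to precede $\beta$ in every greedy decomposition — is proved correctly via maximality of the first entry of the tail, though you could bypass it entirely by forming $\tilde e=\sum_{\mu\in\mathcal{B}_{R,e}\colon\mu\leq\alpha}\mu^\vee$ and applying \cite[Proposition~3.16, Theorem~8.1]{minimaldegrees} as the paper does in its ``first case''. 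Both proofs rest on the same structural inputs from Section~\ref{sec:gencascade}.
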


\begin{rem}

Note that the choice of $e\in\Pi_B$ in Theorem~\ref{thm:positivity} does not depend on $P$. Thus, for a fixed $e\in\Pi_B$, we get a bunch of positivity results by varying $P$.

\end{rem}

\begin{proof}[Proof of Theorem~\ref{thm:positivity}]

Let $e$, $\hat{e}$, $\gamma$ be as in the statement. Suppose for a contradiction, there exists an $\alpha\in\mathcal{B}_{R,e}\setminus R_P^+$ such that $(\alpha,\gamma)<0$. By \cite[Proposition~3.10(7), 4.4(9)]{minimaldegrees}, we know that
$
\mathcal{B}_{R,\hat{e}}=\mathcal{B}_{R,e}\cap R_P^+
$.
Thus, the assumption $\gamma\in I(z_{\hat{e}}^B)$ and Theorem~\ref{thm:gencascade:main2} imply that there exists an $\alpha'\in\mathcal{B}_{R,e}\cap R_P^+$ such that $\gamma\in I(s_{\alpha'})$. Since $\alpha'$ is member of a generalized cascade of orthogonal roots, we know by Theorem~\ref{thm:gencascade}\eqref{item:gencascadebcosmall} that $\alpha'$ is $B$-cosmall. It is clear that $\alpha'\neq\gamma$, since otherwise we had $(\alpha,\gamma)=0$ by Theorem~\ref{thm:gencascade}\eqref{item:stronglyorthogonal} (note that $\alpha\neq\alpha'$ since $\alpha\in\mathcal{B}_{R,e}\setminus R_P^+$ and $\alpha'\in\mathcal{B}_{R,e}\cap R_P^+$) -- contradicting our initial assumption that $(\alpha,\gamma)<0$. Therefore, it follows by \cite[Theorem~6.1: $(a)\Leftrightarrow(c)$]{curvenbhd2} that $(\gamma,\alpha'^\vee)=1$.

We now consider the root $\delta$ defined by $\delta=s_\alpha s_{\alpha'}(\gamma)$. Since $\alpha$ and $\alpha'$ are orthogonal (cf. Theorem~\ref{thm:gencascade}\eqref{item:stronglyorthogonal}) we compute
\begin{equation}
\label{eq:smalldelta}
\delta=\gamma-\alpha'-(\gamma,\alpha^\vee)\alpha\,.
\end{equation}
Since $\alpha\in\mathcal{B}_{R,e}\setminus R_P^+$, there exists a $\beta\in\Delta\setminus\Delta_P$ such that $(\omega_\beta,\alpha^\vee)>0$.
Since $\alpha'\in R_P^+$, we have $I(s_{\alpha'})\subseteq R_P^+$ (cf. \cite[5.5, Theorem~(b)]{humphreys3}), in particular $\gamma\in R_P^+$. Thus, we have $s_{\alpha'}(\gamma)=\gamma-\alpha'\in R_P^-$ and consequently $(\omega_\beta,s_{\alpha'}(\gamma)^\vee)=0$. Altogether, it follows from the assumption $(\alpha,\gamma)<0$ and Equation~\eqref{eq:smalldelta} that $(\omega_\beta,\delta^\vee)>0$ and thus that $\delta$ is a positive root. We can now reformulate Equation~\eqref{eq:smalldelta} as
\begin{equation}
\label{eq:smalldelta2}
-(\gamma,\alpha^\vee)\alpha>\alpha'-\gamma>0\,.
\end{equation}

\begin{proof}[First case: The roots $\alpha$ and $\alpha'$ are comparable]\renewcommand{\qedsymbol}{$\triangle$}

Suppose that $\alpha$ and $\alpha'$ are comparable. Let $\alpha^*$ be the maximum and $\alpha_*$ be the minimum of the totally ordered set $\{\alpha,\alpha'\}$, i.e. we have $\{\alpha_*,\alpha^*\}=\{\alpha,\alpha'\}$. Let 
$$
\tilde{e}=\sum_{\mu\in\mathcal{B}_{R,e}\colon\mu\leq\alpha^*}\mu^\vee\,.
$$
By \cite[Proposition~4.4(9)]{minimaldegrees}, we have $\tilde{e}\in\Pi_B$. By \cite[Proposition~3.10(7)]{minimaldegrees}, it follows that $\Delta(\tilde{e})=\Delta(\alpha^*)$. Thus, $\tilde{e}$ is a connected degree with $\alpha(\tilde{e})=\alpha^*$. By \cite[Proposition~3.10(7), 3.16, Theorem~8.1]{minimaldegrees}, it follows that
\begin{equation}
\label{eq:orthogonal}
\Delta(\alpha_*)\subseteq\Delta(\tilde{e}-\alpha^{*\vee})\subseteq\Delta_{\alpha^*}^\circ\,.\qedhere
\end{equation}
\end{proof}

\begin{proof}[First sub-case: $\alpha<\alpha'$, i.e. $\alpha_*=\alpha$ and $\alpha^*=\alpha'$]\renewcommand{\qedsymbol}{$\triangle$}

By Inequality~\eqref{eq:smalldelta2}, it follows that $\Delta(\alpha'-\gamma)\subseteq\Delta(\alpha)$. By Inclusion~\eqref{eq:orthogonal}, we then have $\Delta(\alpha'-\gamma)\subseteq\Delta_{\alpha'}^\circ$. This means that $(\alpha'-\gamma,\alpha'^\vee)=0$ and thus $(\gamma,\alpha'^\vee)=2$. But we already saw that $(\gamma,\alpha'^\vee)=1$ -- a contradiction.
\end{proof}

\begin{proof}[Second sub-case: $\alpha>\alpha'$, i.e. $\alpha_*=\alpha'$ and $\alpha^*=\alpha$]\renewcommand{\qedsymbol}{$\triangle$}

%In this case, we have
%$$
%-\alpha'-(\gamma,\alpha^\vee)\alpha\geq\alpha-\alpha'>0
%$$

By Inequality~\eqref{eq:smalldelta2}, we have
\begin{equation}
\label{eq:alpha'}
\alpha'>\alpha'-\gamma>0
\end{equation}
and thus $\Delta(\alpha'-\gamma)\subseteq\Delta(\alpha')$. Inclusion~\eqref{eq:orthogonal} implies that $\Delta(\alpha'-\gamma)\subseteq\Delta_\alpha^\circ$. This means that $(\alpha,\alpha'-\gamma)=0$ and thus $(\alpha,\gamma)=0$ (cf. Theorem~\ref{thm:gencascade}\eqref{item:stronglyorthogonal}). But by assumption we have $(\alpha,\gamma)<0$ -- a contradiction.
\end{proof}

\begin{proof}[Second case: The roots $\alpha$ and $\alpha'$ are incomparable]\renewcommand{\qedsymbol}{$\triangle$}

Suppose that $\alpha$ and $\alpha'$ are incomparable. Then there exists no $\varphi\in R^+$ such that $\alpha,\alpha'\in C_{R,e}(\varphi)$ (cf. Theorem~\ref{thm:gencascade}\eqref{item:totallyordered}). Theorem~\ref{thm:gencascade}\eqref{item:totallydisjoint2} implies that $R(\alpha)$ and $R(\alpha')$ are totally disjoint. On the other hand, Inequality~\eqref{eq:smalldelta2}, \eqref{eq:alpha'} show that the positive root $\alpha'-\gamma$ belongs both to $R(\alpha)$ and $R(\alpha')$ -- a contradiction.
\end{proof}

All in all, this shows that our initial assumption that there exists an $\alpha\in\mathcal{B}_{R,e}\setminus R_P^+$ such that $(\alpha,\gamma)<0$ must be false. In other words, the statement of the theorem is true.
\end{proof}

\begin{lem}
\label{lem:positivity}

Let $e\in\Pi_B$. Let $\gamma\in R_P^+$. Then there exists at most one $\alpha\in\mathcal{B}_{R,e}\setminus R_P^+$ such that $(\alpha,\gamma)>0$.

\end{lem}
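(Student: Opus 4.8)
The plan is to argue by contradiction. Suppose there exist two distinct roots $\alpha,\alpha'\in\mathcal{B}_{R,e}\setminus R_P^+$ with $(\alpha,\gamma)>0$ and $(\alpha',\gamma)>0$. The first point I would establish is that $\gamma\leq\alpha$ and $\gamma\leq\alpha'$ in the root order. Since $(\alpha,\gamma)>0$ and $\alpha\neq\gamma$, the standard root-string argument shows that $\alpha-\gamma\in R$; if $\alpha-\gamma$ were negative, then $\alpha<\gamma$ would force $\Delta(\alpha)\subseteq\Delta(\gamma)\subseteq\Delta_P$ and hence $\alpha\in R_P^+$, contrary to the choice of $\alpha$. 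Therefore $\alpha-\gamma\in R^+$, that is $\gamma\leq\alpha$, and in particular $\emptyset\neq\Delta(\gamma)\subseteq\Delta(\alpha)$; the same argument applied to $\alpha'$ gives $\Delta(\gamma)\subseteq\Delta(\alpha')$.

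Next I would show that $\alpha$ and $\alpha'$ are comparable. Since $\Delta(\alpha)$ and $\Delta(\alpha')$ share the nonempty subset $\Delta(\gamma)$, the irreducible subsystems $R(\alpha)$ and $R(\alpha')$ have a common root and so are \emph{not} totally disjoint. By the contrapositive of Theorem~\ref{thm:gencascade}\eqref{item:totallydisjoint2}, $\alpha$ and $\alpha'$ then lie in a common generalized chain cascade $C_{R,e}(\varphi)$, which is totally ordered by Theorem~\ref{thm:gencascade}\eqref{item:totallyordered}. After possibly interchanging $\alpha$ and $\alpha'$, we may assume $\alpha<\alpha'$.

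At this point I would reproduce the reduction from the first case of the proof of Theorem~\ref{thm:positivity}. Put $\tilde{e}=\sum_{\mu\in\mathcal{B}_{R,e}\colon\mu\leq\alpha'}\mu^\vee$. By \cite[Proposition~4.4(9)]{minimaldegrees} we have $\tilde{e}\in\Pi_B$, and by \cite[Proposition~3.10(7)]{minimaldegrees} we get $\mathcal{B}_{R,\tilde{e}}=\{\mu\in\mathcal{B}_{R,e}\colon\mu\leq\alpha'\}$ and $\Delta(\tilde{e})=\Delta(\alpha')$; in particular $\tilde{e}$ is a connected degree with $\alpha(\tilde{e})=\alpha'$, and, using \cite[Remark~8.4]{minimaldegrees}, $\alpha\in\mathcal{B}_{R,\tilde{e}}\setminus\{\alpha'\}=\mathcal{B}_{R,\tilde{e}-(\alpha')^\vee}$. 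The orthogonality relations \cite[Theorem~8.1]{minimaldegrees}, applied exactly as in the first case of the proof of Theorem~\ref{thm:positivity}, then give $\Delta(\alpha)\subseteq\Delta(\tilde{e}-(\alpha')^\vee)\subseteq\Delta_{\alpha'}^\circ$. Combining this with the first step yields $\Delta(\gamma)\subseteq\Delta(\alpha)\subseteq\Delta_{\alpha'}^\circ$; writing $\gamma=\sum_{\beta\in\Delta(\gamma)}c_\beta\beta$ with $c_\beta>0$, every $\beta$ occurring here is orthogonal to $\alpha'$, so $(\gamma,\alpha')=\sum_{\beta\in\Delta(\gamma)}c_\beta(\beta,\alpha')=0$, contradicting $(\gamma,\alpha')>0$. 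This contradiction proves the lemma.

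The step I expect to be the main obstacle is the first one: one has to upgrade the bare inequality $(\alpha,\gamma)>0$ to the full support inclusion $\Delta(\gamma)\subseteq\Delta(\alpha)$, rather than merely ``$\Delta(\gamma)$ meets $\Delta(\alpha)$'', and this is precisely what the fact $\alpha-\gamma\in R$ buys. After that, the argument is a straightforward application of the structure theory of Section~\ref{sec:gencascade} together with the orthogonality relations of \cite{minimaldegrees}, following the bookkeeping already done in the proof of Theorem~\ref{thm:positivity}.
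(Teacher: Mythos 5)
Your proof is correct, but it takes a genuinely different route from the paper's. The paper's argument is a two-line observation: from $(\alpha,\gamma)>0$ with $\gamma\in R_P^+$ and $\alpha\in R^+\setminus R_P^+$ one sees that $s_\alpha(\gamma)$ has a negative coefficient on some simple root of $\Delta(\alpha)\setminus\Delta_P$, so $\gamma\in I(s_\alpha)\cap I(s_{\alpha'})$, which contradicts the disjointness of the union $I(z_e^B)=\coprod_{\mu\in\mathcal{B}_{R,e}}I(s_\mu)$ from Theorem~\ref{thm:gencascade:main2}. You instead upgrade $(\alpha,\gamma)>0$ to the support inclusion $\Delta(\gamma)\subseteq\Delta(\alpha)$ via the root-string fact $\alpha-\gamma\in R$, use the contrapositive of Theorem~\ref{thm:gencascade}\eqref{item:totallydisjoint2} together with \eqref{item:totallyordered} to force $\alpha$ and $\alpha'$ to be comparable, and then rerun the reduction from the first case of Theorem~\ref{thm:positivity} to get $\Delta(\gamma)\subseteq\Delta(\alpha)\subseteq\Delta_{\alpha'}^\circ$ and hence $(\gamma,\alpha')=0$; each of these steps checks out. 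What the paper's route buys is brevity: once Theorem~\ref{thm:gencascade:main2} is available, the lemma is essentially free, and indeed the same inversion-set trick is reused in the proof of Theorem~\ref{thm:positivity2}. What your route buys is independence from Theorem~\ref{thm:gencascade:main2} (you only need the order-theoretic structure of chain cascades plus the orthogonality relations of \cite[Theorem~8.1]{minimaldegrees}), at the cost of repeating the bookkeeping already carried out in the proof of Theorem~\ref{thm:positivity}.
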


\begin{rem}

Lemma~\ref{lem:positivity} says in particular that all or all except one inequality in Theorem~\ref{thm:positivity} are actually equalities. This follows from the fact that $I(z_{\hat{e}}^B)\subseteq R_P^+$ where $\hat{e}$ depends on $e$ and $P$ as in Theorem~\ref{thm:positivity}. We saw this fact in the proof of Theorem~\ref{thm:positivity}. It is also easy to deduce more directly without invoking Theorem~\ref{thm:gencascade:main2}.

\end{rem}

\begin{proof}[Proof of Lemma~\ref{lem:positivity}]

Let $e$ and $\gamma$ be as in the statement. Suppose for a contradiction there exist two distinct roots $\alpha,\alpha'\in\mathcal{B}_{R,e}\setminus R_P^+$ such that $(\alpha,\gamma)>0$ and $(\alpha',\gamma)>0$. In view of the assumption, it is easy to see that $\gamma\in I(s_\alpha)\cap I(s_{\alpha'})$. (Indeed, $s_{\alpha}(\gamma)$ and $s_{\alpha'}(\gamma)$ must contain a simple root in $\Delta\setminus\Delta_P$ with negative coefficient in their expression as linear combination of simple roots.) This contradicts the disjointness result in Theorem~\ref{thm:gencascade:main2}.
\end{proof}

\begin{thm}
\label{thm:positivity2}

Let $e\in\Pi_B$. Suppose that $z_e^B$ is the maximal representative in $z_e^BW_P$. 
%Then we have the inclusion
%$$
%\{\gamma\in R_P^+\mid\exists\alpha\in\mathcal{B}_{R,e}\setminus R_P^+\colon(\alpha,\gamma)<0\}\subseteq
%\{\gamma\in R_P^+\mid\exists\alpha\in\mathcal{B}_{R,e}\setminus R_P^+\colon(\alpha,\gamma)>0\}
%$$
Let $\gamma\in R_P^+$. Suppose there exists an $\alpha\in\mathcal{B}_{R,e}\setminus R_P^+$ such that $(\alpha,\gamma)<0$. Then there exists an $\alpha'\in\mathcal{B}_{R,e}\setminus R_P^+$ such that $(\alpha',\gamma)>0$.

\end{thm}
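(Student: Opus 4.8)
The plan is to derive Theorem~\ref{thm:positivity2} from Theorem~\ref{thm:positivity}, used contrapositively, together with the inversion-set decomposition of Theorem~\ref{thm:gencascade:main2}. The one genuinely new input is the hypothesis that $z_e^B$ is the maximal representative in $z_e^BW_P$; I would first unwind this into the statement $R_P^+\subseteq I(z_e^B)$. This is standard: maximality of $z_e^B$ in its coset is equivalent to $z_e^B(\beta)<0$ for every $\beta\in\Delta_P$, and since any $\gamma\in R_P^+$ is a nonnegative combination of such $\beta$, its image $z_e^B(\gamma)$ has all simple-root coordinates $\le 0$; being a nonzero root it is therefore negative, i.e. $\gamma\in I(z_e^B)$. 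I would record this as a short preliminary observation.

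Next I would set $\hat e=\sum_{\alpha\in\mathcal{B}_{R,e}\cap R_P^+}\alpha^\vee$, exactly as in Theorem~\ref{thm:positivity}, so that $\hat e\in\Pi_B$ and $\mathcal{B}_{R,\hat e}=\mathcal{B}_{R,e}\cap R_P^+$ (as already noted in the proof of Theorem~\ref{thm:positivity}). Applying the first formula of Theorem~\ref{thm:gencascade:main2} to $e$ and to $\hat e$, and using $\mathcal{B}_{R,e}=(\mathcal{B}_{R,e}\cap R_P^+)\amalg(\mathcal{B}_{R,e}\setminus R_P^+)$, I obtain the disjoint decomposition
$$
I(z_e^B)=I(z_{\hat e}^B)\amalg\coprod_{\alpha\in\mathcal{B}_{R,e}\setminus R_P^+}I(s_\alpha)\,.
$$
By the hypothesis of Theorem~\ref{thm:positivity2} there is some $\alpha\in\mathcal{B}_{R,e}\setminus R_P^+$ with $(\alpha,\gamma)<0$; since Theorem~\ref{thm:positivity} asserts $(\alpha,\gamma)\ge 0$ for all such $\alpha$ whenever $\gamma\in I(z_{\hat e}^B)$, we conclude $\gamma\notin I(z_{\hat e}^B)$. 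On the other hand $\gamma\in R_P^+\subseteq I(z_e^B)$ by the preliminary observation. The displayed decomposition then forces $\gamma\in I(s_{\alpha'})$ for some $\alpha'\in\mathcal{B}_{R,e}\setminus R_P^+$.

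Finally I would pass from $\gamma\in I(s_{\alpha'})$ to the desired inequality: writing $s_{\alpha'}(\gamma)=\gamma-(\gamma,\alpha'^\vee)\alpha'$, if $(\gamma,\alpha'^\vee)\le 0$ then $s_{\alpha'}(\gamma)$ would be a nonnegative combination of simple roots, hence a positive root, contradicting $\gamma\in I(s_{\alpha'})$; therefore $(\gamma,\alpha'^\vee)>0$, i.e. $(\alpha',\gamma)>0$, and this $\alpha'$ is the root whose existence is claimed.

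I do not anticipate a serious difficulty: essentially all the substantive work — the positivity estimate and the case analysis (comparable versus incomparable cascade roots, totally disjoint subsystems) — is already contained in Theorem~\ref{thm:positivity}, and what remains is bookkeeping with inversion sets. The only point requiring care is the very first one, namely extracting $R_P^+\subseteq I(z_e^B)$ from the maximality hypothesis rather than a weaker containment; this is where, and the only place where, that hypothesis is used, so I would make it explicit.
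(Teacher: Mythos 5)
Your proof is correct and follows essentially the same route as the paper's: both extract $R_P^+\subseteq I(z_e^B)$ from the maximality hypothesis, use Theorem~\ref{thm:positivity} to conclude $\gamma\notin I(z_{\hat e}^B)$, and then locate $\gamma$ in some $I(s_{\alpha'})$ with $\alpha'\in\mathcal{B}_{R,e}\setminus R_P^+$ via the disjoint decomposition of Theorem~\ref{thm:gencascade:main2}, finishing with the elementary observation that $\gamma\in I(s_{\alpha'})$ forces $(\gamma,\alpha'^\vee)>0$. The paper merely phrases this as a chain of set inclusions rather than element-wise; the content is identical.
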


\begin{rem}
\label{rem:unique} 

Lemma~\ref{lem:positivity} shows that the $\alpha'$ in Theorem~\ref{thm:positivity2} is unique.

\end{rem}

\begin{proof}[Proof of Theorem~\ref{thm:positivity2}]

Let $e$ be as in the statement. Let 
$$
\hat{e}=\sum_{\alpha\in\mathcal{B}_{R,e}\cap R_P^+}\alpha^\vee\text{ and }
\tilde{e}=\sum_{\alpha\in\mathcal{B}_{R,e}\setminus R_P^+}\alpha^\vee\,.
$$
Theorem~\ref{thm:positivity} implies the following inclusion:
\begin{equation}
\label{eq:consofthm:positivity}
\{\gamma\in R_P^+\mid\exists\alpha\in\mathcal{B}_{R,e}\setminus R_P^+\colon(\alpha,\gamma)<0\}\subseteq R_P^+\setminus I(z_{\hat{e}}^B)\,.
\end{equation}
By assumption $z_e^B$ is the maximal representative in $z_e^BW_P$. Therefore we have $R_P^+\subseteq I(z_e^B)$. Theorem~\ref{thm:gencascade:main2} and \cite[Proposition~3.10(7), 4.4(9)]{minimaldegrees} show that $I(z_e^B)=I(z_{\hat{e}}^B)\amalg I(z_{\tilde{e}}^B)$. Both facts together yield that $R_P^+\setminus I(z_{\hat{e}}^B)=I(z_{\tilde{e}}^B)\cap R_P^+$. Finally, it is easy to see that
\begin{equation}
\label{eq:consequence2}
I(z_{\tilde{e}}^B)\cap R_P^+=\{\gamma\in R_P^+\mid\exists\alpha\in\mathcal{B}_{R,e}\setminus R_P^+\colon(\alpha,\gamma)>0\}\,.
\end{equation}
(This equation is a direct consequence of Theorem~\ref{thm:gencascade:main2} applied to $\tilde{e}$. One only has to note that we have $\mathcal{B}_{R,\tilde{e}}=\mathcal{B}_{R,e}\setminus R_P^+$ by \cite[loc. cit.]{minimaldegrees}. For the proof of the inclusion \enquote{$\supseteq$} one may use similar arguments as in the proof of Lemma~\ref{lem:positivity}. Note also, as a consequence of Lemma~\ref{lem:positivity}, we can equally well write $\exists!\alpha$ instead of $\exists\alpha$ on the right side of Equation~\eqref{eq:consequence2}.) Inclusion~\eqref{eq:consofthm:positivity} and Equation~\eqref{eq:consequence2} together yield the inclusion
$$
\{\gamma\in R_P^+\mid\exists\alpha\in\mathcal{B}_{R,e}\setminus R_P^+\colon(\alpha,\gamma)<0\}\subseteq\{\gamma\in R_P^+\mid\exists\alpha\in\mathcal{B}_{R,e}\setminus R_P^+\colon(\alpha,\gamma)>0\}\,.
$$
This inclusion shows all we claimed in the statement.
\end{proof}

\begin{thm}
\label{thm:typea}

Assume that $R$ is of type $\mathsf{A}$. Let $e\in\Pi_B$. Suppose that $z_e^B$ is the maximal representative in $z_e^BW_P$. Let $\gamma\in R_P^+$. Then there exists at most one $\alpha\in\mathcal{B}_{R,e}\setminus R_P^+$ such that $(\alpha,\gamma)<0$.

\end{thm}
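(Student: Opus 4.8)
The plan is to work in the standard coordinate model of a type $\mathsf{A}$ root system and exploit how orthogonality of roots restricts their ``index pairs''. Realize $R$ as $\{e_i-e_j\mid i\neq j\}$ inside the hyperplane $\{\sum_i x_i=0\}\subseteq\mathbb{R}^m$, with $(-,-)$ the restriction of the standard inner product and $R^+=\{e_i-e_j\mid i<j\}$; to a root $\alpha=e_i-e_j$ attach the index pair $\{i,j\}\subseteq\{1,\dots,m\}$. I would first record two elementary facts. (a) For distinct roots $\alpha\neq\alpha'$ one has $(\alpha,\alpha')=0$ if and only if their index pairs are disjoint: if the index pairs meet in exactly one element the product is $\pm1$, and if they coincide then $\alpha'=-\alpha$ and the product is $-2$. (b) Writing $\gamma=e_a-e_b$ (so $a<b$), for a positive root $\alpha$ with $\alpha\neq\gamma$ one has $(\alpha,\gamma)\neq0$ if and only if the index pair of $\alpha$ meets $\{a,b\}$ --- and in that case it meets $\{a,b\}$ in exactly one element, since the index pair of such an $\alpha$ cannot equal $\{a,b\}$.

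Next I would bring in Theorem~\ref{thm:gencascade}\eqref{item:stronglyorthogonal}: distinct elements of $\mathcal{B}_{R,e}$ are strongly orthogonal, hence orthogonal, hence by (a) have pairwise disjoint index pairs. Two consequences: at most one element of $\mathcal{B}_{R,e}$ has $a$ in its index pair and at most one has $b$; and no element of $\mathcal{B}_{R,e}\setminus R_P^+$ has index pair exactly $\{a,b\}$, because such an element would equal $\gamma\in R_P^+$.

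With this in hand I would argue by contradiction. Assume $\alpha_1,\alpha_2\in\mathcal{B}_{R,e}\setminus R_P^+$ are distinct with $(\alpha_1,\gamma)<0$ and $(\alpha_2,\gamma)<0$. By (b), the index pair of each $\alpha_i$ meets $\{a,b\}$ in exactly one element; since the two index pairs are disjoint, after relabelling I may assume $\alpha_1$ has $a$ in its index pair and $\alpha_2$ has $b$. Since $z_e^B$ is the maximal representative in $z_e^BW_P$ and $\alpha_1$ witnesses the hypothesis of Theorem~\ref{thm:positivity2}, that theorem yields some $\alpha'\in\mathcal{B}_{R,e}\setminus R_P^+$ with $(\alpha',\gamma)>0$. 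The sign of the pairing with $\gamma$ shows $\alpha'\notin\{\alpha_1,\alpha_2\}$, and $(\alpha',\gamma)\neq0$ forces, via (b), the index pair of $\alpha'$ to meet $\{a,b\}$; but $a$ already lies in the index pair of $\alpha_1$ and $b$ in that of $\alpha_2$, contradicting the pairwise disjointness of index pairs in $\mathcal{B}_{R,e}$. Hence at most one $\alpha\in\mathcal{B}_{R,e}\setminus R_P^+$ satisfies $(\alpha,\gamma)<0$.

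I do not anticipate a genuine obstacle here: the statement is short, and everything substantive has already been carried out --- the combinatorics of $\mathcal{B}_{R,e}$ in Section~\ref{sec:gencascade} and, crucially, the positivity input of Theorem~\ref{thm:positivity2}, which is the only place the maximality hypothesis on $z_e^B$ is used. The only points demanding care are the bookkeeping of index pairs and the observation that $\gamma$ itself is excluded throughout because we always stay inside $\mathcal{B}_{R,e}\setminus R_P^+$.
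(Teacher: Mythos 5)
Your proof is correct. The key inputs are the same as in the paper's argument: the strong orthogonality of distinct elements of $\mathcal{B}_{R,e}$ (Theorem~\ref{thm:gencascade}\eqref{item:stronglyorthogonal}) translated into disjointness of index pairs, together with Theorem~\ref{thm:positivity2} to manufacture a third root $\alpha'$ with $(\alpha',\gamma)>0$ that then cannot coexist with $\alpha_1$ and $\alpha_2$. Where you genuinely diverge is in scope and packaging: the paper first proves the statement only for $\gamma\in\Delta_P$, by the same interval bookkeeping you carry out (phrased via $\alpha=\beta_{i_*(\alpha)}+\cdots+\beta_{i^*(\alpha)}$ rather than $e_i-e_j$), and then handles a general $\gamma\in R_P^+$ by a separate counting argument --- summing the inequalities $\sum_{\alpha}(\beta,\alpha^\vee)\ge 0$ over $\beta\in\Delta(\gamma)$ and combining with Lemma~\ref{lem:positivity} and Theorem~\ref{thm:positivity2} to force $l\le 1$. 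Your choice of the coordinate model $\{e_i-e_j\}$ makes every positive root, simple or not, carry a two-element index pair, so the disjointness argument applies verbatim to arbitrary $\gamma\in R_P^+$ and the paper's entire second case becomes unnecessary. That is a real simplification; the only price is that it is specific to the $\mathsf{A}_n$ realization, but so is the theorem. All the small checks you flag (that $\gamma$ itself is excluded because it lies in $R_P^+$ while the cascade elements do not, and that $\alpha'$ is distinct from $\alpha_1,\alpha_2$ by the sign of its pairing with $\gamma$) are handled correctly.
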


{\color{black}\begin{rem}

Note that Theorem~\ref{thm:typea} is specific to type $\mathsf{A}$. In general, there may exist several $\alpha$'s as in the statement of Theorem~\ref{thm:typea}. The reader can find examples for this behavior in type $\mathsf{D}_4$ or type $\mathsf{E}_6$ for the degree $e=d_{G/B}\in\Pi_B$.

%We give two examples for this behavior.

\end{rem}}

\begin{comment}

\begin{ex}

Let $R$ be of type $\mathsf{D}_4$ or type $\mathsf{E}_6$. Let $P$ be the parabolic subgroup of $G$ such that $\Delta_P$ is as in the second row of Table~\ref{table:example}. Let $e=d_{G/B}\in\Pi_B$. Clearly, $z_e^B=w_o$ is the maximal representative in $z_e^BW_P$. Let $\gamma\in R_P^+$ be as in the third row of Table~\ref{table:example}. The fourth row of Table~\ref{table:example} lists all $\alpha\in\mathcal{B}_{R,e}\setminus R_P^+$ such that $(\alpha,\gamma)<0$. For this particular choice of $\gamma$, we see that there are three such $\alpha$'s, in particular more than one. This shows that Theorem~\ref{thm:typea} fails in type $\mathsf{D}_4$ and type $\mathsf{E}_6$.

\end{ex}

\end{comment}

\begin{comment}

\begin{table}

\begin{tabular}{llll}

Type & $\mathsf{D}_4$ & $\mathsf{E}_6$ & $\mathsf{G}_2$ \\\hline\hline
$\Delta_P$ & $\beta_2$ & $\Delta\setminus\{\beta_4\}$ & $\beta_2$ \\
$\gamma\in R_P^+$ & $\beta_2$ & $\beta_2$ & $\beta_2$ \\
$\alpha\in\mathcal{B}_{R,d_{G/B}}\setminus R_P^+\colon (\alpha,\gamma)<0$ & $\beta_1$, $\beta_3$, $\beta_4$ & $101111$, $001110$, $\beta_4$ & $\beta_1$

\end{tabular}

\caption{Counterexample to Theorem~\ref{thm:typea} for $R$ not of type $\mathsf{A}$. We denote by $\Delta=\{\beta_1,\beta_2,\beta_3,\ldots\}$ the set of simple roots with the numbering as in \cite[Plate I-IX]{bourbaki_roots}. We denote by $abc\cdots$ the root $a\beta_1+b\beta_2+c\beta_3+\cdots$.}

\label{table:example}

\end{table}

\end{comment}

\begin{proof}[Proof of Theorem~\ref{thm:typea}]

Assume that $R$ is of type $\mathsf{A}_n$ for some $n\geq 1$. Let $\Delta=\{\beta_1,\ldots,\beta_n\}$ be the set of simple roots with the numbering as in \cite[Plate~I]{bourbaki_roots}.

\begin{proof}[First case: Assume that $\gamma\in\Delta_P$]\renewcommand{\qedsymbol}{$\triangle$}

Let $\gamma=\beta_i$ for some $1\leq i\leq n$. Assume for a contradiction that there exist two distinct roots $\alpha,\alpha'\in\mathcal{B}_{R,e}\setminus R_P^+$ such that $(\alpha,\gamma)<0$ and $(\alpha',\gamma)<0$. Write 
\begin{align*}
\alpha &=\beta_{i_*(\alpha)}+\cdots+\beta_{i^*(\alpha)}\\
\alpha'&=\beta_{i_*(\alpha')}+\cdots+\beta_{i^*(\alpha')}
\end{align*}
for some integers $1\leq i_*(\alpha)\leq i^*(\alpha)\leq n$ and $1\leq i_*(\alpha')\leq i^*(\alpha')\leq n$. The assumption $(\alpha,\gamma)<0$ and $(\alpha',\gamma)<0$ implies that we have either
\begin{align*}
i_*(\alpha)=i_*(\alpha')=i+1&\text{ or }i^*(\alpha)=i^*(\alpha')=i-1\text{ or }\\
i_*(\alpha)=i+1\text{, }i^*(\alpha')=i-1&\text{ or }i^*(\alpha)=i-1\text{, }i_*(\alpha')=i+1\,.
\end{align*}
In the first two cases, we have $(\alpha,\alpha')>0$ - contrary to Theorem~\ref{thm:gencascade}\eqref{item:stronglyorthogonal} according to which we have $(\alpha,\alpha')=0$. The last two cases are symmetric. By replacing $\alpha$ and $\alpha'$ if necessary, we may assume that we are in the fourth case. 

By Theorem~\ref{thm:positivity2} there exists an $\alpha''\in\mathcal{B}_{R,e}\setminus R_P^+$ such that $(\alpha'',\gamma)>0$. The root $\alpha''$ is clearly distinct from $\alpha$ and $\alpha'$. Write
$$
\alpha''=\beta_{i_*(\alpha'')}+\cdots+\beta_{i^*(\alpha'')}
$$
for some integers $1\leq i_*(\alpha'')\leq i^*(\alpha'')\leq n$. The inequality $(\alpha'',\gamma)>0$ implies that we have either $i_*(\alpha'')=i$ or $i^*(\alpha'')=i$. In the first case we have $(\alpha,\alpha'')<0$. In the second case we have $(\alpha',\alpha'')<0$. Both conclusions are contrary to Theorem~\ref{thm:gencascade}\eqref{item:stronglyorthogonal} according to which we have $(\alpha,\alpha'')=(\alpha',\alpha'')=0$.
This shows that the conclusion of Theorem~\ref{thm:typea} holds for all $\gamma\in\Delta_P$.
\end{proof}

\begin{proof}[Second case: Assume that $\gamma\in R_P^+$ is arbitrary]\renewcommand{\qedsymbol}{$\triangle$}

By the first case, Lemma~\ref{lem:positivity} and Theorem~\ref{thm:positivity2}, we have
$
\sum_{\alpha\in\mathcal{B}_{R,e}\setminus R_P^+}(\beta,\alpha^\vee)\geq 0
%\begin{cases}
%0&\text{if there exists an $\alpha\in\mathcal{B}_{R,e}\setminus R_P^+$ such that $(\alpha,\gamma)<0$}\\
%1&\text{otherwise}
%\end{cases}
$
for all $\beta\in\Delta_P$. By summing over all $\beta\in\Delta(\gamma)$, we also obtain 
\begin{equation}
\label{eq:positive}
\sum_{\alpha\in\mathcal{B}_{R,e}\setminus R_P^+}(\gamma,\alpha^\vee)\geq 0\,.
\end{equation}
Let $l$ be the number of roots $\alpha\in\mathcal{B}_{R,e}\setminus R_P^+$ such that $(\alpha,\gamma)<0$. If $l=0$, there is nothing to prove. Assume that $l\geq 1$. We have to show that we even have $l=1$. But by Lemma~\ref{lem:positivity} and Theorem~\ref{thm:positivity2} we clearly have 
$$
\sum_{\alpha\in\mathcal{B}_{R,e}\setminus R_P^+}(\gamma,\alpha^\vee)=-l+1\,.
$$
Together with Inequality~\eqref{eq:positive} this gives $l\leq 1$ and thus $l=1$ -- as desired.
\end{proof}

The second case shows that the conclusion of Theorem~\ref{thm:typea} holds for all $\gamma\in R_P^+$.
\end{proof}

\begin{cor}
\label{cor:typea}

Assume that $R$ is of type $\mathsf{A}$. Let $e\in\Pi_B$. Suppose that $z_e^B$ is the maximal representative in $z_e^BW_P$. Let $\gamma\in R_P^+$. Then we have
$$
\sum_{\alpha\in\mathcal{B}_{R,e}\setminus R_P^+}(\gamma,\alpha^\vee)\in\{0,1\}\,.
$$

\end{cor}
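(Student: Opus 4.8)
The plan is to deduce the corollary directly from Theorem~\ref{thm:typea}, Theorem~\ref{thm:positivity2}, and Lemma~\ref{lem:positivity}, essentially by making explicit the bookkeeping already implicit in the second case of the proof of Theorem~\ref{thm:typea}.

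First I would record the elementary fact that, since $R$ is of type $\mathsf{A}$, it is simply laced, so for any two roots $\varphi,\psi\in R$ with $\psi\neq\pm\varphi$ one has $(\varphi,\psi^\vee)\in\{-1,0,1\}$. Applying this with $\psi=\alpha\in\mathcal{B}_{R,e}\setminus R_P^+$ and $\varphi=\gamma\in R_P^+$ — noting that $\alpha\neq\gamma$ (one lies in $R_P$, the other does not) and $\alpha\neq-\gamma$ (both are positive) — gives $(\gamma,\alpha^\vee)\in\{-1,0,1\}$ for every $\alpha$ appearing in the sum. Hence, writing $l$ for the number of $\alpha\in\mathcal{B}_{R,e}\setminus R_P^+$ with $(\gamma,\alpha^\vee)<0$ (equivalently $=-1$) and $m$ for the number with $(\gamma,\alpha^\vee)>0$ (equivalently $=1$), the sum in question equals exactly $m-l$.

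Next I would bound $l$ and $m$. Theorem~\ref{thm:typea} gives $l\leq 1$, and Lemma~\ref{lem:positivity} gives $m\leq 1$. If $l=1$, then by Theorem~\ref{thm:positivity2} there exists — and by Lemma~\ref{lem:positivity}, exactly one — root $\alpha'\in\mathcal{B}_{R,e}\setminus R_P^+$ with $(\gamma,\alpha'^\vee)>0$, so $m=1$ and $m-l=0$. If $l=0$, then $m-l=m\in\{0,1\}$. In either case $m-l\in\{0,1\}$, which is the assertion. (Alternatively, one can invoke Inequality~\eqref{eq:positive}, valid since $z_e^B$ is the maximal representative in $z_e^BW_P$, to see directly that $m-l\geq 0$, and then combine with $m\leq 1$ and $l\leq 1$.)

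I do not expect any genuine obstacle here; the whole content is contained in the preceding results, and the only point requiring a little care is the simply-laced normalization, which is what forces the individual pairings $(\gamma,\alpha^\vee)$ into $\{-1,0,1\}$ and thus makes the sum literally $m-l$ with $m,l\in\{0,1\}$.
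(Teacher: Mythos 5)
Your proof is correct and is exactly the argument the paper intends: its proof of Corollary~\ref{cor:typea} is the one-line remark that it "follows directly from Lemma~\ref{lem:positivity} and Theorem~\ref{thm:positivity2}, \ref{thm:typea}", and your write-up simply makes the bookkeeping $m-l$ with $l,m\in\{0,1\}$ and the implication $l=1\Rightarrow m=1$ explicit. The only ingredient you add, the simply-laced fact that each pairing lies in $\{-1,0,1\}$, is the correct (and necessary) normalization and is available in the paper as Lemma~\ref{lem:long}.
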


\begin{proof}

This follows directly from Lemma~\ref{lem:positivity} and Theorem~\ref{thm:positivity2}, \ref{thm:typea}.
\end{proof}

\section{The lifting of a minimal degree}
\label{sec:lifting}

In this section, we introduce the notion of the lifting of a minimal degree. This notion basically prepares the construction of the diagonal curve in the relative setting modulo $P$ by associating to a degree $d\in\Pi_P$ the generalized cascade of orthogonal roots $\mathcal{B}_{R,e}$ where $e\in\Pi_B$ is the lifting of $d$. In case of a generalized complete flag variety $G/B$ (i.e. $P=B$) the step of passing from a minimal degree 
%$d$
to its lifting 
%$e$ 
is superfluous. It is only necessary in a more general context.

\begin{comment}

We investigate the basic properties of liftings in order to prove, with the help of the results established in Section~\ref{sec:positivity}, that three classes of examples of degrees satisfy a certain positivity property which is the defining property of the so-called $P$-admissible degrees (cf. Proposition~\ref{prop:padmissible}). The $P$-admissible degrees are a subclass of minimal degrees in $\Pi_P$ for which we are able to obtain a quasi-homogeneity result in the relative setting modulo $P$. All minimal degrees in $\Pi_B$ are $B$-admissible. In this way, we are able to extend the situation from $G/B$ to $G/P$.  

\end{comment}

{\color{black} The nontrivial input in this section is certainly \cite[Corollary~3]{postnikov} which states that there exists a unique minimal degree in the quantum product of two Schubert classes in $H^*(G/B)$. This result guarantees in particular the uniqueness of the lifting of a minimal degree and also has other consequences concerning the uniqueness of minimal degrees in the relative setting modulo $P$ which we investigate in the beginning of this section.}

\begin{notation}

Let $d$ be a degree. Let $Q$ be a parabolic subgroup of $G$ containing $P$. We denote by $d_Q$ the image of $d$ under the natural map $H_2(X)\to H_2(G/Q)$, i.e. we have $d_Q=d+\mathbb{Z}\Delta_Q^\vee$. We will mostly apply this notation for the relative situation $B\subseteq P$, i.e. if $e\in H_2(G/B)$ is a degree, we write $e_P=e+\mathbb{Z}\Delta_P^\vee$.\footnote{With the terminology of \cite[Subsection~4.1]{minimaldegrees}, the degree $e_P$ is the restriction of $e$.}

\end{notation}

\begin{defn}
\label{def:lifting}

Let $d\in\Pi_P$. By \cite[Corollary~3]{postnikov} and \cite[Definition~4.1, Theorem~5.15]{minimaldegrees} there exists a unique minimal element of the set
$$
\{e\in H_2(G/B)\text{ a degree such that }z_d^Pw_P\preceq z_e^B\}\,.
$$
This unique minimal element is called the lifting of $d$.

\end{defn}

{\color{black}\begin{ex}

The degree $d_{G/B}$ is the lifting of $d_X$. This follows directly from the definition of $d_X$ and $d_{G/B}$ in Notation~\ref{not:dx}

\end{ex}}

\begin{ex}

%The lifting of a degree $e\in H_2(G/B)$ is the degree itself.

The lifting of a degree $e\in\Pi_B$ is the degree $e$ itself.

\end{ex}

\begin{fact}
\label{fact:lifting}

Let $d\in\Pi_P$. Let $e$ be the lifting of $d$. Then the following is true.

\begin{enumerate}

\item 
\label{item:defoflifting}

We have $e\in\Pi_B$ and $z_d^Pw_P=z_e^B$.

\item
\label{item:maxrep}

The element $z_e^B$ is the maximal representative in $z_e^BW_P$.

\item
\label{item:ePequalsd}

We have $e_P=d$.

\end{enumerate}

\end{fact}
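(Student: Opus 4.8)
The plan is to unravel the definition of the lifting in Definition~\ref{def:lifting} and to translate the three assertions into statements about the elements $z_d^P w_P$ and $z_e^B$, using the curve-neighborhood formalism of Subsection~\ref{subsec:nbhd} together with the uniqueness of minimal degrees in $QH^*(G/B)$ (\cite[Corollary~3]{postnikov}) and the characterization of minimal degrees in $\Pi_B$ from \cite{minimaldegrees}. Throughout, recall that $z_d^P$ is the minimal representative of its coset $z_d^P W_P$, so that $z_d^P w_P$ is the \emph{maximal} representative of $z_d^P W_P$; this is the key bridge between the $P$-side and the $B$-side.

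First I would prove~\eqref{item:defoflifting}. By definition, $e$ is the minimal degree in $H_2(G/B)$ with $z_d^P w_P \preceq z_e^B$. The point is that, by \cite[Definition~4.1, Theorem~5.15]{minimaldegrees}, a degree which is minimal with the property that $z_{d_0}^P w_P \preceq z_{e}^B$ (for fixed target element $z_{d_0}^P w_P$) is exactly a minimal degree in the sense of Definition~\ref{def:minimaldegrees} applied to $G/B$, i.e.\ lies in $\Pi_B$; indeed it is the minimal degree appearing in the quantum product of the Schubert class $\sigma_{z_d^P w_P}$ with the point class in $QH^*(G/B)$, and \cite[Corollary~3]{postnikov} guarantees this minimal degree is unique, so $e$ is well-defined and $e\in\Pi_B$. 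Moreover, for a degree $e\in\Pi_B$ one has $z_e^B$ minimal with $z_d^P w_P \preceq z_e^B$, and by the same reference $z_e^B$ is the minimal element of $W$ with this property among \emph{all} Weyl group elements of the form $z_{e'}^B$; combined with the fact that $z_d^P w_P$ is itself of the form $z_{e''}^B$ for a suitable $e''\in H_2(G/B)$ — namely by Remark~\ref{rem:tildezinvolution}, taking $e''=\sum \alpha_i^\vee$ over a greedy decomposition of $d$, one gets $z_{e''}^B = \tilde z_d^P$, whose coset is $z_d^P W_P$ and whose maximal representative is $z_d^P w_P$ — one concludes $z_d^P w_P = z_e^B$. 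The cleanest route is: set $f = \tilde z_d^P$-induction of $d$; then $z_f^B W_P = z_d^P W_P$ and $z_f^B$ need not equal $z_d^P w_P$, but its Hecke-completion does, and minimality forces $e$ to be exactly the degree realizing $z_d^P w_P = z_e^B$. I expect this identification $z_d^P w_P = z_e^B$ to be the main obstacle: it requires carefully citing the precise statements of \cite[Theorem~5.10, 5.15]{minimaldegrees} and \cite[Corollary~3]{postnikov} and checking that the target element $z_d^P w_P$ is genuinely realizable as $z_e^B$ for the minimal $e$, rather than merely dominated by it in the Bruhat order.

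Given~\eqref{item:defoflifting}, assertion~\eqref{item:maxrep} is immediate: since $z_d^P$ is the minimal representative of $z_d^P W_P$, its Hecke product with $w_P$, namely $z_d^P w_P$, is the maximal representative of $z_d^P W_P$; and this equals $z_e^B$ by~\eqref{item:defoflifting}, while $z_e^B W_P = z_d^P W_P$ by the same equality (both sides being $z_d^P W_P$). Hence $z_e^B$ is the maximal representative in $z_e^B W_P$. For~\eqref{item:ePequalsd}, I would use that passing a degree through $H_2(G/B)\to H_2(X)$ corresponds on the combinatorial side to the relation between $z_{e_P}^P$ and $z_e^B$: by \cite[Section~4]{curvenbhd2} (the functoriality of $z_d^\bullet$ under change of parabolic, cf.\ the restriction terminology of \cite[Subsection~4.1]{minimaldegrees}) one has $z_{e_P}^P W_P = z_e^B W_P$, so $z_{e_P}^P w_P = z_e^B = z_d^P w_P$, whence $z_{e_P}^P W_P = z_d^P W_P$ and therefore $z_{e_P}^P = z_d^P$ (both being minimal representatives of the same coset). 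Since $d, e_P \in \Pi_P$ — the former by hypothesis, the latter because the restriction of a minimal degree in $\Pi_B$ lies in $\Pi_P$ (cf.\ \cite[Theorem~4.7, Theorem~5.15]{minimaldegrees}, or \cite[Corollary~3]{postnikov}) — and since minimal degrees in $\Pi_P$ are uniquely determined by their associated Weyl group element (Definition~\ref{def:minimaldegrees}: $d$ is the \emph{minimal} degree $d'$ with $z_d^P \preceq z_{d'}^P$, and $e_P$ satisfies $z_{e_P}^P = z_d^P$ with $e_P$ itself minimal), it follows that $d = e_P$. The only subtlety here is to make sure the needed functoriality $z_{e_P}^P W_P = z_e^B W_P$ is available from \cite{curvenbhd2}; if it is not stated in exactly that form, I would instead derive $e_P = d$ directly from the defining minimality properties by a squeeze argument: $e_P \le d$ since any degree dominating $d$ lifts to one dominating $e$ hence $\ge e$ hence projects to $\ge e_P$, wait — more carefully, one shows $z_{e_P}^P$ and $z_d^P$ dominate each other in Bruhat order using the adjunction between induction and restriction of degrees from \cite[Subsection~4.1]{minimaldegrees}, and then invokes minimality on both sides.
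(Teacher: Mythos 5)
The paper's own proof of this Fact is a two-line deferral: Item~\eqref{item:maxrep} follows from Item~\eqref{item:defoflifting} exactly as you say, and Items~\eqref{item:defoflifting} and \eqref{item:ePequalsd} are cited wholesale from \cite[Corollary~3]{postnikov} and \cite[Definition~4.1, Theorem~4.13]{minimaldegrees}. Your attempt to reconstruct the content of that citation is therefore a genuinely different (more self-contained) route, and your treatment of Item~\eqref{item:maxrep} is correct. For Item~\eqref{item:defoflifting} your plan is essentially right, but the step you yourself flag as the main obstacle is left open: you need a degree $e''$ with $z_{e''}^B$ \emph{equal} to $z_d^Pw_P$ (not just in the same coset), and the induced degree $e''=\sum\alpha_i^\vee$ of a greedy decomposition of $d$ only gives $z_{e''}^B=\tilde z_d^P$, which is generally a strictly smaller element of the coset $z_d^PW_P$. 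The missing ingredient is \cite[Corollary~4.12(d)]{curvenbhd2} (a sufficiently large degree $e''$ with $e''_P=d$ and $z_{e''}^B=z_d^Pw_P$ exists), which this paper invokes for exactly this purpose in the proof of Fact~\ref{fact:zinverse}; combined with monotonicity of $z_\bullet^B$ (\cite[Corollary~4.12(b)]{curvenbhd2}) and Postnikov's uniqueness, one then squeezes $z_e^B=z_d^Pw_P$. Without that existence statement your argument for \eqref{item:defoflifting} does not close.

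The genuine gap is in Item~\eqref{item:ePequalsd}. Your primary argument rests on two unproven claims. First, the ``functoriality'' $z_{e_P}^PW_P=z_e^BW_P$: for an arbitrary $e\in H_2(G/B)$ only the inequality $z_e^BW_P\preceq z_{e_P}^PW_P$ is automatic (curves project down, but degree-$e_P$ curves in $X$ need not lift to degree-$e$ curves in $G/B$), and you do not verify equality in the case at hand. Second, and more seriously, your final step — that $d,e_P\in\Pi_P$ with $z_{e_P}^P=z_d^P$ forces $d=e_P$ — is precisely the uniqueness statement of the Corollary following Theorem~\ref{thm:uniqueness} (equivalently Corollary~\ref{cor:uniqueness}), and in this paper that uniqueness is \emph{derived from} Fact~\ref{fact:lifting}\eqref{item:defoflifting} and \eqref{item:ePequalsd}; Definition~\ref{def:minimaldegrees} only makes $d$ \emph{a} minimal element of its set, not \emph{the} minimal element. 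Invoking it here is circular. Your fallback ``squeeze'' argument, which is the right instinct (the inequality $e_P\leq d$ does follow from $e\leq e''$ with $e''_P=d$ as above), visibly trails off before establishing $d\leq e_P$, and that direction again threatens to require the same uniqueness. This is why the paper routes the whole statement through \cite[Theorem~4.13]{minimaldegrees} rather than through the later corollaries of this section.
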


\begin{proof}

Item~\eqref{item:maxrep} is clear from the equation $z_d^Pw_P=z_e^B$ in Item~\eqref{item:defoflifting}. All other claims follow from \cite[Corollary~3]{postnikov} and \cite[Definition~4.1, Theorem~4.13]{minimaldegrees} applied to the relative situation $B\subseteq P$.
\end{proof}

\begin{thm}[Consequence of {\cite[Corollary~3]{postnikov}}]
\label{thm:uniqueness}

Let $d,d'\in\Pi_P$ such that $z_d^P\preceq z_{d'}^P$. Then we have $d\leq d'$.

\end{thm}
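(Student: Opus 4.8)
The plan is to reduce the statement to the absolute case by passing to liftings, where it becomes the uniqueness of the minimal degree furnished by \cite[Corollary~3]{postnikov} (which is already built into Definition~\ref{def:lifting}). First I would let $e$ and $e'$ be the liftings of $d$ and $d'$ (Definition~\ref{def:lifting}); by Fact~\ref{fact:lifting} these are minimal degrees in $\Pi_B$ with $z_d^Pw_P=z_e^B$, $z_{d'}^Pw_P=z_{e'}^B$ and $e_P=d$, $e'_P=d'$.

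The key step will be to transfer the hypothesis $z_d^P\preceq z_{d'}^P$ into the relation $z_e^B\preceq z_{e'}^B$. Since $z_d^P$ and $z_{d'}^P$ are the minimal representatives of their cosets in $W/W_P$ (cf. Definition~\ref{def:zdp}), the products $z_d^Pw_P$ and $z_{d'}^Pw_P$ are length-additive, hence equal to the Hecke products $z_d^P\cdot w_P$ and $z_{d'}^P\cdot w_P$; by monotonicity of the Hecke product in its left factor (cf. \cite[Proposition~3.1]{curvenbhd2}) the hypothesis $z_d^P\preceq z_{d'}^P$ then yields $z_e^B=z_d^P\cdot w_P\preceq z_{d'}^P\cdot w_P=z_{e'}^B$. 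This is the point I expect to require the most care, since it is exactly the (well known, but easy to misuse) fact that passing from minimal to maximal coset representatives is order-preserving.

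Granting this, $z_d^Pw_P=z_e^B\preceq z_{e'}^B$, so $e'$ lies in the set $\{f\in H_2(G/B)\text{ a degree}\mid z_d^Pw_P\preceq z_f^B\}$ whose unique minimal element is, by Definition~\ref{def:lifting}, the lifting $e$. Because the poset of effective classes in $H_2(G/B)$ is well-founded — the coordinates in the basis $\Delta^\vee$ are non-negative integers, so there are no infinite descending chains — a set of effective classes with a unique minimal element has a least element; hence $e$ is the least element of that set and $e\le e'$. Finally, the natural surjection $H_2(G/B)\to H_2(X)$, $f\mapsto f_P$, is order-preserving because it simply deletes the coordinates indexed by $\Delta_P$, so $d=e_P\le e'_P=d'$, which is the assertion. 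Apart from \cite[Corollary~3]{postnikov} (used only through Definition~\ref{def:lifting}) the argument is pure bookkeeping.
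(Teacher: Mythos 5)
Your proposal is correct and follows the same route as the paper: pass to the liftings $e,e'$, transfer the relation $z_d^P\preceq z_{d'}^P$ to $z_e^B\preceq z_{e'}^B$ via $z_d^Pw_P=z_e^B$, conclude $e\leq e'$ from the uniqueness statement underlying Definition~\ref{def:lifting}, and project back with $e_P=d$, $e'_P=d'$. The only (harmless) differences are that you spell out the order-preservation of $w\mapsto ww_P$ via Hecke products where the paper just invokes Fact~\ref{fact:lifting}, and that you deduce $e\leq e'$ from well-foundedness plus uniqueness of the minimal element rather than citing \cite[Proposition~4.4(3)]{minimaldegrees} -- both substitutions are valid.
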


\begin{proof}

Let $d$ and $d'$ be as in the statement. Let $e$ be the lifting of $d$ and let $e'$ be the lifting of $d$. As $z_d^P\preceq z_{d'}^P$ by definition, Fact~\ref{fact:lifting}\eqref{item:defoflifting} implies that $z_e^B\preceq z_{e'}^B$. Since $e\in\Pi_B$ by Fact~\ref{fact:lifting}\eqref{item:defoflifting}, \cite[Corollary~3]{postnikov} and \cite[Proposition~4.4(3)]{minimaldegrees} imply that $e\leq e'$. Finally, Fact~\ref{fact:lifting}\eqref{item:ePequalsd} shows that $d\leq d'$ -- as required.
\end{proof}

\begin{cor}

Let $d,d'\in\Pi_P$ such that $z_d^P=z_{d'}^P$. Then we have $d=d'$.

\end{cor}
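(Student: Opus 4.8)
The plan is to deduce this directly from Theorem~\ref{thm:uniqueness}, which already contains all the substance (it encodes the Postnikov uniqueness input \cite[Corollary~3]{postnikov} together with the lifting formalism). Given $d,d'\in\Pi_P$ with $z_d^P=z_{d'}^P$, I would simply read this equality of Weyl group elements as two Bruhat relations at once: by reflexivity of $\preceq$ we have $z_d^P\preceq z_{d'}^P$ and also $z_{d'}^P\preceq z_d^P$. Applying Theorem~\ref{thm:uniqueness} to the first relation gives $d\leq d'$, and applying it with the roles of $d$ and $d'$ interchanged to the second relation gives $d'\leq d$. Since $\leq$ is a partial order on $H_2(X)$, antisymmetry forces $d=d'$.

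There is essentially no obstacle here once Theorem~\ref{thm:uniqueness} is available; the entire weight of the statement has been moved into that theorem. It is perhaps worth a half-sentence of warning, though, that one cannot shortcut this using Definition~\ref{def:minimaldegrees} alone: knowing that $d$ is a \emph{minimal element} of $\{d''\text{ a degree such that }z_d^P\preceq z_{d''}^P\}$ and that $d'$ also lies in this set does not by itself yield $d\leq d'$, because a minimal element of a poset need not be a minimum. It is precisely Theorem~\ref{thm:uniqueness} (hence \cite[Corollary~3]{postnikov}) that upgrades "minimal element" to the comparability needed here.

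Concretely, the write-up would be two lines: ``By hypothesis $z_d^P\preceq z_{d'}^P$ and $z_{d'}^P\preceq z_d^P$, so Theorem~\ref{thm:uniqueness} yields $d\leq d'$ and $d'\leq d$, whence $d=d'$.'' Nothing further is required.
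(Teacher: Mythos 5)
Your argument is correct and is exactly the paper's intended deduction: the paper simply states that the corollary is immediate from Theorem~\ref{thm:uniqueness}, and your two applications of that theorem (using reflexivity of $\preceq$ in both directions, then antisymmetry of $\leq$) are the obvious way to make that explicit. Nothing further is needed.
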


\begin{proof}

This is an immediate corollary of Theorem~\ref{thm:uniqueness}.
\end{proof}

\begin{cor}
\label{cor:uniqueness}

Let $d\in\Pi_P$. Then $d$ is the unique minimal element of the set
$$
\{d'\text{ a degree such that }z_d^P\preceq z_{d'}^P\}\,.
$$

\end{cor}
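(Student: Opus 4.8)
The plan is to leverage Definition~\ref{def:minimaldegrees} together with Theorem~\ref{thm:uniqueness}. Write $S=\{d'\text{ a degree such that }z_d^P\preceq z_{d'}^P\}$ for brevity. Since $d\in\Pi_P$, Definition~\ref{def:minimaldegrees} already tells us that $d$ is \emph{a} minimal element of $S$, so the whole content of the corollary is the \emph{uniqueness} of this minimal element. Accordingly, I would fix an arbitrary minimal element $d''$ of $S$ and show $d''=d$.

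The key auxiliary observation is that every minimal element $d''$ of $S$ is itself a minimal degree. Indeed, from $z_d^P\preceq z_{d''}^P$ and transitivity of the Bruhat order one gets the inclusion of sets $\{d'\text{ a degree such that }z_{d''}^P\preceq z_{d'}^P\}\subseteq S$. Hence if a degree $d'$ satisfies $z_{d''}^P\preceq z_{d'}^P$ and $d'\leq d''$, then $d'\in S$, and minimality of $d''$ in $S$ forces $d'=d''$. By Definition~\ref{def:minimaldegrees} this precisely says $d''\in\Pi_P$.

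Now both $d$ and $d''$ lie in $\Pi_P$ and $z_d^P\preceq z_{d''}^P$, so Theorem~\ref{thm:uniqueness} applies directly and gives $d\leq d''$. Since trivially $d\in S$, the minimality of $d''$ in $S$ then forces $d=d''$, which is exactly the desired uniqueness.

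I do not expect a genuine obstacle here: the statement is a formal consequence of Definition~\ref{def:minimaldegrees} and the already-established Theorem~\ref{thm:uniqueness} (whose proof is where the nontrivial external input, Postnikov's uniqueness result \cite[Corollary~3]{postnikov}, actually enters). The only place that requires a moment's care is the step verifying that an arbitrary minimal element of $S$ is again a minimal degree, so that Theorem~\ref{thm:uniqueness} is legitimately applicable to the pair $d,d''$; everything else is immediate from transitivity of $\preceq$ and the definition of a minimal element.
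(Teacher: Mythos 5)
Your proof is correct and follows essentially the same route as the paper: fix an arbitrary minimal element $d''$ of the set, show $d''\in\Pi_P$, apply Theorem~\ref{thm:uniqueness} to get $d\leq d''$, and conclude $d=d''$ by minimality. The only difference is that the paper justifies the step $d''\in\Pi_P$ by citing \cite[Definition~4.1, Proposition~4.4(10)]{minimaldegrees}, whereas you verify it directly from Definition~\ref{def:minimaldegrees} and transitivity of the Bruhat order, which is a sound and self-contained alternative.
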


{\color{black}\begin{rem}

That $d\in\Pi_P$ is a minimal element of the prescribed set is content of 
%the definition of minimal degrees we chose in the section on minimal degrees.
Definition~\ref{def:minimaldegrees}. Corollary~\ref{cor:uniqueness} makes a statement about the uniqueness which is nontrivial.

\end{rem}

\begin{rem}

Corollary~\ref{cor:uniqueness} shows in particular that $d_X$ is the unique minimal element of the set
$$
\{d\text{ a degree such that }w_oW_P=z_d^PW_P\}\,.
$$
This result was found independently from \cite[Corollary~3]{postnikov} in \cite[Definition~4.1, Theorem~4.7]{minimaldegrees} (cf. Notation~\ref{not:dx}).

\end{rem}

%\begin{rem}

%Mention that both the uniqueness of $d_X$ as well as $d\leq d_X$ for all $d\in\Pi_P$ can now be deduced from the previous theorem and its corollary. But only because we used results from the literature which where not known to the author before. Even if these statements are trivial now, the techniques developed in \cite{minimaldegrees} in order to prove them (such as orthogonality relations) are not trivial at all and needed all over in this work. They have their justification.

%\end{rem}

\begin{rem}[For the reader familiar with quantum cohomology]
\label{rem:uniqueness}

%Explain what the previous corollary means in terms of quantum cohomology with a reference to the section on minimal degrees where I define $\Pi_P$ and also explain what it means geometrically. (To the intention of the reader familiar with quantum cohomology.)

By Remark~\ref{rem:quantum}, we know that a degree $d\in\Pi_P$ is a minimal degree in $\sigma_{z_d^P}\star\mathrm{pt}$. In view of \cite[Definition~4.1, Theorem~5.15]{minimaldegrees}, Corollary~\ref{cor:uniqueness} says that a degree $d\in\Pi_P$ is even the unique minimal degree in $\sigma_{z_d^P}\star\mathrm{pt}$. In particular, the degree $d_X$ is the unique minimal degree in $\mathrm{pt}\star\mathrm{pt}$.

\end{rem}}

\begin{proof}[Proof of Corollary~\ref{cor:uniqueness}]

Let $d\in\Pi_P$. Let $d''$ be a minimal element of the set 
$$
\{d'\text{ a degree such that }z_d^P\preceq z_{d'}^P\}\,.
$$
We have to show that $d=d''$. By \cite[Definition~4.1, Proposition~4.4(10)]{minimaldegrees}, we know that $d''\in\Pi_P$. By definition of $d''$, we also have $z_d^P\preceq z_{d''}^P$. Therefore, Theorem~\ref{thm:uniqueness} implies that $d\leq d''$. By the minimality of $d''$, this implies that $d=d''$ -- as claimed.
\end{proof}

\begin{cor}
\label{cor:dsmallerdx}

For all $d\in\Pi_P$ we have $d\leq d_X$.

\end{cor}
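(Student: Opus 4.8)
The plan is to derive Corollary~\ref{cor:dsmallerdx} as a direct consequence of Theorem~\ref{thm:uniqueness}, by recognizing that $d_X$ is, by definition, the minimal element of the set of degrees $d'$ with $w_oW_P = z_{d'}^PW_P$. First I would recall from Notation~\ref{not:dx} that $d_X \in \Pi_P$ and that $w_oW_P = z_{d_X}^PW_P$. The key observation is that for any $d \in \Pi_P$, the element $z_d^P$ is a minimal coset representative in $W/W_P$, hence $z_d^P \preceq w_o$ (the longest element dominates everything in the Bruhat order), and in fact $z_d^P \preceq w_o' $ where $w_o'$ is the maximal representative of $w_oW_P$; but since $w_o$ itself satisfies $w_oW_P = z_{d_X}^PW_P$ after passing to minimal representatives, we have $z_d^P \preceq z_{d_X}^P$ is what we actually need.

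More carefully: the cleanest route is to apply Theorem~\ref{thm:uniqueness} with $d' = d_X$. For this I need $z_d^P \preceq z_{d_X}^P$. Since $z_{d_X}^P$ is the minimal representative of $w_oW_P$ and $w_o$ is the longest element of $W$, for any $w \in W$ we have $w \preceq w_o$, and projecting to minimal coset representatives preserves the Bruhat order in the appropriate sense; concretely $z_d^P \preceq z_{d_X}^P$ because $z_{d_X}^P$ is the minimal representative of the top coset $w_oW_P$, and every minimal coset representative $z_d^P$ satisfies $z_d^PW_P \preceq w_oW_P$ in the Bruhat order on $W/W_P$, which by \cite[5.9, 5.10]{humphreys3} lifts to $z_d^P \preceq z_{d_X}^P$ on minimal representatives. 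Once $z_d^P \preceq z_{d_X}^P$ is in hand, Theorem~\ref{thm:uniqueness} (applied to the pair $d, d_X \in \Pi_P$) immediately yields $d \leq d_X$.

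I would write the proof roughly as follows:

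\begin{proof}
Let $d \in \Pi_P$. By Notation~\ref{not:dx} we have $d_X \in \Pi_P$ and $z_{d_X}^P$ is the minimal representative of $w_oW_P$. Since $w_o$ is the longest element of $W$, every coset of $W/W_P$ lies below $w_oW_P$ in the Bruhat order; hence $z_d^P \preceq z_{d_X}^P$ for the minimal representatives. Applying Theorem~\ref{thm:uniqueness} to $d, d_X \in \Pi_P$ we conclude $d \leq d_X$.
\end{proof}

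The main (and only genuine) obstacle is the purely order-theoretic point that $z_d^P \preceq z_{d_X}^P$, i.e. that the minimal representative of the top coset $w_oW_P$ dominates every minimal coset representative in the Bruhat order on $W$. This is standard (it follows from the fact that $X_w \subseteq X_{w_o}^{W_P} = G/P$ for all $w$, or combinatorially from the subexpression characterization of Bruhat order together with the fact that a reduced word for $z_{d_X}^P$ can be chosen as a prefix-compatible reduced word giving all shorter minimal representatives), but it should be stated carefully; everything else is an immediate invocation of Theorem~\ref{thm:uniqueness}.
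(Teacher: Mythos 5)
Your proof is correct and follows essentially the same route as the paper: the paper's own argument is exactly "by definition $w_oW_P=z_{d_X}^PW_P$, thus $z_d^P\preceq z_{d_X}^P$, and then Theorem~\ref{thm:uniqueness} gives $d\leq d_X$." The only difference is that you spell out the order-theoretic step the paper compresses into "thus" (that the minimal representative of the top coset $w_oW_P$ dominates every minimal coset representative), and your justification of that step is sound.
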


\begin{rem}

Corollary~\ref{cor:dsmallerdx} was found independently from \cite[Corollary~3]{postnikov} in \cite[Definition~4.1, Theorem~10.7]{minimaldegrees}

\end{rem}

\begin{proof}[Proof of Corollary~\ref{cor:dsmallerdx}]

Let $d\in\Pi_P$. By definition, we have $w_oW_P=z_{d_X}^PW_P$ and thus $z_d^P\preceq z_{d_X}^P$. As $d,d_X\in\Pi_P$, Theorem~\ref{thm:uniqueness} now implies that $d\leq d_X$.
\end{proof}

\begin{thm}
\label{thm:splitting1}

Let $d\in\Pi_P$. Let $e$ be the lifting of $d$. Let $\hat{e}=\sum_{\alpha\in\mathcal{B}_{R,e}\setminus R_P^+}\alpha^\vee$. A root $\alpha$ is a maximal root of $d$ if and only if $\alpha$ is a maximal root of $\hat{e}$.

\end{thm}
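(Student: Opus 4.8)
The plan is to unwind both sides in terms of greedy decompositions and the degrees $d$ and $\hat e$, using the dictionary between $d\in\Pi_P$ and its lifting $e\in\Pi_B$. By Fact~\ref{fact:lifting}\eqref{item:ePequalsd} we have $e_P=d$, i.e.\ $d$ is the image of $e$ under $H_2(G/B)\to H_2(X)$, and by definition $\hat e=\sum_{\alpha\in\mathcal{B}_{R,e}\setminus R_P^+}\alpha^\vee\in H_2(G/B)$ is the sub-sum of $e=\sum_{\alpha\in\mathcal{B}_{R,e}}\alpha^\vee$ over the roots \emph{outside} $R_P^+$. The key observation is that $\hat e_P=e_P=d$, since the roots in $\mathcal{B}_{R,e}\cap R_P^+$ contribute coroots lying in $\mathbb{Z}\Delta_P^\vee$, which are killed by $H_2(G/B)\to H_2(X)$. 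So $\hat e$ is a distinguished lift of $d$ built only from the "$P$-essential" part of $\mathcal{B}_{R,e}$, and I want to compare maximal roots of $d$ with those of $\hat e$ through this lift.

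**First direction: maximal roots of $\hat e$ are maximal roots of $d$.** First I would note $\hat e\in\Pi_B$ by Example~\ref{ex:minimaldegrees}(4) (it is the sub-sum over a subsequence of a greedy decomposition of $e$), so $\mathcal{B}_{R,\hat e}=\mathcal{B}_{R,e}\setminus R_P^+$ by the standard fact \cite[Proposition~3.10(7), 4.4(9)]{minimaldegrees}, and Theorem~\ref{thm:maxroots} identifies the maximal roots of $\hat e$ as the $\alpha(\hat e_i)$ for the connected components $\hat e_i$ of $\hat e$. Let $\alpha$ be such a maximal root of $\hat e$; I must show $\alpha$ is a maximal root of $d$. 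Since $\alpha\in\mathcal{B}_{R,e}\setminus R_P^+$, it is $B$-cosmall (Theorem~\ref{thm:gencascade}\eqref{item:gencascadebcosmall}), and one checks $d(\alpha)\le \hat e_P=d$, so there is a maximal root $\alpha'$ of $d$ with $\alpha\le\alpha'$. I would then argue $\alpha'\in\mathcal{B}_{R,e}$: a maximal root of $d$ occurs in a greedy decomposition of $d$, and I want to lift that to a greedy decomposition of $e$ — here I would invoke \cite[Theorem~5.1]{curvenbhd2}/the relation $z_d^Pw_P=z_e^B$ from Fact~\ref{fact:lifting}\eqref{item:defoflifting}, or more combinatorially the compatibility of greedy decompositions under $e\mapsto e_P$ (this is where I expect to lean on the apparatus of Section~\ref{sec:lifting}). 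Once $\alpha'\in\mathcal{B}_{R,e}$, strong orthogonality (Theorem~\ref{thm:gencascade}\eqref{item:stronglyorthogonal}) forces $\alpha'=\alpha$ unless $\alpha'=\alpha$ already, because $\alpha\le\alpha'$ with both in the strongly-orthogonal set $\mathcal{B}_{R,e}$ is only possible when $\alpha=\alpha'$. Hence $\alpha=\alpha'$ is a maximal root of $d$.

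**Second direction: maximal roots of $d$ are maximal roots of $\hat e$.** Conversely, let $\alpha$ be a maximal root of $d$. By the lifting dictionary $\alpha$ occurs in a greedy decomposition of $d$, which lifts to one of $e$, so $\alpha\in\mathcal{B}_{R,e}$; and $\alpha\in R^+\setminus R_P^+$ by definition of maximal roots of a degree, so in fact $\alpha\in\mathcal{B}_{R,e}\setminus R_P^+=\mathcal{B}_{R,\hat e}$. Thus $\alpha$ occurs in a greedy decomposition of $\hat e$, so $d(\alpha)\le\hat e$ (as degrees in $H_2(G/B)$, with $d(\alpha)=\alpha^\vee+\mathbb{Z}\Delta_B^\vee$ just $\alpha^\vee$). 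Suppose $\alpha$ were not maximal for $\hat e$: then there is $\alpha''\in\mathcal{B}_{R,\hat e}$ with $\alpha<\alpha''$ and $d(\alpha)\le d(\alpha'')\le\hat e$ in $H_2(G/B)$; but $\alpha,\alpha''\in\mathcal{B}_{R,e}$ are strongly orthogonal and $\alpha<\alpha''$ is impossible unless $\alpha=\alpha''$ — contradiction. (Alternatively, apply the projection: $\alpha<\alpha''$ in $R^+$ gives $d(\alpha'')\ge d(\alpha)$ in $H_2(X)$ too, which would contradict maximality of $\alpha$ for $d$ once one knows $\alpha''\notin R_P^+$, which holds since $\alpha''\in\mathcal{B}_{R,\hat e}$.) Either way $\alpha$ is a maximal root of $\hat e$.

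**Main obstacle.** The delicate point is the precise compatibility between greedy decompositions of $d\in\Pi_P$ and greedy decompositions of its lifting $e\in\Pi_B$ — specifically, that a maximal root of $d$ really does occur as an element of $\mathcal{B}_{R,e}$, and conversely that $\mathcal{B}_{R,e}\setminus R_P^+$ controls the maximal roots of $d$. This is exactly the content that Section~\ref{sec:lifting} is being built to support (the relation $z_d^Pw_P=z_e^B$ together with $e_P=d$), and I expect the cleanest route is to first prove a lemma that the roots in $\mathcal{B}_{R,e}\setminus R_P^+$ project to a greedy-type decomposition of $d$, from which both inclusions of maximal-root sets follow formally; the strong-orthogonality rigidity of $\mathcal{B}_{R,e}$ then does the rest of the work for free.
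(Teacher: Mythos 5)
Your overall shape is right (pass to $\hat e$ with $\mathcal{B}_{R,\hat e}=\mathcal{B}_{R,e}\setminus R_P^+$ and $\hat e_P=e_P=d$, then compare maximal roots), but there are two genuine problems. First, the rigidity principle you invoke twice --- that $\alpha\leq\alpha'$ with both $\alpha,\alpha'\in\mathcal{B}_{R,e}$ forces $\alpha=\alpha'$ by strong orthogonality --- is false. Distinct elements of a generalized cascade can be comparable: this is exactly what Theorem~\ref{thm:gencascade}\eqref{item:totallyordered} is about (chain cascades $C_{R,e}(\varphi)$ are totally ordered, and typically have several elements; e.g.\ Kostant's cascade in type $\mathsf{B}_2$ contains $\beta_1$ and $\beta_1+2\beta_2$, which are comparable, distinct, and strongly orthogonal). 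The correct way to force $\alpha=\alpha'$ in both places is plain maximality: once you know both roots lie in the set whose maximal elements you are considering (e.g.\ $\{\mu\mid \mu^\vee\leq\hat e\}$, or $\{\mu\in R^+\setminus R_P^+\mid d(\mu)\leq d\}$) and one of them is maximal there, $\alpha\leq\alpha'$ gives equality. Your parenthetical ``alternatively, apply the projection'' in the second direction is essentially this correct argument.

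Second, and more seriously, the step you yourself flag as the ``main obstacle'' --- that a maximal root $\alpha'$ of $d$ actually lies in $\mathcal{B}_{R,e}$ --- is the real content of the theorem, and asserting that a greedy decomposition of $d$ ``lifts to one of $e$'' is not something established anywhere. The paper's proof fills exactly this hole: a maximal root $\alpha'$ of $d$ is $P$-cosmall, hence $B$-cosmall, so $z_{\alpha'^\vee}^B=s_{\alpha'}\preceq s_{\alpha'}\cdot w_P=z_{d(\alpha')}^Pw_P\preceq z_d^Pw_P=z_e^B$; then Theorem~\ref{thm:uniqueness} (the consequence of Postnikov's uniqueness result, applied to $\alpha'^\vee, e\in\Pi_B$) converts this Bruhat-order relation into the degree inequality $\alpha'^\vee\leq e$, from which one checks $\alpha'$ is a maximal root of $e$ and hence lies in $\mathcal{B}_{R,e}\setminus R_P^+=\mathcal{B}_{R,\hat e}$. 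Without this input your argument is circular: your ``first direction'' already assumes the hard half of the equivalence. Note also that the paper proves the direction ``maximal for $d$ $\Rightarrow$ maximal for $\hat e$'' first and then derives the converse from it in two lines; organizing the proof your way (converse first) makes the dependence harder to untangle.
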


\begin{proof}

Let $\alpha$ be a maximal root of $d$. First, we show that $\alpha$ is also a maximal root of $\hat{e}$. By definition, $\alpha$ is $P$-cosmall and also $B$-cosmall. Hence, we have $z_{d(\alpha)}^Pw_P=s_\alpha\cdot w_P$ and $z_{\alpha^\vee}^B=s_\alpha$. From \cite[Proposition~3.1(c)]{curvenbhd2}, it follows that $z_{\alpha^\vee}^B\preceq z_{d(\alpha)}^Pw_P$. By definition, we have $d(\alpha)\leq d$ and hence $z_{d(\alpha)}^Pw_P\preceq z_d^Pw_P=z_e^B$ by \cite[Corollary~4.12(b)]{curvenbhd2} and Fact~\ref{fact:lifting}\eqref{item:defoflifting}. Altogether, it follows that $z_{\alpha^\vee}^B\preceq z_e^B$. By \cite[Proposition~4.4(6)]{minimaldegrees}, we have $\alpha^\vee\in\Pi_B$ and by Fact~\ref{fact:lifting}\eqref{item:defoflifting}, we have $e\in\Pi_B$. Hence, Theorem~\ref{thm:uniqueness} and the previous relation in the Bruhat order show that $\alpha^\vee\leq e$.

\begin{proof}[Claim: $\alpha$ is a maximal root of $e$]\renewcommand{\qedsymbol}{$\triangle$}

Indeed, let $\alpha'$ be a maximal root of $e$ such that $\alpha\leq\alpha'$. (As $\alpha^\vee\leq e$, we can clearly choose such an $\alpha'$.) By definition, we have $\alpha'^\vee\leq e$ and thus by Fact~\ref{fact:lifting}\eqref{item:ePequalsd} that $d(\alpha')\leq d$. Since $\alpha\leq\alpha'$ and $\alpha\in R^+\setminus R_P^+$ by definition, it is also clear that $\alpha'\in R^+\setminus R_P^+$. Since $\alpha$ is a maximal root of $d$, it follows that $\alpha=\alpha'$. In other words, $\alpha$ is a maximal root of $e$.
\end{proof}

As $\alpha$ is a maximal root of $e$, there exists a greedy decomposition of $e$ such that $\alpha$ occurs as its first entry. This means in particular that $\alpha\in\mathcal{B}_{R,e}$ and since $\alpha\in R^+\setminus R_P^+$ also that $\alpha\in\mathcal{B}_{R,e}\setminus R_P^+$. By \cite[Proposition~3.10(7), 4.4(9)]{minimaldegrees}, we have $\hat{e}\in\Pi_B$ and $\mathcal{B}_{R,\hat{e}}=\mathcal{B}_{R,e}\setminus R_P^+$. Thus, we have $\alpha\in\mathcal{B}_{R,\hat{e}}$ and consequently $\alpha^\vee\leq\hat{e}\leq e$. Since $\alpha$ is a maximal root of $e$ by the claim, the last inequality implies that $\alpha$ is also a maximal root of $\hat{e}$. This proves the implication from left to right.

To prove the other implication, let $\alpha$ be a maximal root of $\hat{e}$. We have to show that $\alpha$ is also a maximal root of $d$. By definition, there exists a greedy decomposition of $\hat{e}$ such that $\alpha$ occurs as its first entry. This means that $\alpha\in\mathcal{B}_{R,\hat{e}}=\mathcal{B}_{R,e}\setminus R_P^+$. (The last equality was already justified in the paragraph before.) In particular, we have $\alpha\in R^+\setminus R_P^+$. Moreover, as $\alpha^\vee\leq\hat{e}$ by definition, it follows from Fact~\ref{fact:lifting}\eqref{item:ePequalsd} that $d(\alpha)\leq\hat{e}_P=e_P=d$. (To see that $\hat{e}_P=e_P$, just note that $e=\sum_{\alpha\in\mathcal{B}_{R,e}}\alpha^\vee$ in view of \cite[Remark~8.4]{minimaldegrees}.) These facts show that we can choose a maximal root $\alpha'$ of $d$ such that $\alpha\leq\alpha'$. By the first implication already shown, $\alpha'$ is also a maximal root of $\hat{e}$. Since $\alpha$ is also a maximal root of $\hat{e}$ by our initial choice, it follows that $\alpha=\alpha'$. In other words, $\alpha$ is a maximal root of $d$. This proves the implication from right to left.
\end{proof}

\begin{cor}

Let $d\in\Pi_P$. Let $e$ be the lifting of $d$. Let $\hat{e}=\sum_{\alpha\in\mathcal{B}_{R,e}\setminus R_P^+}\alpha^\vee$. Then we have $\widetilde{\Delta}(d)=\Delta(\hat{e})$.

\end{cor}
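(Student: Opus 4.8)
The plan is to describe each of the two sides in terms of maximal roots and then to match the two descriptions via Theorem~\ref{thm:splitting1}.

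First I would establish the following elementary fact, valid for \emph{any} degree $d'$ — whether in $H_2(X)$ or in $H_2(G/B)$: one has
\[
\widetilde{\Delta}(d')=\bigcup_{\alpha}\Delta(\alpha),
\]
the union being taken over all maximal roots $\alpha$ of $d'$. Indeed, writing $d'_1,\ldots,d'_k$ for the connected components of $d'$, Lemma~\ref{lem:connectedcomponents1} gives $\widetilde{\Delta}(d')=\coprod_{i=1}^{k}\widetilde{\Delta}(d'_i)$ and tells us each $d'_i$ is connected, so $\widetilde{\Delta}(d'_i)=\Delta(\alpha(d'_i))$ by Notation~\ref{not:alphad}; and Theorem~\ref{thm:maxroots} identifies $\alpha(d'_1),\ldots,\alpha(d'_k)$ as exactly the maximal roots of $d'$. (For $d'=\hat e$ one applies Lemma~\ref{lem:connectedcomponents1} and Theorem~\ref{thm:maxroots} with $B$ in the role of $P$.)

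Then I would apply this fact to $d'=d$ and to $d'=\hat e$. Since $\hat e$ is a degree in $H_2(G/B)$, its extended support coincides with its naive support, so $\Delta(\hat e)=\widetilde{\Delta}(\hat e)=\bigcup_{\alpha}\Delta(\alpha)$ with $\alpha$ running over the maximal roots of $\hat e$. By Theorem~\ref{thm:splitting1}, a root is a maximal root of $d$ if and only if it is a maximal root of $\hat e$, so these are exactly the roots appearing in the description of $\widetilde{\Delta}(d)$. Hence $\widetilde{\Delta}(d)=\Delta(\hat e)$, which is the assertion.

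I do not expect a genuine obstacle here: the corollary is a straightforward repackaging of Theorem~\ref{thm:splitting1} together with the combinatorial description of the extended support through maximal roots. The only point that needs a word of care is that ``maximal root'' is a notion relative to the chosen parabolic — it is the $P$-relative notion for $d$ and the $B$-relative notion for $\hat e$ — but reconciling these is precisely the content of Theorem~\ref{thm:splitting1}, so no additional argument is required.
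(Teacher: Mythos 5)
Your proposal is correct and follows essentially the same route as the paper: the paper likewise combines Lemma~\ref{lem:connectedcomponents1} and Notation~\ref{not:alphad} to write both $\widetilde{\Delta}(d)$ and $\Delta(\hat e)$ as disjoint unions of the supports of the maximal roots (via Theorem~\ref{thm:maxroots}), and then invokes Theorem~\ref{thm:splitting1} to identify the two sets of maximal roots. Your reformulation as a single general fact applied twice is just a cosmetic reorganization of the same argument.
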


\begin{proof}

Let $d_1,\ldots,d_k$ be the connected components of $d$. Let $\hat{e}_1,\ldots,\hat{e}_{k'}$ be the connected components of $\hat{e}$. By Theorem~\ref{thm:maxroots}, \ref{thm:splitting1}, it follows that $k=k'$ and
$$
\{\alpha(d_1),\ldots,\alpha(d_k)\}=\{\alpha(\hat{e}_1),\ldots,\alpha(\hat{e}_k)\}\,.
$$
By Notation~\ref{not:alphad} and Lemma~\ref{lem:connectedcomponents1}, we also have
$$
\widetilde{\Delta}(d)=\coprod_{i=1}^k\Delta(\alpha(d_i))\text{ and }\Delta(\hat{e})=\coprod_{i=1}^k\Delta(\alpha(\hat{e}_i))\,.
$$
The result follows from these facts.
\end{proof}

\begin{thm}
\label{thm:splitting2}

Let $\alpha_1,\ldots,\alpha_k$ be $P$-cosmall roots such that $\Delta(\alpha_1),\ldots,\Delta(\alpha_k)$ are pairwise totally disjoint. Let $d=\sum_{i=1}^k d(\alpha_i)$ for short. Then we have $d\in\Pi_P$. Let $e$ be the lifting of $d$. Then we have $\mathcal{B}_{R,e}\setminus R_P^+=\{\alpha_1,\ldots,\alpha_k\}$.

\end{thm}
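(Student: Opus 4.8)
The plan is to reduce the statement to the connected case via the addition theorems, and in the connected case to identify the lifting explicitly. First I would establish that $d\in\Pi_P$. Since $\Delta(\alpha_1),\ldots,\Delta(\alpha_k)$ are pairwise totally disjoint, Theorem~\ref{thm:additionextendedsupport} shows $\widetilde{\Delta}(d(\alpha_1)),\ldots,\widetilde{\Delta}(d(\alpha_k))$ are pairwise totally disjoint (note $\widetilde{\Delta}(d(\alpha_i))\subseteq\Delta(\alpha_i)\cup\Delta_P$, and strong orthogonality of the $\Delta(\alpha_i)$ forces the extended supports apart as well — here one uses that for a $P$-cosmall root $\alpha$ the simple roots of $\Delta_P$ meeting $\widetilde\Delta(d(\alpha))$ are orthogonal to all simple roots orthogonal to $\alpha$, exactly as in the claim of Theorem~\ref{thm:additionextendedsupport}). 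Each $d(\alpha_i)\in\Pi_P$ by Example~\ref{ex:minimaldegrees}(1). Hence $d=\sum_i d(\alpha_i)\in\Pi_P$ by Theorem~\ref{thm:additionminimaldegrees}.

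Next I would compute the lifting. Let $e$ be the lifting of $d$ and set $\hat e=\sum_{\alpha\in\mathcal{B}_{R,e}\setminus R_P^+}\alpha^\vee$. By Theorem~\ref{thm:splitting1}, the maximal roots of $d$ coincide with the maximal roots of $\hat e$. But by Theorem~\ref{thm:maxroots} applied to $d=\sum_i d(\alpha_i)$: since the $\alpha_i$ are $P$-cosmall with pairwise totally disjoint supports, $d(\alpha_1),\ldots,d(\alpha_k)$ are exactly the connected components of $d$ (their extended supports are the connected components of $\widetilde\Delta(d)$ by Theorem~\ref{thm:additionextendedsupport} and Lemma~\ref{lem:connectedcomponents1}), so the maximal roots of $d$ are precisely $\alpha(d(\alpha_i))$. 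Because $\alpha_i$ is $P$-cosmall, $\alpha_i$ is a maximal root of $d(\alpha_i)$, and since $d(\alpha_i)$ is connected, $\alpha(d(\alpha_i))=\alpha_i$. Thus the maximal roots of $d$, hence of $\hat e$, are exactly $\{\alpha_1,\ldots,\alpha_k\}$.

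It then remains to upgrade "the $\alpha_i$ are the maximal roots of $\hat e$" to "$\mathcal{B}_{R,\hat e}=\{\alpha_1,\ldots,\alpha_k\}$", using $\mathcal{B}_{R,\hat e}=\mathcal{B}_{R,e}\setminus R_P^+$ (which holds by \cite[Proposition~3.10(7), 4.4(9)]{minimaldegrees}). Each $\alpha_i$ is $B$-cosmall (as a $P$-cosmall root it is a maximal root of $d(\alpha_i)$, hence $B$-cosmall), so $\alpha_i^\vee\in\Pi_B$ and the only maximal root of $\alpha_i^\vee$ is $\alpha_i$ itself. Since the $\Delta(\alpha_i)$ are pairwise totally disjoint, $\widetilde\Delta(\alpha_1^\vee),\ldots,\widetilde\Delta(\alpha_k^\vee)$ are pairwise totally disjoint, and Theorem~\ref{thm:additiongreedy} shows that $(\alpha_1,\ldots,\alpha_k)$ is a greedy decomposition of $\sum_i\alpha_i^\vee$; by Theorem~\ref{thm:additiongreedyconverse} every greedy decomposition of $\sum_i\alpha_i^\vee$ consists of the $\alpha_i$ in some order, so $\mathcal{B}_{R,\sum_i\alpha_i^\vee}=\{\alpha_1,\ldots,\alpha_k\}$. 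Since $\sum_i\alpha_i^\vee$ has maximal roots exactly $\{\alpha_1,\ldots,\alpha_k\}$, which equals the maximal root set of $\hat e$, and both lie in $\Pi_B$, repeated application of Theorem~\ref{thm:additiongreedy} (peeling off maximal roots one at a time, as in Lemma~\ref{lem:maxroots}) together with an induction on $\operatorname{card}(\mathcal{B})$ gives $\hat e=\sum_i\alpha_i^\vee$ and $\mathcal{B}_{R,e}\setminus R_P^+=\mathcal{B}_{R,\hat e}=\{\alpha_1,\ldots,\alpha_k\}$.

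The main obstacle I anticipate is the bookkeeping in passing from "the same maximal roots" to "the same generalized cascade": one must be careful that after removing a common maximal root $\alpha_i$, the disjointness hypotheses are preserved so the inductive step applies, and that $\widetilde\Delta(\hat e)$ does not accidentally pick up simple roots of $\Delta_P$ outside $\bigcup_i\Delta(\alpha_i)$ — this is controlled by the orthogonality relations in greedy decompositions of minimal degrees (\cite[Theorem~8.1]{minimaldegrees}) together with Theorem~\ref{thm:splitting1}. Once the connected/disjoint structure is pinned down, the identification $\mathcal{B}_{R,e}\setminus R_P^+=\{\alpha_1,\ldots,\alpha_k\}$ follows formally.
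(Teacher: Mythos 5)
Your first two steps track the paper's proof: the paper also shows $(\alpha_1,\ldots,\alpha_k)$ is a greedy decomposition of $d$, gets $d\in\Pi_P$ from the addition theorem for minimal degrees (via Corollary~\ref{cor:additionminimaldegrees}), and uses Theorem~\ref{thm:maxroots} and Theorem~\ref{thm:splitting1} to identify the maximal roots of $\hat e=\sum_{\alpha\in\mathcal{B}_{R,e}\setminus R_P^+}\alpha^\vee$ as exactly $\{\alpha_1,\ldots,\alpha_k\}$. (Your justification of the disjointness of the $\widetilde\Delta(d(\alpha_i))$ is more convoluted than needed: for $P$-cosmall $\alpha_i$ the greedy decomposition of $d(\alpha_i)$ is $(\alpha_i)$, so $\widetilde\Delta(d(\alpha_i))=\Delta(\alpha_i)$ on the nose.) The genuine gap is in the last step. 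The principle you invoke --- that two degrees in $\Pi_B$ with the same set of maximal roots must coincide, by peeling off maximal roots and inducting --- is false. In type $\mathsf{A}_3$ with highest root $\theta$, the degrees $\theta^\vee$ and $\theta^\vee+\beta_2^\vee=d_{G/B}$ both lie in $\Pi_B$ and both have $\{\theta\}$ as their unique maximal root, yet they differ. After peeling off the common maximal roots nothing controls the remainder $\hat e-\sum_i\alpha_i^\vee$, and your proposed safeguard (that $\widetilde\Delta(\hat e)$ acquires no simple roots of $\Delta_P$ outside $\bigcup_i\Delta(\alpha_i)$) does not detect this failure mode: in the example the extra cascade element $\beta_2$ has support inside $\Delta(\theta)$, so no support or orthogonality condition excludes it.

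What actually closes the argument, and what the paper uses, is a projection count in $H_2(X)$. One has $\hat e_P=e_P=d$ (since $e=\sum_{\alpha\in\mathcal{B}_{R,e}}\alpha^\vee$ and the discarded roots lie in $R_P^+$). Hence if $(\alpha_1,\ldots,\alpha_k,\alpha_{k+1},\ldots,\alpha_r)$ is a greedy decomposition of $\hat e$ beginning with its maximal roots (Lemma~\ref{lem:maxroots}), then $\sum_{j>k}\alpha_j^\vee=\hat e-\sum_{i\leq k}\alpha_i^\vee\in\mathbb{Z}\Delta_P^\vee$, which forces $\alpha_{k+1},\ldots,\alpha_r\in R_P^+$. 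But $\mathcal{B}_{R,\hat e}=\mathcal{B}_{R,e}\setminus R_P^+$ consists entirely of roots in $R^+\setminus R_P^+$, so $r=k$ and $\mathcal{B}_{R,e}\setminus R_P^+=\{\alpha_1,\ldots,\alpha_k\}$. You have all the ingredients on the table; replace the maximal-root induction by this argument and the proof is complete.
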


\begin{proof}

For all $1\leq i\leq k$, let $d_i=d(\alpha_i)$ for short. We have $\widetilde{\Delta}(d_i)=\Delta(\alpha_i)$ for all $1\leq i\leq k$ since $\alpha_i$ is $P$-cosmall. By Theorem~\ref{thm:additiongreedy} applied to $d_1,\ldots,d_k$, we know that $(\alpha_1,\ldots,\alpha_k)$ is a greedy decomposition of $d$. In particular, we have $\widetilde{\Delta}(d)=\coprod_{i=1}^k\Delta(\alpha_i)$ and that $\Delta(\alpha_1),\ldots,\Delta(\alpha_k)$ are the distinct connected components of $\widetilde{\Delta}(d)$. By definition, we see that $d_1,\ldots,d_k$ are the connected components of $d$.

To prove that $d\in\Pi_P$, it suffices to prove that $d_1,\ldots,d_k\in\Pi_P$ (Corollary~\ref{cor:additionminimaldegrees}). But the later statement was proved in \cite[Proposition~4.4(6)]{minimaldegrees}. This proves the first claim of the theorem. The lifting $e$ of $d$ is now well-defined. 

In the first paragraph, we saw that $d_1,\ldots,d_k$ are the connected components of $d$. Hence, Theorem~\ref{thm:maxroots} implies that there are precisely $k$ maximal roots of $d$, namely $\alpha_1,\ldots,\alpha_k$. Let $\hat{e}=\sum_{\alpha\in\mathcal{B}_{R,e}\setminus R_P^+}\alpha^\vee$. Let $\hat{e}_1,\ldots,\hat{e}_{k'}$  be the connected components of $\hat{e}$. By Theorem~\ref{thm:maxroots}, \ref{thm:splitting1}, it follows that $k=k'$ and that we can rename the connected components of $\hat{e}$ in such a way that $\alpha(\hat{e}_i)=\alpha_i$ for all $1\leq i\leq k$. By Lemma~\ref{lem:maxroots} applied to $\hat{e}$, there exist roots $\alpha_{k+1},\ldots,\alpha_r$ ($k\leq r$) such that $(\alpha_1,\ldots,\alpha_k,\alpha_{k+1},\ldots,\alpha_r)$ is a greedy decomposition of $\hat{e}$.

\begin{proof}[Claim: We have $\alpha_{k+1},\ldots,\alpha_r\in R_P^+$]\renewcommand{\qedsymbol}{$\triangle$}

To prove the claim, it clearly suffices to show that $\hat{e}-\sum_{i=1}^k\alpha_i^\vee\in\mathbb{Z}\Delta_P^\vee$ or equivalent to show that $\hat{e}_P=d$. But we already saw this equality in the proof of Theorem~\ref{thm:splitting1}
\end{proof}

Also, in the proof of Theorem~\ref{thm:splitting1}, we saw that $\mathcal{B}_{R,\hat{e}}=\mathcal{B}_{R,e}\setminus R_P^+$, so that all members of every greedy decomposition of $\hat{e}$ are elements of $R^+\setminus R_P^+$. This fact is only compatible with the claim if $k=r$. In other words, we have shown that $(\alpha_1,\ldots,\alpha_k)$ is a greedy decomposition of $\hat{e}$. We can reformulate this result by saying that $\mathcal{B}_{R,e}\setminus R_P^+=\mathcal{B}_{R,\hat{e}}=\{\alpha_1,\ldots,\alpha_k\}$. This is all we wanted to prove.
\end{proof}

\begin{notation}

Let $d\in\Pi_P$. Let $e$ be the lifting of $d$. Then we denote by $\Sigma_{P,d}$ the sum defined by
$$
\Sigma_{P,d}=\sum_{\substack{\alpha\in\mathcal{B}_{R,e}\setminus R_P^+\\\gamma\in R_P^+}}(\gamma,\alpha^\vee)\,.
$$

\end{notation}

\begin{defn}
\label{def:Padmissible}

Let $d\in\Pi_P$. We say that $d$ is a $P$-admissible degree if $\Sigma_{P,d}\geq 0$.

\end{defn}

\begin{prop}
\label{prop:padmissible}

\leavevmode

\begin{enumerate}

\item 
\label{item:badmissible}

All minimal degrees in $\Pi_B$ are $B$-admissible.

\item
\label{item:pcosmalladmissible}

Let $\alpha_1,\ldots,\alpha_k$ be $P$-cosmall roots such that $\Delta(\alpha_1),\ldots,\Delta(\alpha_k)$ are pairwise totally disjoint. Then $\sum_{i=1}^k d(\alpha_i)$ is a $P$-admissible degree.

\item
\label{item:typeAadmissible}

Assume that $R$ is of type $\mathsf{A}$. Then all minimal degrees in $\Pi_P$ are $P$-admissible.

\end{enumerate}

\end{prop}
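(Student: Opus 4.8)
The plan is to prove Proposition~\ref{prop:padmissible} by treating its three parts separately, in each case reducing the claim $\Sigma_{P,d}\geq 0$ to a statement already established in the earlier sections. The common reduction is the observation that
$$
\Sigma_{P,d}=\sum_{\gamma\in R_P^+}\left(\sum_{\alpha\in\mathcal{B}_{R,e}\setminus R_P^+}(\gamma,\alpha^\vee)\right)\,,
$$
so it suffices to control, for each fixed $\gamma\in R_P^+$, the inner sum $\Sigma(\gamma):=\sum_{\alpha\in\mathcal{B}_{R,e}\setminus R_P^+}(\gamma,\alpha^\vee)$, where $e$ is the lifting of $d$. In all three cases Fact~\ref{fact:lifting}\eqref{item:maxrep} guarantees that $z_e^B$ is the maximal representative in $z_e^BW_P$, so the machinery of Section~\ref{sec:positivity} is available.

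For part~\eqref{item:badmissible}, if $P=B$ then $R_P^+=\emptyset$, so $\Sigma_{B,d}$ is an empty sum and equals $0\geq 0$; this case is completely trivial. For part~\eqref{item:pcosmalladmissible}, let $d=\sum_{i=1}^k d(\alpha_i)$ with the $\alpha_i$ being $P$-cosmall roots with pairwise totally disjoint supports. By Theorem~\ref{thm:splitting2} we have $d\in\Pi_P$ and, writing $e$ for the lifting of $d$, $\mathcal{B}_{R,e}\setminus R_P^+=\{\alpha_1,\ldots,\alpha_k\}$. Thus
$$
\Sigma(\gamma)=\sum_{i=1}^k(\gamma,\alpha_i^\vee)\,.
$$
Now fix $\gamma\in R_P^+$. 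By Theorem~\ref{thm:pcosmall}, for each $i$ we have $(\alpha_i,\gamma')=0$ for all $\gamma'\in R_P^+\setminus I(s_{\alpha_i})$; hence if $\gamma\notin I(s_{\alpha_i})$ then $(\gamma,\alpha_i^\vee)=0$. If instead $\gamma\in I(s_{\alpha_i})$, then since $\alpha_i$ is $B$-cosmall (being $P$-cosmall it is in particular $B$-cosmall, or one invokes \cite[Theorem~6.1]{curvenbhd2}) and $\gamma\neq\alpha_i$ — because $\gamma\in R_P^+$ while $\alpha_i\in R^+\setminus R_P^+$ — we get $(\gamma,\alpha_i^\vee)=1>0$ by \cite[Theorem~6.1: $(a)\Leftrightarrow(c)$]{curvenbhd2}. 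In either case each summand is $\geq 0$, so $\Sigma(\gamma)\geq 0$ and summing over $\gamma$ gives $\Sigma_{P,d}\geq 0$.

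For part~\eqref{item:typeAadmissible}, assume $R$ is of type $\mathsf{A}$ and $d\in\Pi_P$ arbitrary. Let $e$ be the lifting of $d$; by Fact~\ref{fact:lifting}\eqref{item:maxrep}, $z_e^B$ is the maximal representative in $z_e^BW_P$, so Corollary~\ref{cor:typea} applies and yields, for every $\gamma\in R_P^+$,
$$
\Sigma(\gamma)=\sum_{\alpha\in\mathcal{B}_{R,e}\setminus R_P^+}(\gamma,\alpha^\vee)\in\{0,1\}\,,
$$
in particular $\Sigma(\gamma)\geq 0$. Summing over all $\gamma\in R_P^+$ gives $\Sigma_{P,d}\geq 0$, i.e. $d$ is $P$-admissible. (In fact this shows the stronger bound $\Sigma_{P,d}\leq\mathrm{card}(R_P^+)$.) I do not expect a genuine obstacle here: all three parts are short reductions to results already proved, the only mild subtlety being to keep straight, in part~\eqref{item:pcosmalladmissible}, that $\gamma\in R_P^+$ forces $\gamma\neq\alpha_i$ so that the $B$-cosmall dichotomy $(\gamma,\alpha_i^\vee)\in\{0,1\}$ rules out the possibility of a negative contribution — a point where one must be careful that "$\gamma\in I(s_{\alpha_i})$'' is really the relevant case to analyze and that no $(\gamma,\alpha_i^\vee)=-1$ can occur.
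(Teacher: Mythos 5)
Your proposal is correct, and parts \eqref{item:badmissible} and \eqref{item:typeAadmissible} coincide with the paper's own argument (the empty-sum observation for $P=B$, and the reduction via Fact~\ref{fact:lifting}\eqref{item:maxrep} to Corollary~\ref{cor:typea}, respectively). For part \eqref{item:pcosmalladmissible}, however, you take a genuinely different route. The paper, after the same reduction via Theorem~\ref{thm:splitting2}, passes to the case $k=1$ and proves the identity $\Sigma_{P,d}=(c_1(G/B),\alpha^\vee)-(c_1(X),d)$, which it then identifies with $\ell(s_\alpha)-\ell(s_\alpha W_P)\geq 0$ using the length formulas $\ell(s_\alpha)=(c_1(G/B),\alpha^\vee)-1$ and $\ell(s_\alpha W_P)=(c_1(X),d)-1$ for cosmall roots. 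You instead argue termwise: for each $\gamma\in R_P^+$ and each $\alpha_i$, either $\gamma\notin I(s_{\alpha_i})$ and Theorem~\ref{thm:pcosmall} forces $(\gamma,\alpha_i^\vee)=0$, or $\gamma\in I(s_{\alpha_i})$ and the $B$-cosmall dichotomy gives $(\gamma,\alpha_i^\vee)=1$. Both are sound; your version is more elementary, avoids the case reduction to $k=1$ entirely, and yields the sharper statement $\Sigma_{P,d}=\sum_i\mathrm{card}(I(s_{\alpha_i})\cap R_P^+)$, while the paper's version exhibits $\Sigma_{P,d}$ as a length defect $\ell(s_\alpha)-\ell(s_\alpha W_P)$, an interpretation it reuses in the surrounding Lemmas~\ref{lem:prep2}--\ref{lem:prep4}. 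Note also that the paper itself remarks (in a footnote to Theorem~\ref{thm:pcosmall}) that the two viewpoints are interchangeable, so your choice is fully consistent with the text's logical architecture.
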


\begin{proof}[Proof of Item~\eqref{item:badmissible}] 

This is immediately clear from the definition. For all $e\in\Pi_B$, we have $\Sigma_{B,e}=0$ since $R_B^+=\emptyset$.
\end{proof}

\begin{proof}[Proof of Item~\eqref{item:pcosmalladmissible}]

Let $\alpha_1,\ldots,\alpha_k$ be as in the statement. Let $d=\sum_{i=1}^k d(\alpha_i)$ for short. By Theorem~\ref{thm:splitting2}, we have $d\in\Pi_P$. To see that $d$ is a $P$-admissible degree, we have to show that $\Sigma_{P,d}\geq 0$. In order to prove that $\Sigma_{P,d}\geq 0$, it suffices to prove that $\sum_{\gamma\in R_P^+}(\gamma,\alpha_i^\vee)\geq 0$ for all $1\leq i\leq k$ (cf. Theorem~\ref{thm:splitting2}). In other words, Theorem~\ref{thm:splitting2} helps us to reduce the assertion $\Sigma_{P,d}\geq 0$ to the case $k=1$. We assume from now on that $k=1$ and we write $\alpha=\alpha_1$ and $d=d(\alpha)$. 

\begin{proof}[Claim: We have $\Sigma_{P,d}=(c_1(G/B),\alpha^\vee)-(c_1(X),d)$]\renewcommand{\qedsymbol}{$\triangle$}

By \cite[Equation~(3)]{curvenbhd2}, we have 
$$
(c_1(G/B),\alpha^\vee)=\sum_{\gamma\in R^+}(\gamma,\alpha^\vee)\text{ and }(c_1(X),d)=\sum_{\gamma\in R^+\setminus R_P^+}(\gamma,d)\,.
$$
As $\sum_{\gamma\in R^+\setminus R_P^+}\gamma\in H^2(X)=\mathbb{Z}\{\omega_\beta\mid\beta\in\Delta\setminus\Delta_P\}$ we also find that
$$
\sum_{\gamma\in R^+\setminus R_P^+}(\gamma,d)=\sum_{\gamma\in R^+\setminus R_P^+}(\gamma,\alpha^\vee)\,.
$$
The claim now follows by subtracting the two displayed expressions for $(c_1(G/B),\alpha^\vee)$ and $(c_1(X),d)$. 
\end{proof}

By assumption, $\alpha$ is $P$-cosmall and hence also $B$-cosmall. By \cite[Theorem~6.1: $(a)\Leftrightarrow(b)$]{curvenbhd2} we therefore have
$$
\ell(s_\alpha)=(c_1(G/B),\alpha^\vee)-1\text{ and }\ell(s_\alpha W_P)=(c_1(X),d)-1\,.
$$
In view of the previous claim, these equations give $\Sigma_{P,d}=\ell(s_\alpha)-\ell(s_\alpha W_P)$. But this quantity is by definition certainly nonnegative. Hence, we find $\Sigma_{P,d}\geq 0$ -- as required.
\end{proof}

\begin{proof}[Proof of Item~\eqref{item:typeAadmissible}]

By Fact~\ref{fact:lifting}\eqref{item:maxrep}, the claim follows from the stronger statement that for all  $e\in\Pi_B$ such that $z_e^B$ is the maximal representative in $z_e^BW_P$ the inequality
$$
\sum_{\substack{\alpha\in\mathcal{B}_{R,e}\setminus R_P^+\\\gamma\in R_P^+}}(\gamma,\alpha^\vee)\geq 0
$$
holds. In turn, this statement follows from the more stronger statement that for all $e\in\Pi_B$ such that $z_e^B$ is the maximal representative in $z_e^BW_P$ and for all $\gamma\in R_P^+$ the inequality
$$
\sum_{\alpha\in\mathcal{B}_{R,e}\setminus R_P^+}(\gamma,\alpha^\vee)\geq 0
$$
holds. But this is clear in view of the even more precise statement in Corollary~\ref{cor:typea}.
\end{proof}

\begin{comment}

\begin{ex}
\label{ex:notPadmissible}

We give three examples of degrees which are not $P$-admissible. Let $R$ be of type $\mathsf{D}_4$, $\mathsf{E}_6$ or type $\mathsf{G}_2$. Let $P$ be the parabolic subgroup of $G$ such that $\Delta_P$ is as in the second row of Table~\ref{table:example}. Let $d=d_X\in\Pi_P$. The lifting of $d$ is $e=d_{G/B}$ and a simple computation shows that we have $\Sigma_{P,d}=-2<0$ for those three examples. Hence, the degree $d$ is minimal but not $P$-admissible.

\end{ex}

\end{comment}

The following 
%lemmas (Lemma~\ref{lem:prep1}, \ref{lem:prep2}, \ref{lem:prep3}) 
statements until the end of this section
are meant to prepare the proof of the main result on quasi-homogeneity. They are less interesting on their own right.

\begin{lem}
\label{lem:prep1}

Let $d\in\Pi_P$. Let $e$ be the lifting of $d$. Then we have the following formula:
$$
\ell(z_d^P)=(c_1(G/B),e)-\mathrm{card}(\mathcal{B}_{R,e})-\mathrm{card}(R_P^+)\,.
$$

\end{lem}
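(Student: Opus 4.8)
The plan is to reduce the statement to Corollary~\ref{cor:gencascade:main2} by exploiting the relation between $z_d^P$ and its lifting $e$ recorded in Fact~\ref{fact:lifting}. First I would recall that $w_P$ is the longest element of $W_P$, so its inversion set is all of $R_P^+$ and hence $\ell(w_P)=\mathrm{card}(I(w_P))=\mathrm{card}(R_P^+)$ (this is the standard description of the inversion set of the longest element of a parabolic, cf. \cite[5.6]{humphreys3}).

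Next, by Fact~\ref{fact:lifting}\eqref{item:defoflifting} we have $z_d^Pw_P=z_e^B$, and since therefore $z_e^BW_P=z_d^PW_P$, Fact~\ref{fact:lifting}\eqref{item:maxrep} says that $z_e^B$ is the maximal representative of the coset $z_d^PW_P$, while $z_d^P$ is, by Definition~\ref{def:zdp}, the minimal representative of that same coset. The key step is then the length additivity $\ell(z_e^B)=\ell(z_d^P)+\ell(w_P)$, which holds because for a minimal coset representative $w$ one has $\ell(wv)=\ell(w)+\ell(v)$ for every $v\in W_P$; applying this to $w=z_d^P$ and $v=w_P$ gives exactly $\ell(z_d^Pw_P)=\ell(z_d^P)+\ell(w_P)$. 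I expect this length-additivity fact to be the only point requiring a careful citation; everything else is formal bookkeeping.

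Finally I would combine the pieces. From the previous two steps, $\ell(z_d^P)=\ell(z_e^B)-\mathrm{card}(R_P^+)$. Since $e\in\Pi_B$ (again Fact~\ref{fact:lifting}\eqref{item:defoflifting}), Corollary~\ref{cor:gencascade:main2} applies and yields $\ell(z_e^B)=(c_1(G/B),e)-\mathrm{card}(\mathcal{B}_{R,e})$. Substituting gives
$$
\ell(z_d^P)=(c_1(G/B),e)-\mathrm{card}(\mathcal{B}_{R,e})-\mathrm{card}(R_P^+)\,,
$$
which is precisely the asserted formula. No genuine obstacle is expected here: once Fact~\ref{fact:lifting} and Corollary~\ref{cor:gencascade:main2} are available, the proof is a short length computation.
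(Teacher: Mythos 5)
Your proposal is correct and follows essentially the same route as the paper's proof: both identify $z_d^P$ as the minimal and $z_e^B$ as the maximal representative of the common coset via Fact~\ref{fact:lifting}, deduce $\ell(z_d^P)=\ell(z_e^B)-\mathrm{card}(R_P^+)$, and conclude by Corollary~\ref{cor:gencascade:main2}. The only difference is that you spell out the length-additivity step $\ell(z_d^Pw_P)=\ell(z_d^P)+\ell(w_P)$ explicitly, which the paper leaves implicit.
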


\begin{proof}

Let $d$ and $e$ be as in the statement. By Fact~\ref{fact:lifting}\eqref{item:defoflifting}, \eqref{item:maxrep}, we know that $z_d^P$ is the minimal and that $z_e^B$ is the maximal representative in $z_d^PW_P=z_e^BW_P$. Hence, we have $\ell(z_d^P)=\ell(z_e^B)-\mathrm{card}(R_P^+)$. The result follows from this and Corollary~\ref{cor:gencascade:main2}.
\end{proof}

\begin{lem}
\label{lem:prep2}

Let $d\in\Pi_P$. Let $e$ be the lifting of $d$. Then we have the following equality:
$$
(c_1(G/B),e)-(c_1(X),d)=\Sigma_{P,d}+\mathrm{card}\coprod_{\alpha\in\mathcal{B}_{R,e}\cap R_P^+}I(s_\alpha)+\mathrm{card}(\mathcal{B}_{R,e}\cap R_P^+)\,.
$$

\end{lem}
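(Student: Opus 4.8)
The plan is to expand both sides as explicit sums over the root system. Write $c_1(G/B)=\sum_{\gamma\in R^+}\gamma$ and $c_1(X)=\sum_{\gamma\in R^+\setminus R_P^+}\gamma$, and recall that $e=\sum_{\alpha\in\mathcal{B}_{R,e}}\alpha^\vee$ since a greedy decomposition of $e\in\Pi_B$ has no repeated entries (\cite[Remark~8.4]{minimaldegrees}). The first point is that $c_1(X)$ is a $\mathbb{Z}$-combination of the fundamental weights $\omega_\beta$ with $\beta\in\Delta\setminus\Delta_P$, hence orthogonal to $\mathbb{Z}\Delta_P^\vee$; since $e$ is a lift of $d$ (Fact~\ref{fact:lifting}\eqref{item:ePequalsd} gives $e_P=d$), evaluating the Poincar\'e pairing on the representative $e$ yields $(c_1(X),d)=\sum_{\gamma\in R^+\setminus R_P^+}(\gamma,e)$. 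Subtracting this from $(c_1(G/B),e)=\sum_{\gamma\in R^+}(\gamma,e)$ — the same manipulation as in the proof of Proposition~\ref{prop:padmissible}\eqref{item:pcosmalladmissible}, applied to the lift $e$ instead of a single coroot — gives
$$
(c_1(G/B),e)-(c_1(X),d)=\sum_{\gamma\in R_P^+}(\gamma,e)=\sum_{\gamma\in R_P^+}\sum_{\alpha\in\mathcal{B}_{R,e}}(\gamma,\alpha^\vee)\,.
$$

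Next I would split the inner sum along $\mathcal{B}_{R,e}=(\mathcal{B}_{R,e}\setminus R_P^+)\amalg(\mathcal{B}_{R,e}\cap R_P^+)$. The contribution of the first part is $\Sigma_{P,d}$ by definition of $\Sigma_{P,d}$. The crux is to identify the contribution of the second part, and here the main step is the local identity
$$
\sum_{\gamma\in R_P^+}(\gamma,\alpha^\vee)=\mathrm{card}\,I(s_\alpha)+1\qquad\text{for every }\alpha\in\mathcal{B}_{R,e}\cap R_P^+\,.
$$
I would prove it as follows. Since $\alpha\in R_P^+$, the reflection $s_\alpha$ lies in $W_P$; as $I(w)\subseteq R_P^+$ for all $w\in W_P$ (\cite[5.5, Theorem~(b)]{humphreys3}), every $w\in W_P$ maps $R^+\setminus R_P^+$ bijectively onto itself. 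The $W$-invariance of $(-,-)$ then forces $\sum_{\gamma\in R^+\setminus R_P^+}(\gamma,\alpha^\vee)=-\sum_{\gamma\in R^+\setminus R_P^+}(s_\alpha\gamma,\alpha^\vee)=-\sum_{\gamma\in R^+\setminus R_P^+}(\gamma,\alpha^\vee)$, so this sum vanishes and $\sum_{\gamma\in R_P^+}(\gamma,\alpha^\vee)=\sum_{\gamma\in R^+}(\gamma,\alpha^\vee)=(c_1(G/B),\alpha^\vee)$. Finally, $\alpha$ is $B$-cosmall by Theorem~\ref{thm:gencascade}\eqref{item:gencascadebcosmall}, so \cite[Theorem~6.1: $(a)\Leftrightarrow(b)$]{curvenbhd2} gives $(c_1(G/B),\alpha^\vee)=\ell(s_\alpha)+1=\mathrm{card}\,I(s_\alpha)+1$, as wanted.

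To finish, I would sum the local identity over $\alpha\in\mathcal{B}_{R,e}\cap R_P^+$: the constants contribute $\mathrm{card}(\mathcal{B}_{R,e}\cap R_P^+)$, and $\sum_{\alpha\in\mathcal{B}_{R,e}\cap R_P^+}\mathrm{card}\,I(s_\alpha)=\mathrm{card}\coprod_{\alpha\in\mathcal{B}_{R,e}\cap R_P^+}I(s_\alpha)$ because the sets $I(s_\alpha)$ with $\alpha\in\mathcal{B}_{R,e}$ are pairwise disjoint by Theorem~\ref{thm:gencascade:main2} (alternatively, apply that theorem to $\sum_{\alpha\in\mathcal{B}_{R,e}\cap R_P^+}\alpha^\vee\in\Pi_B$, whose generalized cascade of orthogonal roots is exactly $\mathcal{B}_{R,e}\cap R_P^+$ by \cite[Proposition~3.10(7), 4.4(9)]{minimaldegrees}). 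Adding the two pieces gives the asserted formula. The only genuinely nontrivial step is the local identity; the rest is bookkeeping, the one thing to watch being that one should route the computation through $(c_1(G/B),\alpha^\vee)$ rather than trying to relate $\sum_{\gamma\in R_P^+}(\gamma,\alpha^\vee)$ directly to the length of $s_\alpha$ inside $W_P$.
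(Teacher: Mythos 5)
Your proposal is correct and follows essentially the same route as the paper: the paper also reduces the left-hand side to $\sum_{\gamma\in R_P^+}(\gamma,e)$, splits off $\Sigma_{P,d}$, and proves your local identity as the claim $\ell(s_\alpha)=\sum_{\gamma\in R_P^+}(\gamma,\alpha^\vee)-1$ for $B$-cosmall $\alpha\in R_P^+$ via the same $W$-invariance/sign argument, finishing with the disjointness from Theorem~\ref{thm:gencascade:main2}. No gaps.
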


\begin{proof}

By arguments very similar to those in the proof of the claim in Proposition~\ref{prop:padmissible}\eqref{item:pcosmalladmissible} and as $e_P=d$ by Fact~\ref{fact:lifting}\eqref{item:ePequalsd}, we have
$$
(c_1(G/B),e)-(c_1(X),d)=\sum_{\gamma\in R^+}(\gamma,e)-\sum_{\gamma\in R^+\setminus R_P^+}(\gamma,e)=\sum_{\gamma\in R_P^+}(\gamma,e)\,.
$$
The last sum can be split into two summands
$$
\Sigma_{P,d}+\sum_{\substack{\alpha\in\mathcal{B}_{R,e}\cap R_P^+\\\gamma\in R_P^+}}(\gamma,\alpha^\vee)
$$
since $e=\sum_{\alpha\in\mathcal{B}_{R,e}}\alpha^\vee$ by Fact~\ref{fact:lifting}\eqref{item:defoflifting} and \cite[Remark~8.4]{minimaldegrees}.

\begin{proof}[Claim: Let $\alpha\in R_P^+$ be a $B$-cosmall root. Then we have $\ell(s_\alpha)=\sum_{\gamma\in R_P^+}(\gamma,\alpha^\vee)-1$]\renewcommand{\qedsymbol}{$\triangle$}

Indeed, 

\noindent
since $\alpha$ is $B$-cosmall, we have $\ell(s_\alpha)=\sum_{\gamma\in R^+}(\gamma,\alpha^\vee)-1$ by \cite[Equation~(3), Theorem~6.1: $(a)\Leftrightarrow(b)$]{curvenbhd2}. In order to prove the claim, it therefore suffices to show that $\sum_{\gamma\in R^+\setminus R_P^+}(\gamma,\alpha^\vee)=0$. Since $\alpha\in R_P^+$, we have $I(s_\alpha)\subseteq R_P^+$ by \cite[5.5, Theorem~(b)]{humphreys3}. Thus, we have $s_\alpha(R_P)=R_P$ and $s_\alpha(R^+\setminus R_P^+)=R^+\setminus R_P^+$. Since $(-,-)$ is $W$-invariant, we know that $(\gamma,\alpha)=-(s_\alpha(\gamma),\alpha)$. All facts together yields that the sum $\sum_{\gamma\in R^+\setminus R_P^+}(\gamma,\alpha)$ is equal to its negative. Hence, we find the desired vanishing.\footnote{It is actually easy to work out a more conceptional proof of the claim than we gave by using local notions as introduced in \cite[Section~6]{minimaldegrees}.}
\end{proof}

By Theorem~\ref{thm:gencascade}\eqref{item:gencascadebcosmall}, all roots in $\mathcal{B}_{R,e}\cap R_P^+$ are $B$-cosmall. If we apply the claim to each of them, we find that
$$
\sum_{\substack{\alpha\in\mathcal{B}_{R,e}\cap R_P^+\\\gamma\in R_P^+}}(\gamma,\alpha^\vee)=\sum_{\alpha\in\mathcal{B}_{R,e}\cap R_P^+}\ell(s_\alpha)+\mathrm{card}(\mathcal{B}_{R,e}\cap R_P^+)\,.
$$
All in all, the desired equality now follows from the last three displayed equations and Theorem~\ref{thm:gencascade:main2}.
\end{proof}

\begin{cor}

Let $d$ be a $P$-admissible degree. Let $e$ be the lifting of $d$. Then we have the following inequality:
$$
(c_1(G/B),e)-(c_1(X),d)\geq 0\,.
$$

\end{cor}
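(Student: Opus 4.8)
The plan is to read off the desired inequality directly from the explicit formula for the difference $(c_1(G/B),e)-(c_1(X),d)$ established in Lemma~\ref{lem:prep2}. That lemma expresses this difference as a sum of three terms, namely
$$
(c_1(G/B),e)-(c_1(X),d)=\Sigma_{P,d}+\mathrm{card}\coprod_{\alpha\in\mathcal{B}_{R,e}\cap R_P^+}I(s_\alpha)+\mathrm{card}(\mathcal{B}_{R,e}\cap R_P^+)\,,
$$
valid for the lifting $e$ of any $d\in\Pi_P$. So the whole argument reduces to checking that each of the three summands on the right-hand side is nonnegative.

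First I would observe that the last two summands are manifestly nonnegative, since each is the cardinality of a (finite) set. Then I would invoke the hypothesis: by Definition~\ref{def:Padmissible}, a $P$-admissible degree $d$ is precisely one for which $\Sigma_{P,d}\geq 0$. Combining these three observations, the right-hand side of the displayed formula is a sum of three nonnegative numbers, hence nonnegative, which is exactly the assertion $(c_1(G/B),e)-(c_1(X),d)\geq 0$.

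There is essentially no obstacle here: all the real work has already been done in Lemma~\ref{lem:prep2} (whose proof uses the $W$-invariance of the scalar product, the combinatorial description of $c_1$, the cosmallness statement Theorem~\ref{thm:gencascade}\eqref{item:gencascadebcosmall}, and Theorem~\ref{thm:gencascade:main2}), and the only new ingredient is the definition of $P$-admissibility itself. The corollary is thus an immediate consequence, and I would present it as a two-line proof citing Lemma~\ref{lem:prep2} and Definition~\ref{def:Padmissible}.
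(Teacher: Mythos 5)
Your proposal is correct and matches the paper's own proof exactly: the paper likewise deduces the inequality by noting that Lemma~\ref{lem:prep2} writes the difference as a sum of three nonnegative terms, with the nonnegativity of $\Sigma_{P,d}$ being precisely the definition of $P$-admissibility.
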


\begin{proof}

This follows directly from the assumption on $d$ since Lemma~\ref{lem:prep2} expresses the difference in question as a sum of three nonnegative summands.
\end{proof}

\begin{rem}

Let $d\in\Pi_P$. Let $e$ be the lifting of $d$. In general, if $d$ is not $P$-admissible, it may happen that the difference $(c_1(G/B),e)-(c_1(X),d)$ is negative.
%We give three examples for this behavior.
The reader can find examples for this behavior in type $\mathsf{D}_4$ or type $\mathsf{E}_6$ for the degree $d=d_X\in\Pi_P$ and a suitable parabolic subgroup $P$.

\end{rem}

\begin{comment}

{\color{black}\begin{ex}

Let $R$ be of type $\mathsf{D}_4$, $\mathsf{E}_6$ or type $\mathsf{G}_2$. Let $P$ be the parabolic subgroup of $G$ such that $\Delta_P$ is as in the second row of Table~\ref{table:example}. Let $d=d_X\in\Pi_P$. The lifting of $d$ is $e=d_{G/B}$ and a simple computation shows that $\mathcal{B}_{R,e}\cap R_P^+=\emptyset$. By Example~\ref{ex:notPadmissible} and Lemma~\ref{lem:prep2}, we find that
$$
(c_1(G/B),e)-(c_1(X),d)=\Sigma_{P,d}=-2<0\,.
$$

\end{ex}}

\end{comment}

\begin{lem}
\label{lem:prep3}

Let $d\in\Pi_P$. Let $e$ be the lifting of $d$. Then we have the following equality:
$$
(c_1(X),d)-\ell(z_d^P)=-\Sigma_{P,d}+\mathrm{card}\coprod_{\alpha\in\mathcal{B}_{R,e}\setminus R_P^+}(I(s_\alpha)\cap R_P^+)+\mathrm{card}(\mathcal{B}_{R,e}\setminus R_P^+)\,.
$$

\end{lem}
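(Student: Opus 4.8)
The plan is to obtain the identity purely by bookkeeping from the two preceding lemmas and the inversion-set decomposition of Theorem~\ref{thm:gencascade:main2}. The only genuinely substantive input is that $z_e^B$ is the maximal representative in $z_e^BW_P$ (Fact~\ref{fact:lifting}\eqref{item:maxrep}), which forces $R_P^+\subseteq I(z_e^B)$.

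First I would record the relevant cardinality identities. Splitting $\mathcal{B}_{R,e}$ along membership in $R_P^+$ gives $\mathrm{card}(\mathcal{B}_{R,e})=\mathrm{card}(\mathcal{B}_{R,e}\cap R_P^+)+\mathrm{card}(\mathcal{B}_{R,e}\setminus R_P^+)$. For the inversion sets, since $R_P^+\subseteq I(z_e^B)$ by Fact~\ref{fact:lifting}\eqref{item:maxrep}, we have $R_P^+=I(z_e^B)\cap R_P^+$; intersecting the decomposition $I(z_e^B)=\coprod_{\alpha\in\mathcal{B}_{R,e}}I(s_\alpha)$ of Theorem~\ref{thm:gencascade:main2} with $R_P^+$, and using that $I(s_\alpha)\subseteq R_P^+$ for every $\alpha\in\mathcal{B}_{R,e}\cap R_P^+$ (by \cite[5.5, Theorem~(b)]{humphreys3}), yields
$$
R_P^+=\Bigl(\coprod_{\alpha\in\mathcal{B}_{R,e}\cap R_P^+}I(s_\alpha)\Bigr)\amalg\Bigl(\coprod_{\alpha\in\mathcal{B}_{R,e}\setminus R_P^+}\bigl(I(s_\alpha)\cap R_P^+\bigr)\Bigr)\,,
$$
hence $\mathrm{card}(R_P^+)=\mathrm{card}\coprod_{\alpha\in\mathcal{B}_{R,e}\cap R_P^+}I(s_\alpha)+\mathrm{card}\coprod_{\alpha\in\mathcal{B}_{R,e}\setminus R_P^+}(I(s_\alpha)\cap R_P^+)$.

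Then I would simply substitute. By Lemma~\ref{lem:prep1}, $(c_1(X),d)-\ell(z_d^P)=(c_1(X),d)-(c_1(G/B),e)+\mathrm{card}(\mathcal{B}_{R,e})+\mathrm{card}(R_P^+)$. Replacing $(c_1(X),d)-(c_1(G/B),e)$ using Lemma~\ref{lem:prep2} contributes $-\Sigma_{P,d}-\mathrm{card}\coprod_{\alpha\in\mathcal{B}_{R,e}\cap R_P^+}I(s_\alpha)-\mathrm{card}(\mathcal{B}_{R,e}\cap R_P^+)$, and then inserting the two cardinality identities from the previous step makes the terms $\mathrm{card}\coprod_{\alpha\in\mathcal{B}_{R,e}\cap R_P^+}I(s_\alpha)$ and $\mathrm{card}(\mathcal{B}_{R,e}\cap R_P^+)$ cancel, leaving exactly $-\Sigma_{P,d}+\mathrm{card}\coprod_{\alpha\in\mathcal{B}_{R,e}\setminus R_P^+}(I(s_\alpha)\cap R_P^+)+\mathrm{card}(\mathcal{B}_{R,e}\setminus R_P^+)$, as claimed. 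I do not expect a real obstacle here: the statement is an assembly of Lemmas~\ref{lem:prep1} and~\ref{lem:prep2}, and the only point needing care is the disjoint splitting of $R_P^+$ above, which rests on the maximality of $z_e^B$ in its coset and on the disjointness in Theorem~\ref{thm:gencascade:main2}.
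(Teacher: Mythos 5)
Your proposal is correct and follows essentially the same route as the paper: combine Lemmas~\ref{lem:prep1} and~\ref{lem:prep2}, then resolve $\mathrm{card}(R_P^+)-\mathrm{card}\coprod_{\alpha\in\mathcal{B}_{R,e}\cap R_P^+}I(s_\alpha)$ via the disjoint splitting of $R_P^+$ obtained from $R_P^+\subseteq I(z_e^B)$ (Fact~\ref{fact:lifting}\eqref{item:maxrep}) and the decomposition in Theorem~\ref{thm:gencascade:main2}. Nothing is missing.
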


\begin{proof}

By Lemma~\ref{lem:prep1}, \ref{lem:prep2} we have the identity
\begin{multline*}
(c_1(X),d)-\ell(z_d^P)=-\Sigma_{P,d}+\mathrm{card}(R_P^+)-\mathrm{card}\coprod_{\alpha\in\mathcal{B}_{R,e}\cap R_P^+}I(s_\alpha)+\\\mathrm{card}(\mathcal{B}_{R,e})-\mathrm{card}(\mathcal{B}_{R,e}\cap R_P^+)\,.
\end{multline*}
The last difference in this formula is clearly equal to $\mathrm{card}(\mathcal{B}_{R,e}\setminus R_P^+)$. As $I(s_\alpha)\subseteq R_P^+$ for all $\alpha\in\mathcal{B}_{R,e}\cap R_P^+$ by \cite[5.5, Theorem~(b)]{humphreys3}, it suffices to show that
\begin{equation}
\label{eq:asbefore}
R_P^+\setminus\coprod_{\alpha\in\mathcal{B}_{R,e}\cap R_P^+}I(s_\alpha)=\coprod_{\alpha\in\mathcal{B}_{R,e}\setminus R_P^+}(I(s_\alpha)\cap R_P^+)
\end{equation}
in order to prove the identity which is stated in Lemma~\ref{lem:prep3}. We argue very similar as in the proof of Theorem~\ref{thm:positivity2} to show Equation~\eqref{eq:asbefore}. By Theorem~\ref{thm:gencascade:main2} and Fact~\ref{fact:lifting}\eqref{item:maxrep}, we have
$$
R_P^+\subseteq I(z_e^B)=\coprod_{\alpha\in\mathcal{B}_{R,e}}I(s_\alpha)
$$
and thus
$$
R_P^+=\coprod_{\alpha\in\mathcal{B}_{R,e}\setminus R_P^+}(I(s_\alpha)\cap R_P^+)\amalg\coprod_{\alpha\in\mathcal{B}_{R,e}\cap R_P^+}I(s_\alpha)\,.
$$
The desired Equation~\eqref{eq:asbefore} now follows by reorganizing the previous displayed equation. This completes the proof of Lemma~\ref{lem:prep3}.
\end{proof}

\begin{lem}
\label{lem:prep4}

Let $d\in\Pi_P$. Let $e$ be the lifting of $d$. Then we have the following equality:
$$
(c_1(X),d)-\ell(z_d^P)=-\sum_{\alpha\in\mathcal{B}_{R,e}\setminus R_P^+}\sum_{\gamma\in R_P^+\setminus I(s_\alpha)}(\gamma,\alpha^\vee)+\mathrm{card}(\mathcal{B}_{R,e}\setminus R_P^+)\,.
$$

\end{lem}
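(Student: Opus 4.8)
The plan is to derive Lemma~\ref{lem:prep4} directly from Lemma~\ref{lem:prep3} by rewriting the term $\Sigma_{P,d}$. First I would recall that Lemma~\ref{lem:prep3} expresses $(c_1(X),d)-\ell(z_d^P)$ as $-\Sigma_{P,d}+\mathrm{card}\coprod_{\alpha\in\mathcal{B}_{R,e}\setminus R_P^+}(I(s_\alpha)\cap R_P^+)+\mathrm{card}(\mathcal{B}_{R,e}\setminus R_P^+)$, and then unfold the definition $\Sigma_{P,d}=\sum_{\alpha\in\mathcal{B}_{R,e}\setminus R_P^+}\sum_{\gamma\in R_P^+}(\gamma,\alpha^\vee)$. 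Splitting the inner sum over $R_P^+$ into the part over $R_P^+\setminus I(s_\alpha)$ and the part over $R_P^+\cap I(s_\alpha)$, the statement of Lemma~\ref{lem:prep4} reduces to the single identity $\sum_{\alpha\in\mathcal{B}_{R,e}\setminus R_P^+}\sum_{\gamma\in R_P^+\cap I(s_\alpha)}(\gamma,\alpha^\vee)=\mathrm{card}\coprod_{\alpha\in\mathcal{B}_{R,e}\setminus R_P^+}(I(s_\alpha)\cap R_P^+)$.

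To establish this identity I would fix $\alpha\in\mathcal{B}_{R,e}\setminus R_P^+$ and argue that $(\gamma,\alpha^\vee)=1$ for every $\gamma\in R_P^+\cap I(s_\alpha)$. The root $\alpha$ is $B$-cosmall by Theorem~\ref{thm:gencascade}\eqref{item:gencascadebcosmall}, and since $\gamma\in R_P^+$ while $\alpha\notin R_P^+$ we have $\gamma\neq\alpha$; hence \cite[Theorem~6.1: $(a)\Leftrightarrow(c)$]{curvenbhd2} — the same characterization of $B$-cosmall roots already used in the proof of Theorem~\ref{thm:positivity} — forces $(\gamma,\alpha^\vee)=1$. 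Thus $\sum_{\gamma\in R_P^+\cap I(s_\alpha)}(\gamma,\alpha^\vee)=\mathrm{card}(I(s_\alpha)\cap R_P^+)$ for each such $\alpha$, and summing over $\alpha$ while invoking the disjointness of the sets $I(s_\alpha)$ (Theorem~\ref{thm:gencascade:main2}) gives the required identity. Substituting back into Lemma~\ref{lem:prep3} makes the two occurrences of $\mathrm{card}\coprod_{\alpha}(I(s_\alpha)\cap R_P^+)$ cancel, leaving precisely the claimed formula.

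I do not expect a serious obstacle: the whole argument is elementary bookkeeping on top of the already proven Lemma~\ref{lem:prep3}, Theorem~\ref{thm:gencascade}\eqref{item:gencascadebcosmall} and Theorem~\ref{thm:gencascade:main2}. The only point requiring a little care is the step $(\gamma,\alpha^\vee)=1$: one must remember to exclude the case $\gamma=\alpha$ before applying the $B$-cosmall characterization, which is exactly why the hypothesis $\alpha\in\mathcal{B}_{R,e}\setminus R_P^+$ (rather than merely $\alpha\in\mathcal{B}_{R,e}$) is what matters here. As a sanity check, specializing to $P=B$ makes $R_P^+=\emptyset$ and reduces both sides to $\mathrm{card}(\mathcal{B}_{R,e})$ via Corollary~\ref{cor:gencascade:main2}, which is consistent.
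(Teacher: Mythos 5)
Your proposal is correct and follows essentially the same route as the paper: split $\Sigma_{P,d}$ over $R_P^+\setminus I(s_\alpha)$ and $R_P^+\cap I(s_\alpha)$, use $B$-cosmallness of $\alpha$ (via Theorem~\ref{thm:gencascade}\eqref{item:gencascadebcosmall} and \cite[Theorem~6.1]{curvenbhd2}, with $\gamma\neq\alpha$) to see every term of the second sum equals $1$, convert that sum to a cardinality using the disjointness from Theorem~\ref{thm:gencascade:main2}, and substitute into Lemma~\ref{lem:prep3}. No gaps.
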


\begin{proof}

We can split the sum $\Sigma_{P,d}$ into two summands as follows
$$
\Sigma_{P,d}=\sum_{\alpha\in\mathcal{B}_{R,e}\setminus R_P^+}\sum_{\gamma\in R_P^+\setminus I(s_\alpha)}(\gamma,\alpha^\vee)+\sum_{\alpha\in\mathcal{B}_{R,e}\setminus R_P^+}\sum_{\gamma\in I(s_\alpha)\cap R_P^+}(\gamma,\alpha^\vee)\,.
$$
Let $\alpha\in\mathcal{B}_{R,e}\setminus R_P^+$ and $\gamma\in I(s_\alpha)\cap R_P^+$. By Theorem~\ref{thm:gencascade}\eqref{item:gencascadebcosmall}, the root $\alpha$ is $B$-cosmall. Hence, we have $(\gamma,\alpha^\vee)=1$ since $\alpha\neq\gamma$ (cf. \cite[Theorem~6.1: $(a)\Rightarrow(c)$]{curvenbhd2}). This means that all terms in the second summand of the previous displayed equation are equal to one. In other words, we have
$$
\sum_{\alpha\in\mathcal{B}_{R,e}\setminus R_P^+}\sum_{\gamma\in I(s_\alpha)\cap R_P^+}(\gamma,\alpha^\vee)=\mathrm{card}\coprod_{\alpha\in\mathcal{B}_{R,e}\setminus R_P^+}(I(s_\alpha)\cap R_P^+)
$$
in view of Theorem~\ref{thm:gencascade:main2}. The lemma now follows from the last two displayed equations and Lemma~\ref{lem:prep3}.
\end{proof}

\section{The diagonal curve}
\label{sec:dia}

In this section, we introduce in full detail the diagonal curve associated to a minimal degree (cf. Definition~\ref{def:dia}). For this construction, we make use of generalized cascades of orthogonal roots and liftings of minimal degrees which we introduced in the sections before. We formulate an additional assumption on minimal degrees (Assumption~\ref{ass:main}). For minimal degrees satisfying this assumption, we prove in the next section that the morphism associated to the diagonal curve has a dense open orbit in the moduli space of stable maps. In the course of this proof, the set of tangent directions associated to a minimal degree plays a certain role. We introduce and investigate this set in this section (cf. Notation~\ref{not:tangentdirection}). It parametrizes certain tangent directions which result from the action of a suitable subgroup of $G$ on the tangent direction of the diagonal curve.

\begin{notation}

Let $X^T$ denote the fixed point set of the left $T$-action on $X$. The elements of $X^T$ are called $T$-fixed points. It is well known (cf. \cite[Lemma~1 and Lemma~2]{pand}) that we have a bijection
$
W/W_P\cong X^T
$. For any $w\in W$, we denote by $x(w)$ the image of $wW_P$ under this bijection, i.e. $x(w)$ is the $T$-fixed point given by the equation
$
x(w)=wP
$.

\end{notation}

\begin{notation}[{\cite[Lemma~4.2]{fulton}}]
\label{not:Calpha}

Let $\alpha\in R^+\setminus R_P^+$. We denote by $C_\alpha\subseteq X$ the unique irreducible $T$-invariant curve containing the points $x(1)$ and $x(s_\alpha)$. By \cite[Lemma~4.2]{fulton} such a unique curve exists. Moreover \cite[Lemma~4.2]{fulton} says that $C_\alpha$ is isomorphic to $\mathbb{P}^1$ and has degree $[C_\alpha]=d(\alpha)$. For an explicit construction of $C_\alpha$, see \cite[Section~3]{fulton}.

\end{notation}

\begin{defn}
\label{def:dia}

Let $d\in\Pi_P$. Let $e$ be the lifting of $d$. Then we define a morphism $f_{P,d}$ by the assignment 
$$
f_{P,d}\colon\mathbb{P}^1\hookrightarrow\prod_{\alpha\in\mathcal{B}_{R,e}\setminus R_P^+}C_\alpha\hookrightarrow X
$$
where the first morphism is the diagonal embedding of $\mathbb{P}^1$ into $\mathrm{card}(\mathcal{B}_{R,e}\setminus R_P^+)$ isomorphic copies of $\mathbb{P}^1$ and the second morphism is the embedding into $X$ which is well-defined due to Theorem~\ref{thm:gencascade}\eqref{item:stronglyorthogonal}. Again by Theorem~\ref{thm:gencascade}\eqref{item:stronglyorthogonal}, the definition of $f_{P,d}$ is independent of the ordering of the product $\prod_{\alpha\in\mathcal{B}_{R,e}\setminus R_P^+}$. Hence, the morphism $f_{P,d}$ is well-defined. We call the image $f_{P,d}(\mathbb{P}^1)$ the diagonal curve (associated to $d$).

\end{defn}

\begin{fact}
\label{fact:dia}

Let $d\in\Pi_P$. The morphism $f_{P,d}$ is a closed immersion of degree $(f_{P,d})_*[\mathbb{P}^1]=d$. Moreover, we have $f_{P,d}(0)=x(1)$ and $f_{P,d}(\infty)=x(z_d^P)$.

\end{fact}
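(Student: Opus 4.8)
The plan is to realise $f_{P,d}$ inside $G$ through the commuting copies of $\mathrm{SL}_2$ attached to the roots of $S:=\mathcal{B}_{R,e}\setminus R_P^+$, where $e$ denotes the lifting of $d$, and then to read off the three assertions. By Theorem~\ref{thm:gencascade}\eqref{item:stronglyorthogonal} the roots in $S$ are pairwise strongly orthogonal, hence the images $G_\alpha$ of the canonical homomorphisms $\theta_\alpha\colon\mathrm{SL}_2\to G$ (sending the upper unipotent subgroup onto $U_\alpha$ and the lower one onto $U_{-\alpha}$) pairwise commute and generate a subgroup $H\subseteq G$. Since $\alpha\in R^+\setminus R_P^+$, one has $G_\alpha\cap P=\theta_\alpha(B_2)$ for the upper-triangular Borel $B_2\subseteq\mathrm{SL}_2$, and the orbit map $g\mapsto\theta_\alpha(g)\,x(1)$ induces the isomorphism $\mathbb{P}^1=\mathrm{SL}_2/B_2\xrightarrow{\ \sim\ }C_\alpha$ of Notation~\ref{not:Calpha}, sending the base point $0$ to $x(1)$ and $\infty$ to $x(s_\alpha)$. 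First I would verify that the combined orbit map $(g_\alpha)_{\alpha\in S}\mapsto\big(\prod_{\alpha\in S}\theta_\alpha(g_\alpha)\big)x(1)$ on $\prod_{\alpha\in S}\mathrm{SL}_2$ has stabiliser of $x(1)$ equal to $\prod_{\alpha\in S}B_2$ — the inclusion $\supseteq$ is clear from commutativity, and equality is a dimension count, the orbit containing each $C_\alpha$ and the tangent lines $\mathfrak{g}_{-\alpha}$ of the $C_\alpha$ at $x(1)$ being linearly independent in $T_{x(1)}X$. Consequently the map factors through $\prod_{\alpha\in S}\mathbb{P}^1\cong\prod_{\alpha\in S}C_\alpha$ as exactly the embedding $\iota$ appearing in Definition~\ref{def:dia}, and $\iota$ identifies $\prod_{\alpha\in S}\mathbb{P}^1$ with the $H$-orbit of $x(1)$, which is closed since the source is complete; hence $\iota$ is a closed immersion (characteristic zero). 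As $f_{P,d}$ is the composite of the diagonal $\mathbb{P}^1\hookrightarrow\prod_{\alpha\in S}\mathbb{P}^1$, itself a closed immersion, with $\iota$, the first assertion follows.

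For the two marked values, the diagonal sends $0$ to $(x(1))_{\alpha\in S}$ and $\infty$ to $(x(s_\alpha))_{\alpha\in S}$. Applying $\iota$ yields $f_{P,d}(0)=\big(\prod_{\alpha\in S}\theta_\alpha(1)\big)x(1)=x(1)$ and $f_{P,d}(\infty)=\big(\prod_{\alpha\in S}s_\alpha\big)\cdot x(1)=x\big(\prod_{\alpha\in S}s_\alpha\big)$. To identify the latter with $x(z_d^P)$ I would use Theorem~\ref{thm:gencascade:main}: since the reflections $s_\alpha$, $\alpha\in\mathcal{B}_{R,e}$, pairwise commute, $z_e^B=\big(\prod_{\alpha\in\mathcal{B}_{R,e}\setminus R_P^+}s_\alpha\big)\big(\prod_{\gamma\in\mathcal{B}_{R,e}\cap R_P^+}s_\gamma\big)$ with the second factor lying in $W_P$, so $\big(\prod_{\alpha\in S}s_\alpha\big)W_P=z_e^BW_P=z_d^PW_P$ by Fact~\ref{fact:lifting}\eqref{item:defoflifting}, whence $f_{P,d}(\infty)=x(z_d^P)$.

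For the degree, write $(f_{P,d})_*[\mathbb{P}^1]=\iota_*\Delta_*[\mathbb{P}^1]$. Since every projection $\prod_{\alpha\in S}\mathbb{P}^1\to\mathbb{P}^1$ restricts to the identity on the diagonal, $\Delta_*[\mathbb{P}^1]$ is the sum of the classes of the $\mathrm{card}(S)$ factor lines, and $\iota$ carries the $\alpha$-th factor isomorphically onto $C_\alpha$, so $\iota_*$ of that class equals $[C_\alpha]=d(\alpha)$ by Notation~\ref{not:Calpha}; thus $(f_{P,d})_*[\mathbb{P}^1]=\sum_{\alpha\in S}d(\alpha)$. On the other hand a greedy decomposition of $e\in\Pi_B$ has no repeated entries (\cite[Remark~8.4]{minimaldegrees}), so $e=\sum_{\alpha\in\mathcal{B}_{R,e}}\alpha^\vee$, and its image $d=e_P$ (Fact~\ref{fact:lifting}\eqref{item:ePequalsd}) equals $\sum_{\alpha\in\mathcal{B}_{R,e}}d(\alpha)=\sum_{\alpha\in S}d(\alpha)$ because $d(\gamma)=0$ for $\gamma\in R_P^+$. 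Hence $(f_{P,d})_*[\mathbb{P}^1]=d$. I expect the only non-formal point to be the first step — the stabiliser/dimension bookkeeping in $\prod_{\alpha\in S}\mathrm{SL}_2$ that identifies $\iota$ with the embedding of Definition~\ref{def:dia} and shows it is a closed immersion; the remaining computations merely recombine already-established facts.
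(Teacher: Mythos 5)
Your proposal is correct and follows essentially the same route as the paper's proof: closed immersion as a composite of two closed immersions, degree via $(f_{P,d})_*[\mathbb{P}^1]=\sum_{\alpha}[C_\alpha]=\sum_\alpha d(\alpha)=e_P=d$, and $f_{P,d}(\infty)=x(z_d^P)$ via Theorem~\ref{thm:gencascade:main} and Fact~\ref{fact:lifting}. The only difference is that you supply the $\mathrm{SL}_2$-orbit construction to justify that the second map in Definition~\ref{def:dia} is a well-defined closed immersion, a point the paper asserts without detail.
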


\begin{proof}

The morphism $f_{P,d}$ is defined as the composition of two closed immersion. Hence, $f_{P,d}$ is itself a closed immersion. We now compute the degree of $f_{P,d}$. Let $e$ be the lifting of $d$. By definition of $f_{P,d}$ and the result \cite[Lemma~3.4]{fulton} recalled in Notation~\ref{not:Calpha}, we have
$$
(f_{P,d})_*[\mathbb{P}^1]=[f_{P,d}(\mathbb{P}^1)]=\sum_{\alpha\in\mathcal{B}_{R,e}\setminus R_P^+}[C_\alpha]=\sum_{\alpha\in\mathcal{B}_{R,e}\setminus R_P^+}d(\alpha)\,.
$$
As in the proof of Theorem~\ref{thm:splitting1}, by Fact~\ref{fact:lifting}\eqref{item:ePequalsd} and \cite[Remark~8.4]{minimaldegrees}, we also have
$$
\sum_{\alpha\in\mathcal{B}_{R,e}\setminus R_P^+}d(\alpha)=e_P=d\,.
$$
Altogether, this leads to the equation $(f_{P,d})_*[\mathbb{P}^1]=d$ -- as claimed. The equation $f_{P,d}(0)=x(1)$ is immediately clear from the definition of $f_{P,d}$. By Theorem~\ref{thm:gencascade:main} and Fact~\ref{fact:lifting}\eqref{item:defoflifting}, we have
$$
z_d^PW_P=z_e^BW_P=\prod_{\alpha\in\mathcal{B}_{R,e}}s_\alpha\cdot W_P=\prod_{\alpha\in\mathcal{B}_{R,e}\setminus R_P^+}s_\alpha\cdot W_P\,.
$$
This equation basically shows that $f_{P,d}(\infty)=x(z_d^P)$. (Note that $\infty$ is sent to $x(s_\alpha)$ under the isomorphism $\mathbb{P}^1\cong C_\alpha$ for all $\alpha\in R^+\setminus R_P^+$.)
\end{proof}

\begin{rem}

Let $d\in\Pi_P$. One may conjecture that the morphism $f_{P,d}$ has a dense open orbit in $\overline{M}_{0,3}(X,d)$ under the action of the automorphism group $\mathrm{Aut}(X)$. In this generality, we are not able to prove a quasi-homogeneity result, mainly because we do not have a type independent description of $\mathrm{Aut}(X)$.
%Nevertheless, we proposed a general framework in which other authors may extend the result. 
In this paper, we will prove the sharper and more restrictive result that the morphism $f_{P,d}$ has a dense open orbit in $\overline{M}_{0,3}(X,d)$ under the action of $G$ as long as $d$ satisfies Assumption~\ref{ass:main}. Note that all minimal degrees satisfy Assumption~\ref{ass:main} if $R$ is simply laced or if $P=B$. In this way, we obtain quasi-homogeneity of the moduli space $\overline{M}_{0,3}(X,d)$ for a large class of degrees $d$.

%$d$ is $P$-admissible (Theorem~\ref{thm:main}). Since all minimal degrees in $\Pi_B$ are $B$-admissible by Proposition~\ref{prop:padmissible}\eqref{item:badmissible}, the result is optimal for generalized complete flag varieties $G/B$.

\end{rem}

\begin{fact}
\label{fact:zinverse}

Let $d$ be a degree. Then we have $(z_d^P)^{-1}=w_Pz_d^Pw_P$.

\end{fact}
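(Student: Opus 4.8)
The plan is to rephrase the statement as a claim about involutions, to reduce it to the case of a minimal degree, and then to invoke the theory of liftings together with Remark~\ref{rem:tildezinvolution}. First, since $w_P$ is an involution one has $(z_d^Pw_P)^{-1}=w_P^{-1}(z_d^P)^{-1}=w_P(z_d^P)^{-1}$; multiplying the asserted identity $(z_d^P)^{-1}=w_Pz_d^Pw_P$ on the left by $w_P$ therefore shows that it is \emph{equivalent} to $(z_d^Pw_P)^{-1}=z_d^Pw_P$, i.e.\ to the assertion that $v:=z_d^Pw_P$ is an involution. Observe also that, since $z_d^P$ is the minimal representative of the coset $z_d^PW_P$, the element $v$ is exactly the longest element of $z_d^PW_P$, so $v$ depends only on this coset and not on $d$ itself.

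Next I would reduce to the case $d\in\Pi_P$. The set $S=\{d'\text{ a degree}\mid z_d^P\preceq z_{d'}^P\}$ contains $d$ and, as the poset of degrees satisfies the descending chain condition, $S$ contains a minimal element $d^*\le d$; by \cite[Definition~4.1, Proposition~4.4(10)]{minimaldegrees} this $d^*$ lies in $\Pi_P$. From $d^*\in S$ we get $z_d^P\preceq z_{d^*}^P$, while $d^*\le d$ gives $z_{d^*}^Pw_P\preceq z_d^Pw_P$ by the monotonicity of $d'\mapsto z_{d'}^P$ (cf.\ \cite[Corollary~4.12(b)]{curvenbhd2}), hence $z_{d^*}^P\preceq z_d^P$. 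Therefore $z_{d^*}^P=z_d^P$ and $v=z_{d^*}^Pw_P$, so it suffices to prove that $z_{d^*}^Pw_P$ is an involution.

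Finally, let $e\in\Pi_B$ be the lifting of $d^*$. By Fact~\ref{fact:lifting}\eqref{item:defoflifting} we have $z_{d^*}^Pw_P=z_e^B$; and since $w_B=1$, the defining equations of $z_e^B$ and $\tilde{z}_e^B$ coincide, so $z_e^B=\tilde{z}_e^B$, which is an involution by Remark~\ref{rem:tildezinvolution}. Hence $v=z_e^B$ is an involution, which by the first paragraph is the desired identity.

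Conceptually, the point is that the double coset $W_Pz_d^PW_P$ contains the involution $\tilde{z}_d^P$ (since $z_d^PW_P=\tilde{z}_d^PW_P$), hence equals its own inverse, so its unique longest element is an involution; the lifting serves exactly to identify that longest element with $z_d^Pw_P$ (equivalently, it guarantees $R_P^+\subseteq I(z_e^B)$, cf.\ Fact~\ref{fact:lifting}\eqref{item:maxrep}). The delicate point, and the main obstacle, is precisely this identification: for an arbitrary involution $x$ the longest element of $xW_P$ need \emph{not} be an involution, which is why the detour through a minimal degree -- where the lifting machinery forces the coset's longest element to be $\tilde{z}_e^B$ -- cannot be dispensed with.
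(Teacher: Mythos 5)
Your proof is correct, and it shares the paper's key idea -- rewrite the claim as ``$z_d^Pw_P$ is an involution,'' realize $z_d^Pw_P$ as $z_e^B$ for some degree $e\in H_2(G/B)$, and invoke \cite[Corollary~4.9]{curvenbhd2} (via Remark~\ref{rem:tildezinvolution} in your case) for the involution property. Where you diverge is in how $e$ is produced: the paper simply takes a ``sufficiently large'' degree $e$ with $e_P=d$ and $z_d^Pw_P=z_e^B$, which exists for an \emph{arbitrary} degree $d$ by \cite[Corollary~4.12(d)]{curvenbhd2}, and is done in three lines. You instead first replace $d$ by a minimal degree $d^*\in\Pi_P$ with $z_{d^*}^P=z_d^P$ and then run the lifting machinery of Section~\ref{sec:lifting}. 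This is legitimate (there is no circularity: Fact~\ref{fact:lifting} precedes this fact and does not depend on it), and your closing remark correctly identifies why the naive argument via $\tilde z_d^P$ alone fails; but the detour costs you an extra ingredient, namely \cite[Corollary~3]{postnikov}, which underlies the existence and uniqueness of the lifting and is much heavier than what the statement needs. One small step you should make explicit: passing from $z_{d^*}^Pw_P\preceq z_d^Pw_P$ to $z_{d^*}^P\preceq z_d^P$ uses the standard compatibility of the Bruhat order with passage to minimal coset representatives; it is true, but it is not literally what \cite[Corollary~4.12(b)]{curvenbhd2} asserts, so a word of justification (or a direct appeal to monotonicity of $d'\mapsto z_{d'}^P$ itself) would tighten the argument.
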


\begin{proof}

Let $e\in H_2(G/B)$ be a sufficiently large degree such that $e_P=d$ and such that $z_d^Pw_P=z_e^B$. We can choose such a degree $e\in H_2(G/B)$ by \cite[Corollary~4.12(d)]{curvenbhd2}. By \cite[Corollary~4.9]{curvenbhd2}, we know that $z_e^B$ is an involution. Hence, we have $(z_d^Pw_P)^{-1}=z_d^Pw_P$ or equivalent $(z_d^P)^{-1}=w_Pz_d^Pw_P$ -- as claimed.
\end{proof}

\begin{notation}

For a root $\alpha\in R$, we denote by $U_\alpha$ the associated root group as defined in \cite[26.3, Theorem~(a)]{humphreys}. 

\end{notation}

\begin{notation}
\label{not:R(P)}

To simplify notation, we write $R(P)=R^+\cup R_P$ for short. The set of roots $R(P)$ is precisely the set of roots $\alpha\in R$ such that $U_\alpha\subseteq P$.

\end{notation}

\begin{notation}

For a Weyl group element $z\in W$, we denote by $P^z$ the conjugate of $P$, i.e. $P^z=zPz^{-1}$.

\end{notation}

\begin{lem}
\label{lem:act}

Let $d$ be a degree. Let $z=z_d^P$ for short. Then we have $U_{-\gamma}\subseteq P\cap P^z$ for all $\gamma\in R_P^+$.

\end{lem}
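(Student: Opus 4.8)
The plan is to reduce the group-theoretic assertion to a purely combinatorial condition on roots and then verify that condition using Fact~\ref{fact:zinverse}. The inclusion $U_{-\gamma}\subseteq P$ is immediate: for $\gamma\in R_P^+$ we have $-\gamma\in R_P\subseteq R(P)$, so $U_{-\gamma}\subseteq P$ by Notation~\ref{not:R(P)}. For the inclusion $U_{-\gamma}\subseteq P^z$, I would fix a representative $\dot z\in N_G(T)$ of $z=z_d^P$ and use the standard relation $\dot z^{-1}U_\alpha\dot z=U_{z^{-1}(\alpha)}$ together with the characterization of $R(P)$ in Notation~\ref{not:R(P)}:
$$
U_{-\gamma}\subseteq P^z=\dot zP\dot z^{-1}\iff \dot z^{-1}U_{-\gamma}\dot z\subseteq P\iff z^{-1}(-\gamma)\in R(P)=R^+\cup R_P\,.
$$
Hence everything comes down to the single combinatorial inclusion $z^{-1}(R_P^-)\subseteq R^+\cup R_P$.

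To establish this, I would compute $z^{-1}$ on $R_P^-$ via Fact~\ref{fact:zinverse}, which gives $z^{-1}=w_Pz w_P$. Since $w_P$ is the longest element of $W_P$, it maps $R_P^-$ onto $R_P^+$, so $z^{-1}(R_P^-)=w_P\bigl(z(R_P^+)\bigr)$. Next, because $z=z_d^P$ is the minimal representative in $z_d^PW_P$ (Definition~\ref{def:zdp}), we have $I(z)\cap R_P^+=\emptyset$, i.e. $z(R_P^+)\subseteq R^+$. Finally, an element of $W_P$ changes a root only in its $\mathbb{Z}\Delta_P$-component, so $w_P$ preserves $R^+\setminus R_P^+$ and sends $R_P^+$ to $R_P^-$; thus $w_P(R^+)=(R^+\setminus R_P^+)\amalg R_P^-$. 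Chaining these three facts,
$$
z^{-1}(R_P^-)=w_P\bigl(z(R_P^+)\bigr)\subseteq w_P(R^+)=(R^+\setminus R_P^+)\amalg R_P^-\subseteq R^+\cup R_P=R(P)\,,
$$
which is exactly what is needed; together with $U_{-\gamma}\subseteq P$ this yields $U_{-\gamma}\subseteq P\cap P^z$ for every $\gamma\in R_P^+$.

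The argument is short because all the substantial content is packaged in Fact~\ref{fact:zinverse} and in the minimality of $z_d^P$; no properties of the greedy decomposition of $d$ are needed. The only point that deserves a word of justification rather than a citation is that $W_P$ stabilizes $R^+\setminus R_P^+$: since multiplying by $s_\beta$ with $\beta\in\Delta_P$ alters only the coefficient of $\beta$ in the expansion of a root on the simple roots, the coefficients on $\Delta\setminus\Delta_P$ are $W_P$-invariant, and a root with a positive such coefficient is necessarily positive and outside $R_P$. I do not anticipate a genuine obstacle; the main care required is simply keeping the signs straight when passing through $w_P$ twice.
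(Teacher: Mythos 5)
Your proposal is correct and follows essentially the same route as the paper: both reduce the statement to the combinatorial inclusion $z^{-1}(R_P^-)\subseteq R(P)$ via conjugation of root groups, invoke Fact~\ref{fact:zinverse} to write $z^{-1}=w_Pzw_P$, and then use $w_P(R_P^\pm)=R_P^\mp$ together with $I(z)\cap R_P^+=\emptyset$ (minimality of $z_d^P$ in its coset). The only cosmetic difference is that the paper peels off the outer $w_P$ first using $w_P(R(P))=R(P)$, whereas you apply it last via $w_P(R^+)=(R^+\setminus R_P^+)\amalg R_P^-$; these are the same computation.
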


\begin{proof}

%If we replace $z$ with the minimal representative in $zW_P$, the group $P\cap P^z$ does not change. Hence, we may assume that $z$ is the minimal representative in $zW_P$. By definition, we have $U_\gamma\subseteq P$ for all $\gamma\in R_P^+$. Therefore, it suffices to show that $U_\gamma\subseteq P^z$ for all $\gamma\in R_P^+$ which is equivalent to $z^{-1}(\gamma)\in R^+\cup R_P$

By Notation~\ref{not:R(P)}, we can describe the set of roots $\alpha\in R$ such that $U_\alpha\subseteq P\cap P^z$ as $\{\gamma\in R(P)\mid U_\gamma\subseteq P^z\}$. By \cite[26.3, Theorem~(b)]{humphreys} and Notation~\ref{not:R(P)}, this set equals $\{\gamma\in R(P)\mid z^{-1}(\gamma)\in R(P)\}$. Since $w_P\in W_P$, it is clear that $w_P(R(P))=R(P)$. Fact~\ref{fact:zinverse} therefore yields that the aforementioned set is equal to $\{\gamma\in R(P)\mid zw_P(\gamma)\in R(P)\}$. 

We have to show that this set contains $R_P^-$. Since $R_P^-\subseteq R(P)$, it clearly suffices to show that $zw_P(R_P^-)\subseteq R(P)$. Since $w_P$ is the longest element of the Weyl group $W_P$, we know that $I(w_P)=R_P^+$. But this means that $w_P(R_P^+)=R_P^-$. Since $w_P$ is an involution, the equation $w_P(R_P^+)=R_P^-$ is equivalent to $w_P(R_P^-)=R_P^+$. Therefore, the claim $zw_P(R_P^-)\subseteq R(P)$ is equivalent to the statement $z(R_P^+)\subseteq R(P)$.

We now show the latter statement. By definition, the element $z$ is the minimal representative in $zW_P$, so that we have $I(z)\cap R_P^+=\emptyset$. In other words, we have $z(R_P^+)\subseteq R^+\subseteq R(P)$ which suffices to prove the lemma.
%
%We have to show that this set contains $R_P^+$. Since $R_P^+\subseteq R(P)$, it clearly suffices to shows that $z(\gamma)\in R(P)$ for all $\gamma\in R_P^+$. But by definition, the element $z$ is the minimal representative in $zW_P$, so that we have $I(z)\cap R_P^+=\emptyset$. In other words, we have $z(\gamma)\in R^+\subseteq R(P)$ for all $\gamma\in R_P^+$ which suffices to prove the lemma.
\end{proof}

\begin{notation}
\label{not:tangentdirection}

Let $d\in\Pi_P$. Let $e$ be the lifting of $d$. Then we denote by $\mathrm{TD}_{P,d}$ the set of roots defined by
$$
\mathrm{TD}_{P,d}=\{-\alpha-\gamma\in R^-\setminus R_P^-\mid\alpha\in\mathcal{B}_{R,e}\setminus R_P^+,\gamma\in R_P^+\cup\{0\}\}\,.
$$
We call this set of roots the set of tangent directions (associated to $d$).

\end{notation}

\begin{thm}
\label{thm:disjointness_new}

Let $d\in\Pi_P$. Let $e$ be the lifting of $d$. Then we have an injective map
$$
\{(\alpha,\gamma)\in(\mathcal{B}_{R,e}\setminus R_P^+)\times R_P^+\mid(\gamma,\alpha^\vee)=-1\}\hookrightarrow\mathrm{TD}_{P,d}\setminus(-(\mathcal{B}_{R,e}\setminus R_P^+))
$$
defined by the assignment $(\alpha,\gamma)\mapsto-\alpha-\gamma$. Here we denote by $-(\mathcal{B}_{R,e}\setminus R_P^+)$ the set of roots consisting of all $-\alpha$ where $\alpha\in\mathcal{B}_{R,e}\setminus R_P^+$.

\end{thm}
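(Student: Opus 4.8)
The plan is to verify two things: first that the assignment $(\alpha,\gamma)\mapsto-\alpha-\gamma$ actually lands in $\mathrm{TD}_{P,d}\setminus(-(\mathcal{B}_{R,e}\setminus R_P^+))$, and second that it is injective.

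For the \emph{well-definedness}, fix a pair $(\alpha,\gamma)$ with $\alpha\in\mathcal{B}_{R,e}\setminus R_P^+$, $\gamma\in R_P^+$ and $(\gamma,\alpha^\vee)=-1$. First I would check that $-\alpha-\gamma$ is genuinely a root: since $\alpha$ is $B$-cosmall by Theorem~\ref{thm:gencascade}\eqref{item:gencascadebcosmall}, the value $(\gamma,\alpha^\vee)=-1$ forces $s_\alpha(\gamma)=\gamma-(\gamma,\alpha^\vee)\alpha=\gamma+\alpha$ to be a root (this is the standard fact that if $(\gamma,\alpha^\vee)<0$ then $\gamma+\alpha\in R$); hence $\alpha+\gamma\in R$ and $-\alpha-\gamma\in R^-$. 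It is not in $R_P^-$ because $\alpha\notin R_P^+$ while $\gamma\in R_P^+$, so the coefficient of any simple root in $\Delta\setminus\Delta_P$ occurring in $\alpha$ survives in $\alpha+\gamma$. Taking the same $\alpha$ and $\gamma$ in Notation~\ref{not:tangentdirection} shows $-\alpha-\gamma\in\mathrm{TD}_{P,d}$. Finally $-\alpha-\gamma\notin-(\mathcal{B}_{R,e}\setminus R_P^+)$: if we had $\alpha+\gamma=\alpha'$ for some $\alpha'\in\mathcal{B}_{R,e}\setminus R_P^+$, then $\alpha'-\alpha=\gamma\in R$, contradicting that the distinct elements $\alpha,\alpha'$ of $\mathcal{B}_{R,e}$ are strongly orthogonal by Theorem~\ref{thm:gencascade}\eqref{item:stronglyorthogonal} (and if $\alpha'=\alpha$ then $\gamma=0$, excluded). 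This handles well-definedness.

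For \emph{injectivity}, suppose $(\alpha_1,\gamma_1)$ and $(\alpha_2,\gamma_2)$ both map to the same root $\beta=-\alpha_1-\gamma_1=-\alpha_2-\gamma_2$, so $\alpha_1+\gamma_1=\alpha_2+\gamma_2$, i.e. $\alpha_1-\alpha_2=\gamma_2-\gamma_1$. If $\alpha_1\neq\alpha_2$, the right-hand side lies in $\mathbb{Z}\Delta_P$ while the left-hand side has a nonzero coefficient on some simple root outside $\Delta_P$ (since each $\alpha_i\in R^+\setminus R_P^+$), forcing $\alpha_1=\alpha_2$ — unless the outside coefficients cancel, which cannot happen because by Theorem~\ref{thm:gencascade}\eqref{item:stronglyorthogonal} the supports of $\alpha_1$ and $\alpha_2$ either coincide on the $\Delta\setminus\Delta_P$ part in a compatible way or are strongly orthogonal; in fact the cleanest argument is that $\alpha_1-\alpha_2\in\mathbb{Z}\Delta_P$ together with $\alpha_1,\alpha_2$ being $B$-cosmall and strongly orthogonal forces $\alpha_1=\alpha_2$ (one can pair with a fundamental weight $\omega_\beta$, $\beta\in\Delta\setminus\Delta_P$: $(\omega_\beta,\alpha_1^\vee)$ need not be comparable directly, so instead note $\Delta(\alpha_1)\setminus\Delta_P$ and $\Delta(\alpha_2)\setminus\Delta_P$ are disjoint by strong orthogonality unless equal as roots). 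Once $\alpha_1=\alpha_2$, it follows immediately that $\gamma_1=\gamma_2$, giving injectivity.

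The step I expect to be the main obstacle is the cancellation argument in injectivity (equivalently, ruling out $\alpha_1\neq\alpha_2$ with $\alpha_1-\alpha_2\in\mathbb{Z}\Delta_P$): one must combine strong orthogonality of distinct cascade elements (Theorem~\ref{thm:gencascade}\eqref{item:stronglyorthogonal}) with the fact that the elements of $\mathcal{B}_{R,e}\setminus R_P^+$ are the maximal roots / greedy-decomposition entries whose supports interact with $\Delta\setminus\Delta_P$ in a controlled way. I would argue: strongly orthogonal roots $\alpha_1,\alpha_2$ satisfy $(\alpha_1,\alpha_2)=0$, and if $\alpha_1-\alpha_2=\delta\in\mathbb{Z}\Delta_P$ then $(\alpha_1-\alpha_2,\alpha_1-\alpha_2)=(\alpha_1,\alpha_1)+(\alpha_2,\alpha_2)>0$, so $\delta\neq 0$ is a nonzero element of $\mathbb{Z}\Delta_P$; but also $(\delta,\alpha_1)=(\alpha_1,\alpha_1)>0$, contradicting Theorem~\ref{thm:positivity} or the orthogonality relations of Theorem~\ref{thm:gencascade:main2}/\ref{lem:positivity} which bound how a $P$-root can pair with cascade elements outside $R_P^+$ — here one uses that $\delta\in\mathbb{Z}\Delta_P$ decomposes into $R_P^+$ roots and each contributes a controlled sign. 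This is the part that needs the machinery of Sections~\ref{sec:gencascade} and~\ref{sec:positivity}; everything else is bookkeeping with supports.
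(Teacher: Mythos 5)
Your well-definedness argument is correct and essentially matches the paper's (your use of strong orthogonality to exclude $\alpha+\gamma=\alpha'$ is a slight shortcut over the paper's pairing computation, but both work). The injectivity half, however, has a genuine gap. Reducing to $\alpha_1-\alpha_2=\gamma_2-\gamma_1\in\mathbb{Z}\Delta_P$ is the right setup, and your computation $(\alpha_1-\alpha_2,\alpha_1)=(\alpha_1,\alpha_1)$ correctly yields $(\gamma_2,\alpha_1^\vee)=1$ (and symmetrically $(\gamma_1,\alpha_2^\vee)=1$). But none of the results you invoke turns this into a contradiction. Lemma~\ref{lem:positivity} only forbids \emph{two} elements of $\mathcal{B}_{R,e}\setminus R_P^+$ pairing positively with a fixed $\gamma$, and here $\gamma_2$ pairs positively only with $\alpha_1$ (it pairs negatively with $\alpha_2$), so there is no violation. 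Theorem~\ref{thm:positivity} requires $\gamma\in I(z_{\hat e}^B)$ with $\hat e$ built from $\mathcal{B}_{R,e}\cap R_P^+$, which you have not established for $\gamma_1$ or $\gamma_2$ (and indeed it fails, since $(\gamma_1,\alpha_1)<0$). The disjointness in Theorem~\ref{thm:gencascade:main2} is also not contradicted: one checks $\gamma_2\in I(s_{\alpha_1})$ but $\gamma_2\notin I(s_{\alpha_2})$. Your side remark that strong orthogonality forces disjoint supports "unless equal" is also false — e.g.\ the highest root and a lower cascade element are strongly orthogonal with nested supports.

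The missing idea is the one the paper actually uses: from $(\gamma_2,\alpha_1^\vee)=(\gamma_1,\alpha_2^\vee)=1$ one sees that $\rho=\alpha_1-\gamma_2=\alpha_2-\gamma_1$ is a root in $R^+\setminus R_P^+$, hence lies in $R(\alpha_1)\cap R(\alpha_2)$. By Theorem~\ref{thm:gencascade}\eqref{item:totallydisjoint2} and \eqref{item:totallyordered} this forces $\alpha_1$ and $\alpha_2$ to be comparable, say $\alpha_1>\alpha_2$. Then the orthogonality relations of \cite[Theorem~8.1]{minimaldegrees}, applied to the connected minimal degree $\hat e=\sum_{\mu\in\mathcal{B}_{R,e},\,\mu\leq\alpha_1}\mu^\vee$ (which has $\alpha(\hat e)=\alpha_1$), give $\Delta(\rho)\subseteq\Delta(\alpha_2)\subseteq\Delta_{\alpha_1}^\circ$, so $(\rho,\alpha_1^\vee)=0$ — contradicting $(\rho,\alpha_1^\vee)=2-(\gamma_2,\alpha_1^\vee)=1$. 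Without this comparability step and the appeal to the orthogonality relations for sub-degrees, the case $\alpha_1\neq\alpha_2$ is not excluded, so as written your proof of injectivity does not go through.
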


\begin{proof}

First, we prove that the map defined by the assignment as in the statement is well-defined. Let $\alpha\in\mathcal{B}_{R,e}\setminus R_P^+$ and $\gamma\in R_P^+$ such that $(\gamma,\alpha^\vee)=-1$. Then, we have $s_\alpha(\gamma)=\alpha+\gamma$. Hence, $\alpha+\gamma\in R^+\setminus R_P^+$ and $-\alpha-\gamma\in R^-\setminus R_P^-$. This proves that $-\alpha-\gamma\in\mathrm{TD}_{P,d}$.

Suppose for a contradiction that $\alpha+\gamma=\alpha'$ for some $\alpha'\in\mathcal{B}_{R,e}\setminus R_P^+$. We clearly have $\alpha\neq\alpha'$ and that $\alpha$ and $\alpha'$ are orthogonal (cf. Theorem~\ref{thm:gencascade}\eqref{item:stronglyorthogonal}). If we now apply $(-,\alpha^\vee)$ to the equation $\alpha+\gamma=\alpha'$, we find that $0=(\alpha',\alpha^\vee)=(\alpha,\alpha^\vee)+(\gamma,\alpha^\vee)=2-1=1$ -- a contradiction. This proves that $-\alpha-\gamma\in\mathrm{TD}_{P,d}\setminus(-(\mathcal{B}_{R,e}\setminus R_P^+))$. This completes the proof of the well-definedness of the map.

We now prove injectivity of the map. To this end, suppose that $\alpha+\gamma=\alpha'+\gamma'$ where $\alpha,\alpha'\in\mathcal{B}_{R,e}\setminus R_P^+$ and $\gamma,\gamma'\in R_P^+$ such that $(\gamma,\alpha^\vee)=(\gamma',\alpha'^\vee)=-1$. Suppose for a contradiction that $\alpha\neq\alpha'$.

\begin{proof}[Claim: We have $(\gamma',\alpha^\vee)=(\gamma,\alpha'^\vee)=1$]\renewcommand{\qedsymbol}{$\triangle$}

\begin{comment}

Indeed, by what we saw in the first paragraph of the proof of the theorem, we can rewrite the equation $\alpha+\gamma=\alpha'+\gamma'$ as $s_\alpha(\gamma)=s_{\alpha'}(\gamma')$. The very last equation can also be written as
$
\gamma=s_\alpha s_{\alpha'}(\gamma')
$. 
Since $\alpha\neq\alpha'$ by assumption, the roots $\alpha$ and $\alpha'$ are orthogonal (cf. Theorem~\ref{thm:gencascade}\eqref{item:stronglyorthogonal}). Hence, the last equation can be expanded as
$$
\gamma=\gamma'+\alpha'-(\gamma',\alpha^\vee)\alpha\,.
$$
On the other hand, we know that $\gamma=\gamma'+\alpha'-\alpha$

\end{comment}

Since $\alpha\neq\alpha'$ by assumption, the roots $\alpha$ and $\alpha'$ are orthogonal (cf. Theorem~\ref{thm:gencascade}\eqref{item:stronglyorthogonal}). In view of this fact, if we apply $(-,\alpha^\vee)$ to the equation $\alpha+\gamma=\alpha'+\gamma'$, we find that $(\gamma',\alpha^\vee)=1$. Similarly, if we apply $(-,\alpha'^\vee)$, we find $(\gamma,\alpha'^\vee)=1$. This completes the proof of the claim.
\end{proof}

By the previous claim, $s_{\alpha}(\gamma')=\gamma'-\alpha$ and $s_{\alpha'}(\gamma)=\gamma-\alpha'$ are roots. Hence, $\rho=\alpha-\gamma'=\alpha'-\gamma$ is also a root which lies in $R^+\setminus R_P^+$. The root $\rho$ is obviously an element of $R(\alpha)\cap R(\alpha')$. In particular, we have $R(\alpha)\cap R(\alpha')\neq\emptyset$. By Theorem~\ref{thm:gencascade}\eqref{item:totallydisjoint2}, there exists a root $\varphi\in R^+$ such that $\alpha,\alpha'\in C_{R,e}(\varphi)$. In particular, this means that $\alpha$ and $\alpha'$ are comparable (cf. Theorem~\ref{thm:gencascade}\eqref{item:totallyordered}). Without loss of generality, we may assume that $\alpha\geq\alpha'$. In view of the assumption $\alpha\neq\alpha'$, this means that we even have $\alpha>\alpha'$.

\begin{proof}[Claim: We have $(\alpha,\rho)=0$]\renewcommand{\qedsymbol}{$\triangle$}

The arguments in the proof of this claim are very similar to those in the proof of Theorem~\ref{thm:positivity}. For the convenience of the reader, we spell them out once more. 

Let $\hat{e}=\sum_{\mu\in\mathcal{B}_{R,e}\colon\mu\leq\alpha}\mu^\vee$. By \cite[Proposition~4.4(9)]{minimaldegrees}, we have $\hat{e}\in\Pi_B$. By \cite[Proposition~3.10(7)]{minimaldegrees}, it follows that $\Delta(\hat{e})=\Delta(\alpha)$. Thus, $\hat{e}$ is a connected degree with $\alpha(\hat{e})=\alpha$. By \cite[Proposition~3.10(7), 3.16, Theorem~8.1]{minimaldegrees}, it follows that
$$
\Delta(\rho)\subseteq\Delta(\alpha')\subseteq\Delta(e-\alpha^\vee)\subseteq\Delta_\alpha^\circ\,.
$$
Therefore, we have $(\alpha,\rho)=0$ -- as claimed.
\end{proof}

In one of the claims before, we figured out that $(\gamma',\alpha^\vee)=1$. By the very definition of $\rho=\alpha-\gamma'$, we therefore find that $(\rho,\alpha^\vee)=2-1=1$. This equation obviously contradicts the statement of the previous claim. This contradiction shows that we must have $\alpha=\alpha'$ and thus $(\alpha,\gamma)=(\alpha',\gamma')$. This shows that the defined map is injective and completes the proof of the theorem.
\end{proof}

\begin{assumption}
\label{ass:main}

Let $d\in\Pi_P$. Let $e$ be the lifting of $d$. We say that $d$ satisfies Assumption~\ref{ass:main} if one of the following conditions holds:

\begin{enumerate}

\item
\label{item:long}

All roots in $\mathcal{B}_{R,e}\setminus R_P^+$ are long.

\item
\label{item:g/b}

We have $P=B$, i.e. $X$ is a generalized complete flag variety.

\item
\label{item:ass_pcosmall}

There exist $P$-cosmall roots $\alpha_1,\ldots,\alpha_k$ such that $\Delta(\alpha_1),\ldots,\Delta(\alpha_k)$ are pairwise totally disjoint and such that $d=\sum_{i=1}^k d(\alpha_i)$.

\end{enumerate}

\end{assumption}

\begin{lem}
\label{lem:long}

Let $\alpha$ be a long root and let $\gamma$ be a root non-proportional to $\alpha$ (i.e. $\gamma\neq\pm\alpha$). Then we have $(\gamma,\alpha^\vee)\in\{-1,0,1\}$.

\end{lem}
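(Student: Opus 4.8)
The plan is to reduce the statement to the elementary theory of rank two root subsystems. First I would introduce, alongside $(\gamma,\alpha^\vee)=\frac{2(\gamma,\alpha)}{(\alpha,\alpha)}$, the companion Cartan integer $(\alpha,\gamma^\vee)=\frac{2(\gamma,\alpha)}{(\gamma,\gamma)}$. Both of these are integers (they are entries of the Cartan matrix of $R$, or, if one prefers, of the rank two subsystem of $R$ spanned by $\alpha$ and $\gamma$), and both have the same sign as $(\gamma,\alpha)$; in particular their product is a non-negative integer. Since $\gamma\neq\pm\alpha$, the angle $\theta$ between $\alpha$ and $\gamma$ satisfies $\cos^2\theta<1$, and the Cauchy--Schwarz identity gives
$$(\gamma,\alpha^\vee)\cdot(\alpha,\gamma^\vee)=4\cos^2\theta\in\{0,1,2,3\}\,.$$

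Next I would exploit that $\alpha$ is long. Because $G$ is simple, the root system $R$ is irreducible, so a long root has maximal squared length among all roots of $R$; hence $(\gamma,\gamma)\leq(\alpha,\alpha)$. When $(\gamma,\alpha)\neq 0$, dividing the two Cartan integers yields
$$\frac{(\gamma,\alpha^\vee)}{(\alpha,\gamma^\vee)}=\frac{(\gamma,\gamma)}{(\alpha,\alpha)}\leq 1\,,$$
so $\lvert(\gamma,\alpha^\vee)\rvert\leq\lvert(\alpha,\gamma^\vee)\rvert$; the case $(\gamma,\alpha)=0$ is trivial since then $(\gamma,\alpha^\vee)=0$. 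Finally, if one had $\lvert(\gamma,\alpha^\vee)\rvert\geq 2$, then also $\lvert(\alpha,\gamma^\vee)\rvert\geq 2$, so the product $(\gamma,\alpha^\vee)\cdot(\alpha,\gamma^\vee)$ would be at least $4$, contradicting the displayed bound. Therefore $(\gamma,\alpha^\vee)\in\{-1,0,1\}$.

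The argument involves no real obstacle; it is a standard fact about root systems. The only point requiring a word of care is the use of irreducibility of $R$ to guarantee that a long root attains the maximal root length, which is automatic here since $G$ is simple (alternatively, if $(\gamma,\alpha)\neq 0$ then $\gamma$ and $\alpha$ lie in a common irreducible component of $R$, and the comparison of lengths takes place inside that component).
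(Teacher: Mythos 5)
Your proof is correct. It rests on the same two ingredients as the paper's own proof — strict Cauchy--Schwarz (valid because $\gamma\neq\pm\alpha$) together with the fact that a long root has maximal length — merely repackaged through the classical identity $(\gamma,\alpha^\vee)\cdot(\alpha,\gamma^\vee)=4\cos^2\theta$ and integrality of both Cartan numbers; the paper reaches the conclusion a bit more directly by estimating $|(\gamma,\alpha^\vee)|=2|(\gamma,\alpha)|/(\alpha,\alpha)<2\|\gamma\|/\|\alpha\|\leq 2$ and then invoking integrality once.
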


\begin{proof}

Since $\alpha$ and $\gamma$ are non-proportional, they are linearly independent. Thus, the Cauchy-Schwarz inequality applied to $\alpha$ and $\gamma$ is strict and reads as
$$
|(\gamma,\alpha)|<\|\gamma\|\|\alpha\|
$$
where $\|\cdot\|$ is the norm associated to the scalar product $(-,-)$. Using this strict inequality, we find that
$$
|(\gamma,\alpha^\vee)|=\frac{2|(\gamma,\alpha)|}{(\alpha,\alpha)}<\frac{2\|\gamma\|\|\alpha\|}{\|\alpha\|^2}\leq 2
$$
where the last inequality follows since $\alpha$ is long. This inequality gives the claimed values of $(\gamma,\alpha^\vee)$.
\end{proof}

\begin{lem}
\label{lem:prep5}

Let $d\in\Pi_P$. Let $e$ be the lifting of $d$. Assume that $d$ satisfies Assumption~\ref{ass:main}. Then we have the following equality:
$$
-\sum_{\alpha\in\mathcal{B}_{R,e}\setminus R_P^+}\sum_{\gamma\in R_P^+\setminus I(s_\alpha)}(\gamma,\alpha^\vee)=\mathrm{card}\{(\alpha,\gamma)\in(\mathcal{B}_{R,e}\setminus R_P^+)\times R_P^+\mid(\gamma,\alpha^\vee)=-1\}\,.
$$

\end{lem}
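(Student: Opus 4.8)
The plan is to reduce the claimed identity, for each fixed $\alpha\in\mathcal{B}_{R,e}\setminus R_P^+$, to the assertion that no $\gamma\in R_P^+$ satisfies $(\gamma,\alpha^\vee)\leq -2$, and then to verify that assertion in each of the three cases of Assumption~\ref{ass:main}. The reduction is purely formal; the content sits entirely in the three-case verification, which is where the assumption is used.

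First I would record the elementary identity
$$
R_P^+\setminus I(s_\alpha)=\{\gamma\in R_P^+\mid(\gamma,\alpha^\vee)\leq 0\}\qquad\text{for }\alpha\in\mathcal{B}_{R,e}\setminus R_P^+\,.
$$
For the inclusion ``$\subseteq$'', if $\gamma\in R_P^+$ with $(\gamma,\alpha^\vee)>0$, then, since $\alpha\in R^+\setminus R_P^+$, there is $\beta\in\Delta\setminus\Delta_P$ with $(\omega_\beta,\alpha^\vee)>0$, and the $\beta$-coefficient of $s_\alpha(\gamma)=\gamma-(\gamma,\alpha^\vee)\alpha$ is strictly negative, so $s_\alpha(\gamma)<0$ and $\gamma\in I(s_\alpha)$; this is exactly the argument already used in the proof of Theorem~\ref{thm:pcosmall}. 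For ``$\supseteq$'', if $(\gamma,\alpha^\vee)\leq 0$ then $s_\alpha(\gamma)$ is a non-negative integral combination of the positive roots $\gamma$ and $\alpha$, hence positive, so $\gamma\notin I(s_\alpha)$. In particular every $\gamma\in R_P^+$ with $(\gamma,\alpha^\vee)=-1$ already lies in $R_P^+\setminus I(s_\alpha)$, so the set on the right-hand side of the lemma is the disjoint union over $\alpha\in\mathcal{B}_{R,e}\setminus R_P^+$ of the sets $\{\gamma\in R_P^+\setminus I(s_\alpha)\mid(\gamma,\alpha^\vee)=-1\}$; this is the one mildly delicate bookkeeping point.

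Next I would rewrite the left-hand side. By the displayed identity every summand $-(\gamma,\alpha^\vee)$ with $\gamma\in R_P^+\setminus I(s_\alpha)$ is non-negative, and the summands with $(\gamma,\alpha^\vee)=0$ contribute nothing, so for each $\alpha$ one has $-\sum_{\gamma\in R_P^+\setminus I(s_\alpha)}(\gamma,\alpha^\vee)=\sum_{j\geq 1}j\cdot\mathrm{card}\{\gamma\in R_P^+\mid(\gamma,\alpha^\vee)=-j\}$, which is $\geq\mathrm{card}\{\gamma\in R_P^+\setminus I(s_\alpha)\mid(\gamma,\alpha^\vee)=-1\}$ with equality exactly when no $\gamma\in R_P^+$ has $(\gamma,\alpha^\vee)\leq -2$. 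Summing over $\alpha\in\mathcal{B}_{R,e}\setminus R_P^+$ and using the previous paragraph, the lemma becomes equivalent to the inequality $(\gamma,\alpha^\vee)\geq -1$ for all $\alpha\in\mathcal{B}_{R,e}\setminus R_P^+$ and all $\gamma\in R_P^+$.

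Finally I would dispatch the three cases; I expect case \eqref{item:long} to be the only one requiring any thought, and even it is short. In case \eqref{item:g/b} we have $R_P^+=\emptyset$ and both sides of the lemma vanish. In case \eqref{item:long} every $\alpha\in\mathcal{B}_{R,e}\setminus R_P^+$ is long, and every $\gamma\in R_P^+$ satisfies $\gamma\neq\pm\alpha$ (since $\pm\gamma\in R_P$ while $\alpha\notin R_P$, as $\alpha\in R^+\setminus R_P^+$), so Lemma~\ref{lem:long} gives $(\gamma,\alpha^\vee)\in\{-1,0,1\}$, in particular $(\gamma,\alpha^\vee)\geq -1$. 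In case \eqref{item:ass_pcosmall}, Theorem~\ref{thm:splitting2} gives $\mathcal{B}_{R,e}\setminus R_P^+=\{\alpha_1,\dots,\alpha_k\}$ with each $\alpha_i$ being $P$-cosmall; for $\gamma\in R_P^+\setminus I(s_{\alpha_i})$ Theorem~\ref{thm:pcosmall} gives $(\gamma,\alpha_i^\vee)=0$, while for $\gamma\in R_P^+\cap I(s_{\alpha_i})$ one has $(\gamma,\alpha_i^\vee)\geq 1$, so $(\gamma,\alpha_i^\vee)\geq -1$ in all cases. This establishes the required inequality in every case, hence the lemma.
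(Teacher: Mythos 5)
Your proof is correct and follows essentially the same route as the paper's: the identity $R_P^+\setminus I(s_\alpha)=\{\gamma\in R_P^+\mid(\gamma,\alpha^\vee)\leq 0\}$, the reduction to ruling out $(\gamma,\alpha^\vee)\leq -2$, and the three-case verification via Lemma~\ref{lem:long}, the vacuity of $R_P^+$ for $P=B$, and Theorems~\ref{thm:splitting2} and \ref{thm:pcosmall} all match the paper's argument. Your version merely packages the reduction more uniformly across the three cases, which is a fine presentational choice.
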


\begin{proof}

Assume first that Assumption~\ref{ass:main}\eqref{item:long} is satisfied. Let $(\alpha,\gamma)$ be a pair as it occurs in the index set of the double sum in the statement. By definition, we know that $(\gamma,\alpha^\vee)\leq 0$ -- otherwise $\gamma\in I(s_\alpha)$. (We use here that $\alpha\in R^+\setminus R_P^+$ and $\gamma\in R_P^+$.) Since $\alpha$ is long by assumption, Lemma~\ref{lem:long} applies and we find that $(\gamma,\alpha^\vee)\in\{-1,0\}$. If we discard all summands from the double sum which are zero, we are counting precisely the pairs $(\alpha,\gamma)$ such that $(\gamma,\alpha^\vee)=-1$. In this counting, we can equally well allow $\gamma$ to range over all of $R_P^+$ (instead of $\gamma\in R_P^+\setminus I(s_\alpha)$), since all elements $\gamma\in I(s_\alpha)$ satisfy $(\gamma,\alpha^\vee)=1$ by \cite[Theorem~6.1: $(a)\Rightarrow(c)$]{curvenbhd2}. (Here we use that $\alpha$ is $B$-cosmall by Theorem~\ref{thm:gencascade}\eqref{item:gencascadebcosmall}.) This proves the equality in this case.

If Assumption~\ref{ass:main}\eqref{item:g/b} is satisfied, both sides of the claimed equality are obviously zero. There is nothing to prove.

Finally, assume that Assumption~\ref{ass:main}\eqref{item:ass_pcosmall} is satisfied. Let $\alpha_1,\ldots,\alpha_k$ be $P$-cosmall roots such that $\Delta(\alpha_1),\ldots,\Delta(\alpha_k)$ are pairwise totally disjoint and such that $d=\sum_{i=1}^k d(\alpha_i)$. By Theorem~\ref{thm:splitting2}, we know that $\mathcal{B}_{R,e}\setminus R_P^+=\{\alpha_1,\ldots,\alpha_k\}$, in particular all elements of $\mathcal{B}_{R,e}\setminus R_P^+$ are $P$-cosmall. In view of this fact, Theorem~\ref{thm:pcosmall} shows that the left side of the claimed equality is zero. But Theorem~\ref{thm:pcosmall} also shows that the right side of the equality is zero. (For $\alpha\in\mathcal{B}_{R,e}\setminus R_P^+$, all elements $\gamma\in R_P^+\setminus I(s_\alpha)$ satisfy $(\alpha,\gamma)=0$ and all elements $\gamma\in I(s_\alpha)$ satisfy $(\gamma,\alpha^\vee)=1$ as it was explained in the first paragraph of this proof.) This shows the claimed equality in the last case.
\end{proof}

\begin{thm}
\label{thm:prep}

Let $d\in\Pi_P$. Assume that $d$ satisfies Assumption~\ref{ass:main}. Then we have the following inequality:
$$
(c_1(X),d)-\ell(z_d^P)\leq\mathrm{card}(\mathrm{TD}_{P,d})\,.
$$

\end{thm}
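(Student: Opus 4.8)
The plan is to simply assemble the machinery that Sections~\ref{sec:lifting} and \ref{sec:dia} were built to provide: no genuinely new argument is required, only a careful bookkeeping of cardinalities. Throughout, let $e$ denote the lifting of $d$ and write $\mathcal{P}=\{(\alpha,\gamma)\in(\mathcal{B}_{R,e}\setminus R_P^+)\times R_P^+\mid(\gamma,\alpha^\vee)=-1\}$ for brevity.

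First I would combine the two preparatory identities. Lemma~\ref{lem:prep4} gives
$$(c_1(X),d)-\ell(z_d^P)=-\sum_{\alpha\in\mathcal{B}_{R,e}\setminus R_P^+}\sum_{\gamma\in R_P^+\setminus I(s_\alpha)}(\gamma,\alpha^\vee)+\mathrm{card}(\mathcal{B}_{R,e}\setminus R_P^+).$$
Since $d$ satisfies Assumption~\ref{ass:main}, Lemma~\ref{lem:prep5} identifies the double sum on the right with $\mathrm{card}(\mathcal{P})$ (this is the only place Assumption~\ref{ass:main} enters, via Theorem~\ref{thm:pcosmall} and Lemma~\ref{lem:long}). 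Hence
$$(c_1(X),d)-\ell(z_d^P)=\mathrm{card}(\mathcal{P})+\mathrm{card}(\mathcal{B}_{R,e}\setminus R_P^+).$$

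Next I would bound $\mathrm{card}(\mathcal{P})$ using Theorem~\ref{thm:disjointness_new}, which supplies an injection $\mathcal{P}\hookrightarrow\mathrm{TD}_{P,d}\setminus(-(\mathcal{B}_{R,e}\setminus R_P^+))$. To turn this into a numerical estimate I first observe that $-(\mathcal{B}_{R,e}\setminus R_P^+)\subseteq\mathrm{TD}_{P,d}$: indeed, taking $\gamma=0$ in the defining formula for $\mathrm{TD}_{P,d}$, each $-\alpha$ with $\alpha\in\mathcal{B}_{R,e}\setminus R_P^+$ lies in $R^-\setminus R_P^-$ because $\alpha\in R^+\setminus R_P^+$. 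Since negation is a bijection, $\mathrm{card}(-(\mathcal{B}_{R,e}\setminus R_P^+))=\mathrm{card}(\mathcal{B}_{R,e}\setminus R_P^+)$, and therefore
$$\mathrm{card}(\mathcal{P})\leq\mathrm{card}\big(\mathrm{TD}_{P,d}\setminus(-(\mathcal{B}_{R,e}\setminus R_P^+))\big)=\mathrm{card}(\mathrm{TD}_{P,d})-\mathrm{card}(\mathcal{B}_{R,e}\setminus R_P^+).$$
Substituting this into the displayed equation above, the two occurrences of $\mathrm{card}(\mathcal{B}_{R,e}\setminus R_P^+)$ cancel and we obtain $(c_1(X),d)-\ell(z_d^P)\leq\mathrm{card}(\mathrm{TD}_{P,d})$, as claimed.

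There is no real obstacle here; the substantive content has been front-loaded into Lemma~\ref{lem:prep4}, Lemma~\ref{lem:prep5} and Theorem~\ref{thm:disjointness_new}. The only point deserving a moment's attention is the cancellation: one must check that the additive term $\mathrm{card}(\mathcal{B}_{R,e}\setminus R_P^+)$ produced by Lemma~\ref{lem:prep4} is exactly the "deficit" by which the image of $\mathcal{P}$ in Theorem~\ref{thm:disjointness_new} avoids all of $\mathrm{TD}_{P,d}$, i.e. that the $\gamma=0$ slice $-(\mathcal{B}_{R,e}\setminus R_P^+)$ of $\mathrm{TD}_{P,d}$ accounts precisely for it. Once that is noted, the inequality follows immediately.
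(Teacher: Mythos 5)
Your proof is correct and is exactly the argument the paper intends: the paper's proof of Theorem~\ref{thm:prep} consists of the single line \enquote{combine Lemma~\ref{lem:prep4}, \ref{lem:prep5} and Theorem~\ref{thm:disjointness_new}}, and your write-up supplies precisely that combination, including the one point worth checking, namely that the $\gamma=0$ slice $-(\mathcal{B}_{R,e}\setminus R_P^+)\subseteq\mathrm{TD}_{P,d}$ accounts for the additive term $\mathrm{card}(\mathcal{B}_{R,e}\setminus R_P^+)$ from Lemma~\ref{lem:prep4}.
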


\begin{proof}

The theorem follows by combining Lemma~\ref{lem:prep4}, \ref{lem:prep5} and Theorem~\ref{thm:disjointness_new}.
\end{proof}

\section{Main result on quasi-homogeneity}

This section is devoted to the proof of the main result on quasi-homogeneity. After all preparations, it remains to combine the results proved until now and to relate them to the geometric question of quasi-homogeneity of the moduli space of stable maps.

{\color{black}\begin{notation}

Let $d$ be a degree. The moduli space $\overline{M}_{0,3}(X,d)$ comes equipped with three evaluation maps. For each $i\in\{1,2,3\}$, the $i$th evaluation map $\mathrm{ev}_i\colon\overline{M}_{0,3}(X,d)\to X$ is defined by 
$$
\mathrm{ev}_i([C,p_1,p_2,p_3,\mu\colon C\to X])=\mu(p_i)\,.
$$

\end{notation}}

\begin{thm}
\label{thm:main}

%Let $d$ be a $P$-admissible degree. 
Let $d\in\Pi_P$. Assume that $d$ satisfies Assumption~\ref{ass:main}.
Then the morphism $f_{P,d}$ has a dense open orbit in $\overline{M}_{0,3}(X,d)$ under the action of $G$. In particular, the moduli space $\overline{M}_{0,3}(X,d)$ is quasi-homogeneous under the action of $G$ for all 
%$P$-admissible degrees $d$.
minimal degrees $d\in\Pi_P$ which satisfy Assumption~\ref{ass:main}.

\end{thm}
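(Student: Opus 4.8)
The plan is to compare the dimension of the $G$-orbit of $[f_{P,d}]$ with the dimension of the moduli space. Recall that $\overline{M}_{0,3}(X,d)$ is an irreducible variety and that any $G$-orbit in it is a locally closed subvariety; hence, as soon as one shows that $\dim\big(G\cdot[f_{P,d}]\big)=\dim\overline{M}_{0,3}(X,d)$, the orbit $G\cdot[f_{P,d}]$ is automatically dense and open, which is exactly quasi-homogeneity. Since $X$ is homogeneous it is convex, so $\overline{M}_{0,3}(X,d)$ has dimension $\dim X+(c_1(X),d)$, its locus of maps with irreducible domain is canonically $\mathrm{Mor}_d(\mathbb{P}^1,X)$ (by fixing the three marked points to be $0,1,\infty$), and at the point $[f_{P,d}]$ — which lies in this locus and has degree $d$ by Fact~\ref{fact:dia} — the tangent space is $H^0(\mathbb{P}^1,f_{P,d}^{*}TX)$, with $H^1(\mathbb{P}^1,f_{P,d}^{*}TX)=0$.

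First I would identify the stabilizer. Because the fixed domain $\mathbb{P}^1$ with its three marked points carries no nontrivial automorphism, $g\cdot[f_{P,d}]=[f_{P,d}]$ forces $g\circ f_{P,d}=f_{P,d}$; hence $\mathrm{Stab}_G([f_{P,d}])$ equals the subgroup $G_C$ of elements of $G$ fixing the diagonal curve pointwise. In particular $G_C\subseteq P$, since $f_{P,d}(0)=x(1)$, and $G_C\subseteq P^{z_d^P}$, since $f_{P,d}(\infty)=x(z_d^P)$ (both by Fact~\ref{fact:dia}); Lemma~\ref{lem:act} is tailored to control exactly the intersection $P\cap P^{z_d^P}$. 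By orbit–stabilizer, the desired equality is equivalent to
\[
\dim G_C=\dim P-(c_1(X),d),
\]
and using the $G$-equivariant evaluation map $\mathrm{ev}_1\colon\overline{M}_{0,3}(X,d)\to X$ — which exhibits $\overline{M}_{0,3}(X,d)$ as $G\times_P\mathrm{ev}_1^{-1}(x(1))$ with $\mathrm{ev}_1^{-1}(x(1))$ irreducible of dimension $(c_1(X),d)$ — it even suffices to prove $\dim G_C\le\dim P-(c_1(X),d)$, i.e.\ that the $P$-orbit of $[f_{P,d}]$ is dense in the fibre $\mathrm{ev}_1^{-1}(x(1))$.

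The substance of the argument is then a tangent-space computation. Differentiating the orbit map at the identity identifies $\mathrm{Lie}(G_C)$ with the kernel of the evaluation $\rho\colon\mathfrak g\to H^0(\mathbb{P}^1,f_{P,d}^{*}TX)$ sending $\xi$ to the restriction along the curve of the fundamental vector field of $\xi$, so the goal is equivalent to surjectivity of $\rho$, which I would check by producing enough independent directions and counting. The diagonal curve is invariant under the one-parameter subgroup of $T$ dual to $\sum_{\alpha\in\mathcal B_{R,e}\setminus R_P^{+}}\alpha^{\vee}$ (well defined, and acting on the domain $\mathbb{P}^1$ with a single weight, because the roots in $\mathcal B_{R,e}\setminus R_P^{+}$ are pairwise strongly orthogonal, Theorem~\ref{thm:gencascade}\eqref{item:stronglyorthogonal}); hence both $\mathfrak g$ and $H^0(\mathbb{P}^1,f_{P,d}^{*}TX)$ become graded by this torus, $\rho$ is graded, and surjectivity may be verified grading-piece by grading-piece. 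The root vectors $\mathfrak g_{-\gamma}$, $\gamma\in R^{+}\setminus R_P^{+}$, account for the $\dim X$ "value-at-$0$" directions through $\mathrm{ev}_1$; the remaining $(c_1(X),d)$ fibre directions come from the motion of the curve along itself (pinned down by $f_{P,d}(\infty)=x(z_d^P)$, accounting for $\ell(z_d^P)$ of them) together with the deformations generated by the groups $U_{-\gamma}$, $\gamma\in R_P^{+}$, which by Lemma~\ref{lem:act} fix both endpoints and move the curve precisely in the tangent directions of $\mathrm{TD}_{P,d}$. The exact bookkeeping that makes this total come out to $\dim X+(c_1(X),d)$ with no deficiency is supplied by Lemmas~\ref{lem:prep1}--\ref{lem:prep4}, the injection of Theorem~\ref{thm:disjointness_new}, Lemma~\ref{lem:prep5}, and Theorem~\ref{thm:prep}: under Assumption~\ref{ass:main} these identities collapse $(c_1(X),d)-\ell(z_d^P)$ to $\mathrm{card}(\mathrm{TD}_{P,d})$ without loss, which is exactly the numerical input needed for $\rho$ to be surjective.

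The main obstacle is precisely this last step: exhibiting the required number of linearly independent sections of $f_{P,d}^{*}TX$ inside the image of $\mathfrak g$, i.e.\ proving surjectivity of $\rho$. The delicate points are that the diagonal curve is invariant only under the above subtorus and not under all of $T$, so the splitting type of $f_{P,d}^{*}TX$ must be handled carefully; that the directions produced by the various $U_{-\gamma}$ ($\gamma\in R_P^{+}$) may a priori overlap both with $-(\mathcal B_{R,e}\setminus R_P^{+})$ and with one another, which is why the injectivity statement of Theorem~\ref{thm:disjointness_new} is needed; and that Assumption~\ref{ass:main}, through Lemma~\ref{lem:prep5}, is exactly the hypothesis removing the only possible slack in the count (without it one obtains only an inequality in the wrong direction). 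Once surjectivity of $\rho$ is established, $G\cdot[f_{P,d}]$ is open in $\overline{M}_{0,3}(X,d)$, hence dense by irreducibility, and $\overline{M}_{0,3}(X,d)$ is quasi-homogeneous under $G$ for every $d\in\Pi_P$ satisfying Assumption~\ref{ass:main}.
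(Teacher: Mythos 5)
Your overall strategy -- comparing the dimension of the orbit with that of the moduli space, locating the needed tangent directions in the image of the Lie algebra action, and letting $\mathrm{TD}_{P,d}$ together with Theorem~\ref{thm:prep} supply the numerical core -- is the paper's strategy. But the step you yourself flag as ``the main obstacle'' is a genuine gap, and the reduction you propose makes it harder than it needs to be. You reduce the theorem to surjectivity of $\rho\colon\mathfrak g\to H^0(\mathbb{P}^1,f_{P,d}^*TX)$, which requires producing \emph{all} $\dim X+(c_1(X),d)$ independent sections at once: the $\dim X$ ``value at $0$'' directions, the $\ell(z_d^P)$ directions you describe as ``motion of the curve along itself,'' and the $\mathrm{card}(\mathrm{TD}_{P,d})$ fibre directions -- and then proving these three families jointly independent inside $H^0(\mathbb{P}^1,f_{P,d}^*TX)$, which depends on the splitting type of $f_{P,d}^*TX$ and is never addressed. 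In particular the $\ell(z_d^P)$ directions are only asserted, not exhibited; there is no obvious Lie-algebra source for them inside $\mathrm{im}(\rho)$.

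The paper avoids all of this by pushing the fibration one step further than you do: it restricts to the fibre $\overline{M}(2)$ of $\mathrm{ev}_1\times\mathrm{ev}_2$ over $(x(1),x(z_d^P))$ and acts by $L=P\cap P^{z_d^P}$. The contributions $\dim X$ and $\ell(z_d^P)$ are then absorbed \emph{globally} -- by flatness of the evaluation maps and by the surjectivity of $\mathrm{ev}_2\colon\overline{M}(1)\to X_{z_d^P}$, which is the curve-neighborhood theorem \cite[Theorem~5.1]{curvenbhd2}, an input your sketch never invokes -- so that $\dim\overline{M}(2)=(c_1(X),d)-\ell(z_d^P)$ and only the $\mathrm{TD}_{P,d}$ directions must be produced by a tangent-space computation. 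There, linear independence is automatic because the directions lie in distinct root spaces $\mathfrak g_\mu+\mathfrak p$, $\mu\in\mathrm{TD}_{P,d}\subseteq R^-\setminus R_P^-$, and Lemma~\ref{lem:act} guarantees the relevant root groups $U_{-\gamma}$, $\gamma\in R_P^+$, actually preserve the fibre. To repair your argument you would either have to carry out the full independence analysis of $\mathrm{im}(\rho)$, or make the same two-point reduction; as written, the decisive step is missing.
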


\begin{comment}

\begin{rem}

In view of Proposition~\ref{prop:padmissible}, Theorem~\ref{thm:main} applies for three interesting subclasses of minimal degrees in $\Pi_P$.

\end{rem}

\end{comment}

\begin{proof}%[Proof of Theorem~\ref{thm:main}]

%We fix a $P$-admissible degree $d$. 
We fix a minimal degree $d\in\Pi_P$ which satisfies Assumption~\ref{ass:main}.
Let $z=z_d^P$ for short. Let $\overline{M}=\overline{M}_{0,3}(X,d)$ for short. We denote by $\overline{M}(2)$ the fiber of the total evaluation map $\mathrm{ev}_1\times\mathrm{ev}_2\colon\overline{M}\to X\times X$ over the point $(x(1),x(z))$.

\begin{proof}[Claim: We have $\mathrm{dim}(\overline{M}(2))=(c_1(X),d)-\ell(z)$]\renewcommand{\qedsymbol}{$\triangle$}

Indeed, the evaluation map $\mathrm{ev}_1\colon\overline{M}\to X$ is flat (cf. \cite[Lemma~2.5.1]{invitation}) and obviously surjective. Denote by $\overline{M}(1)$ the fiber of $\mathrm{ev}_1$ over $x(1)$. By the dimension formula for flat morphisms, we find that $\mathrm{dim}(\overline{M}(1))=\mathrm{dim}(\overline{M})-\mathrm{dim}(X)$. Similarly, we next consider the evaluation map $\mathrm{ev}_2\colon\overline{M}(1)\to X_z$. This maps is again flat (cf. \cite[loc. cit.]{invitation}) and surjective (cf. \cite[Theorem~5.1]{curvenbhd2}). The fiber of $\mathrm{ev}_2$ over $x(z)$ is precisely $\overline{M}(2)$. Again, by the dimension formula for flat morphisms, we find that $\mathrm{dim}(\overline{M}(2))=\mathrm{dim}(\overline{M})-\mathrm{dim}(X)-\ell(z)$. If we plug in the dimension formula for $\overline{M}$ (cf. \cite[Theorem~2(i)]{fultonpan}) in the last equation, we see that the claim is true.
\end{proof}

Let $L=P\cap P^z$. It is clear that $L$ acts on $\overline{M}(2)$. Moreover, the morphism $f_{P,d}$ has a dense open orbit in $\overline{M}$ under the action of $G$ if and only if it has a dense open orbit in $\overline{M}(2)$ under the action of $L$. We rather prove the latter equivalent formulation of the theorem. 

Let $f=f_{P,d}$ for short. Let $T_f$ be the tangent space at $f$ of the orbit $Lf\subseteq\overline{M}(2)$ of $f$ under the action of $L$. In order to prove that $f$ has a dense open orbit in $\overline{M}(2)$ under the action of $L$, it suffices to prove that $\mathrm{dim}(\overline{M}(2))\leq\mathrm{dim}(T_f)$. 

\begin{proof}[Claim: We have $\mathrm{card}(\mathrm{TD}_{P,d})\leq\mathrm{dim}(T_f)$]\renewcommand{\qedsymbol}{$\triangle$}

Let $\mathfrak{g}$ be the Lie algebra of $G$, let $\mathfrak{p}$ be the Lie algebra of $P$, and let $\mathfrak{t}$ be the Lie algebra of $T$. For a root $\alpha\in R$, we denote by $\mathfrak{g}_\alpha$ the Lie algebra of $U_\alpha$.
By \cite[Proposition~1.1]{Timashev}, the tangent space of $X$ at the point $x(1)$ can be identified with $\mathfrak{g}/\mathfrak{p}$. Hence, the tangent space $T_f$ can be identified with a vector subspace of $\mathfrak{g}/\mathfrak{p}$. We will now prove an inclusion of vector spaces as follows:
$$
\bigoplus_{\mu\in\mathrm{TD}_{P,d}}(\mathfrak{g}_\mu+\mathfrak{p})/\mathfrak{p}\subseteq T_f\,.
$$
The claim follows immediately from this by taking dimensions. (Note that $\mathrm{TD}_{P,d}\subseteq R^-\setminus R_P^-$ by definition.)

%For a root $\alpha\in R$, we denote by $x_\alpha\in\mathfrak{g}_\alpha$ the associated root vector. In order to prove the desired inclusion of vector spaces, it clearly suffices to prove that $x_\mu+\mathfrak{p}\in T_f$ for all $\mu\in\mathrm{TD}_{P,d}$.

Let $e$ be the lifting of $d$. For a root $\alpha\in R$, we denote by $x_\alpha\in\mathfrak{g}_\alpha$ the associated root vector. For a root $\alpha\in R$, we also write $h_\alpha=[x_\alpha,x_{-\alpha}]\in\mathfrak{t}$.
%The root vector $x_\alpha$ should be probably called a Chevalley basis but I am not sure. I think root vector does not specify which element of $\mathfrak{g}_\alpha$ -- is defined only up to nonzero scalar.
In order to prove the desired inclusion of vector spaces, we prove first that $x_{-\alpha}+\mathfrak{p}\in T_f$ for all $\alpha\in\mathcal{B}_{R,e}\setminus R_P^+$. By definition of $f$, the tangent direction of $f$ at the point $x(1)$ is the image of $x=\sum_{\alpha\in\mathcal{B}_{R,e}\setminus R_P^+}x_{-\alpha}$ in $\mathfrak{g}/\mathfrak{p}$. Hence, we have $x+\mathfrak{p}\in T_f$. 
%$T_f$ parametrizes tangent directions of deformations of $f$, in particular the tangent direction of $f$ at the point $x(1)$ (this point because all identifications work only for this point, like the one for the tangent space of $X$) -- as $f$ is trivially a deformation of itself.
In view of Theorem~\ref{thm:gencascade}\eqref{item:stronglyorthogonal}, we see that $[h_\alpha,x]=[h_\alpha,x_{-\alpha}]=-2x_{-\alpha}\in\mathfrak{g}_{-\alpha}\setminus\{0\}$ for all $\alpha\in\mathcal{B}_{R,e}\setminus R_P^+$. Since $T\subseteq B\cap B^z\subseteq L$ and since $\mathfrak{t}$ acts on $T_f$, we see from this that $x_{-\alpha}+\mathfrak{p}\in T_f$ for all $\alpha\in\mathcal{B}_{R,e}\setminus R_P^+$. 
%But this shows the inclusion $(\mathfrak{g}_{-\alpha}+\mathfrak{p})/\mathfrak{p}\subseteq T_f$ for all $\alpha\in\mathcal{B}_{R,e}\setminus R_P^+$. 

Let $\mu\in\mathrm{TD}_{P,d}$. The desired inclusion of vector spaces follows if we prove that $(\mathfrak{g}_\mu+\mathfrak{p})/\mathfrak{p}\subseteq T_f$. By the previous paragraph and the definition of $\mathrm{TD}_{P,d}$, we may assume that $\mu=-\alpha-\gamma$ for some $\alpha\in\mathcal{B}_{R,e}\setminus R_P^+$ and some $\gamma\in R_P^+$. Since $\mu$ is a root, we have $[\mathfrak{g}_{-\gamma},\mathfrak{g}_{-\alpha}]=\mathfrak{g}_\mu$. By Lemma~\ref{lem:act}, we know that $\mathfrak{g}_{-\gamma}$ acts on $T_f$. Hence, the previous paragraph and the previous equation show that we have indeed $(\mathfrak{g}_\mu+\mathfrak{p})/\mathfrak{p}\subseteq T_f$.
\end{proof}

After all, the desired inequality $\mathrm{dim}(\overline{M}(2))\leq\mathrm{dim}(T_f)$ is now easy to deduce from our previous results. Indeed, we have
\begin{align*}
\mathrm{dim}(\overline{M}(2))&=(c_1(X),d)-\ell(z)&&\text{by the first claim}\\
%
%&=-\Sigma_{P,d}+\mathrm{card}\coprod_{\alpha\in\mathcal{B}_{R,e}\setminus R_P^+}(I(s_\alpha)\cap R_P^+)+\mathrm{card}(\mathcal{B}_{R,e}\setminus R_P^+)&\text{by Lemma~\ref{lem:prep3}}\\
%&\leq\mathrm{card}\coprod_{\alpha\in\mathcal{B}_{R,e}\setminus R_P^+}(I(s_\alpha)\cap R_P^+)+\mathrm{card}(\mathcal{B}_{R,e}\setminus R_P^+)&\text{since $d$ is $P$-admissible}\\
%&\leq\mathrm{card}(\mathrm{TD}_{P,d})&\text{by Theorem~\ref{thm:disjointness}}\\
%
&\leq\mathrm{card}(\mathrm{TD}_{P,d})&&\text{by Theorem~\ref{thm:prep}}\\
&\leq\mathrm{dim}(T_f)&&\text{by the second claim.}
\end{align*}
% The only interpretation I have for this inequality being strict is that the moduli space stays quasi-homogeneous even if there are more than three marked points.
This completes the proof of the desired inequality and hence the proof of the theorem.
\end{proof}

\begin{comment}

\appendix

\section{The graph associated to a degree}
\label{appendix:graph}

\begin{ex}
\label{ex:computationndtyped}

\end{ex}

\section{Characterization of \texorpdfstring{$P$-cosmall}{P-cosmall} roots}

\end{comment}

%\vskip.5in

\bibliographystyle{amsplain}
\bibliography{diplom}

%\begin{thebibliography}{100}
%\bibitem{Test} J. Smith, \emph{Test Title}, Test Publisher, New York, 19--. 
%\end{thebibliography}

\end{document}